\def\cal{\mathcal}
\def\Bbb{\mathbb}
\def\Re{\text{\rm Re\,}}
\def\rank{\text{\rm rank\,}}
\def\sgn{\text{\rm sgn\,}}
\def\ord{\text{\rm ord\,}}
\def\pad{\phi^a}
\def\ra{\rangle}
\def\laa{\langle}
\def \supp {\text{\rm supp\,}}
\def\nn{\nonumber}
\def \sgn {\text{\rm sgn\,}}
\def\trans{{\,}^t}
\def\hl{{h_{\rm \,lin}}}
\def\A{{\cal A}}
\def\D{{\cal D}}
\def\N{{\cal N}}
\def\S{{\cal S}}
\def\T{{\cal T}}
\def\bC{{\Bbb C}}
\def\CC{{\Bbb C}}
\def\NN{{\Bbb N}}
\def\bN{{\Bbb N}}
\def\bR{{\Bbb R}}
\def\RR{{\Bbb R}}
\def\bZ{{\Bbb Z}}
\def\ZZ{{\Bbb Z}}
\def\QQ{{\Bbb Q}}
\def\vp{{\varphi}}
\def\al{{\alpha}}
\def\be{{\beta}}
\def\ga{{\gamma}}
\def\la{{\lambda}}
\def\om{{\omega}}
\def\ze{{\zeta}}
\def\x{(x_1,x_2)}
\def\y{(y_1,y_2)}
\def\pa{{\partial}}
\def\Hess{{\rm Hess}}
\def\ve{{\varepsilon}}
\def\si{{\sigma}}
\def\de{{\delta}}
\def\dee{{\delta^e}}
\def\Om{{\Omega}}
\def\ka{{\kappa}}
\def\th{{\theta}}
\def\pr{\text{\rm pr\,}}
\def\bpm{\begin{pmatrix}}
\def\epm{\end{pmatrix}}
\def\noi{\noindent}
\def\bee{\begin{enumerate}}
\def\ee{\end{enumerate}}
\def\pf{\proof}
\def\qed{\smallskip\hfill Q.E.D.\medskip}
\newtheorem{thm}{Theorem}[section]
\newtheorem{prop}[thm]{Proposition}
\newtheorem{proposition}[thm]{Proposition}
\newtheorem{cor}[thm]{Corollary}
\newtheorem{lemma}[thm]{Lemma}
\newtheorem{remark}[thm]{Remark}
\newtheorem{remarks}[thm]{Remarks}
\newtheorem{assumption}[thm]{Assumption}
\newtheorem{example}[thm]{Example}
\begin{document}

\title[A sharp restriction theorem]{
$L^p$-$L^2$  Fourier restriction  for  hypersurfaces in  $\bR^3:$ Part I}

\author[I. A. Ikromov]{Isroil A. Ikromov}
\address{Department of Mathematics, Samarkand State University,
University Boulevard 15, 140104, Samarkand, Uzbekistan}
\email{{\tt ikromov1@rambler.ru}}

\author[D. M\"uller]{Detlef M\"uller}
\address{Mathematisches Seminar, C.A.-Universit\"at Kiel,
Ludewig-Meyn-Stra\ss{}e 4, D-24098 Kiel, Germany}
\email{{\tt mueller@math.uni-kiel.de}}
\urladdr{{http://analysis.math.uni-kiel.de/mueller/}}

\thanks{2000 {\em Mathematical Subject Classification.}
35D05, 35D10, 35G05}
\thanks{{\em Key words and phrases.}
Oscillatory integral, Newton diagram, Fourier restriction}
\thanks{We gratefully acknowledge the support for this work by the Deutsche Forschungsgemeinschaft.}

\begin{abstract}
This is the first of two  articles,  in which we prove a sharp  $L^p$-$L^2$ Fourier
restriction theorem for  a large class of smooth, finite type hypersurfaces in
$\RR^3,$ which includes in particular all  real-analytic hypersurfaces.
\end{abstract}

\maketitle

\maketitle


\tableofcontents

\thispagestyle{empty}

\setcounter{equation}{0}
\section{Introduction}\label{intro}

Let $S$ be a smooth, finite type  hypersurface in $\RR^3$ with Riemannian 
surface measure $d\si,$ and consider the compactly supported measure 
$d\mu:=\rho d\si$ on $S,$ where $0\le\rho\in C_0^\infty(S).$
The goal of this article is  to determine the sharp range of exponents $p$ for which a  Fourier restriction estimate 
\begin{equation}\label{rest1}
\Big(\int_S |\widehat f|^2\,d\mu\Big)^{1/2}\le C_p\|f\|_{L^p(\RR^3)},\qquad  f\in\S(\RR^3),
\end{equation}
holds true. To this end, we may localize to a sufficiently small neighborhoods of a given point $x^0$ on $S.$  Observe also that if estimate \eqref{rest1} holds for the hypersurface $S$, then it is valid also for every affine-linear image of $S,$ possibly with a different constant if the Jacobian of this map is not one.   By applying a suitable Euclidean motion of   $\RR^3$ we may then assume that $x^0=(0,0,0),$ and  that $S$ is the graph 
$$
 S=\{(x_1,x_2, \phi\x): \x\in \Om \},
$$
of a smooth function $\phi$ defined on a  sufficiently small neighborhood $\Om$ of the origin, such that 
$\phi(0,0)=0,\, \nabla \phi(0,0)=0.$

In our preceding article \cite{IM-uniform},  this problem had been solved, in terms of  Newton diagrams associated to $\phi,$  under the assumption that there exists a linear coordinate system which is adapted to the function $\phi,$ in the sense of Varchenko. More precisely, if denote by $h(\phi)$ the  height of $\phi,$ in the sense of Varchenko, then 
we had proved the following result:
\begin{thm}\label{adarestrict}
Assume that, after applying a suitable linear change of coordinates, the coordinates $\x$ are adapted to $\phi.$ 
We then define the critical exponent $p_c$ by
\begin{equation}\label{pcritical}
p'_c:=2h(\phi) +2,
\end{equation}
where $p'$ denotes the exponent conjugate to $p,$ i.e., $1/p+1/p'=1.$

Then there exists a neighborhood $U\subset S $ of the point $x^0$ such
that for every  non-negative density $\rho\in C_0^\infty(U)$ the  Fourier restriction estimate \eqref{rest1}
holds true for every $p$ such that
\begin{equation}\label{rest2}
1\le p\le p_c.
\end{equation}

Moreover, if $\rho(x^0)\ne 0,$ then the condition \eqref{rest2} on $p$ is also necessary for the validity of  \eqref{rest1}.
\end{thm}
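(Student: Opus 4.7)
The plan is to handle sufficiency and necessity of \eqref{rest2} separately; I would dispose of necessity first by a Knapp-type example. Let $(\ka_1,\ka_2)$ be the weights defining the principal face of the Newton polyhedron of $\phi$ in the adapted coordinates, so that $h(\phi)=1/(\ka_1+\ka_2)$. Consider the slab $T_\de$ in frequency space of dimensions $\de^{\ka_1}\times\de^{\ka_2}\times\de$, centred at the origin with short axis along the normal to $S$, and take $f$ with $\widehat f$ a smooth bump adapted to $T_\de$. A standard computation gives $\|f\|_{L^p}\sim\de^{(\ka_1+\ka_2+1)/p'}$, while $\widehat f$ is bounded below on the cap of $S$ sitting inside $T_\de$, so that $\|\widehat f\|_{L^2(d\mu)}\gtrsim\de^{(\ka_1+\ka_2)/2}$. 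Plugging into \eqref{rest1} and letting $\de\to0^+$ forces $p'\ge 2+2/(\ka_1+\ka_2)=2h(\phi)+2$, i.e.\ $p\le p_c$.

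For sufficiency, the core analytic input is the Varchenko-type Fourier decay
\[
|\widehat{\rho\,d\si}(\xi)|\lesssim (1+|\xi|)^{-1/h(\phi)},
\]
valid in adapted coordinates provided the principal face of the Newton polyhedron is not reduced to a single vertex. Combined with Greenleaf's classical restriction theorem, which converts pointwise Fourier decay of order $|\xi|^{-1/h}$ into an $L^p$--$L^2$ restriction estimate for $p'\ge 2h+2$, and a routine interpolation against the trivial $L^1\to L^2$ bound, this immediately yields \eqref{rest1} throughout the range \eqref{rest2}.

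The main obstacle is the borderline configuration in which the principal face is a single vertex; there Varchenko's bound degrades by a $\log(2+|\xi|)$ factor and Greenleaf misses the sharp endpoint by precisely this logarithm. To recover the sharp exponent I would decompose $\rho\,d\si=\sum_j\mu_j$ via a Littlewood--Paley partition adapted to the quasi-homogeneous dilations $(x_1,x_2)\mapsto(r^{\ka_1}x_1,r^{\ka_2}x_2)$, so that each $\mu_j$ is supported on an anisotropic annulus of scales $2^{-j\ka_1}\times 2^{-j\ka_2}$. After rescaling, each piece becomes a perturbation of a fixed $\ka$-quasi-homogeneous model measure for which a restriction estimate holds uniformly in $j$. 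Summing the rescaled contributions via an $\ell^2$-orthogonality argument on the frequency side gives \eqref{rest1} for $p'>2h(\phi)+2$; Stein's analytic interpolation between this subcritical bound and the trivial $L^1\to L^2$ estimate then absorbs the logarithmic loss and yields the endpoint $p=p_c$. Throughout, the adaptedness of the coordinates is essential in two ways: it guarantees that the Newton distance $d(\phi)$ genuinely equals the height $h(\phi)$, so the Knapp example and the decay estimate match exponents, and it ensures that the dyadic pieces produced by the $(\ka_1,\ka_2)$-decomposition have uniformly comparable restriction norms, which is what makes the summation geometric.
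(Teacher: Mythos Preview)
This theorem is not proved in the present paper; it is quoted from \cite{IM-uniform} as the previously established case. So there is no ``paper's own proof'' to compare against here, and I can only assess your sketch against the method in \cite{IM-uniform} as it is reflected in this paper (notably the arguments in Sections~\ref{outside} and~\ref{necessary}).

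Your necessity argument is correct in spirit and matches the Knapp-type approach carried out in Section~\ref{necessary}. One caveat: writing $h(\phi)=1/(\ka_1+\ka_2)$ presupposes that the principal face is a compact edge. When $\pi(\phi)$ is a vertex or an unbounded edge the weight is not unique, and the unbounded case needs a slightly different box (cf.\ the treatment of $l=n+1$ in the proof of Proposition~\ref{necs.1}). This is routine to fix.

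For sufficiency, your architecture---$\ka$-dyadic decomposition, rescaling each annular piece to a fixed model, uniform restriction for the rescaled pieces via Greenleaf, then Littlewood--Paley summation---is exactly the scheme used in \cite{IM-uniform} and reproduced in Proposition~\ref{awayjet} of this paper. But the role you assign to Stein interpolation is misplaced. The point of the dyadic decomposition is precisely that after rescaling each piece is supported on an annulus away from the origin, where the $\ka$-principal part dominates and one obtains the clean decay $|\widehat{\mu_{0,(k)}}(\xi)|\lesssim(1+|\xi|)^{-1/h}$ \emph{without} a logarithm, uniformly in $k$. Greenleaf then gives the endpoint $p'=2h+2$ for each piece directly, and Littlewood--Paley sums at the endpoint; no analytic interpolation is needed, and interpolating an open range against $L^1\to L^2$ would not recover the endpoint anyway.

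The genuine gap in your sketch is the step you pass over in one sentence: proving the uniform decay $(1+|\xi|)^{-1/h}$ for the rescaled measures. On the annulus the phase is a small perturbation of the $\ka$-homogeneous polynomial $\phi_\ka$, and one must show that its critical points are tame enough to yield this rate. In the adapted case this hinges on the structural fact $m(\phi_{\rm pr})\le d(\phi)$ (and its analogues for the vertex and unbounded-edge cases), and establishing the decay requires a careful case analysis of two-dimensional oscillatory integrals with degenerate stationary points---this is the substantive content of \cite{IM-uniform}, not a formality.
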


Earlier results for particular classes of hypersurfaces in $\RR^3$ are for instance in the work by E. Ferreyra and M. Urciuolo \cite{ferreyra-urciuolo1}, \cite{ferreyra-urciuolo2} and \cite{ferreyra-urciuolo3}, who studied particular  classes of quasi-homogeneous hypersuraces, for which they were able to  prove $L^p$-$L^q$- restriction estimates when $p<4/3,$ which in some cases are  sharp, except possibly for the endpoint, and    $L^p$-$L^2$  restriction estimates for  general analytic hypersurfaces  in   A. ~Magyar's article  \cite{magyar}.  For particular classes of hypersurfaces given as graphs of functions in adapted coordinates, his  results were sharp, with the exception of the  endpoint.

\medskip
In view of Theorem \ref{adarestrict}, we shall  from now on  always make the following 
\begin{assumption}\label{assumption}
There is no linear coordinate system which is adapted to $\phi.$
\end{assumption}

\subsection{Basic notions, and the case of analytic hypersurfaces}

In order to formulate our main result, we need more notation. 
 We shall build on the results and technics developed in
\cite{IM-ada} and \cite{IKM-max}, which will be our main references,
also for references to earlier and related work. Let us first recall
some basic notions from \cite{IM-ada}, which essentially go back to Arnol'd (cf. \cite{arnold}, \cite{agv}) and his school, most notably Varchenko \cite{Va}. 

If $\phi$ is given as before, consider the associated Taylor series
$$\phi(x_1,x_2)\sim\sum_{\al_1,\al_2=0}^\infty c_{\al_1,\al_2} x_1^{\al_1} x_2^{\al_2}$$
of $\phi$ centered at  the origin.
The set
$$\T(\phi):=\{(\al_1,\al_2)\in\bN^2: c_{\al_1,\al_2}=\frac 1{\al_1!\al_2!}\partial_{ 1}^{\al_1}\partial_{ 2}^{\al_2}­ \phi(0,0)\ne 0\}
$$
will be called the {\it Taylor support } of $\phi$ at $(0,0).$  We shall always assume that
$$\T(\phi)\ne \emptyset,$$
i.e., that the function $\phi$ is of finite type at the origin. The
{\it Newton polyhedron} $\N(\phi)$ of $\phi$ at the origin is
defined to be the convex hull of the union of all the quadrants
$(\al_1,\al_2)+\bR^2_+$ in $\bR^2,$ with $(\al_1,\al_2)\in\T(\phi).$  The associated
{\it Newton diagram}  $\N_d(\phi)$ in the sense of Varchenko
\cite{Va}  is the union of all compact faces  of the Newton
polyhedron; here, by a {\it face,} we shall  mean an edge or a
vertex.

We shall use coordinates $(t_1,t_2)$ for points in the plane
containing the Newton polyhedron, in order to distinguish this plane
from the $(x_1,x_2)$ - plane.

The {\it Newton distance}, or shorter {\it distance} $d=d(\phi)$
between the Newton polyhedron and the origin in the sense of
Varchenko is given by the coordinate $d$ of the point $(d,d)$ at
which the bi-sectrix   $t_1=t_2$ intersects the boundary of the
Newton polyhedron.

The {\it principal face} $\pi(\phi)$  of the Newton polyhedron of
$\phi$  is the face of minimal dimension  containing the point
$(d,d)$. Deviating from the notation in \cite{Va}, we shall call the
series
$$\phi_\pr(x_1,x_2):=\sum_{(\al_1,\al_2)\in \pi(\phi)}c_{\al_1,\al_2} x_1^{\al_1} x_2^{\al_2}
$$
the {\it principal part} of $\phi.$ In case that $\pi(\phi)$ is
compact,  $\phi_\pr$ is a mixed homogeneous polynomial; otherwise,
we shall  consider $\phi_\pr$ as a formal power series.

Note that the distance between the Newton polyhedron and the origin
depends on the chosen local coordinate system in which $\phi$ is
expressed.  By a  {\it local  coordinate system at the origin} we
shall mean a smooth   coordinate system defined near the origin
which preserves $0.$ The {\it height } of the  smooth function
$\phi$ is defined by
$$h(\phi):=\sup\{d_y\},$$
 where the
supremum  is taken over all local  coordinate systems $y=(y_1,y_2)$ at the origin, and where $d_y$
is the distance between the Newton polyhedron and the origin in the
coordinates  $y$.

A given   coordinate system $x$ is said to be
 {\it adapted} to $\phi$ if $h(\phi)=d_x.$

In \cite{IM-ada} we proved that one  can always find an adapted
local  coordinate system in two dimensions, thus  generalizing  the
fundamental work by Varchenko  \cite{Va} who worked in the   setting
of  real-analytic functions $\phi$ (see also \cite{PSS}).

Recall also that if the principal face of the Newton polyhedron $\N(\phi)$ is a compact edge, then it lies on a unique ``principal line'' 
$$
 L:=\{(t_1,t_2)\in \RR^2:\ka_1t_1+\ka_2 t_2=1\}, 
$$
with $\ka_1,\ka_2>0.$ By permuting the  coordinates $x_1$ and $x_2,$ if necessary, we shall always assume that $\ka_1\le\ka_2.$ The  weight $\ka=(\ka_1,\ka_2)$ will be called the {\it principal weight} associated to $\phi.$  It induces dilations $\de_r\x:=(r^{\ka_1}x_1,r^{\ka_2} x_2),\ r>0,$ on $\RR^2,$ so that the principal part $\phi_\pr$ of $\phi$ is $\ka$- homogeneous of degree one with respect to these dilations, i.e.,  $\phi_\pr(\de_r\x)=r\phi_\pr\x$ for every $r>0,$ and 
\begin{equation}\label{dnew}
d=\frac 1{\ka_1+\ka_2}=\frac1{|\ka|}.
\end{equation}

More generally,  if $\ka=(\ka_1,\ka_2)$ is any weight with $0<\ka_1\le \ka_2$ such that  the line  $L_\ka:=\{(t_1,t_2)\in\bR^2:\ka_1 t_1+\ka_2t_2=1\}$ is a supporting  line to the Newton polyhedron $\N(\phi) $ of $\phi,$ then  the {\it $\ka$-principal part} of $\phi$
$$
\phi_\ka(x_1,x_2):=\sum_{(\al_1,\al_2)\in L_\ka} c_{\al_1,\al_2} x_1^{\al_1} x_2^{\al_2}
$$
 is a non-trivial polynomial  which is $\ka$-homogeneous of degree $1$ with respect to the dilations associated to this weight as before. By definition, we then have
$$
\phi(x_1,x_2)=\phi_\ka(x_1,x_2) +\ \mbox{terms of higher $\ka$-degree}
$$

Adaptedness of a given coordinate system can be verified by means of the following criterion (see  \cite{IM-ada}): Denote  by

$$m(\phi_\pr):=\ord_{S^1} \phi_\pr$$
the maximal order of vanishing of $\phi_\pr$ along the unit circle $S^1$ centered at the
origin. The {\it homogeneous distance} of a $\ka$-homogeneous polynomial $P$ (such as $P=\phi_\pr$) is given by $
d_h(P):= 1/{(\ka_1+\ka_2)}=1/|\ka|.$ Notice that $(d_h(P),d_h(P))$ is just the point of intersection of the line given by $\ka_1t_1+\ka_2t_2=1$ with the bi-sectrix $t_1=t_2.$
The height of $P$ can the be computed by means of the formula
\begin{equation}\label{heightp}
h(P)=\max\{m(P), d_h(P)\}.
\end{equation}

 According to  \cite{IM-ada}, Corollary 4.3 and  Corollary 2.3, {\it the coordinates $x$ are adapted to $\phi$ if and only if one of the following conditions is satisfied:
\medskip

\bee
\item[(a)]  The principal face  $\pi(\phi)$ of the Newton polyhedron  is a compact edge, and $m(\phi_\pr)\le d(\phi).$
\item[(b)] $\pi(\phi)$  is a vertex.
\item[(c)] $\pi(\phi)$ is an unbounded edge.
\ee}
\medskip

We like to mention that in case (a) we have $h(\phi)=h(\phi_\pr)=d_h(\phi_\pr).$  Notice also that (a) applies whenever $\pi(\phi)$ is a compact edge  and $\ka_2/\ka_1\notin\NN;$   in this case we even have  $m(\phi_\pr)< d(\phi)$ (cf. \cite{IM-ada}, Corollary 2.3).

\medskip
 
In the case where  the coordinates $\x$ are not adapted to $\phi,$ we see that the principal face $\pi(\phi)$ is a  compact edge lying on a unique  line 
 $$
 L=\{(t_1,t_2)\in \RR^2:\ka_1t_1+\ka_2 t_2=1\}, 
$$
and  that $m:=\ka_2/\ka_1\in\NN.$
 Now, if $\ka_2/\ka_1=1,$ then a linear change of coordinates of the form $y_1=x_1, y_2=x_2-b_1x_1$ will transform $\phi$ into a function $\tilde\phi$ for which, by our assumption, the coordinates $(y_1,y_2)$ are still not adapted (cf. \cite{IM-ada}). Replacing $\phi$ by $\tilde\phi,$ it is also immediate that estimate \eqref{rest1} will hold for the graph of $\phi$ if and only if it holds for the graph of $\tilde\phi.$ Replacing $\phi$ by $\tilde\phi,$ we may and shall therefore always assume that  our original coordinate system $(x_1,x_2)$ is chosen so that 
\begin{equation}\label{m1}
m=\ka_2/\ka_1\in\NN\qquad\mbox{and} \ m\ge 2.
\end{equation}
Such a linear coordinate system will be called {\it linearly adapted} to $\phi$ (see Section \ref{normalforms} for a more comprehensive discussion of this notion).

Then, by Theorem 5.1 in \cite{IM-ada}, there exists a smooth real-valued function $\psi$  (which we may choose as the so-called principal root jet of $\phi$) of the form
\begin{equation}\label{prjet}
\psi(x_1)=cx_1^{m}+O(x_1^{m+1})
\end{equation}
 with $c\ne 0$ defined on a neighborhood of the origin  such that an adapted  coordinate system $(y_1,y_2)$ for $\phi$ is given locally near the origin by means of the (in general non-linear) shear
\begin{equation}\label{adaptco}
y_1:= x_1, \ y_2:= x_2-\psi(x_1).
\end{equation}

  In these coordinates, $\phi$ is given by
\begin{equation}\label{phia1}
 \phi^a(y):=\phi(y_1,y_2+\psi(y_1)).
\end{equation}

We remark that such an adapted coordinate system can be constructed by means of an algorithm which goes back  Varchenko \cite {Va} in the case of real-analytic  $\phi$ (see  \cite{IM-ada}). 
\medskip


\begin{figure}
\centering
\vskip-5cm
\scalebox{0.35}{\input{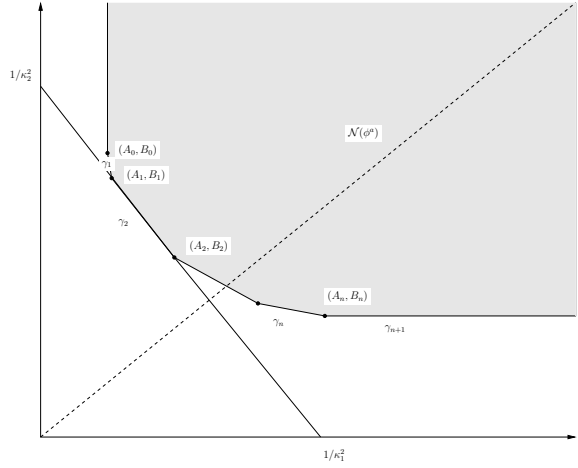}}
\caption{Edges and weights}{\label{fig2}}
\end{figure}

Let us then denote the vertices of the Newton polyhedron $\N(\pad)$ by 
 $(A_l,B_l), \  l=0,\dots,n,$ where  we assume that they are ordered so that $A_{l-1}< A_{l},\ l=1,\dots,n,$ with associated compact edges given by the intervals
  $\ga_l:=[(A_{l-1},B_{l-1}), (A_l,B_l)], l=1,\dots,n.$ The unbounded horizontal edge with left endpoint $(A_n,B_n)$ will be denoted by  $\ga_{n+1}.$ To each of these edges $\ga_l,$ we associate the weight
$\ka^l=(\ka^l_1,\ka^l_2),$ so that $\ga_l$ is  contained in  the line
$$L_l:=\{(t_1,t_2)\in \bR^2:\ka^l_1t_1+\ka^l_2 t_2=1\}.$$ 
For $l=n+1,$ we have $\ka_1^{n+1}:=0,\ka_2^{n+1}=1/B_{n}.$ We denote by 
$$a_l:=\frac {\ka^l_2}{\ka^l_1}, \quad l=1,\dots, n$$
the  reciprocal of the slope of the line $L_l.$ For $l=n+1,$ we formally set $a_{n+1}:=\infty$.

If $l\le n,$  the $\ka^l$-principal part $\phi_{\ka^l }$ of $\phi$ corresponding to the supporting line $L_l$  is of the form
\begin{equation}\label{normalhom}
\phi_{\ka^l }(x)=c_l\, x_1^{A_{l-1}} x_2^{B_l}\prod_\al \Big(x_2-c^\al_l x_1^{a_{l}}\Big)^{N_\al}
\end{equation}
(cf. \cite{IKM-max}).
In view of this identity, we shall say that the edge $\ga_l:=[(A_{l-1},B_{l-1}) ,(A_l,B_l)]$ is
associated to the cluster of roots $[l].$ 

Consider the line parallel to the bi-sectrix
$$
\Delta^{(m)}:=\{(t,t+m+1):t\in\RR\}.
$$
For any edge $\ga_l\subset L_l:=\{(t_1,t_2)\in \bR^2:\ka^l_1t_1+\ka^l_2 t_2=1\}$  define $h_l$ by
$$
\Delta^{(m)}\cap L_l=\{(h_l-m, h_l+1)\},
$$ i.e.,
\begin{equation}\label{hl}
h_l=\frac {1+m\ka^l_1-\ka^l_2}{\ka^l_1+\ka^l_2},
\end{equation}
and define the {\it restriction height}, or short, {\it $r$-height,} of $\phi$ 
\color{black} by 
$$
h^r(\phi):= \max(d, \max\limits_{\{l=1,\dots, n+1:a_l>m\}} h_l).
$$

\begin{remarks}\label{r1}
 \begin{itemize}
\item[(a)] For $L$ in place of $L_l$ and $\ka$ in place of $\ka^l,$  one has $m=\ka_2/\ka_1$ and $d=1/(\ka_1+\ka_2),$ so that one gets $d$ in place of $h_l$ in \eqref{hl}
\item[(b)]  Since $m< a_l,$ we have $h_l<1/(\ka^l_1+\ka^l_2),$ hence $h^r(\phi)<h(\phi).$ On the other hand, since the line $\Delta^{(m)}$ lies above the bi-sectrix, it is obvious that $h^r(\phi)+1\ge h(\phi),$ so that 
\begin{equation}\label{hrh}
h(\phi)-1\le h^r(\phi)<h(\phi).
\end{equation}
\end{itemize}
\end{remarks}

It is easy to see by Remark \ref{r1} (a)  that the  $r$-height admits the following {\it geometric interpretation:}
\medskip

By following Varchenko's algorithm (cf. Subsection 8.2 of \cite{IKM-max}), one realizes that  the Newton polyhedron of $\pad$ intersects the line $L$ of the Newton polyhedron of $\phi$ in a compact face,  either in a single vertex, or a compact edge. I.e., the intersection contains at least one and at most two vertices of $\pad,$ and we choose  $(A_{l_0-1},B_{l_0-1})$ as the one with smallest second coordinate.  Then $l_0$ is the smallest index $l$ such that $\ga_l$ has a slope smaller than the slope of $L,$  i.e., 
 $
a_{l_0-1}\le m<a_{l_0}
$
We may thus consider  the ``augmented'' Newton polyhedron $\N^r(\pad)$ of $\pad,$ which is the convex hull of the union of $\N(\pad)$ with  the half-line $L^+\subset L$ with right endpoint $(A_{l_0-1},B_{l_0-1}).$ Then $h^r(\phi)+1$ is the second coordinate of the  point at which the line $\Delta^{(m)}$ intersects the boundary of $\N^r(\pad).$

\begin{figure}
\centering
\scalebox{0.35}{\input{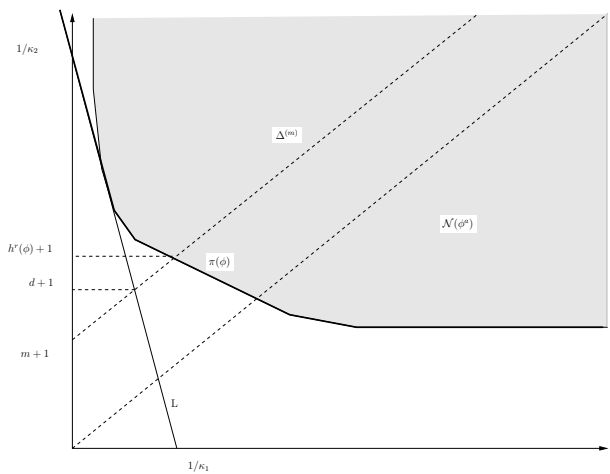}}
\caption{r-height}{\label{fig1}}
\end{figure}

 \begin{thm}\label{nonadarestrictanal}
Let $\phi\ne 0$ be real analytic, and assume that there is no linear  coordinate system  adapted to $\phi.$  Then there exists a neighborhood $U\subset S $ of  $x^0=0$ such
that for every  non-negative density $\rho\in C_0^\infty(U),$ the Fourier restriction estimate \eqref{rest1} holds true for every $p\ge 1$ such that 
$p'\ge p'_c:=2h^r(\phi)+2.$
\end{thm}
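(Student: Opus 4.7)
The plan is to localize $\mu$ around the principal root curve $\Gamma := \{x_2 = \psi(x_1)\}$, dyadically decompose on either side, and apply Theorem \ref{adarestrict} (suitably rescaled, and possibly iterated) to each dyadic piece. The critical exponent $p'_c = 2h^r(\phi)+2$ should emerge as the threshold at which the dyadic restriction bounds are summable.

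Fix $\ve>0$ and split $\rho=\rho_I+\rho_{II}$, with $\rho_{II}$ supported in the quasi-homogeneous strip $|x_2-\psi(x_1)|\le\ve|x_1|^m$ and $\rho_I$ in its complement. For $\rho_I$, I decompose dyadically with respect to the principal weight $\ka$, into pieces $\rho_{I,k}$ supported where $|x_1|\sim 2^{-k\ka_1}$ and $|x_2|\sim 2^{-k\ka_2}$. Since $\rho_{I,k}$ lives away from the real roots of $\phi_{\pr}$ (in particular from $\Gamma$), the natural $\ka$-rescaling maps the piece to a fixed reference region on which $\phi$ is well-approximated by its principal part $\phi_{\pr}$; the rescaled surface has adapted coordinates of height $d_h(\phi_{\pr})=d$, so Theorem \ref{adarestrict} produces a restriction estimate that is uniform in $k$ and, after summation, yields \eqref{rest1} for $p'\ge 2d+2$.

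For $\rho_{II}$, switch to the adapted coordinates $y_1=x_1$, $y_2=x_2-\psi(x_1)$; the support now sits in the strip $|y_2|\le\ve|y_1|^m$. I dyadically decompose $\rho_{II}=\sum_{l,k}\rho_{II,l,k}$ along the edges $\ga_l$ of $\N(\pad)$, with $\rho_{II,l,k}$ supported where $|y_1|^{\ka^l_1}\sim |y_2|^{\ka^l_2}\sim 2^{-k}$, i.e.\ $|y_2|\sim |y_1|^{a_l}$. The strip constraint $|y_2|\le\ve|y_1|^m$ forces $a_l\ge m$; the transitional case $a_l=m$ is naturally absorbed into the $\rho_I$-analysis, so only edges with $a_l>m$ genuinely contribute --- this is exactly the geometric picture behind the augmented polyhedron $\N^r(\pad)$, in which the edges with $a_l\le m$ are replaced by the half-line $L^+$. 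After $\ka^l$-rescaling, each piece is modelled on $\pad_{\ka^l}$, but embedded in $\RR^3$ with the vertical coordinate $x_2=y_2+\psi(y_1)\sim y_1^m$ (since $a_l>m$) stretched relative to $y_2\sim y_1^{a_l}$. A Jacobian count for the surface parametrization $(y_1,y_2)\mapsto(y_1,\,y_2+\psi(y_1),\,\pad(y_1,y_2))$ shows that the effective restriction exponent on the rescaled piece is governed by $h_l$ from \eqref{hl} --- the ordinate of $L_l\cap\Delta^{(m)}$ minus one --- rather than by $d_h(\pad_{\ka^l})$. Applying Theorem \ref{adarestrict} piecewise and summing yields \eqref{rest1} for $p'\ge 2\max_{l:a_l>m}h_l+2$. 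Combining both ranges produces \eqref{rest1} in the desired range $p'\ge 2h^r(\phi)+2$.

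The main technical obstacle is that on many dyadic pieces the rescaled coordinates fail to be adapted to $\pad_{\ka^l}$, precisely when this polynomial carries a root $y_2=c\,y_1^{a_l}$ of multiplicity exceeding $d_h(\pad_{\ka^l})$. In that event the construction must be iterated: perform a further non-linear shear on the piece, compute the Newton polyhedron of the resulting principal part, and recursively decompose. Controlling this recursion so that the final exponent is $h^r(\phi)$ --- and not a larger quantity --- is the heart of the argument, and it is here that real-analyticity enters via Varchenko's algorithm to guarantee termination in finitely many steps; this is exactly why the smooth case is deferred to Part~II. A secondary difficulty is that the $(l,k)$-summation at the endpoint $p'=p'_c$ cannot be performed by a crude $\ell^1$-bound, and instead requires either an $\ell^2$-orthogonality argument on the Fourier side or an interpolation between the critical Stein--Tomas-type bound of Theorem \ref{adarestrict} and a trivial $L^1\to L^2$ bound.
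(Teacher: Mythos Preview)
Your overall architecture --- localize away from and near the principal root jet, then dyadically decompose along the edges of $\N(\pad)$ --- matches the paper's strategy (Sections~\ref{outside}--\ref{nearrootjet}). But the central step, ``apply Theorem~\ref{adarestrict} to each rescaled piece'', does not work as stated, and this is a genuine gap rather than a technicality.

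The issue is that after the $\ka^l$-rescaling you describe on a piece where $|y_2|\sim|y_1|^{a_l}$ with $a_l>m$, the surface is (up to $o(1)$) parametrized by
\[
(y_1,y_2)\;\longmapsto\;\bigl(y_1,\;\de_k y_2 + y_1^m\om(0),\;\pad_{\ka^l}(y_1,y_2)\bigr),\qquad y_1\sim1,\;|y_2|\sim1,
\]
with $\de_k=2^{(m\ka^l_1-\ka^l_2)k}\to0$. In the limit the second coordinate depends only on $y_1$, so the piece is \emph{not} a non-degenerate graph $x_3=\Phi(x_1,x_2)$ to which Theorem~\ref{adarestrict} applies; it is a one-parameter family of curves in $\RR^3$. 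The paper confronts exactly this degeneracy and bypasses Theorem~\ref{adarestrict} altogether on these pieces: for the transition domains $E_l$ it uses Greenleaf's one-curvature restriction theorem directly (Proposition~\ref{inter1}); for the homogeneous domains $D_l$ with $l<l_\pr$ it uses van~der~Corput and sublevel estimates (Proposition~\ref{s7.1}); and for $D_\pr$ it needs, among other things, Drury's restriction theorem for curves with non-vanishing torsion (Remark~\ref{tors}, Section~\ref{nearrootjet}). Your ``Jacobian count'' remark hints at the correct scaling identity $h_l+1=\tfrac{m+1+1/\ka^l_1}{1+a_l}$ (cf.~\eqref{hl+1}), but that identity only tells you how the rescaling transforms norms; it does not produce a restriction estimate on the degenerate limit surface.

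A secondary point: your diagnosis that ``real-analyticity enters via Varchenko's algorithm to guarantee termination; this is exactly why the smooth case is deferred to Part~II'' is not what happens. The paper proves Theorem~\ref{nonadarestrictanal} as the special case of Theorem~\ref{nonadarestrict} (smooth $\phi$ under Condition~(R), automatically satisfied in the analytic case). What is deferred to Part~II is the range $2\le \hl(\phi)<5$ of the domain $D_\pr$, because there Drury's theorem (which needs $p'_c\ge12$) is unavailable and substantially finer oscillatory-integral analysis is required. The recursion you allude to (Section~\ref{nearrootjet}) may not terminate even for analytic $\phi$; the paper handles the non-terminating case via Condition~(R) and a Borel-type argument, not via analyticity.
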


\begin{remarks}\label{r2}
 \begin{itemize}
\item[(a)] An application of Greenleaf's result would  imply, at best,  that the condition  $p'\ge  2h(\phi)+2$ is sufficient for \eqref{rest1} to hold, which is a strictly stronger condition than  $p'\ge  p_c'.$  
\item[(b)] A. Seeger recently informed us  that in a preprint, which  regretfully had remained unpublished,   Schulz \cite{schulz-un} had already observed this  kind of phenomenon for particular examples  of surfaces of revolution.
\item[(c)] It can be shown that the number $m$ is well-defined, i.e., it does not depend on the chosen linearly adapted  coordinate system $x$ (cf. Proposition \ref{linada}).
\color{black}
 \end{itemize}
\end{remarks}

\color{black}

\begin{example}\label{ex1}
{\rm $$\phi\x:=(x_2-x_1^{m})^n, \qquad n,m\ge 2.$$
The coordinates $\x$ are not adapted. Adapted coordinates are 
$y_1:=x_1, y_2:=x_2-x_1^m,$ in which $\phi$ is given by
 $$\phi^a(y_1,y_2)=y_2^n.$$
Here 
 \begin{eqnarray*}
&&\ka_1=\frac 1{mn}, \quad \ka_2=\frac 1n,\\
&&d:=d(\phi)= \frac 1{\ka_1+\ka_2}=\frac {nm}{m+1},
\end{eqnarray*}
and 
$$ p'_c=\left\{  \begin{array}{cc}
2d+2, & \mbox{   if } n\le m+1 ,\hfill\\ 
2n, & \mbox{   if } n>m+1\ .\hfill 
\end{array}\right.
$$
On the other hand, $h:=h(\phi)=n,$ so that $2h+2=2n+2>p'_c.$ }
\end{example}

\smallskip

\subsection{Finite type hypersurfaces, condition (R), and an invariant  description of the notion of $r$-height. }

An analogous theorem holds true even for smooth, finite type functions $\phi,$ under an  additional condition which, however, is always satisfied when $\phi$ is real-analytic. To state this more general result, and in order to prepare a more invariant description of the notion of $r$-height, we need to introduce  more notation. Again, we shall assume that the coordinates $\x$ are linearly adapted to $\phi.$
\medskip

{\bf Definitions.} 
Denote by $\RR_{\pm}:=\{x_1\in\RR: \pm x_1>0\}$ and by $H^\pm:=\RR_{\pm}\times \RR$ the corresponding right,  respectively left half-plane.  

We say that a function $f=f(x_1)$ defined in $U\cap\RR_+$  (respectively $U\cap \RR_-$), where $U$ is an open neighborhood of the origin, is  {\it fractionally smooth}, if there exist  a smooth function $g$ on $U$ and a positive integer $q$ such that $f(x_1)=g(|x_1|^{1/q})$ for $x_1\in U\cap \RR_+$ (respectively $x_1\in U\cap\RR_-$). Moreover, we shall  say that a fractionally smooth function $f$  is {\it flat}, if $f(x_1)=O(|x_1|^N) $ for every $N\in\NN.$ Two smooth functions $f$ and $g$ defined on a neighborhood of the origin will be called {\it equivalent},  and we shall write $f\sim g,$ if $f-g$ is flat.  Finally, a {\it fractional shear} in $H^\pm$ will be a change of coordinates of the form
$$
y_1:=x_1, \ y_2:=x_2-f(x_1),
$$
where $f$ is  real-valued and fractionally smooth, but not flat.  If we express the smooth function $\phi$ on, say, the half-plane $H^+,$ as a function of $y=\y,$ the resulting function 
$$\phi^f(y):=\phi(y_1,y_2+f(y_1))$$
 will in general  no longer be smooth at the origin, but ``fractionally smooth''.

For such functions, there are straight-forward generalizations of the notions of Newton-polyhedron, etc.. Namely, following
 \cite{IKM-max}, and assuming without loss of generality that we are in $H^+$ where $x_1>0,$ 
let  $\phi$ be a  function of the variables $x_1^{1/q}$ and $x_2$ near the origin, i.e., there exists a smooth function $\phi^{[q]}$ near the origin such that $\phi(x)=\phi^{[q]}(x_1^{1/q},x_2)$ (more generally, we could  assume that $\phi$ is a smooth function of the variables $x_1^{1/q}$ and $x_2^{1/p},$ where $p$ and $q$ are positive integers, but we won't need this generality here). Such functions $\phi$ will also be called {\it fractionally smooth.}  If the Taylor series of $\phi^{[q]}$ is given by 
$$
\phi^{[q]}\x\sim \sum_{\al_1,\al_2=0}^\infty c_{\al_1,\al_2}x_1^{\al_1}x_2^{\al_2},
$$
then $\phi$ has the formal Puiseux series expansion
$$
\phi\x\sim \sum_{\al_1,\al_2=0}^\infty c_{\al_1,\al_2}x_1^{ {\al_1}/q}x_2^{\al_2}.
$$
We therefore define the {\it Taylor-Puiseux support}, or shorter,  {\it Taylor-support} of $\phi$ by
$$
\T(\phi):=\{(\tfrac {\al_1}q,\al_2)\in \NN_q^2: c_{\al_1,\al_2}\ne 0\},
$$
where $\NN_q^2:=(\tfrac 1q \NN)\times\NN.$
The {\it Newton-Puiseux  polyhedron} (shorter: {\it Newton polyhedron})  $\N(\phi)$ of $\phi$ at the origin is then defined to be the convex hull of the union of all the quadrants $(\al_1/q,\al_2)+\bR^2_+$ in $\bR^2,$ with $(\al_1/q,\al_2)\in\T(\phi),$ and  other notions, such as the notion of principal face, Newton distance or homogenous distance, are defined in analogy with our previous definitions  for smooth functions $\phi.$ 

Now, if $f(x_1)$ has the formal Puiseux series expansion (say for $x_1>0$)
$$
f(x_1)\sim \sum_{j\ge 0} c_{j} x_1^{m_j},
$$
with non-zero coefficients $c_j$  and exponents $m_j$ which are growing with $j$ and are all multiples of $1/q,$ we isolate the leading exponent $m_0$ and choose the weight $\ka^f$ so that $\ka^f_2/\ka^f_1=m_0$ and such  that the line 
$$L^f:=\{(t_1,t_2)\in \bR^2:\ka^f_1t_1+\ka^f_2 t_2=1\}$$
is a supporting line to $\N(\phi^f).$ We can then define the augmented Newton polyhedron $\N^r(\phi^f)$ in the same way as we defined $\N^r(\pad),$ replacing the exponent $m$ by $m_0$ and the line $L$ by $L^f,$  and define, in analogy with $h^r(\phi),$ the $r$-height  $h^f(\phi)$ associated to $f$ by requiring that $h^f(\phi)+1$ is the second coordinate of the  point at which the line $\Delta^{(m_0)}$ intersects the boundary of $\N^r(\phi^f).$ Again, it is easy to see that 
\begin{equation}\label{hf}
h^f(\phi)= \max(d^f, \max\limits_{\{l:a_l>m_0\}} h^f_l),
\end{equation}
where $(d^f,d^f)$ is the point of intersection of the line $L^f$ with the bi-sectrix,  and where $h^f_l$ is associated to the edge $\ga_l$ of $\N(\phi^f)$ by the analogue of formula \eqref{hl}, i.e., 
\begin{equation}\label{hl2}
h^f_l=\frac {1+m_0\ka^l_1-\ka^l_2}{\ka^l_1+\ka^l_2},
\end{equation}
if $\ga_l$ is again contained in the line $L_l$ defined by the weight $\ka^l.$

\smallskip
Finally, let us say that a fractionally smooth function $f(x_1)$ {\it agrees with the principal root jet $\psi(x_1)$ up to terms of higher order,} if the following holds: if $\psi$ is not   a polynomial,  then $f\sim \psi,$ and if $\psi$ is  polynomial of degree $D,$ then the leading exponent in the formal Puiseux expansion of $f-\psi$ is strictly bigger than $D.$ 
\medskip

We can now formulate the condition that we need when $\phi$ is non-analytic.
\medskip

{\bf Condition (R).}  For every  fractionally smooth, real  function  $f(x_1)$  which  agrees with the principal root jet $\psi(x_1)$ up to terms of higher order, the following holds true: 

If $B\in\NN$ is maximal such that  $\N(\phi^f)\subset \{(t_1,t_2): t_2\ge B\},$ then $\phi$ factors as 
$\phi\x=(x_2-\tilde f (x_1))^B\tilde\phi\x,$  where $\tilde f\sim  f$ and where $\tilde\phi$ is fractionally smooth.

\medskip
Clearly, Condition (R) is satisfied if $\phi$ is real-analytic. 

\begin{thm}\label{nonadarestrict}
Let $\phi$ be smooth and  of finite type, and assume that the coordinates $\x$ are linearly adapted to $\phi,$ but not adapted, and   that  Condition (R) is satisfied.

Then there exists a neighborhood $U\subset S $ of  $x^0=0$ such
that for every  non-negative density $\rho\in C_0^\infty(U),$ the Fourier restriction estimate \eqref{rest1} holds true for every $p\ge 1$ such that 
$p'\ge p'_c:=2h^r(\phi)+2.$
\end{thm}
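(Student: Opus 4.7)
The approach follows the general blueprint of \cite{IM-uniform} and \cite{IKM-max}: perform a dyadic decomposition in adapted coordinates, rescale each piece according to the edge of the Newton polyhedron it corresponds to, apply either Theorem \ref{adarestrict} or a direct Stein--Tomas type estimate on each rescaled piece, and sum. Fix the half-plane $H^+$ (the case $H^-$ being symmetric), change variables by the shear \eqref{adaptco} so that $\pad(y_1,y_2)=\phi(y_1,y_2+\psi(y_1))$, and decompose the support of $\rho$ dyadically according to the edges $\ga_l$ of $\N(\pad)$, together with a transition region around the curve $\{y_2=0\}$.

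On each edge region $D_l$ corresponding to a slope satisfying $a_l>m$, I would rescale via $\de^{\ka^l}_{2^{-j}}$, so that the rescaled function is a small perturbation of the $\ka^l$-principal polynomial $\pad_{\ka^l}$ given in \eqref{normalhom}. A Littlewood--Paley decomposition in frequency combined with Greenleaf-type $L^2$ bounds for the Fourier transform of the surface measure (using van der Corput exponents tied to $h_l$) gives a restriction estimate for each rescaled piece, and summing the estimates over $j\ge 0$ produces a geometric series whose convergence requires exactly $p'\ge 2h_l+2$. The contribution of the $\ka$-principal region on the line $L$ yields, by the same mechanism, the bound $p'\ge 2d+2$. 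Taken together these contributions produce the $\max$ in the definition of $h^r(\phi)$; crucially, the edges with $a_l\le m$ do \emph{not} enter this maximum, so they must be handled separately.

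The main obstacle is precisely the treatment of the remaining transition region clustering near $y_2=0$, which in the original $\x$-coordinates absorbs the contributions of all edges with $a_l\le m$ together with those coming from the root cluster of $\pad$ along the $y_1$-axis. Here one has to iterate the scheme: refine $\psi$ to a fractionally smooth approximation $f$ of strictly higher Puiseux order, pass to the further shear $y_2':=y_2-(f-\psi)(y_1)$, and re-apply the decomposition to the resulting fractionally smooth function $\phi^f$, whose $r$-height $h^f(\phi)$ is defined in \eqref{hf}. This is exactly where Condition (R) becomes essential: it guarantees that after stripping off the vanishing factor $(x_2-\tilde f(x_1))^B$, the residual $\tilde\phi$ is itself fractionally smooth and therefore accessible to the next round of rescaling and Newton-polyhedron analysis. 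Without (R), the residual could carry flat oscillatory contributions that destroy the estimate, which is precisely why the statement restricts to the real-analytic case (Theorem~\ref{nonadarestrictanal}) or to smooth $\phi$ satisfying~(R). The principal technical labor will be to show that this iteration terminates after finitely many Puiseux refinements, that at every stage $h^f(\phi)\le h^r(\phi)$, and that the total sum over dyadic scales and over all iteration stages converges precisely when $p'\ge 2h^r(\phi)+2$.
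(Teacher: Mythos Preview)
The proposal captures the broad architecture—dyadic edge decomposition in adapted coordinates, rescaling, summation—and correctly identifies Condition~(R) as what makes the iteration go. But several substantive pieces are missing or misdescribed, and one is a genuine gap.

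First, a geometric point is reversed: in adapted coordinates the regions associated to edges with $a_l\le m$ lie \emph{farther} from $y_2=0$, not nearer; they are absorbed into the piece of $S$ \emph{away} from the principal root jet, where the $\ka$-principal part of $\phi$ in the original linear coordinates dominates and the estimate already follows from $p'\ge 2d+2$ (Proposition~\ref{awayjet}). The genuinely hard region near $y_2=0$ is the principal-face domain $D_\pr$.

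Second, and this is the real gap: the sentence ``Greenleaf-type $L^2$ bounds \dots\ using van der Corput exponents tied to $h_l$'' hides the central obstruction. On a homogeneous domain $D_l$ the Fourier decay of the rescaled measure is governed by the multiplicity $B$ of a real root of $\pad_{\ka^l}$, not by $h_l$; the Greenleaf/van der Corput argument closes only because $B<d/2$ (inequality~\eqref{7.7}), and this bound \emph{fails} on $D_\pr$ in general (Example~\ref{ex2}). The paper's remedy there is Drury's restriction theorem for curves of non-vanishing torsion (Remark~\ref{tors}), which however needs $\hl(\phi)\ge 5$; the range $2\le\hl(\phi)<5$ is deferred entirely to Part~II with substantially finer tools. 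Your outline does not contain any mechanism that would survive $B\ge d/2$.

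Third, the case $\hl(\phi)<2$ is structurally different from what you describe: the paper classifies $\phi$ into normal forms of type~A and~D (Proposition~\ref{normalform1}) and runs a delicate frequency-side Littlewood--Paley analysis, including an Airy-cone decomposition (Section~\ref{propm2_A}), rather than a Newton-edge decomposition in adapted coordinates.

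Finally, the iteration near $y_2=0$ need not terminate after finitely many Puiseux refinements; the paper handles the non-terminating branch via Borel's theorem to produce a limiting $\psi^{(\infty)}$ and then invokes Condition~(R) for the factorisation (end of Section~\ref{nearrootjet}).
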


This theorem is sharp in the following sense:
\begin{thm}\label{nec}
Let $\phi$ be smooth of finite type, and assume that the Fourier restriction estimate \eqref{rest1}  holds true in a neighborhood of $x^0.$ Then, if $\rho(x^0)\ne 0,$ necessarily $p'\ge p_c'.$
\end{thm}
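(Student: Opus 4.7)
The plan is to prove sharpness via Knapp-type examples, dualizing \eqref{rest1} to the equivalent extension estimate $\|\widehat{g\,d\mu}\|_{L^{p'}(\RR^3)}\le C_p\|g\|_{L^2(d\mu)}$. For each bound we wish to establish, we produce a one-parameter family of test functions $g_r=\mathbf 1_{K_r}$ supported on carefully chosen ``Knapp caps'' $K_r\subset S$, whose Fourier extensions stay essentially constant on large dual boxes. Comparing the powers of $r$ on both sides as $r\to\infty$ forces $p'$ to be at least the required threshold. The hypothesis $\rho(x^0)\ne 0$ lets us treat $\rho$ as a positive constant on a sufficiently small neighbourhood of $x^0$, so cap masses are comparable to Lebesgue measure of their projections to the $(x_1,x_2)$-plane.

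\emph{Step 1: $p'\ge 2d+2$.} Work in the given coordinates $\x$ with the principal weight $\ka=(\ka_1,\ka_2)$ of $\phi$. Define the cap
\[
K_r:=\{(x_1,x_2,\phi(\x)):|x_j|\le\ve r^{-\ka_j},\ j=1,2\}.
\]
By $\ka$-homogeneity of $\phi_\ka$ and the strictly higher $\ka$-degree of the remaining Taylor terms, $|\phi(\x)|\le Cr^{-1}$ on $K_r$, so $K_r$ lies in a Euclidean box of size $r^{-\ka_1}\times r^{-\ka_2}\times r^{-1}$, with $|K_r|_\mu\sim r^{-(\ka_1+\ka_2)}$. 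For $\xi$ in the dual box $R_r^{\ast}=[-cr^{\ka_1},cr^{\ka_1}]\times[-cr^{\ka_2},cr^{\ka_2}]\times[-cr,cr]$ the phase $x\cdot\xi$ is uniformly $O(1)$, yielding the standard Knapp lower bound $|\widehat{g_r\,d\mu}(\xi)|\gtrsim|K_r|_\mu$. Substituting into the extension inequality and using $\|g_r\|_{L^2(d\mu)}\sim r^{-(\ka_1+\ka_2)/2}$ and $|R_r^{\ast}|\sim r^{\ka_1+\ka_2+1}$ gives, as $r\to\infty$, $p'\ge 2(\ka_1+\ka_2+1)/(\ka_1+\ka_2)=2d+2$ by \eqref{dnew}.

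\emph{Step 2: $p'\ge 2h_l+2$ for every edge $\ga_l$ of $\N(\pad)$ with $a_l>m$.} This is the geometrically delicate step. Pass to the adapted coordinates $y=(x_1,x_2-\psi(x_1))$, parametrise $S$ by $y\mapsto(y_1,y_2+\psi(y_1),\pad(y))$, and for the weight $\ka^l$ of $\ga_l$ set
\[
K_r:=\{(y_1,y_2+\psi(y_1),\pad(y)):|y_j|\le\ve r^{-\ka^l_j},\ j=1,2\}.
\]
The $\ka^l$-homogeneity of $\pad_{\ka^l}$ together with the (fractional) Newton--Puiseux gap to higher-degree terms gives $|\pad(y)|\le Cr^{-1}$ on $K_r$; the Jacobian of the shear is one, so $|K_r|_\mu\sim r^{-(\ka^l_1+\ka^l_2)}$. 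The new feature is the dual box: to keep the phase
\[
y_1\xi_1+(y_2+\psi(y_1))\xi_2+\pad(y)\xi_3
\]
of size $O(1)$ on $K_r$, the shear term $\psi(y_1)\xi_2\sim|y_1|^m\xi_2$ forces $|\xi_2|\le cr^{m\ka^l_1}$, which, because $a_l=\ka^l_2/\ka^l_1>m$, is strictly stronger than the naive bound $|\xi_2|\le cr^{\ka^l_2}$ from $|y_2\xi_2|=O(1)$. The dual box is therefore $R_r^{\ast}=[-cr^{\ka^l_1},cr^{\ka^l_1}]\times[-cr^{m\ka^l_1},cr^{m\ka^l_1}]\times[-cr,cr]$, of volume $\sim r^{\ka^l_1(m+1)+1}$. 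The Knapp computation then yields $p'\ge 2[\ka^l_1(m+1)+1]/(\ka^l_1+\ka^l_2)$, and a direct algebraic check using \eqref{hl} shows this equals $2h_l+2$.

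Taking the maximum over Steps 1 and 2 yields $p'\ge 2\max(d,\max_{\{l:a_l>m\}}h_l)+2=2h^r(\phi)+2=p_c'$. The principal obstacle is conceptual and sits in Step 2: the sharp Knapp example lives in the \emph{curved} adapted coordinates, not the linear coordinates $\x$, and the $\xi_2$-extent of its dual box is governed by the shear exponent $m$ rather than by $\ka^l_2$. This is precisely the geometric content encoded by the line $\Delta^{(m)}$ in the definition of $h^r(\phi)$, and it is what explains why the sharp restriction exponent is $2h^r(\phi)+2$ and not the larger $2h(\phi)+2$ that a Greenleaf-type argument would suggest.
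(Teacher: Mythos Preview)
Your approach is the paper's Knapp-type argument, only phrased on the extension side rather than the restriction side; the paper packages it as Proposition~\ref{necs.1}, which allows an arbitrary fractional shear $f$ and then specializes to $f=\psi$ to obtain Theorem~\ref{nec}. For Step~1 and for the compact edges in Step~2 your argument and the paper's coincide.

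There is, however, a genuine gap in Step~2 for the unbounded horizontal edge $\ga_{n+1}$, which has weight $\ka^{n+1}=(0,1/B_n)$ and $a_{n+1}=\infty>m$, and is therefore always one of the terms in the maximum defining $h^r(\phi)$. Your bound $|\pad(y)|\le Cr^{-1}$ on the cap $\{|y_1|\le\ve,\ |y_2|\le\ve r^{-1/B_n}\}$ does \emph{not} follow from ``$\ka^l$-homogeneity plus a Newton--Puiseux gap'': since $\ka^{n+1}_1=0$, the $y_1$-variable does not shrink as $r\to\infty$, and the Taylor-remainder argument that controls $\pad-\pad_{\ka^l}$ for compact edges breaks down. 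Concretely (cf.\ \eqref{nec5}), writing $\pad(y)=y_2^{B_n}h(y)+\sum_{j<B_n}y_2^{j}g_j(y_1)$ with each $g_j$ flat, the terms $g_j(y_1)$ are $O(1)$ on your cap, not $O(r^{-1})$. This edge genuinely matters: $h_{n+1}=B_n-1$ can realize the maximum, as in Example~\ref{ex1} with $n>m+1$, where $p_c'=2n$ and $\N(\pad)$ has \emph{no} compact edges at all, so your Steps~1--2 as written yield only $p'\ge 2d+2<p_c'$. The paper's remedy is to use instead the cap $\{|y_1|\le r^{-\de},\ |y_2|\le\ve r^{-1/B_n}\}$ for a small auxiliary $\de>0$; flatness of the $g_j$ then gives $|g_j(y_1)|\le C_N r^{-N\de}\le r^{-1}$ for $N$ large, the Knapp computation yields $p'\ge 2[(1+m)\de+1]/(\de+1/B_n)$, and letting $\de\to 0$ recovers $p'\ge 2B_n=2h_{n+1}+2$. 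With this amendment your proof is complete and essentially identical to the paper's.
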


Finally, we can also give a more invariant description of the notion of $r$-height, which  conceptually resembles more closely   Varchenko's definition of the notion of height, only that we restrict the admissible changes of coordinates to the class of fractional shears in the half-planes $H^+$ and $H^-.$  Assume again that the coordinates $\x$ are linearly adapted to $\phi,$ and let
\begin{equation}\label{clasheight}
\tilde h^r(\phi):=\sup_f h^f(\phi),
\end{equation}
where the supremum is taken over all non-flat fractionally smooth, real  functions  $f(x_1)$ of $x_1>0$ (corresponding to a fractional shear in $H^+$) or of $x_1<0$ (corresponding to a fractional shear in $H^-$). Then obviously
\begin{equation}\label{hrs}
h^r(\phi)\le \tilde h^r(\phi),
\end{equation}
but in fact there is equality:

\begin{proposition}\label{rheight}
 Assume that the coordinates $\x$ are linearly adapted to $\phi,$ where $\phi$ is smooth and of finite type and satisfies $\phi(0,0)=0,\, \nabla \phi(0,0)=0.$
  \begin{itemize}
\item[(a)] If the coordinates $\x$ are not adapted to $\phi,$ then for every non-flat fractionally smooth, real  function  $f(x_1)$ and the corresponding fractional shear in $H^+$ respectively  $H^-,$ we have $h^f(\phi)\le h^r(\phi).$  Consequently, $h^r(\phi)= \tilde h^r(\phi).$ 

\item[(b)] If the coordinates $\x$ are adapted to $\phi,$ then $ \tilde h^r(\phi)=d(\phi)=h(\phi).$ 
 \end{itemize}
In particular, the critical exponent for the restriction estimate \eqref{rest1} is in all cases  given by $p_c':=2 \tilde h^r(\phi)+2.$ 
\end{proposition}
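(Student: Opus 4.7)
For each non-flat fractional shear $y_2=x_2-f(x_1)$ with leading Puiseux exponent $m_0$, we aim to analyze the augmented Newton polyhedron $\N^r(\phi^f)$ and compute the second coordinate $h^f(\phi)+1$ at which the line $\Delta^{(m_0)}$ enters it. The main tool is Condition~(R) applied to $f=\psi$ itself, which yields a factorization
\[
\phi(x_1,x_2)=(x_2-\tilde\psi(x_1))^B\,\tilde\phi(x_1,x_2),\qquad \tilde\psi\sim\psi.
\]
Substituting $x_2=y_2+f(y_1)$ gives, for any shear $f$,
\[
\phi^f(y_1,y_2)=(y_2+g(y_1))^B\,\tilde\phi(y_1,y_2+f(y_1)),\qquad g:=f-\tilde\psi.
\]
By Condition~(R), $\tilde\phi(y_1,y_2+\tilde\psi(y_1))$ is not divisible by $y_2$, so $\N(\phi^f)$ can be read off from the leading Puiseux exponent of $g$ together with the known shape $\N(\pad)=\N(y_2^B\tilde\phi(y_1,y_2+\tilde\psi(y_1)))$.

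\noindent\textbf{Proof of (a).}
We distinguish whether $f$ agrees with $\psi$ up to terms of higher order.

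If so, then $g$ is flat (when $\psi$ is not polynomial, i.e.\ $f\sim\tilde\psi$) or is a Puiseux series whose leading exponent strictly exceeds $\deg\psi$ (when $\psi$ is polynomial). A direct computation confirms that in either sub-case $\N(\phi^f)$ coincides with $\N(\pad)$ on the lower-left boundary relevant for $\Delta^{(m_0)}=\Delta^{(m)}$, so $h^f(\phi)=h^r(\phi)$.

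If not, then $g$ has a finite leading Puiseux exponent $m'$, and the factor $(y_2+g(y_1))^B$ contributes $B+1$ collinear lattice points on a line of slope $-1/m'$ through $(0,B)$. Several sub-cases arise depending on the relation among $m_0$, $m$, and the coefficients of $f$ and $\tilde\psi$. In each one identifies the principal face of $\N(\phi^f)$, the supporting line $L^f$, and the vertex $(A_{l_0-1}^f,B_{l_0-1}^f)$ used to augment. A geometric comparison via \eqref{hl}--\eqref{hf}, using that $\Delta^{(m_0)}$ has slope $1$, then yields $h^f(\phi)\le h^r(\phi)$. The key monotonicity is that any deviation of $f$ from the principal root jet $\psi$ shifts the boundary of $\N^r(\phi^f)$ along $\Delta^{(m_0)}$ downward in $t_2$.

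\noindent\textbf{Proof of (b) and main obstacle.}
When $\x$ are adapted, the criterion (a)--(c) recalled from \cite{IM-ada} gives $h(\phi)=d(\phi)$. For any non-flat fractional shear $f$, the analysis of (a) applies with simplifications: the absence of a principal root jet to be peeled off makes $\phi^f$ essentially a shifted version of $\phi$ whose augmented polyhedron reaches no farther than $\N(\phi)$ along $\Delta^{(m_0)}$. One concludes $h^f(\phi)\le d(\phi)$, with equality attained or approached as $m_0\to\infty$. Hence $\tilde h^r(\phi)=d(\phi)=h(\phi)$, and combining (a) and (b) with Theorems~\ref{adarestrict}, \ref{nonadarestrict} and \ref{nec} gives $p_c'=2\tilde h^r(\phi)+2$. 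The principal technical difficulty is the combinatorial analysis of $\N(\phi^f)$ for arbitrary fractional $f$: identifying $L^f$, the vertex $(A_{l_0-1}^f,B_{l_0-1}^f)$, and the subsequent edges contributing via \eqref{hf}, particularly when $\tilde\phi(y_1,y_2+f(y_1))$ introduces new vertices into $\N(\phi^f)$ not present in $\N(\phi)$, or when $m_0$ coincides with the slope of some non-principal edge $\gamma_l$.
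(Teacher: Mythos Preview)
Your approach is genuinely different from the paper's and, as written, contains real gaps.

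\medskip

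\textbf{What the paper does.} The paper gives an intentionally \emph{indirect} proof. For (a), in the analytic case it combines the \emph{sufficient} restriction theorem (Theorem~\ref{nonadarestrictanal}: the restriction estimate holds for $p'\ge 2h^r(\phi)+2$) with the \emph{necessary} Knapp-type condition (Proposition~\ref{necs.1}: the restriction estimate forces $p'\ge 2h^f(\phi)+2$). Plugging $p'=2h^r(\phi)+2$ into the necessary condition immediately gives $h^f(\phi)\le h^r(\phi)$. The smooth, finite-type case is reduced to this by passing to a high-degree Taylor polynomial $\phi_N$, for which $h^r(\phi_N)=h^r(\phi)$ and $h^f(\phi_N)=h^f(\phi)$. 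Part (b) is handled the same way, using Theorem~\ref{adarestrict} in place of Theorem~\ref{nonadarestrictanal} for the upper bound $h^f(\phi)\le d(\phi)$, and explicit test shears $f(x_1)=x_1^{m_0}$ (respectively $f_n(x_1)=x_1^n$ with $n\to\infty$) for the lower bound. No Newton-polyhedron bookkeeping for general $f$ is ever carried out.

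\medskip

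\textbf{What goes wrong in your approach.} Your proof rests on Condition~(R), but Proposition~\ref{rheight} does \emph{not} assume Condition~(R): it is stated for arbitrary smooth $\phi$ of finite type. So the factorization $\phi=(x_2-\tilde\psi)^B\tilde\phi$ on which your entire strategy is built is simply unavailable in general. (The paper sidesteps this by reducing to polynomials, where Condition~(R) is trivially true, but only \emph{after} having the restriction theorems in hand; you would need some analogous reduction.) Second, even granting Condition~(R), your argument for the crucial inequality $h^f(\phi)\le h^r(\phi)$ is a list of cases followed by the assertion that ``a geometric comparison \dots\ yields'' the result, with the ``key monotonicity'' stated but not proved. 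You yourself flag this as ``the principal technical difficulty''; that is precisely the content of the proposition, so leaving it as a difficulty means the proof is not complete. In particular, when $m_0\ne m$ or when the shear $f$ does not agree with $\psi$ to leading order, determining the full boundary of $\N^r(\phi^f)$ and comparing all the $h_l^f$ against $h^r(\phi)$ is a substantial combinatorial task that your outline does not carry out.

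\medskip

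\textbf{Comparison.} A direct combinatorial proof along your lines may well be possible (and would be more self-contained, since it would not rely on the heavy analytic machinery of the restriction theorems), but it would require a careful case analysis of how $\N(\phi^f)$ and the line $L^f$ move as $f$ varies. The paper's indirect route trades this for an appeal to its main theorems plus a short Knapp argument; the price is circularity of exposition (the proposition is proved only after the restriction theorems), the gain is that no further Newton-diagram gymnastics are needed.
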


\subsection{ Organization of the article} \label{organization}
 Before we turn to the proof of Theorem \ref{nonadarestrict}, we shall first clarify the notion of linearly adapted coordinates in Section \ref{prelim}.

Moreover, as in the preceding papers \cite{IKM-max}, \cite{IM-uniform}, assuming that the 
coordinates $x$ are linearly adapted, it  will be natural to distinguish the cases where  $d(\phi)<2$  and where $d(\phi)\ge 2,$ since, in contrast to the first case,  in the latter case in many situations a reduction to estimates for one-dimensional oscillatory integrals will be possible, which in return can be performed by means of van der Corput's lemma (\cite{stein-book}), respectively  the van der  Corput type Lemma \ref{corput}. The latter result will be stated in Section \ref{prelim} too.

Our discussion of the case where  $d(\phi)<2$ will rely on certain normal forms to which $\phi$ can be transformed by means of a linear change of coordinates. These will be derived in Section  \ref{normalforms}.

Next, in Section \ref{outside}, as a first step in the proof of Theorem \ref{nonadarestrict} we shall show that one may  reduce the restriction estimate to the piece of surface which lies above  a small ,  ``curved-conic ''  neighborhood of the principal root
 jet $\psi.$  This step works in all cases, no matter what the value of $d(\phi)$ is.
 
 Sections \ref{nearAD} to  \ref{complex2} will be devoted to the proof of Theorem \ref{nonadarestrict} in the case where $d(\phi)<2.$  Some of the main tools will consist of various kinds of dyadic domain decompositions   in combination with Littlewood-Paley theory and re-scaling arguments,  and  additional dyadic decompositions in frequency space. It turns out that the particular case where $m=2$ in \eqref{m1}, \eqref{prjet} requires a more refined analysis than the case $m\ge 3.$  Indeed, in this case, it turns out that further dyadic decompositions with respect to the distance to a certain ``Airy cone'' are needed. This particular case will be discussed in Section \ref{propm2_A}, with the exception of the endpoint $p=p_c.$   Indeed, the discussion of this endpoint in the cases left when $m=2$ (compare Proposition \ref{m2_A}) will require  rather intricate complex interpolation arguments, which will be presented in 
 Sections  \ref{complex1} and  \ref{complex2}. A further, useful tool will be Lemma \ref{doublesum} on oscillatory double sums, whose proof will be given in the Appendix in Section \ref{doublesumproof}.
  
 We should like to mention that a beautiful, real interpolation method has been  devised  by Bak and Seeger recently in \cite{bs}, which  in many cases allows to replace the more classical complex interpolation methods in the proof  of Stein-Tomas-type Fourier restriction estimates by substantially  shorter proofs.     
  In \cite{IM-rest2}, we shall be able to make use  of this new method in a few situations, but it  does not seem to apply to the situations arising in Proposition \ref{m2_A}.

 Sections \ref{prep} - \ref{nearrootjet} will deal with  the case where $d(\phi)\ge 2.$ It is natural to decompose the surface $S$ according to the ``root structure''  of the function $\phi,$ which in return is reflected by properties of the Newton diagram associated to $\pad$ (cf.  \cite{phong-stein}, \cite{IKM-max} and \cite{IM-uniform}.   More precisely, we shall decompose the domain $\Om$ into certain domains $D_l,$ which are homogeneous in adapted coordinates, and intermediate ``transition'' domains $E_l,$  and consider the corresponding decomposition of the surface $S.$  The particular domain $D_l$ which contains the principal root jet  $x_2=\psi(x_1)$ will be called $D_\pr.$ It is this  domain whose discussion  will require the most refined arguments. All this is described  in Section \ref{prep}.
 Next, in Section \ref{transition}, we estimate the contribution of the transition domains $E_l$ to the restriction problem. It turns out that this works whenever $d(\phi)\ge 2.$  Similarly, in Section \ref{homogeneous} we can also treat the contributions by the domains $D_l$ different from $D_\pr$ whenever $d(\phi)\ge 2.$ 
 
What remains is the domain $D_\pr.$ The contribution by this domain is studied in Section \ref{nearrootjet}, by means of a certain domain decomposition algorithm, which, roughly speaking, reflects the ``fine splitting'' of roots of $\pa_2\pad.$ 
In this discussion, various cases arise, and  there is one case in which we may fibre the corresponding piece of surface into a family of curves with non-vanishing torsion, so that we can apply Drury's restriction theorem for curves \cite{drury}.  However, it turns out that this requires that $d(\phi)\ge 5.$ 
 
 \smallskip
  What remains open at this stage is the proof  of Proposition \ref{nearjet} in the case where $2\le \hl(\phi)<5.$ The discussion of this case requires substantially more refined techniques  and interpolation arguments, and will be the content of  \cite{IM-rest2}.
  
  \smallskip
Finally, in Section  \ref{necessary}, we shall  employ a Knapp-type argument in order to show that  the condition $p'\ge p'_c$  is necessary in Theorem \ref{nonadarestrict}, and conclude the article with a proof of Proposition \ref{rheight}.
 
 \medskip
 {\bf Conventions:}  In this article, we shall use the ``variable constant'' notation, i.e.,  many constants appearing in the paper, often  denoted by  $C,$  will typically have different values  at different lines.  Moreover, we shall use  symbols such as  $\sim, \lesssim$ or $\ll$ in order to avoid  writing down constants. By $A\sim B$ we mean that there are constants $0<C_1\le C_2$ such that $C_1 A\le B\le C_2 A,$ and  these constants will not depend on the relevant parameters arising in the context in which the quantities $A$ and $B$  appear. Similarly, by $A\lesssim B$ we mean that there  is a  (possibly large)  constant  $C_1>0$ such that  $A\le C_1B,$  and by $A\ll B$ we mean that there  is a sufficiently  small  constant  $c_1>0$ such that  $A\le c_1B,$  and  again these constants do not depend on the relevant parameters.

By $\chi_0 $ and $\chi_1$ we shall  always denote  smooth cut-off functions with compact support on $\RR^n,$ where $\chi_0 $ will   be supported in a neighborhood of the origin, whereas $\chi_1=\chi_1(x)$  will be support away from the origin in each of its coordinates $x_j,$ i.e., $|x_j|\sim 1$ for every $j=1,\dots,n.$ These cut-off functions may also vary from line to line, and may  in some instances, where several of  such functions of different variables appear within the same formula, even   designate different functions.

Also, if we speak of the {\it slope}  of a line such as a supporting line to a Newton polyhedron, then we shall actually  mean the modulus of the slope.

\setcounter{equation}{0}
\section{Preliminaries: Linear height, and   van der Corput type estimates}\label{prelim}

In analogy with Varchenko's notion of height , let us introduce  the notion of {\it linear height} of $\phi,$ which measures  the upper limit of all Newton distances of $\phi$ in linear coordinate systems:
$$
\hl(\phi):=\sup\{d(\phi\circ T):T\in GL(2,\RR)\}.
$$
Note that 
$$
d(\phi)\le \hl(\phi)\le h(\phi).
$$
We also say that a linear coordinate system $y=\y$ is {\it linearly adapted} to $\phi,$ if $d_y=\hl(\phi).$ Clearly, if there is a linear coordinate system which is adapted to $\phi,$ it is in particular linearly adapted to $\phi.$ The  following proposition gives a  characterization of linearly adapted coordinates under the complementary Assumption \ref{assumption}.

\begin{proposition}\label{linada}
If $\phi$ satisfies Assumption \ref{assumption}, and if $\phi=\phi(x),$  then the following are equivalent:

(a) The coordinates $x$ are linearly adapted to $\phi.$

(b)  If the principal face $\pi(\phi)$ is contained in the   line 
 $$
 L=\{(t_1,t_2)\in \RR^2:\ka_1t_1+\ka_2 t_2=1\}, 
$$
then either $\ka_2/\ka_1\ge 2$ or $\ka_1/\ka_2\ge 2.$

Moreover, in  all linearly adapted coordinates $x$ for which $\ka_2/\ka_1>1,$  the principal face of the Newton polyhedron is the same, so that in particular the number $m:=\ka_2/\ka_1$ does not depend on the choice of the linearly adapted coordinate system.

\end{proposition}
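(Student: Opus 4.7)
My strategy is to use an $LU$-type factorization of any $T \in GL(2,\mathbb{R})$ and trace through the Newton polyhedron step by step. Every such $T$ decomposes as $T = P S_1 D S_2$, where $P$ is a permutation (a swap of coordinates or the identity), $D$ is a diagonal scaling, $S_1$ is the elementary shear $y_2 = x_2 - bx_1$, and $S_2$ is the elementary shear $y_1 = x_1 - cx_2$. Scalings leave $\N(\phi)$ invariant and permutations reflect it across the bi-sectrix, so both preserve $d$; the entire argument reduces to a careful analysis of how $S_1$ and $S_2$ act on the Newton polyhedron.

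\textbf{Preliminaries.} Assumption \ref{assumption} together with the adaptedness criterion recalled above forces $\pi(\phi)$ to be a compact edge on $L$, with $m = \ka_2/\ka_1 \in \NN$ (after the normalization $\ka_1 \le \ka_2$). Let $(P_0, Q_0)$ and $(P_1, Q_1)$ denote the top-left and bottom-right vertices of $\pi(\phi)$. Writing $\phi_{\pr} = x_1^{P_0} x_2^{Q_1}\prod_\alpha (x_2 - r_\alpha x_1^m)^{N_\alpha}$ with $\sum N_\alpha = Q_0 - Q_1$ and reading off $m(\phi_{\pr})$ along $S^1$ at the points $(0,\pm 1)$, $(\pm 1, 0)$ and at each root curve $x_2 = r_\alpha x_1^m$, one finds that $(P_0,Q_0) = (d, d)$ would force $m(\phi_{\pr}) \le d$ and contradict non-adaptedness; hence $P_0 < d < Q_0$.

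\textbf{(b) $\Rightarrow$ (a).} Assume $m \ge 2$. Apply $S_2$ first: each monomial $(\alpha,\beta)$ spreads to positions $(k, \alpha+\beta-k)$ for $k \in [0,\alpha]$, whose $\ka$-weight equals $\ka_1\alpha + \ka_2\beta + (\ka_2 - \ka_1)(\alpha - k) \ge \ka_1\alpha + \ka_2\beta$. Hence $\N(\phi \circ S_2) \subseteq \{\ka_1 t_1 + \ka_2 t_2 \ge 1\}$, and its face on $L$ is exactly $\pi(\phi)$ with the same vertices; in particular $d(\phi \circ S_2) = d(\phi)$ and the top-left vertex is still $(P_0,Q_0)$. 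The diagonal $D$ leaves the polyhedron unchanged. Applying $S_1$ next, each monomial $(\alpha,\beta)$ of the current function spreads along a segment of slope $-1$ ending at $(\alpha+\beta, 0)$; the segment starting at $(P_0,Q_0)$ crosses the bi-sectrix at $\bigl((P_0+Q_0)/2,(P_0+Q_0)/2\bigr)$, which thus lies in $\N(\phi \circ S_2 D S_1)$. Using $\ka_1 P_0 + \ka_2 Q_0 = 1$ we compute
\[
2 - (P_0+Q_0)(\ka_1+\ka_2) = (\ka_2-\ka_1)(Q_0-P_0) \ge 0,
\]
so $(P_0+Q_0)/2 \le d(\phi)$; the final permutation $P$ preserves $d$, giving $d(\phi \circ T) \le d(\phi)$. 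Hence $x$ is linearly adapted.

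\textbf{(a) $\Rightarrow$ (b) and uniqueness.} Conversely, suppose $x$ is linearly adapted but $m = 1$. Then $\phi_{\pr}$ is ordinarily homogeneous of degree $D = d$ and $m(\phi_{\pr}) > d$ yields a real linear factor $(x_2 - s_0 x_1)^{N_1}$ with $N_1 > D/2$. The shear $y_2 = x_2 - s_0 x_1$ preserves total degree, so $\N(\tilde\phi) \subseteq \{t_1+t_2 \ge D\}$; the monomials of $\tilde\phi$ actually on $\{t_1+t_2 = D\}$ come from $\tilde\phi_{\pr} = y_2^{N_1} Q(y)$ and all satisfy $t_2 \ge N_1 > D/2$, so $(D/2,D/2) \notin \N(\tilde\phi)$ and $d(\tilde\phi) > d(\phi)$, a contradiction. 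For the \emph{moreover} statement, the strict inequality $Q_0 > P_0$ from the Preliminaries makes the $S_1$ computation strict, so any $S_1$ with $b \ne 0$ strictly decreases $d$; a coordinate swap reverses $\ka_2/\ka_1 > 1$. Hence any linear map linking two linearly adapted systems with $m, m' > 1$ is a composition of scalings and $S_2$-type shears, both of which preserve the face on $L$; so $\ka$ (and in particular $m$) is well-defined. The main obstacle is the requirement to carry out the steps in the order $S_2$-then-$S_1$: this is precisely what lets one analyze $S_1$ against the \emph{original} top-left vertex $(P_0,Q_0)$, rather than against a transformed face whose geometry would be substantially harder to control.
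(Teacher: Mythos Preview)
Your overall architecture mirrors the paper's closely: factor an arbitrary $T\in GL(2,\RR)$ into a permutation, a diagonal scaling, an upper-triangular shear (your $S_2$, the paper's $T_2$) and a lower-triangular shear (your $S_1$, the paper's $T_1$), and track the Newton polyhedron through each factor. Your treatment of $S_2$ and of the uniqueness statement is essentially the paper's argument. Two points need attention, however.

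\medskip
\textbf{The $S_1$ step is not justified.} You write that the single monomial $(P_0,Q_0)$ ``spreads along a segment of slope $-1$'' under $S_1$, and conclude that $((P_0+Q_0)/2,(P_0+Q_0)/2)\in\N(\psi\circ S_1)$. But the spread of one monomial does not place points in $\T(\psi\circ S_1)$: other monomials $(\alpha,\beta)$ of $\psi$ with $\alpha+\beta=P_0+Q_0$ contribute to the same target positions and can cancel. In particular neither $(P_0,Q_0)$ nor $(P_0+Q_0,0)$ is guaranteed to lie in $\T(\psi\circ S_1)$ for the specific $b$ at hand, so you have not shown the segment lies in $\N(\psi\circ S_1)$. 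The paper sidesteps this by invoking Lemma~3.2 of \cite{IM-ada} for the lower-triangular change $x_1=ay_1,\ x_2=cy_1+dy_2$, which gives $d_y\le d_x$ (strictly if $c\ne 0$). Your sharper intermediate claim $d(\psi\circ S_1)\le (P_0+Q_0)/2$ is in fact correct, but it needs a genuine argument: since every $(\alpha,\beta)\in\T(\psi)$ with $\alpha+\beta=n:=P_0+Q_0$ satisfies $\beta\ge Q_0$ (from $\ka_1\alpha+\ka_2\beta\ge 1$ and $\ka_2>\ka_1$), the degree-$n$ part $\psi_n$ factors as $x_2^{Q_0}q(x_1,x_2)$ with $\deg q=P_0$ and $q(x_1,0)\ne 0$; hence $(\psi_n)\circ S_1$ has $y_1$-order $\le P_0$ and $y_2$-order $\le P_0$, producing two monomials on $t_1+t_2=n$ that straddle the bisectrix.

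\medskip
\textbf{Two minor slips.} First, the order of your factorization is inconsistent: you state $T=PS_1DS_2$ but then compute $\phi\circ S_2$, then $\circ\, D$, then $\circ\, S_1$, then $\circ\, P$, which corresponds to $T=S_2DS_1P$. Second, in (a)$\Rightarrow$(b) you write ``degree $D=d$'' for $\phi_\pr$ when $m=1$; with $\ka_1=\ka_2$ the ordinary degree is $1/\ka_1=2d$, not $d$. Your subsequent inequality $N_1>D/2$ is then exactly $N_1>d$, so the argument goes through once the typo is fixed.
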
 
This result shows in particular that linearly adapted coordinates always exist under  Assumption \ref{assumption}, since either the original coordinates for $\phi$ are already linearly adapted, or we arrive at such coordinates after applying the first step in Varchenko's algorithm (when $\ka_2/\ka_1=1$ in the original coordinates).

\proof
In order to prove that (a) Êimplies (b), assume that $d_x:=d(\phi)=\hl(\phi).$ By interchanging the coordinates $x_1$ and $x_2,$ if necessary, we may assume that $\ka_2/\ka_1\ge 1,$  where we recall that $\ka_2/\ka_1\in\NN.$ Now, if we had $\ka_2/\ka_1= 1,$ then, by Varchenko's algorithm, there would exist a linear change of coordinates of the form $y_1=x_1, y_2=x_2-cx_1$ so that $d_y>d_x=d,$ which would  contradict the maximality of $d_x.$ Thus, necessarily $\ka_2/\ka_1\ge 2.$
\smallskip

Conversely, assume without loss of generality that $\ka_2/\ka_1\ge 2.$ Consider any matrix $T=\left(
\begin{array}{cc}
  a&  b    \\
 c & d    \\
\end{array}
\right)\in GL(2,\RR),$
and the corresponding linear coordinates $y$ given by 
$$x_1=ay_1+by_2, \quad x_2=cy_1+dy_2.
$$
To prove (a), we have to show that $d_y\le d_x$ for all such matrices $T.$ 

\smallskip
{\bf 1. Case.} $a\ne 0.$ Then we may factorize $T=T_1T_2,$ where
$$T_1:=\left(
\begin{array}{cc}
  a&  0    \\
 c & \frac{ad-bc}a    \\
\end{array}
\right),\quad
T_2:=\left(
\begin{array}{cc}
  1&  \frac ba    \\
 0 & 1    \\
\end{array}
\right).
$$
We first consider $T_2.$ Since $\phi_\pr(T_2y)=\phi_\ka(y_1+\frac ba y_2, y_2),$ where $y_2$ is $\ka$-homogenous of degree $\ka_2>\ka_1,$ where $\ka_1$ is the $\ka$-degree of $y_1,$ we see that the $\ka$-principal part of $\phi\circ T_2$ is given by $(\phi\circ T_2)_\ka=\phi_\ka,$ so that $\phi\circ T_2$ and $\phi$ have the same principal face, and in particular the same Newton distance. This shows that we may assume without loss of generality that $b=0.$ Then necessarily $d\ne 0. $ But then our change of coordinates is of the type $x_1=ay_1, \quad x_2=cy_1+dy_2$ considered in Lemma 3.2 of \cite{IM-ada}, so that this lemma implies that  $d_y\le d_x.$ Indeed, one finds more precisely that $d_y<d_x,$ if $c\ne 0,$ and $d_y=d_x$ otherwise.

\smallskip
{\bf 2. Case.} $a= 0, d=0.$
Since separate scalings of the coordinates have no effect on the Newton polyhedra, $T$ then essentially interchanges the roles of $x_1$ and $x_2,$ i.e., the Newton polyhedron is reflected at the bi-sectrix under this coordinate change. This shows that here $d_y=d_x.$

\smallskip
{\bf 3. Case.} $a= 0, d\ne 0.$  Then we may factorize $T=\left(
\begin{array}{cc}
  0&  b    \\
 c & d    \\
\end{array}
\right)=T_1T_2,$ where
$$
T_1:=\left(
\begin{array}{cc}
  0&   1    \\
 1 & 0   \\
\end{array}
\right),\quad
T_2:=\left(
\begin{array}{cc}
  c&  d    \\
 0 & b    \\
\end{array}
\right).
$$
We have seen in the previous  cases  that both $T_1$  and $T_2$ do not change the Newton distance, and thus here $d_y=d_x.$ 
This concludes the proof of the first part of Proposition \ref{linada}.
\medskip

Assume finally that $x$ and $y$ are two linearly adapted coordinate systems  for $\phi,$ for which the corresponding principal weights $\ka$ and $\ka '$ satisfy $\ka_2/\ka_1>1$  and $\ka'_2/\ka'_1>1,$  respectively. Choose $T\inÊGL(2,\RR)$ such that $x=Ty.$

 Inspecting  the three cases from the previous argument, we see that in Case 1 the mapping $T_2 $ does not change the principal face, and that necessarily  $c=0,$ since otherwise we had $d_y<d_x.$ But then also $T_1$ does not change the principal face.
 Case 2 cannot arise here, since we assume that both $\ka_2/\ka_1>1$  and $\ka'_2/\ka'_1>1,$ and similarly Case 3 cannot apply. This proves also the second statement in the proposition.  

\qed

We shall often make use of van der Corput type estimates. This includes the classical van der Corput Lemma \cite{vdC}  (see also \cite{stein-book}) as well as variants of it, going back to   J.\,E. Bj\"ork  (see \cite{domar}) and  G.\,I.~Arhipov \cite{arhipov}.

\begin{lemma}\label{corput}
Let $M\ge 2\ (M\in\NN)$, and let $f$ be a real valued function of class $C^M$ defined on an interval $I\subset \RR$.  Assume that  either
 \begin{itemize}
\item[(i)] $|f^{(M)}(s)|\ge 1$ on $I,$ or that 
\item[(ii)]  $f$
is of polynomial type  $M\ge 2,$ i.e., there are positive constants $c_1,c_2>0$ such that
$$
c_1\le\sum^M_{j=1}|f^{(j)}(s)|\le c_2\quad\mbox{for every}\  s\in I,
$$
and $I$ is compact.
 \end{itemize}

Then  the following hold true: For every $\la\in\RR, $ 
 \begin{itemize}
\item[(a)]  $$
\Big|\int_{I}e^{i\la f(s)} g(s)\, ds\Big| \le C(\| g\|_{L^\infty(I)} +\|g'\|_{L^1(I)})\,(1+|\la|)^{-1/M},
$$
where the constant $C$ depends only on   $M$ in case (i), and on $M,c_1, c_2$ and $I$ in case (ii).
\item[(b)] If $G\in L^1(I)$ is a non-negative function which is majorised by a function $H\in L^1(I)$ such that $\hat H\in L^1(\RR),$ then
$$
\int_{I}G(\la f(s)) \, ds\le C |\la|^{-1/M},
$$
 \end{itemize}
 where the constant $C$ depends only on   $M$ and $\|H\|_1+\|\hat H\|_1$  in case (i), and on $M,c_1, c_2,I$ and $\|H\|_1+\|\hat H\|_1$ in case (ii).
\end{lemma}

\begin{proof} 
For (a), we refer to \cite{vdC}, \cite{stein-book}, \cite{domar}) and \cite{arhipov}. Moreover, it  is well-known (see \cite{vdC}) that (b) is an immediate consequence of (a).  Indeed,  by means of the Fourier inversion formula  and Fubini's theorem  we may estimate 
$$
\int_{I}G(\la f(s)) \, ds\le\frac 1{2\pi} \Big| \int\hat H(\xi)  \int_I e^{i\xi \la f(s)}\, ds d\xi\Big| \le C |\la|^{-1/M} 
\int_\RR|\hat H(\xi)||\xi|^{-1/M}\, d\xi.
$$
\end{proof}

We remark that the conditions on the function $G$ in (b) are satisfied in particular if $G=|\vp|,$ where $\vp$ is of Schwartz class.

 \setcounter{equation}{0}
\section{Normal forms of $\phi$ under linear coordinate changes when  $\hl<2$}\label{normalforms}

\bigskip

In this section we shall  provide normal forms of the functions $\phi$ under linear coordinate changes when $\hl<2.$ This extends Siersma's work on analytic functions \cite{siersma} to the smooth, finite type case. The designation of the type of singularity that we list below corresponds to Arnol'd's classification of singularities in the case of analytic functions (cf. \cite{agv} and  \cite{duistermaat}), i.e., in the analytic case, non-linear analytic changes of coordinates would  allow to further  reduce $\phi$ to Arnol'd's normal forms.

\begin{proposition}\label{normalform1}
Assume that $\hl(\phi)< 2,$ where $\phi$ satisfies Assumption \ref{assumption}.

Then, after applying  a suitable linear change of coordinates, $\phi$ can be written in the following form on a sufficiently small neighborhood of the origin: 

\begin{equation}\label{A}
\phi(x_1,x_2)=b(x_1,x_2)(x_2-\psi(x_1))^2 +b_0(x_1),
\end{equation}
where $b,b_0$ and $\psi$ are  smooth functions, and where  $\psi(x_1)=cx_1^{m}+O(x_1^{m+1})$, with $c\ne0$ and $m\ge 2.$ Moreover, we can distinguish two cases: 

\medskip
{\bf Case a.}   $b(0,0)\ne 0.$ Then either

\smallskip
(i) $b_0$ is flat, \hfill (singularity of type $A_\infty$) 

\smallskip
or 
\smallskip

(ii) $b_0(x_1)=x_1^n \beta(x_1),$ where $\beta(0)\ne 0$ and $n\ge 2m+1.$ \hfill (singularity of  type $A_{n-1})$ In these cases we say that $\phi$ is  of type $A$.

\medskip
{\bf Case b.}   $b(0,0)= 0.$ 
Then we may assume that 
\begin{equation}\label{D}
b\x=x_1 b_1\x+x_2^2 b_2(x_2),
\end{equation}

where $b_1$ and $b_2$ are smooth functions, with $b_1(0,0)\ne 0.$

\smallskip
Moreover, either 

\smallskip
(i) $b_0$ is flat, \hfill (singularity of type $D_\infty$) 

or 
\smallskip

(ii) $b_0(x_1)=x_1^n \beta(x_1),$ where $\beta(0)\ne 0$ and $n\ge 2m+2.$ \hfill (singularity of  type $D_{n+1})$ 
  In these cases we say that $\phi$ is   of type $D$.

\end{proposition}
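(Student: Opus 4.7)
The plan is to use Proposition~\ref{linada} to work in linearly adapted coordinates, then exploit $\hl(\phi)<2$ to pin down the structure of $\phi_\pr$ to exactly two cases, and finally extract a double-root factor $(x_2-\psi(x_1))^2$ via the Implicit Function Theorem (directly in type A, through a flat-correction argument in type D).

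\emph{Reduction and classification of the principal part.} By Proposition~\ref{linada} together with Assumption~\ref{assumption}, after a preliminary linear change we may assume $m:=\ka_2/\ka_1\in\NN$ with $m\ge 2$, so that $\pi(\phi)$ is a compact edge on $L:\ka_1 t_1+\ka_2 t_2=1$. Write its endpoints as $(A_0,B_0)$ and $(A_1,B_1)$ with $A_0<A_1$. Since $d(\phi)\le\hl(\phi)<2$, and $A_0,B_1\in\NN$ are bounded by $d$, we have $A_0,B_1\in\{0,1\}$. Combining the factorization formula \eqref{normalhom} for $\phi_\pr$, the identity $B_0-B_1=\sum_\al N_\al$, the ceiling $B_0\le 2$ (from $A_0+mB_0<2(m+1)$ with $m\ge 2$), and the non-adaptedness condition $m(\phi_\pr)>d\ge 1$, a short case check rules out every configuration except $B_1=0$, $B_0=2$ with a single root cluster of multiplicity $N_\al=2$ (the alternative with two distinct simple root clusters gives $m(\phi_\pr)=\max(A_0,1)\le 1$, contradicting non-adaptedness), yielding
\[
\phi_\pr(x_1,x_2)=c\,x_1^{A_0}(x_2-c'x_1^m)^2,\qquad A_0\in\{0,1\},\ c,c'\ne 0.
\]
The sub-case $A_0=0$ is type A; $A_0=1$ is type D.

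\emph{Type A.} Here $\partial_{x_2}^2\phi(0,0)=2c\ne 0$, so the IFT applied to $\partial_{x_2}\phi=0$ produces a smooth $\psi(x_1)$ with $\psi(0)=0$; matching against $\phi_\pr$ identifies its leading term as $c'x_1^m+O(x_1^{m+1})$. A second-order Taylor expansion of $\phi$ about $x_2=\psi(x_1)$ then yields \eqref{A} with $b_0(x_1):=\phi(x_1,\psi(x_1))$ and $b(0,0)=\tfrac12\partial_{x_2}^2\phi(0,0)=c\ne 0$.

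\emph{Type D.} Here $\partial_{x_2}^2\phi(0,0)=0$ but $\partial_{x_1}\partial_{x_2}^2\phi(0,0)=2c\ne 0$, so the IFT degenerates at the origin and the fine form \eqref{D} is not automatic. My approach is: first, apply an additional linear shear $x_1\mapsto x_1+\be x_2$, which preserves the principal face by the $T_2$-computation in the proof of Proposition~\ref{linada}, and choose $\be=-c_{0,3}/c$ (where $c_{0,3}$ is the $x_2^3$-coefficient; the other relevant coefficients $c_{2,1},c_{3,0}$ vanish since $(2,1),(3,0)$ lie strictly below $\pi(\phi)$ for $m\ge 2$) so that afterwards $\phi(0,x_2)=O(x_2^4)$. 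Next, build $\psi$ in three substeps: (i) solve $\partial_{x_2}\phi(x_1,\psi(x_1))=0$ as a formal power series in $x_1$, which succeeds order-by-order because the recursion pivot is $\partial_{x_1}\partial_{x_2}^2\phi(0,0)=2c\ne 0$, and realize this formal series as a smooth $\psi_0$ by Borel's theorem; (ii) on each half-line $\{\pm x_1>0\}$, $\partial_{x_2}^2\phi(x_1,\psi_0)\sim 2cx_1\ne 0$, so the IFT produces a correction $\de$ with $\partial_{x_2}\phi(x_1,\psi_0+\de)\equiv 0$; (iii) both corrections $\de$ are flat (since $\partial_{x_2}\phi(x_1,\psi_0)$ is flat by construction of $\psi_0$), with matching zero formal Taylor series, so $\psi:=\psi_0+\de$ is smooth across $x_1=0$. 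Taylor expanding $\phi$ about $x_2=\psi$ then yields $\phi(x)=b_0(x_1)+(x_2-\psi)^2 b(x)$ with $b(0,0)=0$. Since $\phi(0,x_2)=O(x_2^4)$ after the shear, the identity $b(0,x_2)=\phi(0,x_2)/x_2^2$ gives $b(0,x_2)=x_2^2 b_2(x_2)$ for some smooth $b_2$; Hadamard's lemma then provides the decomposition \eqref{D} with $b_1(0,0)=\tfrac12\partial_{x_1}\partial_{x_2}^2\phi(0,0)=c\ne 0$ (using $\psi'(0)=0$, since $\psi(x_1)=c'x_1^m+O(x_1^{m+1})$ with $m\ge 2$).

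\emph{Order of $b_0$ and classification.} A direct $\ka$-degree analysis of $b_0(x_1)=\phi(x_1,\psi(x_1))$ gives: in type A, $\phi_\pr(x_1,\psi)=c(\psi-c'x_1^m)^2=O(x_1^{2(m+1)})$, and each higher-$\ka$-degree monomial $x_1^{\al_1}x_2^{\al_2}$ (with $\al_1+m\al_2\ge 2m+1$) yields $O(x_1^{2m+1})$ when evaluated at $x_2=\psi$, hence $b_0=O(x_1^{2m+1})$; in type D the analogous bound is $b_0=O(x_1^{2m+2})$. The dichotomy ``flat vs.\ non-flat'' then produces the four sub-cases $A_\infty$, $A_{n-1}$ with $n\ge 2m+1$, $D_\infty$, and $D_{n+1}$ with $n\ge 2m+2$. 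The main obstacle of the proof is type D: the degeneracy $\partial_{x_2}^2\phi(0,0)=0$ rules out a direct IFT and forces the formal-series-plus-Borel-summation-plus-flat-correction construction of $\psi$, while the specific internal form \eqref{D} of $b$ only emerges after the supplementary linear shear used to kill $c_{0,3}$.
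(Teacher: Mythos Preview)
Your proof is correct, and the overall architecture (classify the principal part, then extract a double-root factor) matches the paper's, but the execution differs in two places.

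For the initial classification you argue combinatorially on the Newton polygon: $d<2$ forces $A_0,B_1\in\{0,1\}$ and $B_0\le 2$, and non-adaptedness then pins down $\phi_\pr$ as $c\,x_1^{A_0}(x_2-c'x_1^m)^2$ with $A_0\in\{0,1\}$. The paper instead argues via the rank of $D^2\phi(0,0)$: rank $1$ gives type A after diagonalizing $P_2$; rank $0$ forces $P_3\ne 0$ (otherwise $d\ge 2$), and a case split on $m(P_3)\in\{1,2,3\}$ shows only $m(P_3)=2$ is compatible with non-adaptedness and $\hl<2$, whence a linear change makes $P_3=x_1x_2^2$. Both routes are short; yours fits more naturally into the paper's Newton-diagram framework.

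The substantive difference is type D. You solve $\partial_2\phi(x_1,\psi)=0$ despite the degeneracy $\partial_2^2\phi(0,0)=0$, via a formal-series solution realized by Borel, then half-line IFT corrections glued by flatness. The paper avoids all of this by applying the IFT to the \emph{non-degenerate} equation $\partial_1\partial_2\phi=0$ (the pivot $\partial_2(\partial_1\partial_2\phi)(0,0)=\partial_1\partial_2^2\phi(0,0)=2c\ne 0$), which produces a smooth $\psi$ across $x_1=0$ in one step; the decomposition is then obtained by Taylor-expanding $\partial_1\phi$ in $x_2$ about $\psi$ (the linear term drops by construction) followed by an integration in $x_1$. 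The paper's route is shorter, but your construction has a compensating advantage: since your $\psi$ satisfies $\partial_2\phi(x_1,\psi)\equiv 0$ exactly, the decomposition $\phi=b_0(x_1)+b(x)(x_2-\psi)^2$ falls out immediately from Taylor's formula for $\phi$ itself, whereas the paper's passage from $\partial_1\phi$ back to $\phi$ is somewhat compressed. Your preliminary shear $x_1\mapsto x_1+\beta x_2$ to kill $c_{0,3}$ is exactly the paper's linear change reducing $P_3$ to $x_1x_2^2$, just performed at a different stage.
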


\begin{remarks}\label{distances1}
 \begin{itemize}
\item[(a)] It is easy to see that the Newton distance  $d=d(\phi)$ for these normal forms is given as follows:
$$
d=\begin{cases}   \frac {2m}{m+1}, &\quad  \mbox{  if $\phi$ is of type A,   }      \\
 \frac {2m+1}{m+1},& \quad  \mbox{  if $\phi$ is of type D},  
\end{cases}
$$
and by Proposition \ref{linada} that $\hl(\phi)=d,$ i.e., that the coordinates $x$ are linearly adapted.

\item[(b)] Similarly, the coordinates $y_1:=x_1, \, y_2:=x_2-\psi(x_1)$ are adapted to $\phi,$ and we can choose $\psi$ as the principal root jet. 

\item[(c)] When $\phi$ has a singularity of type $A_\infty$ or $D_\infty$  and satisfies Condition (R), then necessarily $b_0\equiv 0.$
 \end{itemize}
\end{remarks}

\proof

If $D^2\phi(0,0)$ had full rank $2,$ then the coordinates $x$ would already be adapted to $\phi,$ which would contradict  our assumptions. Therefore $\rank D^2\phi(0,0)\le 1.$ Let us denote by $P_n$ the  homogeneous part of degree $n$ of the Taylor polynomial of $\phi,$ i.e., $P_n(x_1,x_2)=\sum_{j+k=n} c_{jk} x_1^jx_2^k.$  
\medskip

{\bf 1. Case: $\rank D^2\phi(0,0)=1.$}

\smallskip
In this case, by passing to a suitable linear coordinate system, we may assume that $P_2\x=ax_2^2,$ where $a\ne 0.$ Consider the equation
\begin{equation}\nonumber
\pa_2\phi\x=0.
\end{equation}
By the implicit function theorem, it has locally a unique smooth solution $x_2=\psi(x_1),$ i.e.,  
$\pa_2\phi(x_1,\psi(x_1))=0.$ A Taylor series expansion of  the function $\phi(x_1, x_2)$ with respect to the variable $x_2$  around $\psi(x_1)$ then shows that 
\begin{equation}\label{2.9}
\phi(x_1,x_2)=b(x_1,x_2)(x_2-\psi(x_1))^2 +b_0(x_1),
\end{equation}
where $b$ and $b_0$ are  smooth functions and  $b(0,0)= \frac 12\pa_2^2\phi(0,0)=a\ne 0,$  whereas $b_0(x_1)=O(x_1^2),$ since $\phi(0,0)=0,\, \nabla \phi(0,0)=0$ (this is a special instance  of what would follow from a classical division theorem, see, e.g., \cite{hormander-book}).

Now, either $b_0$ is flat, which leads to type $A_\infty,$ or otherwise we may write $b_0(x_1)=x_1^n \beta(x_1),$ where $\beta(0)\ne 0$ and $n\ge 2,$ which leads to type $A_{n-1}.$ 

Observe also that the function $\psi$ cannot be flat, for otherwise the Newton polyhedron of $\phi$ would be the set $(0,2)+\RR_+^2,$ in case that $b_0$ is flat, or its principal edge would be the compact line segment with vertices $(0,2)$ and $(n,0).$  In the latter case, the principal part of $\phi$ is given by $\phi_\pr\x= ax_2^2+g(0)x_1^n,$ so that the maximal multiplicity $m(\phi_\pr)$ of any real root of $\phi_\pr$ along the unit circle is at most $1,$ whereas the Newton distance is given by $d=1/(\frac 12+\frac 1n)\ge 1.$ Therefore, in both cases, the coordinates $x$ would already be adapted to $\phi,$ according to Corollary 4.3 in \cite{IM-ada}. Notice also that the same argument shows that the coordinates $y$ introduced in $\eqref{adaptco}$ are adapted to $\phi,$ so that in particular indeed $h=2$ (in case that $b_0$ is flat) respectively $h=1/(\frac 12+\frac 1n)<2$ (if $b_0(x_1)=x_1^n \beta(x_1)$). 

In particular, since $\psi(0)=0,$ we can write $\psi(x_1)=c x_1^m+O(x_1^{m+1})$ for some $m\in\NN,$  where $c\ne 0.$ Note that indeed $m\ge 2,$ since $P_2\x=ax_2^2.$

Finally, when $b_0(x_1)=x_1^n \beta(x_1),$ a similar reasoning as before shows that the coordinates $x$ are already adapted if $2m\ge n,$ so that under Assumption \ref{assumption} we must have $n\ge 2m+1.$ 

\medskip

{\bf 2. Case: $D^2\phi(0,0)=0.$}

\smallskip

Then $P_2=0,$  and $P_3\ne 0,$  for otherwise we had $\hl\ge d\ge 1/(1/4+1/4)=2,$ which would contradict our assumption that $\hl<2.$  Notice also that $P_3\ne 0$ is homogeneous of  odd degree $3$, so that necessarily $m(P_3)\ge 1.$ 

Assume first that $m(P_3)=1.$ Then, passing to a suitable linear coordinate system, we may assume that $P_3\x=x_1(x_2-\alpha x_1)(x_2-\beta x_1),$ where either $\alpha\ne \beta$ are both real, or $\alpha=\overline{\beta}â$ are non-real. Then one checks easily that the Newton diagram of $P_3$ is a compact edge intersecting the bi-sectrix in its interior and  contained in the line given by $\frac 13 t_1+\frac 13 t_2=1.$ Consequently, it agrees with the  principal face $\pi(\phi),$  so that $P_3=\phi_\pr.$ We thus find that the Newton distance $d$ in this linear coordinate system  satisfies  $d=3/2>m(\phi_\pr),$ so that these  coordinates  would  already be adapted, contradicting our assumptions.

Assume next that $m(P_3)=3.$ Then, in a suitable linear coordinate system, $P_3\x$ $=x_2^3.$ These coordinates are then adapted to $P_3,$ so that $h(P_3)=d(P_3)=3>2.$ However, as has been shown in \cite{IKM-max}, p. 217, under Assumption \ref{assumption} this implies that the Taylor support of $\phi$ is contained in the region where $ \frac 16 t_1+\frac 13 t_2\ge 1.$ This in return implies  that $\hl\ge d\ge 1/(\frac 16+\frac 13)=2,$ in contrast to what we assumed.

\smallskip
We have thus seen that necessarily $m(P_3)=2.$ Then, after applying a suitable linear change of coordinates, we may assume that $P_3\x=x_1x_2^2,$  i.e., 
$$
\phi\x=x_1x_2^2+O(|x|^4).
$$
Consider here  the equation
\begin{equation}\label{2.10}
\pa_1\pa_2\phi\x=0.
\end{equation}
By the implicit function theorem, it has locally a unique smooth solution $x_2=\psi(x_1),$ i.e.,  
$\pa_1\pa_2\phi(x_1,\psi(x_1))=0.$ By means of a  Taylor series expansion of  the function $\pa_1\phi(x_1, x_2)$ with respect to the variable $x_2$  around $\psi(x_1)$ and subsequent integration in $x_1$ one then finds that
\begin{equation}\nonumber
\phi(x_1,x_2)=b(x_1,x_2)(x_2-\psi(x_1))^2 +b_2(x_1) x_2 +b_0(x_1),
\end{equation}
where $b, b_0$ and $b_2$ are  smooth functions.   Again, we have that $\psi(x_1)=c x_1^m+O(x_1^{m+1}),$ with $m\ge 2.$ Then \eqref{2.10} implies that $b_2'=0,$ and since $\pa_2\phi(0,0)=0,$ we see that $b_2=0,$ hence
\begin{equation}\label{2.11}
\phi(x_1,x_2)=b(x_1,x_2)(x_2-\psi(x_1))^2  +b_0(x_1),
\end{equation}
Moreover, since $\pa_2^2\phi(0,0)=0, \pa_1\pa_2^2\phi(0,0)\ne 0, \pa^3_2\phi(0,0)=0,$ we have that 
$$
b(0,0)=0, \quad \pa_1b(0,0)\ne 0\quad \mbox{ and } \pa_2 b(0,0)=0.
$$
By Taylor's formula, this implies that 
$$
b\x=x_1 b_1\x+x_2^2 b_2(x_2),
$$
where $b_1$ and $b_2$ are smooth functions, with $b_1(0,0)\ne 0.$

In a similar way as in Case 1, one can see that the coordinates from \eqref{adaptco} are adapted to $\phi.$  Moreover, if $b_0$ is flat, which leads to case $D_\infty$, then  $h=2,$  and if $b_0(x_1)=x_1^n \beta(x_1),$  which leads to case $D_{n+1},$ then $h=\frac {2n}{n+1}<2.$ 
Finally, one also checks easily that the coordinates $x$ in \eqref{adaptco} are already adapted to $\phi,$ if $2m+1\ge n,$ so that under our assumption we must have $ n\ge 2m+2.$ 

This concludes the proof of Proposition \ref{normalform1}.
\qed

\begin{cor}\label{s2.1}
Assume that $\phi$ satisfies Assumption \ref{assumption}. By passing to a suitable linear coordinate system, let us also assume  that the coordinates $x$ are linearly adapted to $\phi.$ Then, if  $d=d(\phi)<2,$  the critical exponent in  Theorem \ref{nonadarestrict} is given by $p'_c=2d+2.$
\end{cor}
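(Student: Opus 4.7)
The plan is to combine Theorem \ref{nonadarestrict} with Proposition \ref{normalform1}. Since linear changes of coordinates preserve the restriction estimate, I may assume $\phi$ is already in one of the four normal forms from Proposition \ref{normalform1}. By Theorem \ref{nonadarestrict}, $p'_c = 2 h^r(\phi) + 2$, so the task reduces to showing $h^r(\phi) = d$ under the hypothesis $d(\phi) < 2$; since $h^r(\phi) \ge d$ by definition, I only need the reverse inequality, which I would establish through the geometric interpretation of $h^r$ given after Remarks \ref{r1}.

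First, in each of the four normal forms, I would compute the Newton polyhedron $\N(\pad)$ in the adapted coordinates $y_1 = x_1$, $y_2 = x_2 - \psi(x_1)$ (which are adapted by Remark \ref{distances1}(b)). In type $A$, where $b(0,0) \ne 0$, the function $\phi^a(y) = b(y_1, y_2+\psi(y_1))\, y_2^2 + b_0(y_1)$ has left vertex $(0,2)$. In type $D$, where $b(0,0) = 0$, expanding $b(y_1, y_2+\psi(y_1))\, y_2^2 = \big[\, y_1 b_1 + (y_2+\psi)^2 b_2 \,\big]\, y_2^2$ produces the candidate monomials $(0,4),(1,2),(m,3),(2m,2)$, and I would check that only $(0,4)$ and $(1,2)$ are vertices of $\N(\pad)$, the others lying strictly above the edge $[(0,4),(1,2)]$ followed by the horizontal ray from $(1,2)$. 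In the non-flat subcases the additional vertex $(n,0)$ appears. Consequently, the vertex $(A_{l_0-1},B_{l_0-1})$ at which $L^+$ attaches satisfies $B_{l_0-1}=2$ and $A_{l_0-1}\in\{0,1\}$ in all four cases.

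Next, I would show that $\Delta^{(m)}$ exits $\N^r(\pad)$ through a point on the half-line $L^+$. By Remark \ref{r1}(a), this exit point then lies on $L$ itself and has second coordinate $d+1$, yielding $h^r(\phi) = d$. The intersection of $\Delta^{(m)}$ with $L$ is the point $(d-m, d+1)$, as a short computation using $\kappa_2 = m \kappa_1$ and $d = 1/(\kappa_1+\kappa_2)$ shows. It lies on $L^+$ iff $d - m \le A_{l_0-1}$, which follows from $m \ge 2 > d$ when $A_{l_0-1}=0$ and is trivial when $A_{l_0-1}=1$. Moreover, at $t_1 = A_{l_0-1}$ one has $\Delta^{(m)}(t_1) = A_{l_0-1} + m + 1 > 2 = B_{l_0-1}$ for $m \ge 2$, and since the lower-left polyhedral boundary of $\N(\pad)$ is non-increasing while $\Delta^{(m)}$ has slope $1$, the line $\Delta^{(m)}$ stays strictly inside $\N^r(\pad)$ for all $t_1 \ge A_{l_0-1}$. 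Hence the exit point is exactly $(d-m, d+1)$, proving $h^r(\phi) = d$ and therefore $p'_c = 2d+2$.

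The main technical obstacle will be the type $D$ computation of $\N(\pad)$: here $b(0,0)=0$ forces the quartic contribution $b_2(0)\, y_2^4$ to produce a new left vertex $(0,4)$, and one must verify that the other monomials produced by expanding $(y_2+\psi)^2 b_2\, y_2^2$, namely $(m,3)$ and $(2m,2)$, lie strictly above the resulting polyhedral boundary; this step relies only on $m \ge 2$ and on $n \ge 2m+2$ in the non-flat subcase. All remaining steps reduce to elementary inequalities involving $d$, $m$, and the explicit vertex coordinates.
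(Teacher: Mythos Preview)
Your argument is correct and follows essentially the same route as the paper: identify the ``upper'' vertex $v\in\{(0,2),(1,2)\}$ of the principal face, observe that $\Delta^{(m)}$ meets $L$ at $(d-m,d+1)$ with $d-m<0\le A_{l_0-1}$, and conclude that the exit from $\N^r(\pad)$ occurs on $L^+$, giving $h^r(\phi)=d$. The paper phrases the identification of $v$ as a vertex of $\N(\pad)$ by appealing to Varchenko's algorithm rather than by your direct computation of $\N(\pad)$ from the normal forms, but the content is the same.

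One minor imprecision: in type~$D$ you assert that $(0,4)$ is a vertex of $\N(\pad)$, but Proposition~\ref{normalform1} does not guarantee $b_2(0)\ne 0$ (indeed $b_2$ may vanish identically), so $(0,4)$ need not lie in the Taylor support. This does not affect your argument, however, since all you actually use is that $(1,2)$ is a vertex of $\N(\pad)$ lying on $L$ (which follows from $b_1(0,0)\ne 0$ alone), so that $(A_{l_0-1},B_{l_0-1})=(1,2)$; the possible presence or absence of a vertex $(0,4+k)$ above it is irrelevant to the location of the attachment point of $L^+$.
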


\pf Proposition \ref{normalform1} shows that the principal face $\pi(\phi)$ of the Newton polyhedron of $\phi$ is a compact edge whose  ``upper'' vertex $v$ is one the following points $(0,2)$ or  $(1,2),$ which both  lie below the line $H:=\{(t_1,t_2):t_2= 3\}$ within the positive quadrant. On the other hand, $m+1\ge 3.$ It is then clear from the geometry of the lines $H$, the line $L$ which contains $\pi(\phi)$  and the line $\Delta^{(m)},$ that $\Delta^{(m)}$ will intersect $L$ above or in the vertex $v.$ Since, by Varchenko's algorithm, the point $v$ will also be a vertex of the Newton polyhedron of $\pad,$ this easily implies that $h^r(\phi)=d$ (compare Figure 2). This proves the claim.
\qed

\setcounter{equation}{0}
\section{Reduction to restriction estimates near the principal root jet }\label{outside}

We now turn to the proof of Theorem \ref{nonadarestrict} (which includes Theorem \ref{nonadarestrictanal}). As a first step,   we shall reduce considerations to a small  neighborhood of the principal root jet $\psi.$ Recall that our coordinates $x$ are assumed to satisfy \eqref{prjet} and \eqref{m1}.

Following \cite{IM-uniform}, by decomposing $\RR^2$  into its four quadrants, we shall in the sequel always assume   that the surface carried  measure $d\mu=\rho d\si$ is  supported in the positive quadrant where $x_1>0,x_2>0,$ i.e., that it is of the form
$$
\laa \mu, f\ra=\int_{(\RR_+)^2} f(x,\phi(x))\, \eta(x)\, dx, \qquad f\in C_0(\RR^3),
$$
where $\eta(x):=\rho(x,\phi(x))\sqrt{1+|\nabla \phi(x)|^2}$ is smooth and has its support in the neighborhood $\Omega$ of the origin, which we may assume to be sufficiently small. The contributions by the other quadrants can be treated in a very similar way.

 If $\chi$ is an integrable function defined on $\Omega,$ we put
$$
\mu^\chi:=(\chi\otimes 1)\mu,\  \mbox{ i.e.,}\   \laa \mu^\chi, f\ra=\int_{(\RR_+)^2} f(x,\phi(x))\, \eta(x)\chi(x)\, dx.
$$
Recall from \eqref{prjet} that $\psi(x_1)=c x_1^m+O(x_1^{m+1}).$ 
We choose a non-negative bump function $\chi_0\in C^\infty_0(\RR)$ supported in $[-1,1],$ and put 
$$
\rho_1\x:=\chi_0\Big(\frac{ x_2-cx_1^{m}}{\ve x_1^{m}}\Big),
$$
where $\ve>0$ is a small parameter to be determined later. Notice that $\rho_1$ is supported in the $\ka$-homogeneous subdomain of $\Om\cap\RR^2$ where
\begin{equation}\label{3.1}
|x_2-cx_1^m|\le \ve x_1^m,
\end{equation}
which contains the curve $x_2=\psi(x_1)$ when $\Om$ is sufficiently small.

\begin{proposition}\label{awayjet}
For every $\ve>0,$ when the support of $\mu$ is sufficiently small then  
$$
\Big(\int_S |\widehat f|^2\,d\mu^{1-\rho_1}\Big)^{1/2}\le C_{p,\ve} \|f\|_{L^p(\RR^3)},\qquad  f\in\S(\RR^3)
$$
whenever $p'\ge 2d+2.$ In particular, this estimate is valid for $p'\ge p'_c.$
\end{proposition}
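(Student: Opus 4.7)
\medskip
\noindent\textbf{Proof proposal.} The plan is to reduce, via a $\ka$-homogeneous dyadic decomposition, to uniform restriction estimates on rescaled unit-scale pieces, and then to sum by Littlewood--Paley theory. Choose a cut-off $\chi \in C_0^\infty(\RR^2\setminus\{0\})$ with $\sum_{k \ge 0}\chi(\delta_{2^k}x) = 1$ on a punctured neighborhood of the origin, where $\delta_r\x := (r^{\ka_1}x_1, r^{\ka_2}x_2)$. Since the region $\{|x_2 - cx_1^m| \ge \ve x_1^m\}$ is $\ka$-invariant, I write $(1-\rho_1)\eta = \sum_{k \ge k_0}\chi_k$ with $\chi_k := (1-\rho_1)\eta\cdot \chi(\delta_{2^k}\cdot)$; each $\chi_k$ is supported in a $\ka$-annulus of scale $2^{-k}$ and uniformly bounded away from the curve $\{x_2 = cx_1^m\}$. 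Put $\mu_k := \mu^{\chi_k}$.

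Next, I rescale each piece to unit scale. With $D_k(x_1,x_2,x_3) := (2^{-k\ka_1}x_1, 2^{-k\ka_2}x_2, 2^{-k}x_3)$ acting on $\RR^3$, set $\tilde\mu_k := 2^{k(|\ka|+1)}(D_k^{-1})_*\mu_k$. Then $\tilde\mu_k$ is supported on the graph of $\phi_k(y) := 2^k\phi(\delta_{2^{-k}}y)$, which converges smoothly on $\supp\chi$ to $\phi_\pr = \phi_\ka$, with the difference $\phi_k - \phi_\ka$ being $O(2^{-k\ve_0})$ in every $C^N$-norm for some $\ve_0 > 0$. Using the standard root factorization of the principal part,
$$
\phi_\ka(y) = c_0\, y_1^{A} y_2^{B}(y_2 - cy_1^m)^{N_1}\prod_{\alpha \ne 1}(y_2 - c^\alpha y_1^m)^{N_\alpha},
$$
the factor $(y_2 - cy_1^m)^{N_1}$ is bounded below by $\gtrsim \ve^{N_1}$ on $\supp\chi$, so the ``dangerous'' principal root has effectively been removed. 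Covering $\supp\chi$ by finitely many small patches on each of which at most one secondary root is relevant, one checks that the rescaled phase $\phi_k$ is, for $k$ large enough, linearly adapted on every patch with Newton distance bounded by $d(\phi) = 1/|\ka|$. Theorem \ref{adarestrict} then yields, uniformly in $k$,
$$
\Big(\int|\widehat{g}|^2\,d\tilde\mu_k\Big)^{1/2} \le C \|g\|_{L^p(\RR^3)}\quad\text{for all } p'\ge 2d+2.
$$

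Scaling back via $D_k$ turns this uniform estimate into one for $\mu_k$ carrying a factor $2^{-k\sigma_p}$, where $\sigma_p \ge 0$ vanishes precisely at the endpoint $p' = 2d+2$. For $p' > 2d+2$ the resulting geometric series is immediately summable. At the endpoint, the anisotropic rescaling localizes each $\widehat{\mu_k}$ in a dyadic frequency region of scale $2^k$, so a Littlewood--Paley decomposition of the frequency side, applied in the $TT^*$ formulation of the restriction inequality, supplies the required almost-orthogonality across $k$ and avoids a logarithmic loss. The final statement for $p' \ge p_c'$ follows a fortiori from $p_c' = 2h^r(\phi) + 2 \ge 2d + 2$. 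The main obstacle is the endpoint summation; the rest is essentially a rescaling argument that reduces the non-adapted problem, outside the principal root jet, to the adapted case already handled by Theorem \ref{adarestrict}.
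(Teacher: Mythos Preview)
Your overall architecture---the $\ka$-homogeneous dyadic decomposition, rescaling each piece to unit scale, and summing via Littlewood--Paley at the endpoint---matches the paper's. The substantive difference is in how the rescaled pieces are handled: the paper reduces everything to the uniform Fourier decay estimate
\[
|\widehat{\mu_{0,(k)}}(\xi)|\le C(1+|\xi|)^{-1/d}
\]
(Lemma~\ref{decay1}), and then invokes Greenleaf's theorem together with the Littlewood--Paley machinery from \cite{IM-uniform}. You instead try to apply Theorem~\ref{adarestrict} directly to each rescaled piece.

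This shortcut has a genuine gap. Theorem~\ref{adarestrict} requires that a \emph{linear} coordinate system adapted to the phase exist, and its conclusion involves the \emph{height}, not the Newton distance in the given coordinates. Your sentence ``linearly adapted on every patch with Newton distance bounded by $d(\phi)$'' is precisely the nontrivial claim that needs proof: for each point $v$ on the unit $\ka$-sphere away from the principal root, you must show that the phase $\phi_\ka$ (shifted to $v$ and with its linear part subtracted) has height at most $d$. Establishing this forces you into a pointwise singularity analysis---Hessian nondegenerate, or degenerate of type $A_{m-1}$ at $v_1=0$ in the type~A case, or the more delicate computation the paper carries out for type~D---which is exactly the content of the paper's proof of Lemma~\ref{decay1}. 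When $\hl(\phi)\ge 2$ the paper defers this to \cite{IM-uniform}, and your argument offers no substitute. A second issue is uniformity in $k$: Theorem~\ref{adarestrict} is stated for a fixed phase, and nothing in its statement guarantees that the constant is stable as $\phi_k\to\phi_\ka$.

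In short, the decomposition and the summing scheme are right, but invoking Theorem~\ref{adarestrict} as a black box does not bypass the work; it repackages it. The paper's route through an explicit decay estimate is cleaner because van der Corput and stationary phase bounds are manifestly stable under small $C^\infty$ perturbations, which delivers the uniformity in $k$ automatically.
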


The proof of this result will, by and large, follow the proof of Corollary 1.6 in \cite{IM-uniform}.
By $\{\de_r\}_{ r>0}$ we shall again denote the dilations associated to the principal weight $\ka.$ 
Fixing a suitable smooth cut-off function $\chi\ge 0$ on $\RR^2$ supported in an annulus $\A\subset\RR^2$ such that the functions $\chi_k:=\chi\circ \de_{2^k}$ form a partition of unity, we then decompose the measure $\mu^{1-\rho_1}$ dyadically as 
\begin{eqnarray}\label{3.2}
\mu^{1-\rho_1}=\sum_{k\ge k_0}\mu_k,
\end{eqnarray}
where $\mu_k:= \mu^{\chi_k(1-\rho_1)}.$  Let us extend the dilations $\de_r$ to $\RR^3$ by putting
$$
\de^e_r(x_1,x_2,x_3):=(r^{\ka_1} x_1, r^{\ka_2} x_2, r x_3).
$$
We re-scale the measure $\mu_k$ by defining $\mu_{0,(k)}:=2^{-k}\mu_k\circ \dee_{2^{-k}},$ i.e.,
\begin{equation}\label{3.3}
\laa \mu_{0,(k)},f\ra=2^{|\ka|k}\laa \mu_k, f\circ \dee_{2^{k}}\ra=\int_{(\RR_+)^2} f(x,\phi^k (x))\, \eta(\de_{2^{-k}}x)\chi(x)(1-\rho_1\x)\, dx,
\end{equation}
with 
\begin{equation}\label{3.4}
\phi^k(x):=2^k\phi(\de_{2^{-k}}x)=\phi_\ka (x)+ \mbox{error terms of order } O(2^{-\delta k}),
\end{equation}
 where $\delta>0.$ Recall here that the principal part $\phi_\pr$ of $\phi$ agrees with $\phi_\ka.$ This shows that the measures $\mu_{0,(k)}$ are supported on the smooth hypersurfaces  $S^k$ defined as the graph of $\phi^k$, their total variations are uniformly bounded,
i.e.,
$\sup_k\|\mu_{0,(k)}\|_1<\infty,$
and that they are approaching the surface carried  measure $ \mu_{0,(\infty)}$   on  $S$  defined  by
$$\laa \mu_{0,(\infty)},f\ra:=\int_{(\RR_+)^2} f(x,\phi_\ka (x))\, \eta(0)\chi(x)(1-\rho_1\x)\, dx
$$
as $k\to\infty. $
The proof of  Corollary 1.6 in \cite{IM-uniform}, which is based on a classical result by A. Greenleaf \cite{greenleaf} which relates uniform estimates for the Fourier transform of a surface carried measure to $L^p$-$L^2$- Fourier restriction estimates for this measure, as well as on Littlewood-Paley theory, then shows that it is sufficient to verify the following estimate in order to prove Proposition \ref{awayjet}:

\begin{lemma}\label{decay1}
If $k_0\in \NN$ is sufficiently large, then there exists a constant $C>0$ such that 
$$
|\widehat{ \mu_{0,(k)}}(\xi)|\le C (1+|\xi|)^{-1/d} \quad \mbox{for every }\xi\in\RR^3, k\ge k_0.
$$
\end{lemma}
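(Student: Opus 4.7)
Since $\|\mu_{0,(k)}\|_1$ is uniformly bounded in $k$, the estimate is trivial for $|\xi|\lesssim 1$, so I assume $|\xi|\gg 1$. Writing
\begin{equation*}
\widehat{\mu_{0,(k)}}(\xi) = \int_{(\RR_+)^2} e^{-i(\xi_1 x_1+\xi_2 x_2+\xi_3\phi^k(x))}\,a_k(x)\,dx,
\end{equation*}
with $a_k(x):=\eta(\de_{2^{-k}}x)\chi(x)(1-\rho_1(x))$, I note that $a_k$ is supported in the fixed annulus $\A$, uniformly bounded in every $C^N$-norm, and additionally satisfies $|x_2-cx_1^m|\ge \tfrac{\ve}{2}x_1^m$ on $\supp a_k$. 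If $|\xi_3|\le\eps_0(|\xi_1|+|\xi_2|)$ for a suitably small $\eps_0>0$, then on $\supp a_k$ the gradient in $x$ of the phase has magnitude comparable to $|\xi|$, and repeated integration by parts gives $O(|\xi|^{-N})$ decay for every $N$. I may therefore restrict to the case $|\xi_3|\gtrsim |\xi|$.

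Second, I reduce to the principal part. By \eqref{3.4}, $\phi^k-\phi_\ka$ and all its derivatives are $O(2^{-\de k})$ on $\A$. For $|\xi|\le 2^{\de k/2}$ the trivial bound $\|\mu_{0,(k)}\|_1\lesssim 1$ already dominates $|\xi|^{-1/d}$ provided $k_0$ is large, while for $|\xi|\ge 2^{\de k/2}$ the factor $e^{-i\xi_3(\phi^k-\phi_\ka)}$ can be absorbed into a new amplitude whose $C^N$-norms remain uniformly bounded. Setting $\la:=\xi_3$ and absorbing also the slowly varying factor $e^{-i(\xi_1 x_1+\xi_2 x_2)}$ (whose $x$-derivatives are controlled by $|\xi_3|$, hence give only harmless multiplicative constants in van der Corput estimates), the matter is reduced to proving
\begin{equation*}
\Bigl|\int e^{i\la\,\phi_\ka(x)}\,b(x,\xi)\,dx\Bigr|\le C(1+|\la|)^{-1/d}
\end{equation*}
uniformly in $\la$ and in smooth amplitudes $b$ supported in $\A\setminus\{|x_2-cx_1^m|<\ve x_1^m/2\}$ and bounded in a fixed $C^N$-norm.

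Third, I exploit the factorization \eqref{normalhom} of the $\ka$-homogeneous polynomial $\phi_\ka$: its real roots on $\A$ lie along finitely many curves of the form $x_2=c^\al x_1^m$, of multiplicities $N_\al$, together with possibly $x_1=0$ or $x_2=0$. Since $\psi$ was chosen as the principal root jet, the root $c^\al=c$ realizes the maximum real multiplicity $m(\phi_\pr)$, and this is precisely the root removed by the factor $(1-\rho_1)$. Hence on $\supp b$ every remaining real root curve has multiplicity strictly less than $m(\phi_\pr)$, and it is straightforward to check from the location of these curves relative to the Newton diagram that each such multiplicity is in fact bounded by $d_h(\phi_\pr)=d$. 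Partitioning $\supp b$ into a region where $\phi_\ka$ is uniformly bounded below (where integration by parts in a $\ka$-radial direction yields arbitrary polynomial decay) and finitely many $\ka$-homogeneous cones around each of the remaining root curves, I change variables in each cone so that one coordinate runs transversally to the root curve and apply Lemma \ref{corput} in that variable; the phase is then a polynomial of degree at most $d$ in the transverse variable, with derivatives of order $\le d$ uniformly bounded below, yielding the decay $(1+|\la|)^{-1/d}$ fibrewise. Integrating the fibre estimate over the remaining variable completes the proof.

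The main obstacle is the uniformity in $k$ of the van der Corput bounds, which hinges on the fact that the error $\phi^k-\phi_\ka$ is of size $O(2^{-\de k})$ in every $C^N$-norm on $\A$: one has to arrange the perturbation threshold and $k_0$ so that the lower bounds on derivatives of $\phi_\ka$ survive for $\phi^k$ precisely in the regime $|\la|\ge 2^{\de k/2}$ where the oscillatory bound is needed. The conceptual heart of the argument is, as indicated, the identification that cutting out the principal root jet $\psi$ reduces the effective maximum multiplicity of $\phi_\ka$ on $\supp b$ from $m(\phi_\pr)$ down to at most $d_h(\phi_\pr)=d$.
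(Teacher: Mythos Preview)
Your approach has a genuine gap, and it lies precisely where you treat the argument as routine.

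First, the factor $e^{-i(\xi_1 x_1+\xi_2 x_2)}$ cannot be ``absorbed'' into the amplitude for a van der Corput estimate. In the regime $|\xi_3|\gtrsim|\xi|$ you only know $|\xi_1|,|\xi_2|\le \eps_0^{-1}|\xi_3|$, so the $x$-derivatives of this factor are of order $\eps_0^{-1}|\la|$, not $O(1)$; an amplitude with total variation of order $|\la|$ makes the van der Corput bound useless. The correct normalisation is to keep the linear part in the phase, writing $\Phi=\la\psi$ with $\psi(x)=s_1x_1+s_2x_2+\phi_\ka(x)$ and $|s_i|\lesssim\eps_0^{-1}$. But then your ``integration by parts in a $\ka$-radial direction'' where $|\phi_\ka|$ is bounded below no longer yields arbitrary decay: the $\ka$-radial derivative of $\psi$ is $\ka_1 s_1 x_1+\ka_2 s_2 x_2+\phi_\ka(x)$, and the linear contribution, of size $\lesssim\eps_0^{-1}$, can cancel $\phi_\ka(x)$ on the annulus. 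The same $\eps_0$ cannot be simultaneously small (for your first reduction) and large (for this step).

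Second --- and this is the decisive point --- a \emph{one-dimensional} van der Corput estimate cannot deliver decay $|\la|^{-1/d}$ when $d<2$, which is exactly the case the paper must treat explicitly (for $\hl(\phi)\ge 2$ it simply invokes Lemma~2.3 of \cite{IM-uniform}). For $\phi$ of type~A one has $\phi_\ka(x)=b(0,0)(x_2-cx_1^m)^2$ and $d=2m/(m+1)$, so $1/d=\tfrac12+\tfrac1{2m}>\tfrac12$. There are \emph{no} remaining real root curves of $\phi_\ka$ once the principal one is removed, so your region~(a) is empty and the whole support lies in region~(b). There $\pa_2^2\phi_\ka\equiv 2b(0,0)$, so the best a fibrewise $x_2$-estimate can give is $|\la|^{-1/2}$, which is strictly weaker than $|\la|^{-1/d}$. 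The required rate genuinely needs both variables: the paper computes the Hessian of $\phi_\ka$, shows it is non-zero at every $v$ with $v_2\ne cv_1^m$ except when $v_1=0$ and $m>2$, and applies two-dimensional stationary phase to obtain $|\la|^{-1}$ at the generic points; at the exceptional points $v=(0,v_2)$ it identifies an $A_{m-1}$ singularity after translation and combines stationary phase in $x_2$ with van der Corput in $x_1$ to get $|\la|^{-(1/2+1/m)}$. An analogous Hessian analysis handles type~D. Your multiplicity observation about the remaining roots is correct (indeed one finds they have multiplicity $<d/m\le d/2$), but it does not by itself supply the missing half of the decay.
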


We turn to the proof of  Lemma \ref{decay1}. Assume first that $\hl=\hl(\phi)\ge 2.$ Then $h(\phi)>2$ by Assumption \ref{assumption}. Thus, in this case, the proof of Lemma 2.3 in \cite{IM-uniform} shows that indeed the estimate in Lemma \ref{decay1} holds true. 

\smallskip
We may therefore assume that $\hl< 2,$ so that  $\phi$ can be assumed to be  given by one of the normal forms appearing in Proposition \ref{normalform1}. Moreover, then $\hl=d$ is the Newton distance.  Let us re-write
$$
\widehat{ \mu_{0,(k)}}(\xi)=\int_{(\RR_+)^2}e^{-i(\xi_1x_1+\xi_2x_2+\xi_3\phi^k\x)}\eta(\de_{2^{-k}}x)\chi(x)(1-\rho_1\x)\, dx,
$$ 
and observe that, by a partition of unity argument, it will suffice to prove the following:
\smallskip

Given any point $v\in\A$ such that 
\begin{equation}\label{3.5}
v_2-cv_1^m\ne 0,
\end{equation}
  there is neighborhood $V$ of $v$ such that  for every bump function $\chi_v\in C^\infty(\RR^2)$ supported in $V$ we have 
\begin{equation}\label{3.6}
|J^{\chi_v}(\xi)|\le C (1+|\xi|)^{-1/d} \quad \mbox{for every }\xi\in\RR^3, k\ge k_0,
\end{equation}
where 
$$
J^{\chi_v}(\xi):=\int_{(\RR_+)^2}e^{-i(\xi_1x_1+\xi_2x_2+\xi_3\phi^k\x)}\eta(\de_{2^{-k}}x)\chi_v(x)\, dx.
$$
\smallskip
To prove this, we shall distinguish the cases a and b from Proposition \ref{normalform1}.
\medskip

{\bf Case a ($\phi$ of type A).}  In this case, we see that $\ka=(\frac 1{2m}, \frac 12)$ and 
$$
\phi_\ka\x=\phi_\pr\x=b(0,0)(x_2-cx_1^m)^2,
$$
so that   $\frac 1d=\frac 12+\frac 1{2m}.$ After applying a suitable linear change of coordinates (and possibly complex conjugation to $J^{\chi_v}(\xi)$), we may assume that $b(0,0)=1.$ Then, the Hessian of $\phi_\ka$ is given by 
$$
\Hess(\phi_\ka)(x_1,x_2):=-4m(m-1)cx_1^{m-2}(x_2-cx_1^m).
$$

Therefore, by \eqref{3.5}, if $m=2,$ or $v_1\neq0,$ then $\Hess(\phi_\ka)(v)\neq 0$. In this case, in view of \eqref{3.4} we can apply the method of stationary phase for phase functions depending on small parameters and easily obtain 
$$
|J^{\chi_v}(\xi)|\le C (1+|\xi|)^{-1} \quad \mbox{for every }\xi\in\RR^3, k\ge k_0,
$$
provided $V$ is sufficiently small and $k_0$ sufficiently large.  Since $d\ge 1,$ this yields \eqref{3.6}.

\smallskip
We are left with the case where  $m>2$ and $v_1=0$. Since $v=(v_1,v_2)\in\A,$ this  implies that $v_2\neq0.$

Putting $\tilde\phi^k(y_1,y_2):=\phi^k(y_1,v_2+y_2),$  we may re-write $J^{\chi_v}(\xi)$ as 
$$
J^{\chi_v}(\xi)=e^{-iv_2\xi_2} \int_{(\RR_+)^2}e^{-i(\xi_1y_1+\xi_2y_2+\xi_3\tilde\phi^k(y_1,y_2))}\eta(\de_{2^{-k}}(y_1,v_2+y_2)\tilde\chi_0(y)\, dy,
$$
where $\tilde\chi_0$ is now supported in a sufficiently small neighborhood of the origin.
But,
\begin{eqnarray*}
\tilde\phi^k(y_1,y_2)&=&(v_2+y_2-cy_1^m)^2+O(2^{-\delta k})\\
&=&v_2^2+2v_2y_2 +\Big(y_2^2-2cv_2y_1^m +c^2y_1^{2m}-2cy_2y_1^m\Big)+O(2^{-\delta k}).
\end{eqnarray*}
The main term here is $(y_2^2-2cv_2y_1^m),$ which shows that the phase has a singularity of type $A_{m-1}.$

By means of a linear change of variables in $\xi$-space, which replaces $\xi_2+2v_2\xi_3$ by $\xi_2,$    we may thus reduce to assuming that the complete phase in the oscillatory integral $J^{\chi_v}(\xi)$ 
is given by 
$$
\xi_1y_1+\xi_2y_2+\xi_3\Big(y_2^2-2cv_2y_1^m +c^2y_1^{2m}-2cy_2y_1^m+O(2^{-\delta k})\Big).
$$
We claim that 
$$
|J^{\chi_v}(\xi)|\le C (1+|\xi|)^{-(\frac 12+\frac 1m)} \quad \mbox{for every }\xi\in\RR^3, k\ge k_0,
$$
which is even stronger  than \eqref{3.6}.

Indeed, if 
$$
 |\xi_3|\ll \max\{|\xi_1|,|\xi_2| \},
$$
then this  follows easily by integration by parts, so let us assume that 
$$
|\xi_3|\ge M \max\{|\xi_1|,|\xi_2|\}
$$
for some constant $M>0.$ Then $|\xi_3|\sim |\xi|.$ Consequently, by  applying first the method of stationary phase to the integration in $y_2,$ and then van der Corput's estimate to the $y_1$ integration, we obtain the estimate above. Observe here that these types of estimates are stable under small, smooth perturbations.

\medskip
{\bf Case b ($\phi$ of type D).}  In this case, we see that $\ka=(\frac 1{2m+1}, \frac {m}{2m+1})$ and 
$$
\phi_\ka\x=\phi_\pr\x=g(0,0)x_1(x_2-cx_1^m)^2,
$$
so that  $\frac 1d=\frac {m+1}{2m+1}.$ Again, we may assume without loss of generality that $g(0,0)=1,$  so that 
$$
\phi_\ka\x=x_1x_2^2-2cx_1^{m+1}x_2+c^2x_1^{2m+1}.
$$
Straight-forward computations show that
\begin{eqnarray*}
\pa_1^2\phi_\ka(x)&=&-2cm(m+1)x_1^{m-1}x_2+c^22m(2m+1)x_1^{2m-1},\\
\pa_1\pa_2\phi_\ka(x)&=&2x_2-2c(m+1)x_1^m,\quad \pa_2^2\phi_\ka(x)=2x_1,
\end{eqnarray*}
hence
$$
\Hess(\phi_\ka)(v):=-4(x_2-cx_1^m)\Big(x_2+c(m^2-m-1)x_1^m\Big).
$$
In view of \eqref{3.5}, we see that $\Hess(\phi_\ka)(v)\ne 0,$ if $v_2+c(m^2-m-1)v_1^m\ne 0,$ so that we can again estimate $J^{\chi_v}(\xi)$ by means of the method of stationary phase.

\smallskip
Let us therefore  assume that $\Hess(\phi_\ka)(v)=0,$ i.e., 
\begin{equation}\label{3.7}
v_2=-c(m^2-m-1)v_1^m.
\end{equation}
Observe that then $v_1\ne0, v_2\ne 0.$ Denote by 
$$
P_j(y):=\sum\limits_{|\al|=j}\frac1{\al!}\pa^\al\phi_\ka(v)y^\al
$$
the homogeneous Taylor polynomial of $\phi_\ka$ of degree $j,$   centered at $v.$ Then clearly
$$
P_2(y)=v_1\Big(y_2+(v_2-c(m+1)v_1^m)y_1/v_1\Big)^2=v_1\Big(y_2-cm^2v_1^{m-1}y_1\Big)^2.
$$
Moreover, by \eqref{3.7}
\begin{eqnarray*}
P_3(y)&=&-y_1\Big(\frac 13c^2m^2(m^3-m^2+2m+1)v_1^{2m-2}y_1^2 -cm(m+1)v_1^{m-1}y_1y_2+y_2^2\Big)\\
&=&-y_1Q(y).
\end{eqnarray*}
Passing to the  linear coordinates $z_1:=y_1, \ z_2:= y_2-cm^2v_1^{m-1}y_1,$  one  finds that 
$$P_2=v_1z_2^2,\quad P_3=-z_1\tilde Q(z),
$$
where again $\tilde Q=z_2^2+2\beta_1z_1z_2+\beta_2z_1^2$ is again a quadratic form. Moreover, straight-forward computations show that 
$$\beta_2=\frac{c^2}3m^2(m-1)(m^2-1)v_1^{2m-2}\ne 0.
$$ 
Applying Taylor's  formula, we thus find that, in the coordinates $z,$ 
$$
\tilde\phi(z):= \phi_\ka(v_1+y_1,v_2+y_2)= c_0+c_1z_1+c_2z_2+(v_1z_2^2-\beta_2z_1^3) -(z_1z_2^2+2\beta_1z_1^2z_2)+O(|z|^4). 
$$
Let us put $\phi^v(z):=\phi(z)-(c_0+c_1z_1+c_2z_2),$ so that $\phi^v(0,0)=0,\, \nabla \phi^v(0,0)=0.$
Then one finds that the principal part of $\phi^v$ is given by
$$
\phi^v_\pr(z)=v_1z_2^2-\beta_2z_1^3, \quad \mbox{ where }  \beta_2\ne 0.
$$
We can now argue in a very similar way as in the previous case. Indeed, by passing for the variables $x$ in the integral defining $J^{\chi_v}(\xi),$ and then applying first the method of stationary phase to the integration in $z_2,$ and subsequently van der Corput's estimate to the $z_1$ integration (in the case where $|\xi_3|\ge M \max\{|\xi_1|,|\xi_2|\}$), 
 we obtain the estimate  
$$
|J^{\chi_v}(\xi)|\le C (1+|\xi|)^{-(\frac 12+\frac 13)} \quad \mbox{for every }\xi\in\RR^3, k\ge k_0.
$$
Again, this is a stronger estimate  than \eqref{3.6}, since here 
$$\frac 1d=\frac 12+\frac1{4m+2}\le \frac 12+\frac 13.
$$ 
The proof of Proposition \ref{awayjet} is thus complete.

\bigskip

We are thus left with proving Fourier restriction estimates for the measure $\mu^{\rho_1}$ which is supported in the small neighborhood \eqref{3.1} of the principal root jet. Our main goal will thus be to prove the following
\begin{proposition}\label{nearjet}
Assume that $\phi$ satisfies the assumptions of Theorem \ref{nonadarestrict}. If $\ve>0$ is sufficiently small, then we have 
$$
\Big(\int_S |\widehat f|^2\,d\mu^{\rho_1}\Big)^{1/2}\le C_{p,\ve} \|f\|_{L^p(\RR^3)},\qquad  f\in\S(\RR^3)
$$
whenever $p'\ge p'_c .$ 
\end{proposition}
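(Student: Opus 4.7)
The plan is to split the proof of Proposition \ref{nearjet} according to the dichotomy $d(\phi)<2$ versus $d(\phi)\ge 2$, exactly as announced in the organization of the article, since the analytic tools available in each regime are quite different.

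In the case $d(\phi)<2$, I would exploit the fact that Proposition \ref{normalform1} gives explicit normal forms (type $A$ or type $D$) for $\phi$ after a suitable linear change of coordinates, and by Corollary \ref{s2.1} the critical exponent coincides with $p'_c=2d+2$. Working in the $\ka$-homogeneous neighborhood \eqref{3.1} of the principal root jet, I would perform a dyadic decomposition of $\mu^{\rho_1}$ analogous to \eqref{3.2}, rescale each dyadic piece by $\delta^e_{2^{-k}}$ to obtain measures $\mu_{0,(k)}^{\rho_1}$ carried by surfaces converging to the principal model surface $x_3=\phi_\ka\x$, and then invoke Greenleaf's restriction theorem together with Littlewood--Paley theory in the scaling direction. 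The decay of $\widehat{\mu_{0,(k)}^{\rho_1}}$ at rate $(1+|\xi|)^{-1/d}$ would be established by a careful stationary-phase/van~der~Corput analysis on the rescaled phase $\phi^k$, where the only essential obstruction to $\frac{1}{d}$-decay is a degeneracy along the Airy cone of $\phi_\ka$; this cone degeneracy only occurs in the $m=2$ case, so the $m\ge 3$ case can be handled in Section \ref{nearAD}, while the $m=2$ case requires the additional dyadic decomposition in the distance to the Airy cone developed in Section \ref{propm2_A}.

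In the case $d(\phi)\ge 2$, I would follow the root-cluster decomposition strategy of \cite{IKM-max} and \cite{IM-uniform}: partition the small neighborhood \eqref{3.1} into the $\ka$-homogeneous ``edge domains'' $D_l$ associated to the edges $\ga_l$ of $\N(\pad)$ (using the adapted coordinates \eqref{adaptco}) and intermediate transition domains $E_l$ that bridge neighboring $D_l$. Write $\mu^{\rho_1}=\sum_l \mu_{D_l}+\sum_l\mu_{E_l}$, apply Littlewood--Paley in the appropriate scaling direction, and reduce each piece to uniform estimates for rescaled measures. For the transition zones $\mu_{E_l}$ the relevant Fourier decay follows from van~der~Corput estimates (Lemma \ref{corput}) on the one-dimensional phase obtained after applying stationary phase in the transverse variable; this argument gives decay matching the height $\max(d,h_l)\le h^r(\phi)$ exactly when $d(\phi)\ge 2$. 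For the homogeneous domains $D_l$ with $l\ne l_{\rm pr}$, a similar rescaling combined with the fact that the corresponding reduced hypersurfaces are already adapted allows one to invoke Theorem \ref{adarestrict} at scale, with critical exponent bounded by $2h_l+2\le 2h^r(\phi)+2$.

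The main obstacle, and the focus of Section \ref{nearrootjet}, is the principal root jet domain $D_{\rm pr}$, which contains the curve $x_2=\psi(x_1)$ and where the Newton distance can exceed $d$ up to the full height $h(\pad)$, forcing the refined critical exponent $h^r(\phi)$. Here I would run a further domain-decomposition algorithm that tracks the ``fine splitting'' of the roots of $\pa_2\pad$, using Condition (R) to guarantee that the appropriate factorization $\phi=(x_2-\tilde f(x_1))^B\tilde\phi$ is available at each stage so that the algorithm terminates. In the favorable subcase where the level sets can be fibred into a one-parameter family of curves with non-vanishing torsion I would apply Drury's restriction theorem \cite{drury} for curves; this requires $d(\phi)\ge 5$, so it cleanly handles the case $\hl(\phi)\ge 5$. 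The remaining range $2\le \hl(\phi)<5$ cannot be absorbed by this curve-fibration argument and instead demands the interpolation and finer decomposition arguments announced for \cite{IM-rest2}, which is where the hardest estimates of the theorem actually live; at the present stage I would simply record this as Proposition \ref{nearjet} modulo that deferred case.
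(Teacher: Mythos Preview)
Your outline contains a genuine gap in the $d(\phi)<2$ regime. You propose to establish the restriction estimate for $\mu^{\rho_1}$ via Greenleaf's theorem, i.e., by proving uniform decay $|\widehat{\mu_{0,(k)}^{\rho_1}}(\xi)|\lesssim(1+|\xi|)^{-1/d}$ on the rescaled pieces. But this decay rate is simply not available near the principal root jet. After rescaling, the limiting phase is the $\ka$-principal part, which for type~$A$ reads $\phi_\ka\x=b(0,0)(x_2-cx_1^m)^2$ and for type~$D$ reads $\phi_\ka\x=b_1(0,0)x_1(x_2-cx_1^m)^2$. On the support of $\rho_1$ one has $x_2\approx cx_1^m$, so in adapted coordinates the phase is essentially $y_2^2$ (respectively $x_1y_2^2$): the $y_1$-direction contributes no oscillation at the principal level, and the best one can extract is $|\xi|^{-1/2}$. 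Since $1/d=(m+1)/(2m)>1/2$ for every $m\ge 2$, Greenleaf does not reach $p'=2d+2$. The ``Airy cone'' you mention is a further, finer obstruction that arises only \emph{after} one has already performed the additional decompositions that replace Greenleaf; it is not the primary obstacle.

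What the paper actually does in Sections~\ref{nearAD}--\ref{propm2_A} is quite different. After the $\ka$-dyadic step one lands on the measures $\nu_k$ of \eqref{nuk}, and then performs a \emph{second} dyadic decomposition, this time in the $x_3$-variable (equivalently, in $|\phi|\sim 2^{-2j}$, see \eqref{nudj}), followed by a further rescaling that normalizes $|x_2-\psi(x_1)|\sim 2^{-j}$. One then carries out a Littlewood--Paley decomposition in all three frequency variables and interpolates, for each frequency box, between the bound $\|\widehat{\nu^\la_j}\|_\infty$ (obtained by stationary phase / van der Corput) and the spatial bound $\|\nu^\la_j\|_\infty$. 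The resulting $L^{p_c}\to L^{p_c'}$ operator norms are then summed over the dyadic parameters; the summability is what forces the case distinctions on $2^{2j}\de_0$, and in the borderline $m=2$ case also the Airy-cone decomposition of Section~\ref{propm2_A}. None of this is captured by ``Greenleaf plus $1/d$-decay''.

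Your sketch for $d(\phi)\ge 2$ is closer in spirit but also off in the mechanics. For the transition domains $E_l$ the paper does not first apply stationary phase in a transverse variable; instead Lemma~\ref{equivcond} shows that localization to the curved boxes can be achieved by cut-offs in $x_1$ and $x_3$ (not $x_2$), which permits a bi-dyadic decomposition indexed by $(j,k)$, with each piece handled by Greenleaf at the (universal) decay rate $1/2$ and then summed using the inequalities $p_c'\ge 2(h_l+1)$. For the homogeneous domains $D_l$, one does not ``invoke Theorem~\ref{adarestrict}'' on a rescaled surface; rather one localizes near points $v=(1,c_0)$, distinguishes whether $\pa_2\pad_{\ka^l}(v)$ vanishes, and in the vanishing case exploits the multiplicity bound $B<d/2$ (inequality \eqref{7.7}), which is what makes the interpolation between \eqref{7.8} and \eqref{7.9} close at $p'_c$. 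Your description of $D_{\rm pr}$ and the deferral of $2\le\hl<5$ to \cite{IM-rest2} is accurate.
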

In combination with Proposition \ref{awayjet} this will conclude the proof of Theorem \ref{nonadarestrict}. Notice that by interpolation with the trivial $L^1$-$L^2$- restriction estimate, it will suffice to prove this for $p=p_c.$

\medskip
We shall distinguish between the cases where $\hl<2,$ and  where $\hl\ge 2,$ since their treatments will require somewhat different approaches. Moreover, when   $\hl\ge 5,$ some arguments simplify substantially  compared to the case where    $2<\hl <5,$ since we can then apply  restriction estimates for curves with non-vanishing torsion  originating from seminal work by S.W.  Drury,   so that we shall also distinguish between those subcases.

\setcounter{equation}{0}
\section{The case  when $\hl(\phi)<2$ }\label{nearAD}

In this case, we may assume that $\phi$ is given by one of the normal forms in Proposition \ref{normalform1}. Recall from Corollary \ref{s2.1} that then $p'_c=2d+2.$ Recall also that, because we are assuming  Condition (R) to hold,  the term $b_0$ in \eqref{A} respectively \eqref{D} vanishes identically if $\phi$ is of type $A_\infty$ or $D_\infty$ (cf. Remark \ref{distances1} (c)).
\medskip

In a first step, we shall follow the arguments  from the preceding section and   decompose the measure $\mu^{\rho_1}$ dyadically by means of the dilations associated to the principal weight $\ka.$ Applying  subsequent re-scalings, we may  then reduce ourselves by means of Littlewood-Paley theory to proving the following uniform restriction estimates \eqref{njet}:
\smallskip

For $k\in\NN$ denote by $\nu_k$ the measure given by 
\begin{equation}\label{nuk}
\laa \nu_k,f\ra=2^{|\ka|k}\laa \mu_k, f\circ \dee_{2^{k}}\ra=\int_{(\RR_+)^2} f(x,\phi^k (x))\, \eta(\de_{2^{-k}}x)\chi(x)\rho_1\x\, dx,
\end{equation}
where $\phi^k$ is again given by \eqref{3.4}. Observe  that 
\begin{equation}\label{sim1}
x_1\sim 1\sim x_2
\end{equation}
in the support of the integrand. Recall also from \eqref{phia1} that
$$
\phi\x=\pad(x_1,x_2-\psi(x_1)),
$$
where  according to \eqref{prjet} we may write 
$$
\psi(x_1)=x_1^m \om(x_1),\quad (m\ge 2),
$$
with a smooth  function $\om$   satisfying $\om(0)\ne 0.$

Then, if $\ve>0$  and $\de$ are chosen sufficiently small,  there are constants $C_{\ve}>0$ and $k_0\in \NN$ such that for every $k\ge k_0$
\begin{equation}\label{njet}
\Big(\int |\widehat f|^2\,d\nu_k\Big)^{1/2}\le C_{\ve} \|f\|_{L^{p_c}(\RR^3)},\qquad  f\in\S(\RR^3).
\end{equation}

\medskip
 
 In order to prove this estimate, observe that  $\phi^k$ can be written in the form
\begin{equation}\label{5.1}
\phi(x,\de):=\tilde b( x_1, x_2,\de_1, \de_2)\Big(x_2-x_1^m\om(\de_1x_1)\Big)^2+\de_3 x_1^n \beta(\delta_1x_1),
\end{equation}
where 
$$
\de=(\de_1,\,\de_2\, ,\de_3)=(2^{-\ka_1k},2^{-\ka_2k}, 2^{-(n\ka_1-1)k}) 
$$ 
are small
parameters  which tend to $0$ as $k$ tends to infinity, and where $\tilde b$ is a  smooth function in all variables given  by
\begin{eqnarray}\label{btilde}
\quad \tilde b( x_1, x_2,\de_1, \de_2)
:=\begin{cases}    b(\de_1 x_1,\de_2 x_2),& \mbox{for $\phi$ of type $A$},Ê  \\
x_1b_1(\de_1x_1,\de_2x_2)+ \de_1^{2m-1}x_2^2b_2(\de_2 x_2),& \mbox{for $\phi$ is  type $D$}.
\end{cases}
\end{eqnarray}
Note that  $\de_3:=0$ when $\phi$ is of type $A_\infty$ or $D_\infty.$
Recall also that here $x_1\sim1\sim x_2,$ and notice  that 
$$
\om(0)\ne 0, \mbox{  and  } \tilde b(x_1,x_2,0,0)\sim 1.
$$

It is thus easily  seen by means of a partition of unity argument that it will suffice to prove the following proposition in order to verify \eqref{njet}.
\begin{proposition}\label{s5.1}
Let $\phi(x,\de)$ be as in \eqref{5.1}. Then, for every point $v=(v_1,v_2)$ such that $v_1\sim 1$ and $v_2=v_1^m\om(0),$ there exists a neighborhood $V$ of $v$ in $(\RR_+)^2$ such that for every cut-off function $\eta\in \D(V),$ the measure 
$\nu_\de$ given by
$$
\laa \nu_\de,f\ra:=\int f(x,\phi (x,\de))\, \eta\x\, dx
$$
satisfies a restriction estimate 
\begin{equation}\label{5.2}
\Big(\int |\widehat f|^2\,d\nu_\de\Big)^{1/2}\le C_{\eta} \|f\|_{L^{p_c}(\RR^3)},\qquad  f\in\S(\RR^3),
\end{equation}
provided $\de$ is sufficiently small, with a constant $C_{\eta}$ which depends only on  some $C^k$-norm of $\eta.$

\end{proposition}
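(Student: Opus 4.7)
I would reduce \eqref{5.2} to a uniform Fourier decay estimate for the measure $\nu_\de$ (and its dyadic refinements) of order $(1+|\xi|)^{-1/d}$, and then invoke Greenleaf's theorem to convert such decay into the $L^{p_c}$-$L^2$ restriction estimate. The first step is to straighten the principal root jet: after the smooth parameter change $y_1:=x_1$, $y_2:=x_2-x_1^m\om(\de_1 x_1)$ the surface is parameterized by
$$(y_1,\,y_2+y_1^m\om(\de_1 y_1),\,\tilde b\cdot y_2^2+\de_0 y_1^n\beta(\de_1 y_1)),$$
with $y_1$ close to $v_1\sim 1$ and $|y_2|$ small. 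Since $\tilde b(v_1,v_2,0,0)\ne 0$ (type A) or $\tilde b\sim v_1\ne 0$ (type D), the quadratic term dominates the $x_3$-coordinate except where $y_2$ is very small, which is precisely the region responsible for the loss of adaptedness.

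Second, to handle the degeneracy along $y_2=0$, I would further decompose $|y_2|\sim 2^{-j}$ dyadically, producing pieces $\nu_{\de,j}$. Rescaling by $z_1:=y_1$, $z_2:=2^j y_2$, matched by the anisotropic dilation on the frequency side $(\xi_1,\xi_2,\xi_3)\mapsto(\xi_1,2^{-j}\xi_2,2^{-2j}\xi_3)$, one obtains a family of measures $\tilde\nu_{\de,j}$ supported on unit-sized pieces. The exponents of the rescaling are chosen so that, at the critical exponent $p=p_c=2d+2$, the resulting restriction estimate is scale-invariant, in the spirit of the proof of Proposition \ref{awayjet} and of \cite{IM-uniform}; Littlewood--Paley theory adapted to the dyadic decomposition in $y_2$ then reassembles the global estimate.

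The heart of the matter is thus the uniform Fourier decay bound
$$|\widehat{\tilde\nu_{\de,j}}(\xi)|\le C(1+|\xi|)^{-1/d}, \qquad \xi\in\RR^3,$$
with constants independent of $\de$ and $j\ge j_0$. To prove it I would first apply the method of stationary phase in $z_2$, exploiting $\pa_{z_2}^2\phi\sim 2\tilde b\ne 0$, to reduce to a one-dimensional oscillatory integral in $z_1$. The remaining phase is, up to smooth parameter-dependent perturbations, of the form $z_1\mapsto a_1 z_1+a_2 z_1^m+a_3\,2^{2j}\de_0 z_1^n+\cdots$, and a case analysis on which term dominates, combined with the van der Corput type Lemma \ref{corput} with an order $M$ adapted to $m$ and $n$, produces the decay with $d=2m/(m+1)$ in type A and $d=(2m+1)/(m+1)$ in type D, matching Remarks \ref{distances1}.

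\emph{Main obstacle.} The hardest case, deferred to Section \ref{propm2_A}, is $m=2$. Here, after stationary phase in $z_2$, the reduced one-dimensional phase in $z_1$ contains a cubic term arising from $\om'(0)\ne 0$ together with the perturbation $2^{2j}\de_0 z_1^n$, and its critical points can coalesce in Airy fashion precisely in the region where the $z_2$-leading coefficient $\tilde b$ fails to be uniformly bounded below. This obstructs a direct application of stationary phase plus van der Corput and forces an additional dyadic decomposition with respect to the distance to an \emph{Airy cone} in frequency space. For $m\ge 3$ the degeneracies are transverse to the $z_2$-direction and Lemma \ref{corput} suffices. A pervasive secondary technical point is that every stationary phase and van der Corput estimate must be uniform, and hence quantitatively stable, under the smooth dependence of the phase on the small parameters $\de=(\de_0,\de_1,\de_2)$.
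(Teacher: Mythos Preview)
Your overall architecture (straighten the root jet, decompose dyadically in the degenerate direction, rescale, reassemble via Littlewood--Paley) matches the paper's, but the central analytic step you propose does \emph{not} work, and the claim of scale-invariance is false.

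\medskip
\textbf{The Greenleaf reduction fails.} After your rescaling $z_2=2^j y_2$ (this is essentially the paper's passage to $\tilde\nu_{\de,j}$ in the regime $2^{2j}\de_0\ll 1$), the rescaled surface is parametrized by
\[
(z_1,\;2^{-j}z_2+z_1^m\om(\de_1 z_1),\;\tilde b\,z_2^2+\cdots),\qquad z_1\sim 1,\ |z_2|\sim 1,
\]
and as $j\to\infty$ this \emph{degenerates to a curve}: the second coordinate loses its $z_2$-dependence. Concretely, take $|\xi_1|\sim|\xi_2|=\la$ and $|\xi_3|\ll 2^{-j}\la$. Then there is no oscillation in $z_2$, and stationary phase in $z_1$ gives only $|\widehat{\tilde\nu_{\de,j}}(\xi)|\sim\la^{-1/2}$. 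Since $1/d=\tfrac12+\tfrac1{2m}>\tfrac12$ for type~A (and similarly for type~D), the claimed uniform decay $(1+|\xi|)^{-1/d}$ is simply false, and Greenleaf's theorem does not apply.

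\medskip
\textbf{What the paper does instead.} The paper also decomposes dyadically in the $x_3$-variable (equivalently in $|y_2|$), but then performs a further Littlewood--Paley decomposition in \emph{all three} frequency variables, producing pieces $\nu_j^\la$ with $|\xi_i|\sim\la_i$. For each piece it proves \emph{two} estimates: a Fourier-side bound on $\|\widehat{\nu_j^\la}\|_\infty$ (which may be as weak as $\la_1^{-1/2}$) and a physical-side bound on $\|\nu_j^\la\|_\infty$ (typically $\lesssim\la_1$). Interpolating at $\theta=2/p_c'$ gives $\|T_j^\la\|_{p_c\to p_c'}\lesssim\la_1^{(3\theta-1)/2}$, and summing over $\la_1$ in the relevant range yields $\|T_{\de,j}\|_{p_c\to p_c'}\lesssim 2^{\frac{3\theta-1}{2}j}$. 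This is then compared to the bound $2^{(1-4/p_c')j}$ that the rescaling actually demands --- the rescaling is \emph{not} scale-invariant at $p_c$; the required constant for $\tilde\nu_{\de,j}$ grows like $2^{(1-4/p_c')j}$, cf.\ \eqref{rest4}. The paper further splits into three regimes according to whether $2^{2j}\de_0$ is $\ll1$, $\sim1$, or $\gg1$, each requiring a genuinely different change of variables and set of estimates.

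\medskip
\textbf{On the $m=2$ obstacle.} Your diagnosis is off on two points. First, $\tilde b$ is uniformly bounded away from zero near $v$ in both types (for type~D, $\tilde b(v,0,0)=v_1 b_1(0,0)\ne0$), so there is no region where it ``fails to be bounded below''. Second, the Airy degeneration does not come from $\om'(0)$; it arises in the regime $2^{2j}\de_0\sim1$, $\la_1\sim\la_2\sim\la_3$, where after stationary phase in $x_2$ the remaining phase in $x_1$ is $\xi_1x_1+\xi_2\om(0)x_1^m+\xi_3\si\be(0)x_1^n+\cdots$. The three distinct monomial degrees $1,m,n$ produce an Airy-type critical set in $\xi$-space, and for $m=2$ the resulting bound $\la_1^{(14\theta-5)/6}$ fails to sum over the full range, forcing the additional dyadic decomposition in the distance to the Airy cone carried out in Section~\ref{propm2_A}.
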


In oder to prove this proposition, we shall perform yet another dyadic decomposition, this time with respect to the $x_3$-variable. A straight-forward modification of  the proof of Corollary 1.6 in \cite{IM-uniform} then allows to reduce the proof again  by means of Littlewood-Paley theory to uniform restriction estimates for the following family of measures:
\begin{equation}\label{nudj}
\laa \nu_{\de,j},f\ra:=\int f(x,\phi (x,\de))\, \chi_1(2^{2j}\phi(x,\de))\eta\x\, dx.
\end{equation}

Here, $\chi_1\in\D(\RR)$ is a fixed, non-negative smooth bump-function supported in \hfill\newline $(-2,-1/2)\cup(1/2,\, 2)$ such that $\chi_1\equiv 1$ in a neighborhood of the points $-1$ and $1.$ Notice that $\nu_{\de,j}$ is supported where $|\phi(x,\de)|\sim 2^{-2j}.$ I.e., in place of \eqref{5.2}, it will be sufficient to prove an analogous uniform estimate

\begin{equation}\label{5.3}
\Big(\int |\widehat f|^2\,d\nu_{\de,j}\Big)^{1/2}\le C_{\eta} \|f\|_{L^{p_c}(\RR^3)},\qquad  f\in\S(\RR^3),
\end{equation}
for all $j\in\NN$ sufficiently big, say $j\ge j_0,$ where the constant $C_{\eta}$ does neither depend on $\de,$ nor on  $j.$

\smallskip
In order to verify \eqref{5.3}, we shall distinguish three cases, depending on the size of $2^{2j}\de_3.$

\subsection[caseAD1]{The situation where $2^{2j}\de_3 \gg1$}

Observe first that if $j$ is sufficiently large, then by \eqref{5.1} and since $x_1\sim 1,$  $\nu_{\de,j}=0$ unless  $\tilde b(v,\de_1,\de_2)$ and $\be(0)$ have  opposite signs. So, let us for instance assume that $\tilde b( x_1, x_2,\de_1, \de_2)>0$ and $\beta(\delta_1x_1)<0$ on the support of $\eta.$ Then $\tilde\beta:=-\beta>0,$ and we may re-write 
$$
2^{2j}\phi(x,\de)=2^{2j}\tilde b( x_1, x_2,\de_1, \de_2)\Big(x_2-x_1^m\om(\de_1x_1)\Big)^2-2^{2j}\de_3 x_1^n \tilde\beta(\delta_1x_1).
$$
We introduce new coordinates $y$ by putting $y_1:=x_1$ and $y_2:=2^{2j}\phi(x,\de).$ Solving for $x_2,$ one easily finds that 
\begin{equation}\label{5.4}
x_2=\tilde b_1\Big(y_1,\sqrt{2^{-2j} y_2+\de_3y_1^n\tilde\beta(\de_1y_1)},\de_1,\delta_2\Big)\, \sqrt{2^{-2j} y_2+\de_3y_1^n\tilde\beta(\de_1y_1)}+y_1^m\om(\de_1y_1)\, ,
\end{equation}
where $\tilde b_1$ has similar properties like $\tilde b.$ Moreover, by the support properties of the amplitude $\chi(2^{2j}\phi(x,\de))\eta\x,$ we see that also for the new coordinates we have 
$
y_1\sim 1\sim y_2,
$
and that we can re-write 
$$
\laa \nu_{\de,j},f\ra=\frac{2^{-2j}}{\sqrt{\de_3}}\int f\Big(y_1,\,\phi(y,\de,j), 2^{-2j}y_2\Big)\, a(y,\de,j)\, \chi_1(y_1)\chi_1(y_2)\, dy,
$$
with a cut-off function $\chi$ as before, and where $a(y,\de,j)$ is smooth in $y$ and $\de,$ with  $C^k$-norms uniformly bounded in $\de$ and $j,$ and where 
\begin{equation}\label{5.5}
\phi(x,\de,j):=\tilde b_1\Big(x_1,\sqrt{2^{-2j} x_2+\de_3x_1^n\tilde\beta(\de_1x_1)},\de_1,\delta_2\Big)\, \sqrt{2^{-2j} x_2+\de_3x_1^n\tilde\beta(\de_1x_1)}+x_1^m\om(\de_1x_1)\, .
\end{equation}

We have re-named the variable $y$ to become $x$ here, since if we  define the  measure $\tilde\nu_{\de,j}$  by
\begin{equation}\label{5.6}
 \laa\tilde\nu_{\de,j},f\ra:=\int f\Big(x_1,\,\phi(x,\de,j), x_2\Big)\, a(x,\de,j)\, \chi_1(x_1)\chi_1(x_2)\, dx,
\end{equation}
then the restriction estimate \eqref {5.3} for the measure $\nu_{\de,j}$ is equivalent to the following restriction estimate for the measure $\tilde\nu_{\de,j}:$

\begin{equation}\label{5.7}
\int |\widehat f|^2\,d\tilde\nu_{\de,j}\le C_{\eta}\,\sqrt{\de_3}\,2^{2j(1-\frac2{{p_c}'})}\, \|f\|^2_{L^{p_c}(\RR^3)},\qquad  f\in\S(\RR^3)
\end{equation}
for all $j\in\NN$ sufficiently big, say $j\ge j_0,$ where the constant $C_{\eta}$ does neither depend on $\de,$ nor on  $j.$

\smallskip

Formula \eqref{5.6} shows that the Fourier transform of the
measure $ \tilde\nu_{\de,j}$ can be expressed as an oscillatory integral
\begin{equation}\label{5.8}
\widehat{\tilde\nu_{\de,j}}(\xi)=\int e^{-i
\Phi(x,\de,j,\xi)}a(x,\de,j)\, \chi_1(x_1)\chi_1(x_2)\, dx,
\end{equation}
where the complete phase function $\Phi$ is given by 
\begin{equation}\label{5.9}
\Phi(x,\de,j,\xi):=\xi_2\phi(x,\de,j)+ \xi_3x_2+\xi_1x_1.
\end{equation}

Finally, we shall perform a Littlewood- Paley decomposition of the   measure $\tilde\nu_{\de,j}$ in each coordinate. To this end, we fix again  a suitable smooth cut-off function $\chi_1\ge 0$ on $\RR$ supported in $(-2,-1/2)\cup(1/2,2)$ such that the functions $\chi_k(t):=\chi_1(2^{1-k}t), k\in\NN\setminus\{0\},$ in combination with a suitable smooth function $\chi_0$ supported  in $(-1,1),$ form a partition of unity, i.e.,
\begin{equation}\label{pu1}
\sum_{k=0}^\infty\chi_k(t)=1\quad \mbox{ for all } t\in\RR.
\end{equation}
For every multi-index $k=(k_1,k_2,k_3)\in\NN^3,$ we put
\begin{equation}\label{pu2}
\chi_k(\xi):=\chi_{k_1}(\xi_1)\chi_{k_2}(\xi_2)\chi_{k_3}(\xi_3),
\end{equation}
and finally define the smooth functions $\nu_{k,j}$ by 
$$
\widehat{\nu_{k,j}}(\xi):=\chi_k(\xi)\widehat{\tilde\nu_{\de,j}}(\xi).
$$
In order to defray the notation, we have suppressed here the dependency of this smooth function on the small parametersÊ $\de.$ We then find that 
\begin{equation}\label{5.12}
\tilde\nu_{\de,j}=\sum_{k\in\NN^3} \nu_{k,j},
\end{equation}
in the sense of distributions. To simplify the subsequent discussion, we shall concentrate on those measures $\nu_{k,j}$ for which none of its components  $k_i$'s are zero, since the remaining cases where for instance $k_i$ is zero can be dealt with in the same way as the corresponding cases where $k_i\ge 1$ is small.

Now, if  $1\le \la_i=2^{k_i-1}, i=1,2,3,$ are dyadic numbers, we shall  accordingly write $\nu^{\la}_{j}$ in place of $\nu_{k,j},$ i.e., 
\begin{equation}\label{nulaj}
\widehat{\nu^\la_{j}}(\xi)=\chi_1\Big(\frac{\xi_1}{\la_1}\Big)\chi_1\Big(\frac{\xi_2}{\la_2}\Big)\chi_1\Big(\frac{\xi_3}{\la_3}\Big)\widehat{\tilde\nu_{\de,j}}(\xi).
\end{equation}
Note that 
\begin{equation}\label{5.13}
|\xi_i| \sim\la_i, \quad \mbox{  on  } \supp\widehat{\nu^{\la}_{j}}.
\end{equation}

Moreover, by \eqref{5.6},
\begin{eqnarray}\nonumber
\nu^{\la}_{j}(x)=\la_1\la_2\la_3&\int \check\chi_1\Big({\la_1}(x_1-y_1)\Big) \, \check\chi_1\Big({\la_2}(x_2-\phi(y,\de,j))\Big)\label{5.14}\\
&\check\chi_1\Big({\la_3}(x_2-y_2)\Big)
\,a(y,\de,j)\, \chi(y_1)\chi(y_2)\, dy,
\end{eqnarray}
where $\check f$ denotes the inverse Fourier transform of $f.$

\medskip
We begin by  estimating the Fourier transform of $ \nu^{\la}_{j}.$  To this end, we first integrate in $x_1$ in \eqref{5.6}, and then in $x_2,$ assuming  that \eqref{5.13} holds true. 
We shall concentrate on those $ \nu^{\la}_{j}$  for which
\begin{equation}\label{5.15}
\la_1\sim\la_2\sim \sqrt{\de_3}2^{2j}\la_3.
\end{equation}
\smallskip
In all other cases, the phase has  no critical point on  the support of the amplitude, and we obtain much faster Fourier decay estimates by repeated integrations by parts, so that the corresponding terms  can be considered as error terms. 
Observe also that 
$$
\frac{\pa^2}{\pa x_2^2}\Phi(x,\de,j,\xi)\sim \la_2 \de_3^{-3/2}2^{-4j} 
$$
on the support of the amplitude. We therefore distinguish two subcases.

\smallskip
{\bf 1. Case: $1\le \la_1\lesssim \de_3^{3/2}2^{4j}.$}
 In this case  we cannot gain from the integration in  $x_2$ but, by applying van der Corput's lemma 
(or the method of stationary phase) in $x_1$ we obtain 
\begin{equation}\label{5.16}
\|\widehat{\nu^{\la}_{j}}\|_\infty\lesssim
\frac{1}{\la_1^{1/2}}.
\end{equation}

\smallskip
{\bf 2. Case: $\la_1\gg \de_3^{3/2}2^{4j}.$}  Then, by first applying the method of stationary phase to the integration in $x_1,$ and subsequently applying the classical van der Corput lemma (or Lemma \ref{corput}, with $M=2$) to the integration in $x_2,$ we obtain
\begin{equation}\label{5.17}
\|\widehat{\nu^{\la}_{j}}\|_\infty\lesssim \frac{1}{\la_1^{1/2}}\frac 1{(\la_2 \de_3^{-3/2}2^{-4j})^{1/2}}
\lesssim \frac{\de_3^{3/4}2^{2j}}{\la_1}.
\end{equation}

\medskip
Next, from \eqref{5.14}, we trivially obtain the following estimate for the $L^\infty$-norm of $ \nu^{\la}_{j}:$ 
\begin{equation}\label{5.18}
\|\nu^{\la}_{j}\|_\infty\lesssim \la_2\sim \la_1,
\end{equation}
in Case 1 as well as in Case 2. All these estimates are uniform in $\de,$ for $\de$ sufficiently small.

\medskip

For each of the measures $\nu^{\la}_{j},$ we can now obtain suitable restriction estimates by applying the usual approach. Let us denote by $ T_{\de,j}$ the convolution operator 
$$
 T_{\de,j}: \vp \mapsto \vp\ast \widehat{\tilde\nu_{\de,j}},
$$
and similarly by $ T_j^\la$ the convolution operator 
$$
 T_j^\la: \vp \mapsto \vp\ast \widehat{\nu^{\la}_{j}}.
$$
Formally, by \eqref{5.12}, $ T_{\de,j}$ decomposes as
\begin{equation}\label{5.19}
T_{\de,j}=\sum_{k\in\NN^3} T_j^{2^k},
\end{equation}
if $2^k$ represents the vector $2^k:=(2^{k_1},2^{k_2},2^{k_3})$ (with a suitably modified definition of  $T_j^{2^k}$ when one of the components $k_i$ is zero).
If we denote by $\|T\|_{p\to q}$ the norm of  $T$ as an operator from $L^p$ to $L^q,$ then clearly $\| T_j^\la\|_{1\to\infty}=\|\widehat{\nu^{\la}_{j}}\|_\infty$ and $\| T_j^\la\|_{2\to 2}
=\|\nu^{\la}_{j}\|_\infty.$

The estimates \eqref{5.16} - \eqref{5.18} thus yield the following bounds:
\begin{eqnarray*}
\| T_j^\la\|_{1\to\infty}\lesssim 
\begin{cases}       \la_1^{-1/2}, & \mbox{  if  } \ 1\le \la_1\lesssim \de_3^{3/2}2^{4j},\\
   \dfrac{\de_3^{3/4}2^{2j}}{\la_1},&  \mbox{  if  } \ \la_1\gg \de_3^{3/2}2^{4j},
\end{cases}
   \end{eqnarray*}
   and 
$
\| T_j^\la\|_{2\to 2}\lesssim \la_1.
$
Interpolating these estimates,  and defining the critical interpolation parameter $\th=\th_c$ by  $1/p'_c=(1-\theta)/\infty+\theta/2=\theta/2,$
i.e.,
$$
\th:=\frac 1{p_c'},
$$ 
we find that 
\begin{eqnarray}\label{5.20}
\| T_j^\la\|_{p_c\to p_c'}\lesssim 
\begin{cases}       \la_1^{\frac{3\theta-1}2}, & \mbox{  if  } \ 1\le \la_1\lesssim \de_3^{3/2}2^{4j},\\
  \de_3^{\frac 34(1-\theta)} 2^{2(1-\theta) j}\la_1^{2\theta-1},&  \mbox{  if  } \ \la_1\gg \de_3^{3/2}2^{4j},
\end{cases}
   \end{eqnarray}
where according to Remark \ref{distances1}
\begin{equation}\label{5.21}
\theta=\begin{cases}   \frac {m+1}{3m+1}, & \mbox{ if $\phi$ is of type $A$},    \\
 \frac {m+1}{3m+2},  &     \mbox{ if $\phi$ is of type $D$}.
\end{cases}
\end{equation}
Observe that in particular 
\begin{equation}\label{thcond}
\frac 13<\th\le \frac 3 7,
\end{equation}
and  $\th=3/7$ if and only if $m=2$ and $\phi$ is of type $A.$ The latter case will turn out to be the most difficult one.




\medskip
Observe next that the  main contributions to the series \eqref{5.19} come from those dyadic $\la=2^k$ for which $\la_1\sim\la_2\sim \sqrt{\de_3}2^{2j}\la_3.$ Under these relations, for $\la_1$ given, $\la_2$ and $\la_3$ may only vary in a finite set whose cardinality is bounded by a fixed number.
This shows that, up to an easily bounded error term, 
$$
\| T_{\de,j}\|_{p_c\to p_c'}\lesssim \sum_{\la_1=2}^{ \de_3^{3/2}2^{4j}}\la_1^{\frac{3\theta-1}2 } +
\sum_{\la_1>\de_3^{3/2}2^{4j}} \de_3^{\frac 34(1-\theta)} 2^{2(1-\theta) j} \la_1^{(2\theta-1)}.
$$
Here, and in the sequel, summation over $\la_1,\la_2$ etc. means that we sum over dyadic numbers $\la_1,\la_2$ etc. only.
Now, by \eqref{thcond}, $2\theta-1<0 $ and $  0\le 3\theta-1\le 1,$
which  yields 
$$
\| T_{\de,j}\|_{p_c\to p_c'} \lesssim \de_3^{\frac 34(3\theta-1)}2^{(3\theta-1)2j}.
$$

Applying the usual $T^*T$-argument, we thus need to prove that 
$$
\de_3^{\frac 34(3\theta-1)}2^{(3\theta-1)2j}\le C \sqrt{\de_3}\,2^{2j(1-\frac2{p_c'})}
$$ in order to  verify that  the restriction estimate \eqref{5.7} holds true for $p=p_c=2d+2.$ However, since $2/p_c'=\theta,$ the previous estimate  is equivalent to 
$$
2^{2j(4\theta-2)}\le C \de_3^{\frac {5-9\theta}4}.
$$
But, since $2^{2j}\de_3 \gg1$ and $2\theta-1<0,$ we see that 
$2^{2j(4\theta-2)}\le C\, \de_3^{2-4\theta},$ and therefore we only have to verify that $2-4\theta\ge (5-9\theta)/4,$ i.e., 
$7\theta\le 3,$ which is true according to \eqref{thcond}.
\smallskip

This is obvious  by \eqref{5.21}, and we thus have  verified the  restriction estimate \eqref{5.3}  in this subcase.

\bigskip

{\bf There remains the case  $2^{2j}\de_3 \le C,$ }where $C$ is a fixed, possibly large constant. 

\medskip

Observe  that 
 the change of variables $(x_1,x_2)\mapsto(x_1, x_2+x_1^m\om(\de_1x_1))$ and subsequent scaling in $x_2$ by the factor $2^{-j}$ allows to re-write  the measure $\nu_{\de,j}$ given by \eqref{nudj} as 
$$
 \laa\nu_{\de,j},f\ra=2^{-j}\int f\Big(x_1,2^{-j}x_2+x_1^m\om(\de_1x_1),2^{-2j}\pad(x,\de,j)\Big)\,a(x,\de,j)\,dx,
$$
where here 
\begin{equation}\label{padj}
\pad(x,\de,j):=\tilde b( x_1, 2^{-j}x_2+x_1^m \om(\de_1
x_1),\de_1,\de_2)x_2^2
+2^{2j}\de_3 x_1^n \beta(\de_1 x_1),
\end{equation}
and
\begin{equation}\label{adef}
 a(x,\de,j):= \chi_1(\pad(x,\de,j))\,\eta(x_1,
 2^{-j}x_2+x_1^m\om(\de_1x_1)).
\end{equation}
Let us here introduce the re-scaled measure $\tilde\nu_{\de,j}$  by
\begin{equation}\label{5.23}
 \laa\tilde\nu_{\de,j},f\ra:=\int f\Big(x_1,2^{-j}x_2+x_1^m\om(\de_1x_1),\pad(x,\de,j)\Big)\,a(x,\de,j)\,dx.
\end{equation}
Then, it is easy to see by means of a scaling in the variable $x_3$ by the factor $2^{-2j}$ that the restriction estimate \eqref {5.3} for the measure $\nu_{\de,j}$ is equivalent to the following restriction estimate for the measure $\tilde\nu_{\de,j}:$

\begin{equation}\label{rest4}
\int_S |\widehat f|^2\,d\tilde\nu_{\de,j}\le C_{\eta}\,2^{(1-\frac 4{p'_c})j}\, \|f\|^2_{L^{p_c}(\RR^3)}=C_{\eta}\,2^{(1-2\th)j}\, \|f\|^2_{L^{p_c}(\RR^3)},\qquad  f\in\S(\RR^3),
\end{equation}
for all $j\in\NN$ sufficiently big, say $j\ge j_0,$ where the constant $C_{\eta}$ does neither depend on $\de,$ nor on  $j.$

\medskip

In order to prove \eqref{rest4}, we again distinguish two subcases.

\subsection[caseAD2]{The situation  where $2^{2j}\de_3 \ll1$}

Notice that here the phase $\pad(x,\de,j)$ is a small perturbation of $\tilde b(v,0,0)x_2^2,$ where $\tilde b(v,0,0)\sim 1.$ This shows that also in the new coordinates, $x_1\sim 1\sim x_2$ on the support of the amplitude $a,$ which in return implies 
\begin{equation}\label{5.25}
\pa_2\pad(x,\de,j)\sim 1.
\end{equation}

We can thus write 
\begin{equation}\label{5.26}
\widehat{\tilde\nu_{\de,j}}(\xi)=\int e^{-i
\Phi(x,\de,j,\xi)}a(x,\de,j)\chi_1(x_1)\chi_1(x_2)\,dx,
\end{equation}
where the complete phase function $\Phi$ is now given by 
\begin{equation}\label{5.27}
\Phi(x,\de,j,\xi):=\xi_3\pad(x,\de,j)+2^{-j}\xi_2x_2+\xi_2x_1^m\om(\de_1
x_1)+\xi_1x_1,
\end{equation}
with $\pad$ given by \eqref{padj}, and where $\chi$ has similar properties as before

As in the previous subcase, we  perform a Littlewood- Paley decomposition \eqref{5.12} of the  the measure $\tilde\nu_{\de,j}$ in each coordinate and define the measure $\nu_j^\la$ by \eqref{nulaj}. 
Then here we have 
\begin{eqnarray}\nonumber
\nu^{\la}_{j}(x)=\la_1\la_2\la_3&\int \check\chi_1\Big({\la_1}(x_1-y_1)\Big) \, \check\chi_1\Big({\la_2}(x_2-2^{-j}y_2-y_1^m\om(\de_1y_1))\Big)\label{5.28}\\
&\check\chi_1\Big({\la_3}(x_3-\pad(y,\de,j))\Big)
\,a(y,\de,j)\,\chi_1(y_1)\chi_1(y_2)\,dy,
\end{eqnarray}
where $\check f$ denotes the inverse Fourier transform of $f.$

\medskip
We begin by  estimating the Fourier transform of $ \nu^{\la}_{j}.$  To this end, we first integrate in $x_2$ in \eqref{5.26}, and then in $x_1.$ We may assume  that \eqref{5.13} holds true. Since  then the phase function  $\Phi$ has no critical point in $x_2$ unless $\la_3\sim 2^{-j}\la_2,$ and similarly in $x_1,$ unless $\la_2\sim \la_1,$ we shall concentrate on those $ \nu^{\la}_{j}$  for which
\begin{equation}\label{5.29}
\la_1\sim \la_2 \quad \mbox{and} \ 2^{-j}\la_2\sim \la_3.
\end{equation}
\smallskip
In all other cases, we obtain much faster Fourier decay estimates by repeated integrations by parts, so that the corresponding terms  can be considered as error terms.

\smallskip
{\bf 1. Case: $1\le \la_1\le 2^{j}.$}  In this case the phase function has essentially 
no oscillation in the $x_2$ variable. But, by applying van der Corput's lemma 
(or the method of stationary phase) in $x_1$ we obtain in combination with  \eqref{5.29} Êthe estimate
\begin{equation}\label{5.30}
\|\widehat{\nu^{\la}_{j}}\|_\infty\lesssim
\frac{1}{\la_1^{1/2}}.
\end{equation}

\smallskip
{\bf 2. Case: $\la_1>2^{j}.$}  Observe that in this case, our assumptions imply that 
$\de_32^{2j}\la_3\ll\la_3\ll \la_2,$ if  $j\ge j_0\gg1$. Moreover, depending on the signs  of the 
$\xi_i,$ we may have no critical point, or exactly one  non-degenerate critical point, with respect to each of the variables $x_2$ and $x_1.$  So,  integrating by parts, respectively  applying the method of stationary phase in the presence of a critical point,   first in $x_2$ and then in $x_1,$  we obtain 
\begin{equation}\label{5.31}
\|\widehat{\nu^{\la}_{j}}\|_\infty\lesssim
\frac{2^{j/2}}{\la_1}.
\end{equation}

\medskip
Next, we estimate the $L^\infty$-norm of $ \nu^{\la}_{j}.$ To this end, notice that \eqref{5.25} shows that we may change coordinates in \eqref{5.28} by putting $(z_1,z_2):=(y_1, \phi(y_1,y_2,\de,j)).$ Since the Jacobian of this coordinate change is of order $1,$ we thus obtain  that
\begin{eqnarray*}
|\nu^{\la}_{j}(x)|\lesssim\la_1\la_2\la_3\iint \Big|\check\chi_1\Big({\la_1}(x_1-z_1)\Big) \, 
\check\chi_1\Big({\la_3}(x_3-z_2)\Big)
\,\tilde a(z,\de,j)\Big|\,dz_1 \, dz_2,
\end{eqnarray*}
hence 
\begin{equation}\label{5.32}
\|\nu^{\la}_{j}\|_\infty\lesssim \la_2\sim \la_1,
\end{equation}
in Case 1 as well as in Case 2. 
\medskip

For the operators $ T_{\de,j}$ and $ T_j^\la$ which  appear in this subcase, the estimates \eqref{5.30} - \eqref{5.32} thus yield the following bounds:
\begin{eqnarray*}
\| T_j^\la\|_{1\to\infty}\lesssim 
\begin{cases}       \la_1^{-1/2}, & \mbox{  if  } \ 1\le \la_1\le 2^{j},\\
   2^{j/2} \la_1^{-1},&  \mbox{  if  } \ \la_1>2^{j},
\end{cases}
   \end{eqnarray*}
   and 
$
\| T_j^\la\|_{2\to 2}\lesssim \la_1.
$
Interpolating these estimates,  we find that 
\begin{eqnarray}\label{5.33}
\| T_j^\la\|_{p_c\to p_c'}\lesssim 
\begin{cases}       \la_1^{\frac{3\theta-1}2}, & \mbox{  if  } \ 1\le \la_1\le 2^{j},\\
   2^{\frac {1-\theta} 2 j}\la_1^{2\theta-1},&  \mbox{  if  } \ \la_1>2^{j},
\end{cases}
   \end{eqnarray}
where $\theta$ is again given by \eqref{5.21}.

\medskip
Now, in view of \eqref{5.29}, the main contributions to the series \eqref{5.19} comes here  from those dyadic $\la=2^k$ for which $\la_1\sim \la_2 \quad \mbox{and} \ 2^{-j}\la_2\sim \la_3.$ 
Thus, up to an easily bounded error term, 
\begin{equation}\label{5.34}
\| T_{\de,j}\|_{p_c\to p_c'}\lesssim \sum_{\la_1=2}^{2^j} \la_1^{\frac{3\theta-1}2 } +
\sum_{\la_1=2^{j+1}}^\infty 2^{\frac {1-\theta}2 j}\la_1^{2\theta-1}\lesssim 2^{\frac{3\theta-1}2 j}\le 2^{(1-2\th) j} ,
\end{equation}
since, by \eqref{thcond} we have $2\theta-1<0$  and $(3\theta-1)/2\le 1-2\th.$

This verifies the  restriction estimate \eqref{rest4} and thus  
concludes the proof of Proposition \ref{s5.1} also in this subcase.

\smallskip
\subsection[caseAD3]{The situation where $2^{2j}\de_3 \sim1$} Notice that here we can no longer conclude that $x_2\sim 1$ on the support of the amplitude $a(x,\de,j),$ only that $|x_2|\lesssim 1,$ whereas still $x_1\sim 1.$
Observe also that here the cases $A_\infty$ and $D_\infty$ are excluded, since in these cases $\de_3=0.$ 

 Putting $\si:=2^{2j}\de_3,$ and
 $
b^\sharp(x,\de,j):=\tilde b( x_1, 2^{-j}x_2+x_1^m \om(\de_1x_1),\de_1,\de_2),
 $
 we may re-write the complete phase in \eqref{5.27} as
\begin{eqnarray}\nonumber
\Phi(x,\de,j,\xi)&=&\xi_1x_1+\xi_2x_1^m\om(\de_1x_1)+\xi_3\si x_1^n \beta(\de_1 x_1)  \\
&&+2^{-j}\xi_2x_2+\xi_3b^\sharp(x,\de,j)\,x_2^2,\label{5.35}
\end{eqnarray}
where $\si\sim 1$  and $|b^\sharp(x,\de,j)|\sim 1,$
and \eqref{5.28} as
\begin{eqnarray}\nonumber
\nu^{\la}_{j}(x)=\la_1\la_2\la_3&\int \check\chi_1\Big({\la_1}(x_1-y_1)\Big) \, \check\chi_1\Big({\la_2}(x_2-2^{-j}y_2-y_1^m\om(\de_1y_1))\Big)\\
&\check\chi_1\Big({\la_3}(x_3-b^\sharp(y,\de,j)\,y_2^2
-\si y_1^n \beta(\de_1 y_1))\Big)\label{nujla}
\,a(y,\de,j)\,dy.
\end{eqnarray}
Here, we have suppressed the dependence on the parameter $\si$ in order to defray the notation. Observe also that we then may drop the parameter $\de_3$ from  the definition of $\de,$ i.e., we may assume that $\de=(\de_1,\de_2),$ since only $\si$ depends on $\de_3.$  Recall from that \eqref{adef} that $a(y,\de,j)$ is supported where $y_1\sim 1$ and $|y_2|\lesssim 1.$ 

Since $\Big|\int \check\chi_1\Big({\la_3}(c-t^2)\Big)\,dt\Big|\le C \la_3^{-1/2},$ with a constant $C$ which is independent of $c,$ making use of the localizations given for the integration in $y_2$ from the third factor, respectively second,  factor, and then for the integration in $y_1$ by the first factor in the integrand, it is easy to see that
\begin{equation}\label{5.36}
\|\nu^{\la}_{j}\|_\infty\lesssim \min\{\la_2\la_3^{1/2}, 2^j\la_3, \} =\la_3^{1/2}\min\{\la_2,  2^j\la_3^{1/2}\}\,.
\end{equation}


Let us again assume \eqref{5.13}. We shall first integrate in $x_1$ in order to estimate $\widehat {\nu^{\la}_{j}}(\xi).$ If one of the quantities $\la_1,\la_2$  and $\la_3$ is much bigger than the two other ones, we see that we have no critical point on the support of the amplitude, so that the corresponding terms can again be viewed as error terms. Let us therefore assume that all three are of comparable size, or two of them are of comparable size, and the third one is much smaller. We shall begin with the latter situation, and distinguish various possibilities.

\smallskip
{\bf 1. Case: $\la_1\sim \la_3$ and $\la_2\ll \la_1.$}  In this case, we apply the method of stationary phase to the integration in $x_1,$ and subsequently van der Corput's estimate to the $x_2$-integration and obtain
$\|\widehat{\nu^{\la}_{j}}\|_\infty\lesssim \la_1^{-1/2} \la_3^{-1/2} \sim \la_1^{-1}.$

{\bf 1.1. The subcase where $\la_2\le 2^j\la_1^{1/2}.$ } Then, by \eqref{5.36}, $\|\nu^{\la}_{j}\|_\infty\lesssim \la_2\la_1^{1/2},$  and we obtain  in a similar way as before by interpolation that
$$
\| T_j^\la\|_{p_c\to p_c'}\lesssim \la_1^{\frac {3\theta-2}2}\la_2^\theta.
$$
Here, $ \frac {3\theta-2}2<0,$ because of \eqref{thcond}. Note next that if $2^j\la_1^{1/2}\le \la_1,$ i.e., if $\la_1\ge 2^{2j},$ then by our assumptions $\la_2\le 2^j\la_1^{1/2},$ and if $\la_1< 2^{2j},$ then $\la_2\le \la_1.$ We thus find that the contribution  $T_{\de,j}^I$ of the operators $T^{\la}_j$ with $\la$ satisfying the assumptions of  this subcase  to $T_{\de,j}$ can be estimated by
\begin{eqnarray*}
\| T_{\de,j}^I\|_{p_c\to p_c'}&&\lesssim \sum_{\la_1=2}^{2^{2j}} \sum_{\la_2=2}^{\la_1} \la_1^{\frac{3\theta-2}2 }\la_2^{\theta }
 + \sum_{\la_1=2^{2j+1}}^\infty  \sum_{\la_2=2}^{2^j\la_1^{1/2}}   \la_1^{\frac{3\theta-2}2 }\la_2^{\theta }\\
 &&\lesssim \sum_{\la_1=2}^{2^{2j}} \la_1^{\frac{5\theta-2}2 }+ \sum_{\la_1=2^{2j+1}}^\infty 2^{\theta j} \la_1^{2\theta-1}.
\end{eqnarray*}
But, we have seen that $2\theta-1<0,$ so that
$$
\| T_{\de,j}^I\|_{p_c\to p_c'}\lesssim \max\{j, 2^{(5\theta-2)j}\}.
$$
Now, if $5\theta-2>0, $ then, again because of \eqref{thcond}, $\| T_{\de,j}^I\|_{p_c\to p_c'}\lesssim 2^{(5\theta-2)j} \le 2^{(1-2\th)j}. $ And, if $5\theta-2\le0, $ then  $\| T_{\de,j}^I\|_{p_c\to p_c'}\lesssim j\lesssim 2^{(1-2\th)j},$ i.e.,
\begin{equation}\label{5.37}
\| T_{\de,j}^I\|_{p_c\to p_c'}\lesssim  2^{(1-2\th)j}.
\end{equation}

{\bf 1.2. The subcase where  $\la_2> 2^j\la_1^{1/2}.$ }Then, by \eqref{5.36}, $\|\nu^{\la}_{j}\|_\infty\lesssim 2^j\la_1,$  and we obtain  in a similar way as before by interpolation that
$$
\| T_j^\la\|_{p_c\to p_c'}\lesssim 2^{\theta j} \la_1^{2\theta-1}.
$$
Observing that we have $2^j\la_1^{1/2}<\la_2\le \la_1,$ and then also  $\la_1>2^{2j},$  we  see  that the contribution  $T_{\de,j}^{II}$ of the operators $T^{\la}_j$ with $\la$ satisfying the assumptions in this subcase  to $T_{\de,j}$ can be estimated by
\begin{eqnarray*}
\| T_{\de,j}^{II}\|_{p_c\to p_c'}&\lesssim&  2^{\theta j}  \sum_{\la_1=2^{2j}}^\infty \sum_{2^j\la_1^{1/2}<\la_2\le \la_1}  \la_1^{2\theta-1}\lesssim
2^{\theta j} \sum_{\la_1=2^{2j}}^\infty (\log_2 {\la_1}-2j)\,\la_1^{2\theta-1}\\
&\lesssim&2^{\theta j} \sum_{k=2j}^\infty (k-2j)\,2^{(2\theta-1)k} \lesssim 2^{(5\theta-2)j}\sum_{k=0}^\infty k\,2^{(2\theta-1)k}\lesssim 2^{(5\theta-2)j},
\end{eqnarray*}
so that also
\begin{equation}\label{5.38}
\| T_{\de,j}^{II}\|_{p_c\to p_c'}\lesssim 2^{(1-2\th)j}.
\end{equation}

\smallskip
{\bf 2. Case: $\la_2\sim \la_3$ and $\la_1\ll \la_2.$}  Here, we can estimate $\widehat{\nu^{\la}_{j}}$ in the same way as in the previous
case and obtain $\|\widehat{\nu^{\la}_{j}}\|_\infty\lesssim \la_2^{-1/2} \la_3^{-1/2}\sim \la_2^{-1}.$ Moreover, by \eqref{5.36}, 
we have $\|\nu^{\la}_{j}\|_\infty\lesssim \la_2\min\{\la_2^{1/2}, 2^j\}.$ Both these estimates are independent of $\la_1.$ We therefore  consider the sum over all $\nu^\la_j$ such that $\la_1\ll \la_2,$  by putting
 $
\si_j^{\la_2,\la_3}:=\sum_{\la_1\ll \la_2}\nu^\la_j.
$
This means that 
$$
\widehat{\si_j^{\la_2,\la_3}}(\xi)=\chi_0\Big(\frac {\xi_1}{\la_2}\Big)\chi_1\Big(\frac {\xi_2}{\la_2}\Big)\chi_1\Big(\frac {\xi_3}{\la_3}\Big)\, \widehat{\tilde\nu_{\de,j}}(\xi),
$$
where now $\chi_0$ is smooth and compactly supported in an interval $[-\ve,\ve],$ where $\ve>0$ is sufficiently small. In particular, $\si_j^{\la_2,\la_3}(x)$ is given again by the expression \eqref{nujla}, only with the first factor $\check\chi_1\Big({\la_1}(x_1-y_1)\Big)$ in the integrand replaced by $\check\chi_0\Big({\la_2}(x_1-y_1)\Big)$ and $\la_1$ replaced by $\la_2.$  Thus we obtain the same type of estimates 
\begin{equation}\label{5.36b}
\|\widehat{\si_j^{\la_2,\la_3}}\|_\infty\lesssim \la_2^{-1}, \quad \|\si_j^{\la_2,\la_3}\|_\infty\lesssim \la_2\min\{\la_2^{1/2}, 2^j\}.
\end{equation}
Denote by $T_j^{\la_2,\la_3}$ the operator of convolution with $\widehat{\si_j^{\la_2,\la_3}}.$

\smallskip

{\bf 2.1. The subcase where $\la_2\le 2^{2j}.$ } Then  we have $\|\si_j^{\la_2,\la_3}\|_\infty\lesssim \la_2^{3/2},$ and interpolating this  with the first estimates in  \eqref{5.36b}, we obtain
$$
\| T_j^{\la_2,\la_3}\|_{p_c\to p_c'}\lesssim \la_2^{\th-1}\la_2^{\frac 32 \theta}=\la_2^{\frac {5\th -2}2}.
$$
We thus find that the contribution  $T_{\de,j}^{III}$ of the operators $T^{\la}_j$ with $\la$ satisfying the assumptions of this subcase  to $T_{\de,j}$ can be estimated by
\begin{eqnarray*}
\| T_{\de,j}^{III}\|_{p_c\to p_c'}\lesssim \sum_{\la_2=2}^{2^{2j}} \la_2^{\frac {5\th -2}2}.
\end{eqnarray*}
Arguing in a similar way as in  Sub-case 1.1, this implies that 
\begin{equation}\label{5.39}
\| T_{\de,j}^{III}\|_{p_c\to p_c'}\lesssim 2^{(1-2\th)j}.
\end{equation}

 \smallskip
 {\bf 2.2. The subcase where $\la_2> 2^{2j}.$ }Then, by \eqref{5.36b}, $\|\si_j^{\la_2,\la_3}\|_\infty\lesssim 2^j\la_2,$  and we obtain  in a similar way as before by interpolation that
$$
\| T_j^{\la_2,\la_3}\|_{p_c\to p_c'}\lesssim \la_2^{\theta-1} 2^{\theta j} \la_2^\theta=2^{\theta j}\la_2^{2\theta-1},
$$
where, according to \eqref{thcond}, $2\th -1<0.$
We thus find that the contribution  $T_{\de,j}^{IV}$ of the operators $T^{\la}_j$ with $\la$ satisfying the assumptions of this subcase  to $T_{\de,j}$ can be estimated by
\begin{eqnarray*}
\| T_{\de,j}^{IV}\|_{p_c\to p_c'}\lesssim 2^{\theta j}\sum_{\la_2=2^{2j}}^\infty   2^{\theta j}\la_2^{2\theta-1}
 \lesssim 2^{(5\theta-2)j}.
\end{eqnarray*}
As before, this  implies that
\begin{equation}\label{5.40}
\| T_{\de,j}^{IV}\|_{p_c\to p_c'}\lesssim 2^{(1-2\th)j}.
\end{equation}

\smallskip
{\bf 3. Case: $\la_1\sim \la_2$ and $\la_3\ll \la_1.$}  Notice that the phase $\Phi$ has no critical point with respect to $x_2$ when $2^{-j}\la_2\gg \la_3,$ so that we shall concentrate on the case   where $\la_2\lesssim 2^j\la_3.$
Then we can estimate $\widehat{\nu^{\la}_{j}}$ in the same way as in the previous cases and obtain 
\begin{equation}\label{hatnu3}
\|\widehat{\nu^{\la}_{j}}\|_\infty\lesssim \la_1^{-1/2} \la_3^{-1/2}.
\end{equation}

\smallskip

{\bf 3.1. The subcase where $\la_3^{1/2}\gtrsim \la_12^{-j}.$ } Then $(2^{-j}\la_1)^2\lesssim \la_3\ll \la_1\,$ and hence  we may assume that $\la_1\le 2^{2j},$ and  from \eqref{5.36}  and the previous estimate for $\widehat{\nu^{\la}_{j}}$ we obtain by interpolation
$$
\| T_j^\la\|_{p_c\to p_c'}\lesssim \la_1^{\frac {3\theta-1}2}\la_3^{\frac {2\theta-1}2}.
$$
We thus find that the contribution $T_{\de,j}^{V}$ of the operators $T^{\la}_j$ with $\la$ satisfying the assumptions of this subcase  to  the operator $T_{\de,j}$ can be estimated by
\begin{eqnarray*}
\| T_{\de,j}^{V}\|_{p_c\to p_c'}\lesssim \sum_{\la_1=2}^{2^{2j}} \,\sum\limits_{(2^{-j}\la_1)^2\lesssim \la_3\ll \la_1} \la_1^{\frac {3\theta-1}2}\la_3^{\frac {2\theta-1}2}\lesssim 2^{(1-2\th)j}\sum_{\la_1=2}^{2^{2j}}\la_1^{\frac{7\th-3}2}
\end{eqnarray*}
(recall that $2\th-1<0,$ according to  \eqref{thcond}). If $\th<3/7,$ this implies the desired estimate
\begin{equation}\label{5.48I}
\| T_{\de,j}^{V}\|_{p_c\to p_c'}\lesssim 2^{(1-2\th) j}\qquad (\mbox{ if } \th<\frac 37)
\end{equation}

However, if $\th=3/7,$ i.e., if $\phi$ is of finite type $A$ and $m=2,$ we only get the estimate
\begin{equation}\label{compint1}
\| T_{\de,j}^{V}\|_{p_c\to p_c'}\lesssim j2^{(1-2\th) j}.
\end{equation}
 In order to improve on this estimate, we shall have to apply a complex interpolation argument. There will be a few more cases which require such an interpolation argument, and we shall collect all of them in Section \ref{complex2}.   
 We also remark that 
 $$\sum_{2\le \la_1\lesssim 2^{j}} \,\sum\limits_{(2^{-j}\la_1)^2\lesssim \la_3\ll \la_1} \la_1^{\frac {3\theta-1}2}\la_3^{\frac {2\theta-1}2}\lesssim 2^{\frac {3\theta-1}2 j}\lesssim 2^{(1-2\th) j},
 $$ 
 so that we only need to control the terms with $\la_1\gg 2^j.$

\smallskip
{\bf 3.2. The subcase where $\la_3^{1/2}\ll  \la_12^{-j}.$ } Then we have 
$$
\la_3\ll \min\{\la_1,\, (2^{-j}\la_1)^2\},
$$ 
which implies that necessarily $\la_1\gg 2^j,$
and interpolation yields in this case that 
 $$
\|T_{\de,j}^\la\|_{p_c\to p_c'}\lesssim  2^{\th j}\la_1^{-\frac{(1-\theta)}2}\la_3^\frac{3\theta-1}2 .
$$

First, assume that $\la_1>2^{2j}$. Then 
$\la_1=\min\{\la_1,\, (2^{-j}\la_1)^2\},$ so that we shall use that  $\la_3\ll \la_1.$  Denoting by  $T_{\de,j}^{VI,1}$ the sum of the operators $T^{\la}_j$ with $\la$ satisfying the assumptions of this subcase  and  $\la_1>2^{2j},$ and recalling that $3\th-1>0$ and $2\th-1<0,$ we see that 

\begin{equation}\label{5.50I}
\| T_{\de,j}^{VI,1}\|_{p_c\to p_c'}\lesssim 2^{\th j}\sum_{\la_1=2^{2j}}^\infty \sum_{\la_3=2}^{\la_1}\la_1^{-\frac{(1-\theta)}2}\la_3^\frac{3\theta-1}2\lesssim 2^{(5\theta-2)j}\le 2^{(1-2\th)j}.
\end{equation}

There remains the case where $2^j\ll \la_1\le 2^{2j}.$ Then  $\la_3\ll (2^{-j}\la_1)^2$. Denoting by  $T_{\de,j}^{VI,2}$ the sum of the operators $T^{\la}_j$ with $\la$ satisfying the assumptions of this subcase  and  $2^j\ll \la_1\le 2^{2j},$ and recalling that $3\th-1>0$ and $2\th-1<0,$ we see that 

\begin{equation}\label{5.51I}
\| T_{\de,j}^{VI,2}\|_{p_c\to p_c'}\lesssim 2^{\th j}\sum_{\la_1=2^j}^{2^{2j}}\sum_{\la_3=2}^{(2^{-j}\la_1)^2}\la_1^{-\frac{(1-\theta)}2}\la_3^\frac{3\theta-1}2\lesssim 2^{(1-2\th)j}\sum_{\la_1=2}^{2^{2j}}\la_1^{\frac{(7\theta-3)}2}
\lesssim 2^{(1-2\th)j},
\end{equation}
provided $\th<3/7.$  If $\th =3/7,$ we pick up an additional factor $j$ as in \eqref{compint1}:

\begin{equation}\label{compint2}
\| T_{\de,j}^{VI,2}\|_{p_c\to p_c'}\lesssim j 2^{\frac17 j}=j2^{(1-2\th) j}.
\end{equation}
 In order to improve on this estimate, we shall have to apply  again a complex interpolation argument (cf. Section \ref{complex2}).

\medskip

What is left is 

\medskip

{\bf 4. Case: $\la_1\sim \la_2\sim \la_3.$}  We can here  first apply the method of stationary phase to the integration in $x_2.$ This produces a phase function in $x_1,$ which is of the form $\phi_1(x_1)=\xi_1x_1+\xi_2 (\om(0)x_1^m+\mbox{error})+\xi_3 (\si\beta(0)x_1^n+\mbox{error}),$ with small  error terms of order $O(|\de|+2^{-j}).$  We assume again that \eqref{5.13} holds true. Then $\phi_1$ has a singularity of Airy type, which implies that the oscillatory integral with phase $\phi_1$ that we have arrived at decays of order $O(|\la|^{-1/3}).$ Indeed,  we have  $n\ge 2m+1$ and $m\ge 2,$ and since  $x_1\sim 1,$ it is  easy to see by studying the linear system of equations $y_j=\phi_1^{(j)}(x_1), \ j=1,2,3,$  that there exist constants $0<c_1\le c_2$ which do not depend on $\xi$ and $x_1\sim 1$ such that 
$$
c_1|\xi|\le\sum^3_{j=1}|\phi_1^{(j)}(x_1)|\le c_2|\xi|.
$$
Thus, our claim follows from Lemma \ref{corput}. We thus find that 
$$
\|\widehat{\nu^{\la}_{j}}\|_\infty\lesssim \la_1^{-5/6}.
$$

\medskip
{\bf 4.1. The subcase where  $\la_1> 2^{2j}.$ }Then, by \eqref{5.36} $\|\nu^{\la}_{j}\|_\infty\lesssim 2^j \la_1,$ and we obtain 
$$
\| T_j^\la\|_{p_c\to p_c'}\lesssim 2^{\theta j}\la_1^{\frac {11\theta-5}6}.
$$
The estimates in \eqref{thcond} show that $11\theta-5<0,$ which implies that the contribution  $T_{\de,j}^{VII}$ of the operators $T^{\la}_j$ with $\la$ satisfying the assumptions of this subcase  to $T_{\de,j}$ can again be estimated by
\begin{equation}\label{5.53I}
\| T_{\de,j}^{VII}\|_{p_c\to p_c'}\lesssim \sum_{\la_1=2^{2j}}^\infty 2^{\theta j}\la_1^{\frac {11\theta-5}6}\lesssim 
 2^{\frac {14\theta-5}3 j} \lesssim 2^{(1-2\th)j},
\end{equation}
provided that $\th\le 2/5.$ According to \eqref{5.21}, this is true, with the only exception of the case where  $\phi$ is of type 
$A$  and $m=2.$ 

Observe also that if $m=2,$ then $\theta=3/7$ and $p'_c=14/3,$ so that $\| T_j^\la\|_{p_c\to p_c'}\lesssim 2^{3j/7}\la_1^{-1/21},$ and 
$$
\sum_{\la_1>2^{6j}} 2^{\frac{3j}7}\la_1^{-\frac 1{21}}\lesssim 2^{\frac j 7}= 2^{(1-2\th)j}.
$$
 This leaves open  the sum over the  terms with $\la_1\le 2^{6j},$ in the case where $\phi$ is of type A and $m=2.$
\medskip

{\bf 4.2. The subcase where  $\la_1\le 2^{2j}.$ } 
 Then, by \eqref{5.36} $\|\nu^{\la}_{j}\|_\infty\lesssim \la_1^{3/2},$ and we obtain 
$$
\| T_j^\la\|_{p_c\to p_c'}\lesssim \la_1^{\frac {14\theta-5}6}.
$$
We thus find that the contribution  $T_{\de,j}^{VIII}$ of the operators $T^{\la}_j$ with $\la$ satisfying the assumptions of this subcase to $T_{\de,j}$ can be estimated by
\begin{eqnarray*}
\| T_{\de,j}^{VIII}\|_{p_c\to p_c'}\lesssim \sum_{\la_1=2}^{2^{2j}} \la_1^{\frac {14\theta-5}6}.
\end{eqnarray*}
If $14\theta-5\le 0,$ then we immediately obtain the desired estimate $\| T_{\de,j}^{VIII}\|_{p_c\to p_c'}\lesssim j\lesssim 2^{(1-2\th)j},$  so assume that $14\theta-5> 0.$  Then 
$
\| T_{\de,j}^{VIII}\|_{p_c\to p_c'}\lesssim 2^{\frac {14\theta-5}3 j},
$
and arguing as before (compare \eqref{5.53I}), we see that 
\begin{equation}\label{5.54I}
\| T_{\de,j}^{VIII}\|_{p_c\to p_c'}\lesssim 2^{(1-2\th)j},
\end{equation}
unless $\phi$ is of type A and $m=2.$  But, recall that the case $A_\infty$ was excluded here, so that $\phi$ is  of type $A_{n-1}$, with finite $n\ge 5$ (compare Proposition \ref{normalform1}).

\medskip
The estimates \eqref{5.37} - \eqref{5.48I}, \eqref{5.50I} -\eqref{5.51I}, \eqref{5.53I} and \eqref{5.54I}  show that estimate \eqref{rest4} holds true also in the situation of this subsection, which completes the proof of Proposition \ref{s5.1}, with the exception of the case where  $\phi$ is of type $A_{n-1},$ with finite $n\ge 5$ and $m=2,$  in which we still need to improve on the estimates \eqref{compint1} and \eqref{compint2} in the Sub-cases 3.1 and 3.2, and moreover need to find stronger estimates for the cases where $\la_1\sim \la_2\sim \la_3$ when $\la_1\le 2^{6j}.$  Observe also that in   Case 3, we have that $\la_1\sim \la_2,$ and thus we may assume that $\la_2=2^K\la_1,$ where $K$ is from a finite set of integers. This allows to assume that $\la_2=2^K\la_1,$ for a given, fixed integer $K,$ and for the sake of simplicity, we shall even assume that $K=0,$ so that $\la_1=\la_2$  (the other cases can be treated in exactly the same way). In a similar way, we may and shall assume that $\la_1=\la_2=\la_3$ in Case 4. Thus, in order to complete the proof of Proposition \ref{s5.1},  and hence that of Theorem \ref{nonadarestrict} when $\hl(\phi)< 2,$ what remains   to prove is  the following 

\begin{prop}\label{m2_A}
Assume that  $\phi$ is of type $A_{n-1}$, with $m=2$ and  finite $n\ge 5,$  so that $p'_c=14/3$ and $\th:=2/p_c'=3/7.$ Then the following hold true, provided $j,M\in\NN$ are sufficiently large and $\de$ is sufficiently small:
 \begin{itemize}
\item[(a)] Let  
$$
\nu_{\de,j}^{V}:=\sum_{\la_1=2^{M+j}}^{2^{2j}} \sum_{\la_3=(2^{-M-j}\la_1)^2}^{2^{-M} \la_1} \nu^{(\la_1,\la_1,\la_3)}_j,
$$
and denote by $T_{\de,j}^{V}$ the convolution operator $\vp\mapsto \vp* \widehat{\nu_{\de,j}^{V}}.$
Then
\begin{equation}\label{restm3}
\|T_{\de,j}^{V}\|_{\frac{14}{11}\to \frac{14}3}\le C\, 2^{\frac {j}7}.
\end{equation}

\item[(b)] Let  
$$
\nu_{\de,j}^{VI}:=\sum_{\la_1=2^{M+j}}^{2^{2j}} \sum_{\la_3=2}^{(2^{-M-j}\la_1)^2} \nu^{(\la_1,\la_1,\la_3)}_j,
$$
and denote by $T_{\de,j}^{VI}$ the convolution operator $\vp\mapsto \vp* \widehat{\nu_{\de,j}^{VI}}.$
Then
\begin{equation}\label{restm2}
\|T_{\de,j}^{VI}\|_{\frac{14}{11}\to \frac{14}3}\le C\, 2^{\frac {j}7}.
\end{equation}

\item[(c)]
Let  
$$
\nu_{\de,j}^{VII}:=\sum_{\la_1=2}^{2^{6j}}  \nu^{(\la_1,\la_1,\la_1)}_j,
$$
and denote by $T_{\de,j}^{VII}$ the convolution operator $\vp\mapsto \vp* \widehat{\nu_{\de,j}^{VII}}.$
Then
\begin{equation}\label{restm4}
\|T_{\de,j}^{VII}\|_{\frac{14}{11}\to \frac{14}3}\le C\, 2^{\frac {j}7}.
\end{equation}
\end{itemize}
Here, the constant $C$ does neither depend on $\de,$ nor on  $j.$ 
\end{prop}

\begin{remark}\label{rem5.3I}
If $\phi$ is of type $A_{n-1}$ and  $m=2,$ it will  often be convenient in the sequel  to augment our former vector $\de=(\de_1,\de_2)$ by the parameter 
$$\de_0:=2^{-j}\ll 1,
$$ 
i.e., we re-define $\de$  to become   $\de:=(\de_0,\de_1,\de_2).$ Observe that according to  \eqref{btilde} and   \eqref{5.35},
we may then re-write  in \eqref{nujla}  $b^\sharp(y,\de_1,\de_2, j)=b_0(y,\de):=b^a( \de_1y_1,\de_0 \de_2 y_2),$ where  $b^a(y_1,y_2):=b(y_1, y_2+y_1^m \om(y _1))$  expresses $b$ in adapted coordinates. 

Then, by  \eqref{nujla}, we may write 
\begin{eqnarray}\nonumber
\nu^{\la}_{j}(x)=:\nu_\de^\la(x)&=&\la_1\la_2\la_3\int \check\chi_1\Big({\la_1}(x_1-y_1)\Big) \, \check\chi_1\Big({\la_2}(x_2-\de_0y_2-y_1^m\om(\de_1y_1))\Big)\\
&&\check\chi_1\Big({\la_3}(x_3-b_0(y,\de)\,y_2^2
-\si y_1^n \beta(\de_1 y_1))\Big)\label{nujla2}
\,\eta (y,\de)\,dy,
\end{eqnarray}
where $\eta\in C^\infty(\RR^2\times \RR^3)$ is supported where $y_1\sim 1$ and $|y_2|\lesssim 1$ (and, say, $|\de|\le 1$), and where  $\chi_1$ is  a smooth cut-off function supported  near $1.$  Notice that the measure  $\nu_\de^\la$ indeed also depends on $\si\sim1,$ but we shall suppress this dependency in order to defray the notation.
\end{remark}

The proof of  the first two parts of Proposition \ref{m2_A} will be based on a complex interpolation argument, whereas the proof of part (c) will in addition  require  substantially more refined estimations, making use of the fact that $\widehat{\nu^{\la}_{j}}(\xi)$ is large on a small neighborhood of some  ``Airy cone''  only.  

We shall therefore  defer the discussion of our complex interpolation arguments which are needed in order to cover the endpoint cases  to the Sections \ref{complex1} and  \ref{complex2}, and first continue to outline the Airy type analysis which is needed in order to narrow down the large gap between the desired estimate and the actual estimate given by \eqref{5.53I} in the case where $\phi$ is of type A and $m=2.$  
\bigskip

\color{black}

\setcounter{equation}{0}
\section{On Proposition \ref{m2_A}(c): Airy type analysis}\label{propm2_A}

 In order to prove estimate \eqref{restm2} in Proposition \ref{m2_A}, we recall that  $\si\sim 1,$ and that we are assuming that  
$$
2\le \la_1=\la_2=\la_3\le 2^{6j}.
$$  
 In order to defray the notation, we shall in the sequel denote by $\la$ the common value of $\la_1=\la_2=\la_3,$ 
and put  
\begin{equation}\label{sdef}
 s_1:=\frac{\xi_1}{\xi_3}, \, s_2:=\frac{\xi_2}{\xi_3},\, s_3:=\frac{\xi_3}\la,
\end{equation}
so that $|s_1|\sim|s_2|\sim |s_3|\sim  1$ and 
$$
\xi=\la s_3(s_1,s_2,1).
$$
In view of the special role $s_3$ will play, we shall write 
$$
s:=(s_1,s_2,s_3), \quad s':=(s_1,s_2).
$$
Correspondingly, we shall   re-write
\begin{equation}\label{phase1}
\Phi(x,\de_1,\de_2,j,\xi)=\la s_3\tilde\Phi(x,\de,\si,s_1,s_2),
\end{equation}
where  
\begin{eqnarray}\nn
\tilde\Phi(x,\de,\si,s_1,s_2)&:=&s_1x_1+s_2x_1^2\om(\de_1x_1)+\si x_1^n \beta(\de_1 x_1) \\ \label{phit}
&&+\de_0s_2x_2+x_2^2b_0(x,\de).
\end{eqnarray}
Recall also that $\om(0)\ne 0, \, \beta(0)\ne 0,$ and 
$b_0(x,0)=b(0,0)\ne 0.$

According to \eqref{nujla2},  we then have 
$$
\widehat{\nu^\la_j}(\xi)=\widehat{\nu^\la_\de}(\xi)=\chi_1(s_1s_3)\chi_1(s_2s_3)\chi_1(s_3)\; \int e^{-i\la s_3\tilde\Phi(x,\de,\si,s_1,s_2)}\, \tilde a(x,\de)\, dx,
$$
where the amplitude $\tilde a(x,\de):=a(x,\de)\chi_1(x_1) \chi_0(x_2)$  (compare \eqref{nujla2}) is a smooth function of $x$ supported where $x_1\sim 1$ and $|x_2|\lesssim 1,$ whose derivatives are uniformly bounded with respect to the parameters $\de.$    
Moreover, if $T^\la_\de$ denotes the convolution operator
$$
T^\la_\de \vp:=\vp *\widehat{\nu_\de^\la}, 
$$ 
then we see that the estimate \eqref{restm2} can be re-written as 
\begin{equation}\label{restm4b}
\Big\|\sum\limits_{2\le \la\le \de_0^{-6}} T^\la_\de\Big\|_{\frac{14}{11}\to \frac{14}3}\le C\, \de_0^{-\frac 17}
\end{equation}

We shall need to understand the precise behavior of $\widehat{\nu^\la_\de}(\xi).$
To this end,  consider the integration with respect to $x_2$ in the corresponding integral. Notice that there always is a critical point  $x_2^c$ with respect to $x_2.$  Writing $x_2=\de_0s_2y_2,$ and applying the implicit function theorem to $y_2,$ we find  that 
\begin{equation}\label{ai0}
x_2^c=\de_0s_2Y_2(\de_1x_1,\de_2, \de_0s_2),
\end{equation}
where $Y_2$ is smooth and of size $|Y_2|\sim 1.$ Notice also that $Y_2(0,0,0)=-1/(2b(0,0))$ when $\de=0.$
Let us put
\begin{equation}\label{defPsi}
\Psi(x_1,\de,\si, s_1,s_2):=\tilde\Phi(x_1,x_2^c,\si,s_1,s_2)=\tilde\Phi(x_1,\de_0s_2Y_2(\de_1x_1,\de_2, \de_0s_2),\si,s_1,s_2).
\end{equation}

Applying the method of stationary phase with parameters to the $x_2$- integration (see, e.g., \cite{stein-book}) and ignoring the region away from the critical point $x_1^c,$ which leads to better estimates by means of integrations by parts,   we find that we may assume that

\begin{equation}\label{nudehat}
\widehat{\nu_\de^\la}(\xi)= \la^{-1/2}\chi_1(s_1s_3)\chi_1(s_2s_3)\chi_1(s_3) 
 \int e^{-i\la s_3\Psi(y_1,\de,\si, s')}
\, a_0(y_1,s',\de;\,\la)  \,\chi_1(y_1)\, dy_1\, ,
\end{equation}
where  $\chi_1$ is a  smooth cut-off function supported, say, in the interval $[1/2, 2].$ 

Moreover, $a_0(y_1,s',\de;\,\la) $ is  smooth  and {\it uniformly a classical symbol}  of order $0$ with respect to $\la.$  By this we mean that it is  a classical symbol of order zero for every  given parameter (here  these are $y_1,s_1,s_2$ and $\de$), and the constants in the symbol estimates are uniformly controlled for these parameters.  It will be important to observe that this implies that   $\frac {\pa}{\pa \la} a_0(y_1,s',\de;\,\la)$ is even a symbol of order $-2$ with respect to $\la,$ uniformly in $y_1, s',\de$  (the latter property will become relevant later!).

\medskip
We shall  need more precise information on the phase $\Psi.$   Indeed, in the subsequent lemmata,  we shall establish two different presentations of $\Psi,$  both of which will become relevant.

\begin{lemma}\label{Psi1}
For $|x_1|\lesssim1,$ we may write 
$$
\Psi(x_1,\de,\si, s_1,s_2)=s_1x_1+s_2x_1^2\om(\de_1x_1)+\si x_1^n \beta(\de_1 x_1) +(\de_0s_2)^2 Y_3(\de_1x_1,\de_2, \de_0s_2),
$$
where $Y_3$ is smooth and 
$
Y_3(\de_1x_1,\de_2, \de_0s_2)=c_0+O(|\de|),
$
with $c_0:=-1/4b(0,0)\ne 0.$
\end{lemma}
\begin{proof} 
We have 
$$
\Psi(x_1,\de,\si, s_1,s_2)=s_1x_1+s_2x_1^2\om(\de_1x_1)+\si x_1^n \beta(\de_1 x_1) 
+\de_0s_2x^c_2+(x_2^c)^2 b_0(x_1,x_2^c,\de),
$$
so that, by definition, 
$$
Y_3(\de_1x_1,\de_2, \de_0s_2):=Y_2(\de_1x_1,\de_2, \de_0s_2)+Y_2(\de_1x_1,\de_2, \de_0s_2)^2 b_0(x_1,x_2^c,\de)
$$
where  for $\de=0$ we have
$$
Y_3(0,0,0)=Y_2(0,0,0)+Y_2(0,0,0)^2b_0(0,0,0)=-\frac 1{4b(0,0)}\ne 0,
$$
because $Y_2(0,0,0)=-1/(2b(0,0)).$ 
\end{proof} 

Next, we shall verify that $\Psi$  has indeed a singularity of Airy type with respect to the variable $x_1.$ To this end, let us first consider the case where $\de=0.$ Then
$$
\Psi(x_1,0,\si, s_1,s_2):=s_1x_1+s_2x_1^2\om(0)+\si x_1^n\be(0),
$$
and depending again on  the signs of $ s_2\om(0)$ and $\be(0),$  the first  derivative  (with respect to $x_1$)
$$
\Psi'(x_1,0,\si, s_1,s_2)=s_1+2s_2\om(0)x_1+n\si \be(0) x_1^{n-1}
$$  may have a critical point, or not.  If not, 
$\Psi$ will  have at worst  non-degenerate critical points, and this case can be treated again by the method of stationary phase, respectively integrations by parts. We shall therefore concentrate on the case where  $\Psi'$ does have a critical point $x_1^c,$ which will then be given explicitly by
$$
x_1^c=x_1^c(0,\si,s_2):=\Big(-\frac{2\om(0)}{n(n-1)\si\be(0)} \, s_2\Big)^{\frac 1{n-2}}.
$$
Let us assume that $s_2>0$ (the case where it is negative can be treated similarly). By scaling in $x_1,$ we may and shall assume for simplicity that
\begin{equation}\label{normalization1}
-\frac{2\om(0)}{n(n-1)\si\be(0)}=1 \qquad (\mbox {and  } s_2\sim 1).
\end{equation}

Then $x_1^c(0,\si,s_2)= s_2^{\frac 1{n-2}},$ and $|\Psi'''(x_1^c,0,\si,s_1,s_2)|\sim 1.$ Thus, the implicit function theorem  shows that for $\de$ sufficiently small, there is a unique critical point $x_1^c=x_1^c(\de,\si,s_2)$ of $\Psi'$ depending smoothly on $\de,\si$ and $s_2,$ i.e., 
\begin{equation}\label{ai1}
\Psi''(x_1^c(\de,\si,s_2),\de,\si,s_1,s_2)=0.
\end{equation}

\begin{lemma}\label{s11.1}
The phase $\Psi$ given by \eqref{defPsi} can be developed locally  around the critical point $x_1^c$ of $\Psi'$ in the form
$$
\Psi(x_1^c(\de,\si,s_2)+y_1,\de,\si,s_1,s_2)=B_0(s',\de,\si)-B_1(s',\de,\si) y_1+B_3(s_2,\de,\si,y_1) y_1^3,
$$
where $B_0,B_1$ and $B_3$ are smooth functions, and where  $|B_3(s_2,\de,\si,y_1)|\sim 1,$  and indeed 
$$
B_3(s_2,\de,\si,0)=s_2^{\frac{n-3}{n-2}}G_4(s_2,\de,\si),
$$
where $G_4$ is smooth and satisfies
$$
G_4(s_2,0,\si)=\tfrac{n(n-1)(n-2)}{6}\si \be(0).
$$
Moreover, we may write
\begin{equation}\label{ai2}
\begin{cases} \quad  x_1^c(\de,\si,s_2)&=s_2^{\frac 1{n-2}}G_1(s_2,\de,\si),\\
\quad B_0(s',\de,\si)&=s_1s_2^{\frac 1{n-2}}G_1(s_2,\de,\si) - s_2^{\frac n{n-2}} G_2(s_2,\de,\si),\\
 \quad B_1(s',\de,\si)&= -s_1+s_2^{\frac {n-1}{n-2}}G_3(s_2,\de,\si),
\end{cases}
\end{equation}
with smooth functions $G_1, G_2 $ and $G_3$ satisfying 
\begin{equation}\label{ai3}
\begin{cases}  
\quad G_1(s_2,0,\si)&=1,\\
\quad G_2(s_2,0,\si)&= \frac{n^2-n-2}2 \si \be(0),\\
\quad G_3(s_2,0,\si)&= n(n-2)\si\be(0).
\end{cases}
\end{equation}
Notice that all the numbers in \eqref{ai3} are non-zero, since we assume $n\ge 5.$
\end{lemma}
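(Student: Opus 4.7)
The structural observation driving everything is that $x_1^c$ is by construction a zero of $\Psi''$, so the Taylor expansion of $\Psi$ about $x_1^c$ has no quadratic term in $y_1$. Taking $B_0 := \Psi(x_1^c)$, $B_1 := -\Psi'(x_1^c)$, and letting $B_3$ be the integral remainder
\begin{equation*}
B_3(s_2,\de,\si,y_1) := \frac{1}{2} \int_0^1 (1-s)^2\, \Psi'''(x_1^c + s y_1)\, ds
\end{equation*}
automatically reproduces the stated expansion $\Psi(x_1^c+y_1) = B_0 - B_1 y_1 + B_3 y_1^3$, so the genuine work is to establish the $s_2$-scaling of $x_1^c, B_0, B_1, B_3$ and to compute the limiting values at $\de = 0$.

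The plan is to substitute $x_1 = s_2^{1/(n-2)} t_1$ in the critical-point equation $\Psi''(x_1) = 0$. Direct computation of $\Psi''$ decomposes into three pieces: from $s_2 x_1^2 \om(\de_1 x_1)$ one obtains $2 s_2 \tilde\om(\de_1 x_1)$ with $\tilde\om(0) = \om(0)$; from $\si x_1^n \be(\de_1 x_1)$ one gets $\si x_1^{n-2} \tilde\be(\de_1 x_1)$ with $\tilde\be(0) = n(n-1)\be(0)$; and the $x_2^c$-dependent part of $\Psi$, being $O(\de_3^2)$ by \eqref{ai0}, contributes at most $O(\de_3^2)$ after two derivatives. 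After dividing by $s_2 \sim 1$, the resulting equation is smooth in $(s_2,\de,\si,t_1)$ and at $\de = 0$ reduces, via the normalization \eqref{normalization1}, to $t_1^{n-2} = 1$. Its $t_1$-derivative at $t_1 = 1$ equals $(n-2) n(n-1) \si \be(0) \neq 0$, so the implicit function theorem produces a smooth $G_1(s_2,\de,\si)$ with $G_1(s_2,0,\si) = 1$, yielding $x_1^c = s_2^{1/(n-2)} G_1$.

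With $x_1^c$ determined, substituting into $\Psi$ and $-\Psi'$ and factoring out the powers $s_2^{1/(n-2)}$ and $s_2^{(n-1)/(n-2)}$ yields the form \eqref{ai2} for $B_0$ and $B_1$ with smooth $G_2, G_3$, the $O(\de_3^2)$ corrections from the $x_2^c$-terms being absorbed into them. The values at $\de = 0$ are a direct computation: using $G_1 = 1$ and replacing $\om(0)$ by $-\tfrac{n(n-1)}{2} \si \be(0)$ via \eqref{normalization1}, the relevant cancellations produce
\begin{equation*}
G_2(s_2,0,\si) = -\om(0) - \si\be(0) = \tfrac{n^2-n-2}{2}\si\be(0), \qquad G_3(s_2,0,\si) = -2\om(0) - n\si\be(0) = n(n-2)\si\be(0).
\end{equation*}
Finally, $B_3(s_2,\de,\si,0) = \tfrac{1}{6} \Psi'''(x_1^c)$, and since $\Psi'''(x_1)$ has leading part $n(n-1)(n-2) \si \be(\de_1 x_1) x_1^{n-3}$ modulo $O(\de_1) + O(\de_3)$ perturbations, evaluation at $x_1^c = s_2^{1/(n-2)} G_1$ extracts the factor $s_2^{(n-3)/(n-2)}$ and produces the stated $G_4$ with $G_4(s_2,0,\si) = \tfrac{n(n-1)(n-2)}{6} \si \be(0) \neq 0$ (as $n \ge 5$). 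Continuity in $(\de, y_1)$ then gives $|B_3| \sim 1$ in a neighbourhood of $\de = 0$, $y_1 = 0$.

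The only real bookkeeping concern is the contribution of the $x_2^c$-terms in $\Psi$. But since $x_2^c = \de_3 s_2 Y_2(\de_1 x_1, \de, s_2)$ by \eqref{ai0} with $Y_2$ smooth and bounded, both $\de_3 s_2 x_2^c$ and $(x_2^c)^2 b_0(x_1, x_2^c, \de)$ are smooth functions of $(x_1, \de, \si, s_2)$ of size $O(\de_3^2)$, and the same holds for all their $x_1$-derivatives, uniformly for $x_1 \sim 1$. Thus these terms perturb everything by smooth $O(\de_3^2)$ quantities that may be absorbed into $G_1, G_2, G_3$ and into $B_3$ without affecting either the scaling or the nonvanishing asserted at $\de = 0$.
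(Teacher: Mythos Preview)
Your proposal is correct and follows essentially the same route as the paper: both arguments identify $B_0=\Psi(x_1^c)$, $B_1=-\Psi'(x_1^c)$, and obtain $B_3$ as the cubic Taylor remainder (the quadratic term vanishing because $\Psi''(x_1^c)=0$), then evaluate everything at $\de=0$ using the normalization \eqref{normalization1}. Your version is a bit more explicit in two places --- you spell out the implicit-function-theorem argument for the scaling $x_1^c=s_2^{1/(n-2)}G_1$ (the paper dismisses this as ``obvious'' since $s_2\sim 1$ lets one simply define $G_1:=x_1^c/s_2^{1/(n-2)}$), and you write the integral remainder formula for $B_3$ --- but the computations and the treatment of the $O(\de_3^2)$ contributions from $x_2^c$ match the paper's exactly.
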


\begin{proof} 
The first statements in \eqref{ai2}, \eqref{ai3} are obvious. Next, by  \eqref{defPsi} and  \eqref{ai0} we have 
\begin{eqnarray*}
B_0(s',\de,\si)&=&\Psi(x_1^c(\de,\si,s_2),\de,\si,s_1,s_2)=s_1s_2^{\frac 1{n-2}}G_1(s_2,\de,\si)\\
&&\hskip-1cm+s_2^{\frac n{n-2}}\Big( G_1(s_2,\de,\si)^2\om(\de_1x_1^c)+\si G_1(s_2,\de,\si)^n \be(\de_1x_1^c)+
\de_0^2s_2^{\frac{n-4}{n-2}}Y_3(\de_1x_1^c,\de_2, \de_0s_2)\Big),
\end{eqnarray*}
where $x_1^c$ is given by the first identity in \eqref{ai2}.  In  combination with \eqref{normalization1}, we thus obtain the second identity in \eqref{ai2} and the third in \eqref{ai3}, because $s_2\sim 1.$

Similarly,
\begin{eqnarray*}
-B_1(s',\de,\si)&=&\Psi'(x_1^c(\de,\si,s_2),\de,\si,s_1,s_2)\\
&=&s_1+ 2s_2x_1^c\om(\de_1x_1^c)+n\si (x_1^c)^{n-1}\be(\de_1 x_1^c)+O(|\de|),
\end{eqnarray*}
which in view of \eqref{normalization1} easily implies the last identities in \eqref{ai2} and \eqref{ai3}.
Finally, when $y_1=0,$  then 
\begin{eqnarray*}
6B_3(s_2,\de,\si,0)&=&\Psi'''(x_1^c(\de,\si,s_2),\de,\si,s_1,s_2)=n(n-1)(n-2)\si \be(0)(x_1^c)^{n-3}+O(|\de|),
\end{eqnarray*}
which shows that $|B_3(s_2,\de,\si,y_1)|\sim 1$ for $|y_1|$ sufficiently small.
\end{proof}

Translating the  coordinate $y_1$ in \eqref{nudehat} by $x_1^c,$ Lemma \ref{s11.1} then allows to re-write
 \eqref{nudehat} also in the following form:
 
\begin{eqnarray}\nn
\widehat{\nu_\de^\la}(\xi)&=& \la^{-1/2}\chi_1(s_1s_3) \chi_1(s_2s_3)\chi_1(s_3) \; e^{-i\la s_3 B_0(s',\de,\si)}\\ 
&&\hskip2cm \int e^{-i\la s_3\Big(B_3(s_2,\de,\si,y_1) y_1^3-B_1(s',\de,\si) y_1\Big)}
\, a_0(y_1,s',\de;\, \la)\,\chi_0(y_1)\, dy_1\, .\label{ai4}
\end{eqnarray}

Here, $\chi_0$ is a  smooth cut-off function supported in sufficiently small neighborhood of the origin, and 
$a_0(y_1,s',\de;\,\la)$  is again a smooth function (possible different from the one in  \eqref{nudehat}), which is uniformly a classical symbol of order $0$ with respect to $\la.$  

\medskip

We shall   make use of the following, more or less classical  lemma, respectively variations  of it,    in the case  of Airy type integrals, i.e., when $B=3$ (compare for instance Lemma 1 in  \cite{randol},   or \cite{duistermaat} for related results). The case of general $B\ge 3$ will become relevant in  \cite{IM-rest2}.  Since we need somewhat more refined results than what can be found in the literature, for instance information on  the asymptotic behavior also under certain  perturbations,  we shall sketch a proof.

\begin{lemma}\label{airy1}
Let $B\ge 3$ be an integer, and let
$$
J(\la,u,s):=\int_\RR e^{i\la( b(t,s)t^B-ut-\sum_{j=2}^{B-1}b_j(u)t^j)} \, a(t,s)\, dt,\quad \la\ge 1, u\in\RR, |u|\lesssim 1,
$$
where $a,b$   are smooth, real-valued functions of $(t,s)$ on an open neighborhood of $I\times K,$ where $I$ is a compact neighborhood of the origin in $\RR$ and $K$ is a compact subset of $\RR^m.$  The functions  $b_j$ are assumed to be real-valued and smooth too. Assume also that $b(t,s)\ne 0$ on $I\times K,$  that $|t|\le \ve$ on the support of $a,$ and that
$$
|b_j(u)|\le C|u|,\qquad j=2,\dots, B-1.
$$
If $\ve>0$ is  chosen sufficiently small and $\la$ sufficiently large, then the following hold true:
 \begin{itemize}
\item[(a)]  If $\la^{(B-1)/B} |u|\lesssim 1,$ then 
$$
J(\la,u,s)=\la^{-\frac 1B} \, g(\la^{\frac {B-1}B}u, \la, s),
$$
where $g(v,\la,s)$ is a smooth function of $(v,\la,s)$ whose derivates of any order are uniformly bounded on its natural domain.
\item[(b)]  If $\la^{(B-1)/B} |u|\gg 1,$ let us assume first that $u$ and $b$ have the same sign, and  that  $B$ is odd.  Then 
\begin{eqnarray*}
&&J(\la,u,s)=\la^{-\frac 12} |u|^{-\frac {B-2}{2B-2}}\, \chi_0\Big(\frac u\ve\Big)\\
&& \times\Big(a_+(|u|^{\frac 1{B-1}},s)\, e^{i\la |u|^{\frac B{B-1}}q_+(|u|^{\frac1{B-1}},s)}+ a_{-}(|u|^{\frac1{B-1}},s) \,e^{i\la |u|^{\frac B{B-1}}q_{-}(|u|^{\frac1{B-1}},s)}\Big)\\
 &&\hskip 4cm +(\la |u|)^{-1} E(\la |u|^{\frac B{B-1}}, |u|^{\frac1{B-1}},s),
\end{eqnarray*}
where $a_\pm,q_\pm$ are smooth functions, and where  $E$ is smooth and satisfies estimates
$$
|\pa_\mu^\al \pa_v^\be\pa_s^\ga E(\mu,v,s)|\le C_{N,\al,\be,\ga} |v|^{-\be} |\mu|^{-N}, \qquad \forall N,\al,\be,\ga\in\NN.
$$Moreover, when $|u|$ is sufficiently small, then
$$
q_\pm(v,s)=\mp \sgn {b(0,s)}|b(0,s)|^{\frac1{B-1}}\rho(v,s),
$$
where $\rho$ is smooth and $\rho(0,s)=(B-1)\cdot B^{-B/(B-1)}.$

Finally, if $u$ and $b$ have opposite signs, then  the same formula remains valid, even with $a_+\equiv 0, a_-\equiv 0.$ And, if $B$ is even, we do have a similar result, but without the presence of the term containing $a_-.$

 \end{itemize}
\end{lemma}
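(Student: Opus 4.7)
The two regimes in the lemma correspond to two natural scales for $t$: in (a) the diagonal scale $t\sim \la^{-1/B}$ (balancing $\la$ against $b(t,s)t^B$), and in (b) the critical-point scale $t\sim |u|^{1/(B-1)}$ coming from the equation $Bb(t,s)t^{B-1}\sim u$. The plan is therefore to rescale $t$ accordingly in each case. In (a) this reduces matters to a smooth, uniformly bounded model integral depending on $v=\la^{(B-1)/B}u$. In (b) it identifies a large parameter $\mu:=\la|u|^{B/(B-1)}\gg 1$ to which the method of stationary phase with parameters can be applied. The hypothesis $|b_j(u)|\le C|u|$ is precisely what makes the lower-order terms $\sum_j b_j(u)t^j$ behave as subordinate perturbations of the Airy-type model phase $b(t,s)t^B-ut$ in both regimes, since $b_j$ smooth with $b_j(0)=0$ gives a factorization $b_j(u)=u\tilde b_j(u)$ with $\tilde b_j$ smooth.

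\textbf{Case (a).} Setting $t=\la^{-1/B}\tau$ and $v:=\la^{(B-1)/B}u$ gives
\begin{equation*}
J(\la,u,s)=\la^{-1/B}\int e^{i\Phi_a(\tau,v,\la,s)}\,a(\la^{-1/B}\tau,s)\,d\tau,
\end{equation*}
with $\Phi_a=b(\la^{-1/B}\tau,s)\tau^B-v\tau-\sum_{j=2}^{B-1}\la^{(B-j)/B}b_j(\la^{-(B-1)/B}v)\tau^j$. Using $b_j(u)=u\tilde b_j(u)$, each perturbation term is $O(\la^{(1-j)/B})$ and jointly smooth in $(v,\la^{-1/B},s)$. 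Splitting the $\tau$-integration at $|\tau|=M$: on $|\tau|\le M$ the phase and amplitude are smooth with uniformly bounded derivatives in $(v,\la,s)$ and define a smooth function $g_M(v,\la,s)$ directly. On $M<|\tau|\le \ve\la^{1/B}$ (the effective support of $a$) the derivative $\partial_\tau\Phi_a$ is dominated by $Bb(0,s)\tau^{B-1}$ and hence bounded below by $cM^{B-1}$; repeated integration by parts then yields $O(M^{-N})$ for any $N$. Taking $M$ sufficiently large gives the claimed $g(v,\la,s)$.

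\textbf{Case (b).} Rescale $t=|u|^{1/(B-1)}\tau$, and set $v:=|u|^{1/(B-1)}$, $\eps:=\sgn u$, $\mu:=\la v^B$. After factoring $b_j(u)=u\tilde b_j(u)$, the phase becomes $\mu\Psi(\tau,v,s,\eps)$ with
\begin{equation*}
\Psi(\tau,v,s,\eps)=b(v\tau,s)\tau^B-\eps\tau-\sum_{j=2}^{B-1}\eps v^{j-1}\tilde b_j(\eps v^{B-1})\tau^j,
\end{equation*}
jointly smooth in all its arguments, and the integral acquires a Jacobian $v$. The critical equation at $v=0$, namely $Bb(0,s)\tau^{B-1}=\eps$, has two real roots $\tau_\pm$ when $\eps b(0,s)>0$ and $B$ is odd, none when $\eps b(0,s)<0$ and $B$ is odd, and exactly one in the even $B$ case; the implicit function theorem extends these smoothly to $\tau_\pm(v,s)$ for small $v$, with $\partial_\tau^2\Psi|_{\tau_\pm}$ bounded away from zero. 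Applying the method of stationary phase with parameters to $\int e^{i\mu\Psi}a(v\tau,s)\,d\tau$ produces, in the same-sign case, $\mu^{-1/2}\sum_\pm A_\pm(v,s)e^{i\mu q_\pm(v,s)}$ with $q_\pm(v,s):=\Psi(\tau_\pm(v,s),v,s,\eps)$ and $A_\pm$ smooth, together with a $\mu^{-3/2}$-remainder. Multiplying by the Jacobian $v$ turns $v\mu^{-1/2}$ into $\la^{-1/2}|u|^{-(B-2)/(2(B-1))}$ and $v\mu^{-3/2}$ into $(\la|u|)^{-1}\mu^{-1/2}\tilde E$, matching the stated form of the remainder $E$. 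A direct computation at $v=0$ gives $q_\pm(0,s)$ in the claimed form with $\rho(0,s)=(B-1)B^{-B/(B-1)}$, and $\rho$ is simply the smooth factor of $v$ that results. In the opposite-sign case no real critical points exist, $|\partial_\tau\Psi|$ stays bounded below on the support of $a$, and iterated integration by parts absorbs the entire contribution into $E$ with $a_\pm\equiv 0$; the even-$B$ case is handled identically with a single critical point.

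\textbf{Main obstacle.} The rescalings and critical-point analysis are routine; the technical work lies in the uniform symbol estimate $|\partial_\mu^\al\partial_v^\be\partial_s^\ga E|\lesssim |v|^{-\be}|\mu|^{-N}$. Since $v$ and $\mu$ are not independent parameters ($v=|u|^{1/(B-1)}$, $\mu=\la v^B$), one must carefully track how $\partial_v$ derivatives act simultaneously on the amplitude $a(v\tau,s)$, the perturbative factors $\tilde b_j(\eps v^{B-1})$, and the stationary-phase expansion of the remainder, while the $\mu^{-N}$ decay must be extracted from non-stationary tails; the loss of powers of $v$ under $\partial_v$ is the source of the $|v|^{-\be}$ factor. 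A second delicate point, important for the subsequent use of the lemma, is that the two descriptions must match smoothly across the transition $\mu\sim 1$: the form of (a), with $g$ smooth and uniformly bounded in $(v,\la,s)$ on its natural domain, is designed precisely to glue onto (b) without loss.
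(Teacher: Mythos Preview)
Your approach is correct and essentially identical to the paper's: the same rescalings $t=\la^{-1/B}\tau$ in (a) and $t=|u|^{1/(B-1)}\tau$ in (b), the same splitting at $|\tau|\sim M$ into a compact piece (treated directly, resp.\ by stationary phase) and a tail (treated by integration by parts), and the same critical-point computation for $q_\pm(0,s)$. The one step the paper makes explicit that you leave implicit is the range $|u|\ge\ve$ in (b), where $|t|\le\ve$ on $\supp a$ forces $|\Phi'(t)|\gtrsim\la|u|$ and integration by parts absorbs the whole integral into $E$; your outline only invokes the implicit function theorem for small $v$, so this case should be mentioned separately.
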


\begin{proof} In the case (a), scaling in $t$ by the factor $\la^{-1/B}$ allows to re-write
$$
J(\la,u,s)=\la^{-\frac1B}\int e^{i( b(\la^{-\frac 1B}t,s)t^B-\la^{\frac{B-1}B}ut -\sum_{j=2}^{B-1}\la^{\frac{B-j}B}b_j(u)t^j)} \, a(\la^{-\frac1B}t,s)\, dt.
$$
Choose a smooth  cut-off function $\chi_0$ on $\RR$ which is identically one on $[-1,1],$ and $M\gg 1,$  and decompose  
$$
\la^{\frac1B}J(\la,u,s)=G_0(\la^{\frac{B-1}B}u,\la,s)+G_\infty(\la^{\frac{B-1}B}u,\la,s),
$$
where, for $|v|\lesssim 1,$ 
\begin{eqnarray*}
G_0(v,\la,s)&:=&\int e^{i( b(\la^{-\frac1B}t,s)t^B-vt-\sum_{j=2}^{B-1}\la^{\frac{B-j}B}b_j(\la^{\frac{1-B}B}v)t^j)} \,\chi_0(\tfrac t M) a(\la^{-\frac1B}t,s)\, dt,\\
G_\infty(v,\la,s)&:=&\int e^{i( b(\la^{-\frac1B}t,s)t^B-vt-\sum_{j=2}^{B-1}\la^{\frac{B-j}B}b_j(\la^{\frac{1-B}B}v)t^j)}  \,(1-\chi_0(\tfrac t M))\, a(\la^{-\frac1B}t,s)\, dt.
\end{eqnarray*}

Notice that for $j\ge2,$ 
$$
|\la^{\frac{B-j}B}b_j(\la^{\frac{1-B}B}v)|\le C \la^{\frac{B-j}B}\la^{\frac{1-B}B}|v|\lesssim \la^{-\frac 1B}.
$$
It is  then easy to see that $G_0$ is a smooth function of $(v,\la,s)$ whose derivates of any order are uniformly bounded on its natural domain, and the same can easily be  verified for $G_\infty$ by means of iterated integrations by parts. This proves (a).

\medskip

In order to prove (b),  consider first the case where $|u|\ge \ve.$ If $\Phi=\Phi(t)$ denotes the complete phase in the oscillatory integral defining $J(\la,u,s),$ recalling that $|t|\le \ve,$  we easily   see that 
$$
|\Phi'(t)|\ge C \la|u|,
$$ 
provided we choose $\ve $ sufficiently small. Integrations by parts then show that we can represent $J(\la,u,s)$ by the third term $(\la |u|)^{-1} E(\la |u|^{\frac B{B-1}}, |u|^{\frac1{B-1}},\la,s).$

\medskip
Let us therefore assume that $|u|<\ve.$ We shall also  assume that $u>0;$ the case $u<0$ can be treated in a similar way. Here, we scale t by the factor $u^{1/(B-1)},$  and re-write
$$
J(\la,u,s)=u^{\frac1{B-1}}\int e^{i\la u^{\frac B{B-1)}}( b(u^{\frac1{B-1}}t,s)t^B-t-\sum_{j=2}^{B-1} u^{-\frac{B-j}{B-1}}b_j(u)t^j)} \, a(u^{\frac1{B-1}}t,s)\, dt.
$$
Again, we decompose this as 
$$
J(\la,u,s)=J_0(\la,u^{\frac1{B-1}},s)+J_\infty(\la,u^{\frac1{B-1}},s),
$$
where, with $v:=u^{\frac1{B-1}},$ 
\begin{eqnarray*}
J_0(\la,v,s)&:=&v\int e^{i\la v^B( b(vt,s)t^B-t-\sum_{j=2}^{B-1} v^{-(B-j)}b_j(v^{B-1})t^j)} \, \chi_0(\tfrac t M)\, a(vt,s)\, dt,\\
J_\infty(\la,v,s)&:=&v\int e^{i\la v^B( b(vt,s)t^B-t-\sum_{j=2}^{B-1} v^{-(B-j)}b_j(v^{B-1})t^j)}  \, (1-\chi_0(\tfrac t M))\, a(vt,s)\, dt.
\end{eqnarray*}
Observe that 
$$
|v^{-(B-j)}b_j(v^{B-1})|\le C v^{j-1}\lesssim \ve^{\frac 1{B-1}}, \qquad j=2,\dots, B-1.
$$
Assume that $\ve$ is sufficiently small.  If $B$ is odd, then, in the first integral $J_0,$  the phase has exactly two non-degenerate critical points $t_\pm(v,s)\sim \pm 1,$  if $b>0,$ and thus the method of stationary phase shows that 
$$
J_0(\la,v,s)=v(\la v^B)^{-\frac12}a_+(v,s) e^{i\la v^Bq_+(v,s)}+v(\la v^B)^{-\frac12}a_{-}(v,s) e^{i\la v^Bq_{-}(v,s)} +
vE_1(\la v^B,v,s),
$$
where $a_\pm$ are smooth functions, and where $E_1$ is smooth and rapidly decaying with respect to the first variable.  If $b<0,$ then there are no critical points, and we get  the term $E_1$ only. Moreover, 
$$
q_\pm(v,s)=b(vt_\pm(v,s),s) t_\pm(v,s)^B-t_\pm(v,s)+O(v).
$$
Note that if $v=0,$ then $t_\pm(0,s)=\pm (Bb(0,s))^{-1/(B-1)},$ so that 
$$
q_\pm(0,s)=\mp\frac {B-1}{B^{\frac B{B-1}} b(0,s)^{\frac1{B-1}}}\ne 0,
$$
which proves the statement about $q_\pm.$ A similar discussion applies when  $B$ is even. In this case, there is only  one critical point, namely  $t_+(v,s).$ 

In the second integral $J_\infty,$ we may apply integrations by parts in order to re-write it as 
$$
J_\infty(\la,v,s):=v(\la v^B)^{-N} \int e^{i\la v^B( b(vt,s)t^B-t-\sum_{j=2}^{B-1} v^{-(B-j)}b_j(v^{B-1})t^j)} \,  a_N(t,v,s)\, dt, \quad N\in\NN,
$$
where $a_N$ is supported where $|t|\ge M$ and $|a_N(t,v,s)|\le C_N |t|^{-2N}.$ Similarly, if we take derivatives with respect to $s,$ we produce additional  powers of $t$ in the integrand, which, however, can be compensated by integrations by parts. Analogous considerations apply to derivatives with respect to $v$ (where we produce negative powers of $v$), and with respect to $\la v^B.$ Altogether, we find that 
$$
J_\infty(\la,v,s) =\frac 1{\la v^{B-1}}  E_2(\la v^B, v,s),
$$
where $E_2$ is smooth and 
$$
|\pa_\mu^\al \pa_v^\be\pa_s^\ga E_2(\mu,v,s)|\le C_{N,\al,\be,\ga} |v|^{-\be} |\mu|^{-N}, \qquad \forall N,\al,\be,\ga\in\NN.
$$
Summing up all terms, and putting $E:=E_1+E_2,$ we obtain the statements in (b).
\end{proof}

The following remark can be verified easily by  well-known versions of the method of stationary phase for oscillatory integrals whose amplitude depends also on the parameter $\la$ as symbols of order $0$ (see, e.g., \cite{sogge}). 
\begin{remark}\label{airy1rem}
We may even allow in Lemma \ref{airy1} that the function $a(t,s)$ also depends on $\la,$ i.e., $a=a(t,s;\,\la),$ in such a way that it is a symbol of order $0$ in $\la,$ uniformly in the other parameters, i.e., 
$$
|\Big(\frac{\pa}{\pa \la}\Big)^\al \Big(\frac{\pa}{\pa t}\Big)^{\beta_1}  \Big(\frac{\pa}{\pa s}\Big)^{\beta_2}a(t,s;\,\la)|\le C_{\al,\beta} (1+\la)^{-\al}
$$
for all $\al, \beta_1,\beta_2\in \NN.$  Then the same conclusions hold, only with $a_{\pm}$ and $E$ depending also additionally on $\la$ as symbols of order $0$ in a uniform way.
\end{remark}

Let us apply this lemma and the remark to the oscillatory integral \eqref{ai4}, with $B=3.$ 
Putting $u:=B_1(s,\de,\si),$ in view of this lemma we shall decompose the frequency support of $\nu_\de^\la$  furthermore into the domain where $\la^{2/3}|B_1(s,\de,\si)|\lesssim 1$ (this is  essentially a conic region in $\xi$-space (cf. \eqref{sdef}), which will be  called  the "region near the  Airy cone"),  and the remaining domain into the conic regions where $(2^{-l}\la)^{2/3}|B_1(s,\de,\si)|\sim 1,$ for 
$
M_0\le  2^l\le \frac \la {M_1}
$
where $M_0,M_1\in\NN$ are sufficiently  large. The  {\it Airy cone}   is given by the equation $B_1=0,$ i.e., 
$$
s_1=s_2^{\frac {n-1}{n-2}}G_3(s_2,\de,\si).
$$

 To this end, we choose smooth cut-off functions $\chi_0$ and $\chi_1$ such that   $\chi_0=1$ on a sufficiently large neighborhood of the origin, and 
$\chi_1(t)$ is supported  where $|t|\sim 1$ and $\sum_{l\in \bZ} \chi_1(2^{-2l/3})=1$ on $\RR\setminus \{0\},$ and define the functions $\nu_{\de,Ai}^\la$ and $\nu_{\de,l}^\la$ by 
\begin{eqnarray*}  
\widehat{\nu_{\de,Ai}^\la}(\xi)&:=&\chi_0\Big(\la^{\frac23}B_1(s',\de,\si)\Big)\, \widehat{\nu_{\de}^\la}(\xi),\\ 
 \widehat{\nu_{\de,l}^\la}(\xi)&:=&\chi_1\Big((2^{-l}\la)^{\frac23}B_1(s',\de,\si)\Big)\, \widehat{\nu_{\de}^\la}(\xi), \qquad M_0\le  2^l\le \frac \la {M_1}, 
\end{eqnarray*}
so that 
\begin{equation}\label{ai5}
\nu_{\de}^\la= \nu_{\de,Ai}^\la+\sum_{M_0\le  2^l\le \frac \la {M_1}}\nu_{\de,l}^\la.
\end{equation}
 Denote by $T^\la_{\de,Ai}$ and $T^\la_{\de,l}$ the convolution operators
 $$
 T_{\de,Ai}^\la \vp:=\vp * \widehat{\nu_{\de,Ai}^\la},\quad T_{\de,l}^\la \vp:=\vp * \widehat{\nu_{\de,l}^\la}.
 $$
 
Since $\de_0=2^{-j},$ we note that in order to prove Proposition \ref{m2_A}, it will suffice to prove the following estimate:

If $p_c:=14/13,$   then 
\begin{equation}\label{restm}
\sum\limits_{2\le \la\le \de_0^{-6}}\|T^\la_{\de,Ai}\|_{p_c\to p_c'}+\Big\|\sum_{M_0\le  2^l\le \frac \la {M_1}}\sum\limits_{2\le \la\le \de_0^{-6}} T^\la_{\de,l}\Big\|_{p_c\to p_c'}
\le C\, \de_0^{-\frac 17},
\end{equation}
provided $\de$ is sufficiently small and $M_0, M_1\in\NN$  are sufficiently large.

\subsection{Estimation of $T^\la_{\de,Ai}$ }

We first consider the region near the Airy cone and  prove the following 
\begin{lemma}\label{s11.3}
There are constants $C_1,C_2$ so that 
\begin{eqnarray}\label{ai6}
\|\widehat{\nu_{\de,Ai}^\la}\|_\infty&\le& C_1 \la^{-\frac 56},\\
\|\nu_{\de,Ai}^\la\|_\infty&\le& C_2 \la^{\frac 76},\label{ai7}
\end{eqnarray}
uniformly in  $\si$ and $\de,$
 provided $\la$ is sufficiently large and $\de$ sufficiently small.
\end{lemma}
Notice that by interpolation (again with $\theta=3/7$)  these estimates imply that 
$$
\|T^\la_{\de,Ai}\|_{p_c\to p_c'}\lesssim (\la^{-\frac 56})^{\frac 47}( \la^{\frac 76})^{\frac37}=\la^{\frac 1 {42}},
$$
so that 
\begin{equation}\label{ai8}
\sum\limits_{2\le \la\le \de_0^{-6}} \|T^\la_{\de,Ai}\|_{p_c\to p_c'}\lesssim \de_0^{-\frac 17},
\end{equation}
which is exactly the estimate that we need  (cf. \eqref{restm}).

\medskip
Let us turn to the proof of Lemma \ref{s11.3}.
The first estimate \eqref{ai6}   is immediate from \eqref{ai4} and Lemma \ref{airy1}.

In order to prove the second estimate,  observe first that  by Lemma \ref{airy1} (a) and the subsequent remark, we may write 
\begin{eqnarray*}
\chi_0(\la^{2/3}B_1(s,\de,\si))\int e^{-i\la s_3\Big(B_3(s_2,\de,\si,y_1) y_1^3-B_1(s',\de,\si) y_1\Big)}
\, a(y_1,s,\de;\,\la)\,\chi_0(y_1)\, dy_1\\
=\la^{-\frac 13}\,\chi_0(\la^{2/3}B_1(s',\de,\si))\, g\Big(\la^{2/3}|B_1(s',\de,\si)|,\la,\de, \si,s\Big),
\end{eqnarray*}
 where $g$ is a smooth function whose derivates of any order are uniformly bounded on its natural domain.

Applying  the Fourier inversion formula to $\nu_{\de,Ai}^\la,$ \eqref{ai4} and this identity  yield that 
\begin{eqnarray*}
\nu_{\de,Ai}^\la(x)=   \iint   \la^{-\frac 12}\, \la^{-\frac 13}\,\chi_0(\la^{2/3}B_1(s',\de,\si))\, \chi_1(s_1s_3)
\chi_1(s_2s_3)\chi_1(s_3) \; e^{i\xi\cdot x}\\
e^{-i\la s_3 B_0(s',\de,\si)} \,  g\Big(\la^{2/3}B_1(s',\de,\si),\xi_3,\de, \si,s\Big)\, d\xi.
\end{eqnarray*}
We again change coordinates from $\xi=(\xi_1,\xi_2,\xi_3)$ to $(s_1, s_2,s_3)$  according to \eqref{sdef}. 

We  then  find  that 
\begin{eqnarray}\nn \label{ai9}
\nu_{\de,Ai}^\la(x)&=&\la^{\frac {13}6}\int  e^{-i\la s_3\Big( B_0(s',\de,\si) -s_1x_1-s_2x_2- x_3\Big)}
\chi_0(\la^{2/3}B_1(s', \de,\si))\\
&&\hskip 4cm      g\Big(\la^{2/3}B_1(s',\de,\si),\la,\de, \si,s\Big) \tilde \chi_1(s)\, ds_1 ds_2 ds_3,
\end{eqnarray}
where 
$$
\tilde \chi_1(s):=\chi_1(s_1s_3) 
\chi_1(s_2s_3) \chi_1(s_3)\, s_3^{2}
$$
localizes to a region where 
$
s_j\sim 1, \quad j=1,2,3.
$

Observe first that when $|x|\gg 1,$ then we easily obtain by means of integrations by parts that
\begin{equation}\label{aii9}
|\nu_{\de,Ai}^\la(x)|\le C_N \la^{-N}, \quad N\in\NN, \mbox { if } |x|\gg 1.
\end{equation}
Indeed, when $|x_1|\gg 1,$ then we integrate by parts repeatedly in $s_1$ to see this, and a similar argument applies when $|x_2|\gg1,$ where we use the $s_2$-integration. Observe that in each step, we gain a factor $
\la^{-1},$ and lose at most $\la^{2/3}.$  Finally,  when $|x_1|+|x_2|\lesssim 1$ and $|x_3|\gg 1,$ then we can integrate by parts in $s_3$ in order to establish this estimate.
\medskip

We may therefore assume now that $|x|\lesssim 1.$

We then perform yet  another change of coordinates, passing from $s'=(s_1,s_2)$ to $(z,s_2),$ where 
$$
z:= \la^{2/3}B_1(s',\de,\si).
$$ 
Applying  \eqref{ai2}, we find that
$$
z=\la^{\frac 23}(-s_1+s_2^{\frac {n-1}{n-2}}G_3(s_2,\de,\si))
$$
so that
\begin{equation}\label{ai10}
s_1=s_2^{\frac {n-1}{n-2}}G_3(s_2,\de,\si)-\la^{-\frac 23}z.
\end{equation}
 In combination with \eqref{ai2}, we thus obtain that
\begin{equation}\label{ai11}
B_0(s,\de,\si)=-\la^{-\frac 23}z\,s_2^{\frac {1}{n-2}}G_1(s_2,\de,\si)+s_2^{\frac {n}{n-2}}
(G_1G_3-G_2)(s_2,\de,\si).
\end{equation}
We may thus re-write 
\begin{eqnarray}\nn \label{ai12}
\nu_{\de,Ai}^\la(x)&=&\la^{\frac {3}2}\int  e^{-i\la s_3\Phi(z,s_2,x_1,\de,\si)}
\, g\Big(z,\la,\de, \si,s_2^{\frac {n-1}{n-2}}G_3(s_2,\de,\si)-\la^{-\frac 23}z,s_2,s_3\Big)\\
&& \hskip3cm  \tilde \chi_1 \Big(s_2^{\frac {n-1}{n-2}}G_3(s_2,\de,\si)-\la^{-\frac 23}z,s_2\Big)\,   
  \, \chi_0(z)\, dz  ds_2 ds_3,
\end{eqnarray}
where
\begin{eqnarray}\nn
\Phi(z,s_2,x_1,\de,\si):=s_2^{\frac {n}{n-2}}(G_1G_3-G_2)(s_2,\de,\si)
- s_2^{\frac {n-1}{n-2}}G_3(s_2,\de,\si)x_1-s_2x_2-x_3\\
+\la^{-\frac 23}z\,(x_1-s_2^{\frac {1}{n-2}}G_1(s_2,\de,\si)). \label{ai13}
\end{eqnarray}

Observe that by \eqref{ai2},  when $\de=0,$ 
\begin{equation}\label{ai14}
(G_1G_3-G_2)(s_2,\de,\si)=\tfrac{n^2-3n+2}2 \si \be(0)\ne 0, \quad  G_3(s_2,\de,\si)= n(n-2)\si\be(0)\ne 0,\end{equation}
since we assume that $n\ge 5,$ and that the exponents $n/(n-2), (n-1)/(n-2)$ and $1$ of $s_2$ which  appear in $\Phi$ (regarding the last term in \eqref{ai13} as an error term) are all different. Moreover, recall that $|x|\lesssim 1.$  It is then  easily seen that this  implies that, when $\de=0,$ 
$$
\sum_{j=1}^3|\pa_{s_2}^j\Phi(z,s_2,x_1,\de,\si)|\sim 1\quad \mbox{for every } s_2\sim 1,
$$
uniformly in $z$ and $\si.$  The same type of estimates then remains valid for $\de$ sufficiently small. We may thus apply the van der Corput type Lemma \ref{corput} to  the $s_2$- integration in \eqref{ai12}, which  in combination with Fubini's theorem yields
$$
\|\nu_{\de,Ai}^\la\|_\infty\le C \la^{\frac 32}\la^{-\frac 13},
$$ hence \eqref{ai7}. This concludes the proof of Lemma \ref{s11.3}.

\subsection{Estimation of $T^\la_{\de,l}$ }

We next regard the region away from  the Airy cone. The study of this region will require substantially more refined techniques.  Let us first note that by \eqref{nudehat} and Fourier inversion we have
\begin{eqnarray}\nn
\nu_{\de,l}^\la(x)&= &\la^3\la^{-1/2}\iint \chi_1(s_1s_3)\chi_1(s_2s_3)\chi_1(s_3) \, \chi_1\Big((2^{-l}\la)^{2/3}B_1(s,\de,\si)\Big)\\
&&\hskip2cm \times e^{-i\la s_3\Big(\Psi(y_1,\de,\si, s_1,s_2)-s_1x_1-s_2x_2-x_3\Big)}
\, a(y_1,\de,\si,s;\, \la)\,\chi_1(y_1)\, dy_1\, ds . \label{nude}
\end{eqnarray}

In order to  indicate the problems that we have to face here,  let us state (without proof) an analogue to Lemma \ref{s11.3}, which we believe  gives essentially optimal estimates (a proof will  implicitly be  contained in the more refined estimates of the next section).

\begin{lemma}\label{s11.4}
There is a constant $C$ so that 
\begin{eqnarray}\label{ai15}
\|\widehat{\nu_{\de,l}^\la}\|_\infty&\le& C 2^{-\frac l 6} \la^{-\frac 56},\\
\|\nu_{\de,l}^\la\|_\infty&\le& C\min\{\la^{\frac 76} 2^{\frac l 3},\frac \la{\de_0}\},\label{ai16}
\end{eqnarray}
uniformly in $\si$ and $\de,$ provided $\de$  is sufficiently small.
\end{lemma}

In order to apply this lemma, let us put $\la=2^r,\,r\in\NN.$ Then, according to \eqref{ai15}, we have 
\begin{equation}\label{6.39I}
\|\widehat{\nu_{\de,l}^\la}\|_\infty\lesssim 2^{-\frac{5r+l}{6}},
\end{equation}
For $k\in\NN$ we therefore define
$$
\nu_{\de,k}:=\sum_{I_k}\nu_{\de,l}^{2^r},
$$
where $I_k:=\{(r,l)\in\NN^2: 5r+l=k, 2^r\le \de_0^{-6}\}.$   Then 
\begin{equation}\label{c6.7}
\sum_{M_0\le  2^l\le \frac \la {M_1}}\, \sum_{2\le \la\le \de_0^{-6}} \nu_{\de,l}^\la=\sum_{k\in\NN}\nu_{\de,k},
\end{equation}
and we have the following consequence of Lemma \ref{s11.4}:
\begin{cor}\label{cai}
There is a  constant $C$ so that 
\begin{eqnarray}\label{6.41I}
\|\widehat{\nu_{\de,k}}\|_\infty&\le& C\,2^{-\frac k 6};\\
\|\nu_{\de,k}\|_\infty&\le& C\,  2^{\frac 29 k}\,\de_0^{-\frac 13},   \label{6.42I}
\end{eqnarray}
uniformly in $\si$ and $\de,$ provided  $\de$ sufficiently small.
\end{cor}

\begin{proof} 
The first estimate \eqref{6.41I} follows immediately from \eqref{6.39I}, because the supports of the functions 
$\{\widehat{\nu_{\de,l}^{2^r}}\}_{r,l}$ are essentially disjoint.

Next, we decompose $I_k=I_k^1\cup I_k^2,$ where
\begin{eqnarray*}
I_k^1&:=&\{(r,l)\in\NN^2: 5r+l=k, \, 2^{r+2l}\le  \de_0^{-6}\},\\
I_k^2&:=&\{(r,l)\in\NN^2: 5r+l=k,\,  \de_0^{-6}< 2^{r+2l},\,   2^r\le \de_0^{-6}\} .
\end{eqnarray*}
Notice that according to \eqref{ai16}, for $(r,l)\in I_k^1$ we have $\la^{\frac 76} 2^{\frac l 3}\le\frac \la{\de_0},$ hence $\|\nu_{\de}^\la\|_\infty\lesssim 2^{7r/6} 2^{l/3}=2^{2k/9} 2^{(r+2l)/18},$ whereas for $(r,l)\in I_k^2$ we have 
$\|\nu_{\de}^\la\|_\infty\lesssim 2^r /\de_0=(2^{2k/9}/\de_0) 2^{-(r+2l)/9},$ so that
$$
\|\nu_{\de,k}\|_\infty\le C\, 2^{\frac 29 k} \sum\limits_{(r,l)\in  I_k^1}  2^{\frac{r+2l}{18}}
+\frac{2^{\frac 29 k}}{\de_0}\sum\limits_{(r,l)\in  I_k^2}2^{-\frac{r+2l}9}.
$$
Comparing the latter sums with one-dimensional geometric series and using that $2^{r+2l}\le  \de_0^{-6}$ in the first sum, and $2^{r+2l}>\de_0^{-6}$ in the second sum, we obtain \eqref{6.42I}.
\end{proof}

Let us denote by $T_{\de,k}$ the convolution operator $\vp\mapsto \vp *\widehat{\nu_{\de,k}}.$ Interpolating the  estimates in the preceding lemma,  again with parameter $\th_c:= 3/7,$ we obtain
$$
\|T_{\de,k}\|_{p_c\to p_c'} \lesssim \de_0^{-\frac 17},
$$
uniformly in $k, $  whereas for $1\le p<p_c$ we get  $\|T_{\de,k}\|_{p\to p'} \lesssim 2^{-\ve k}\de_0^{-\frac 17},$ 
for some $\ve>0$ which depends on $p,$ so that by  \eqref{ai5}, \eqref{ai8} and \eqref{c6.7} 
$$
\Big\|\sum\limits_{2\le \la\le \de_0^{-6}} T^\la_\de\Big\|_{p\to p'}\lesssim  \de_0^{-\frac 17}+ \sum\limits_{k\in\NN} \|T_{\de,k}\|_{p\to p'}
\lesssim \de_0^{-\frac 17}.
$$
 We thus barely fail to establish the estimate \eqref{restm} at the critical exponent $p=p_c.$ 
 
 \medskip In order to prove the estimate \eqref{restm}  also at the endpoint  $p=p_c,$  we need to apply an interpolation argument. Recall that for the Fourier restriction to spheres, the endpoint result had been obtained by Stein using interpolation with analytic families of distributions (cf. \cite{tomas}), and this has become one of  the  standard methods for obtaining endpoint estimates. However, an alternative, real interpolation method has been devised by Bak and Seeger recently in \cite{bs}, which often leads to much shorter proofs and  even optimal results  
  in the scale  of Lorentz spaces.
    
  In \cite{IM-rest2}, we shall make use of this new method in some cases.  Nevertheless, we  shall also encounter further situations which apparently cannot be studied by means of this real interpolation method, but still can by treated by using  complex interpolation. 
  
The  latter applies also to the proof of the endpoint estimate in Proposition \ref{m2_A} (c).   Indeed, what  seems to prevent the application of the real interpolation method is that the (complex) measures $\nu_{\de,k}$ arise from the positive measure $\nu_\de$ by means of spectral localizations to certain frequency regions, i.e., $\nu_{\de,k}=\nu_\de*\psi_{\de,k},$ and the obstacle in applying the method from \cite{bs} 
 is that there is no uniform bound for the $L^1$-norms of the functions $\psi_{\de,k}$ as $k$ tends to infinity.

The proofs based on complex interpolation are technically  involved, and our arguments outlined in the next section can be viewed as prototypical for other  proofs of this kind in  \cite{IM-rest2}.

\setcounter{equation}{0}
\section{The endpoint in Proposition \ref{m2_A} (c): Complex interpolation}\label{complex1}

We keep the notation of the previous section. According to \eqref{ai4} and Lemma \ref{s11.1} we may write 
(recalling that $\xi=\la s_3(s_1,s_2,1)$)
\begin{equation}\label{nulhat}
\widehat{\nu_{\de,l}^\la}(\xi):=\la^{-\frac12} \chi_1\Big((2^{-l}\la)^{\frac23}B_1(s',\,\de,\sigma)\Big)\tilde\chi_1(s)\, e^{-i\la s_3B_0(s',\de,\si)}\,J(\la,s,\de,\sigma),
\end{equation}
where we recall that $\tilde \chi_1$ localizes to a region where 
$
s_j\sim 1,\  j=1,2,3,
$
and where 
$$
J(\la,s,\de,\sigma):=\int e^{-i\la s_3 \tilde\Psi_0(y_1,\de,\si, s_1,s_2) }\,a_0(y_1,s,\de;\,\la)\,\chi_0(y_1)\,dy_1,
$$
with
$$
\tilde\Psi_0(y_1,\de,\si, s_1,s_2) :=B_3(s_2,\de,\sigma,y_1)y_1^3-B_1(s',\de,\si)y_1.
$$
\smallskip
Since $B_1$ is of size $(2^l/\la)^{2/3},$ we scale by the factor $(2^l/\la)^{1/3}$ in the integral defining $J(\la,s,\de,\sigma)$ by putting $y_1=(2^l/\la)^{1/3} u_1,$ and obtain 
$$
J(\la,s,\de,\sigma)=(2^l\la^{-1})^\frac13\int e^{-is_3 2^l\Psi_0(u_1,s',\de,\la,l)}\,a_0\Big((2^l\la^{-1})^\frac13u_1,\,s,\,\de,\,\la\Big)\,\chi_0\Big((2^l\la^{-1})^\frac13u_1\Big)\,du_1,
$$
with
$$
\Psi_0(u_1,s',\de,\la,l):=B_3\Big(s_2,\,\de,\sigma,(2^l\la^{-1})^\frac13u_1\Big)\,u_1^3-(2^l\la^{-1})^{-\frac23}B_1(s',\,\de,\sigma)\,u_1.
$$
Observe that  the coefficients of $u_1$ and of $u_1^3$ in $\Psi_0$ are both of size $1,$ so that  
$\Psi_0$ will have no critical point with respect to $u_1$ unless $|u_1|\sim 1.$  

We  may therefore choose  a smooth cut-off function $\chi_1\in C_0^\infty(\RR)$ supported away from $0$  so that $\Psi_0$ has no critical point outside the support of $\chi_1,$ and  decompose
$$
J:=J(\la,s,\de,\sigma)=J_1+J_\infty,
$$
where $J_1=J_1(\la,s,\de,\sigma)$ is given by
$$
J_1:=(2^l\la^{-1})^\frac13\int e^{-is_3 2^l\Psi_0(u_1,s',\de,\la,l)}\,a_0\Big((2^l\la^{-1})^\frac13u_1,\,s,\,\de,\,\la\Big)\,\chi_0\Big((2^l\la^{-1})^\frac13u_1\Big)\,\chi_1(u_1)\,du_1,
$$
Accordingly, we decompose
$$
\nu_{\de,l}^\la=\nu_{l,1}^\la+\nu_{l,\infty}^\la,
$$
where the summands are defined by 
\begin{eqnarray*}
\widehat{\nu_{l,1}^\la}(\xi)&:=&\la^{-\frac12} \chi_1\Big((2^{-l}\la)^{\frac23}B_1(s',\,\de,\sigma)\Big)\tilde\chi_1(s)\, e^{-i\la s_3B_0(s',\de,\si)}\,J_1(\la,s,\de,\sigma),\\
\widehat{\nu_{l,\infty}^\la}(\xi)&:=&\la^{-\frac12} \chi_1\Big((2^{-l}\la)^{\frac23}B_1(s',\,\de,\sigma)\Big)\tilde\chi_1(s)\, e^{-i\la s_3B_0(s',\de,\si)}\,J_\infty(\la,s,\de,\sigma)
\end{eqnarray*}
(we have dropped the dependence on $\de$ in  order to defray the notation).

\medskip
Let us first  consider the contribution given by the $\nu_{l,\infty}^\la:$
By means of integrations by parts, we easily  obtain  that for every $N\in\NN$ we have 
$
|J_\infty|\lesssim (2^l\la^{-1})^\frac13 2^{-lN},
$
hence 
\begin{equation}\label{6.29I}
\|\widehat{\nu_{l,\,\infty}^\la}\|_\infty \lesssim \la^{-\frac12}(2^l\la^{-1})^\frac13 2^{-lN} \qquad \forall N\in\NN.
\end{equation}
Next,  we may assume that we have chosen $\tilde\chi_1$ so that the Fourier inversion formula reads
$$
\nu_{l,\infty}^\la(x)=\la^3\int_{\RR^3} e^{i\la s_3(s_1x_1+s_2x_2+x_3)} \widehat{\nu_{l,\,\infty}^\la}(\xi)\, ds
$$
(with $\xi=\la s_3(s_1,s_2,1)$). 
We then use the change of variables  from $s'=(s_1,s_2)$ to $(z,s_2),$ where now
$$
z:= (2^{-l}\la)^{\frac 23}B_1(s',\de,\si),
$$ 
and  find that (compare \eqref{ai10})
\begin{equation}\label{ai20}
s_1=s_2^{\frac {n-1}{n-2}}G_3(s_2,\de,\si)-(2^{-l}\la)^{-\frac 23}z,
\end{equation}
and in particular
 \begin{equation}\label{ai21}
B_0(s,\de,\si)=-(2^{-l}\la)^{-\frac 23}z\,s_2^{\frac {1}{n-2}}G_1(s_2,\de,\si)+s_2^{\frac {n}{n-2}}
(G_1G_3-G_2)(s_2,\de,\si).
\end{equation}
Notice that  $2^{l}/\la\le 1/M_1\ll 1.$   And, if we plug in the previous formula for 
$\widehat{\nu_{l,\infty}^\la}$ and write  $\nu_{l,\infty}^\la(x)$ as an oscillatory with respect to the variables $u_1, z, s_2,s_3,$ we see that the complete phase is of the form 
$$
-\la s_3\Big( s_2^{\frac {n}{n-2}}(G_1G_3-G_2)(s_2,\de,\si)- x_1 s_2^{\frac {n-1}{n-2}}G_3(s_2,\de,\si)- s_2x_2-x_3+O(2^{l}\la^{-1}(1+|u_1|^3))\Big),
$$
where according to \eqref{ai14} $|G_1G_3-G_2|\sim 1.$ Observe that the localization given by $\chi_0$ implies that  $2^{l}\la^{-1}|u_1|^3\ll 1.$ Applying again first $N$ integrations by parts with respect to $u_1,$ and then van der Corput's lemma for the integration in $s_2,$ taking into account also the Jacobian  of our change of coordinates to $z,$ we thus see that 
$$
\|\nu_{l,\,\infty}^{\la}\|_\infty \lesssim \la^3  \, \la^{-\frac12}(2^l\la^{-1})^\frac13 2^{-lN}\, 
(2^{-l}\la)^{-\frac 23} \, \la^{-\frac 13}=\la^\frac76 2^{-l(N-1)}.
$$

Interpolating between this estimate and \eqref{6.29I}, we see that the convolution operator $T_{l,\infty}^\la,$ which maps $\vp$ to $\vp* \widehat{\nu_{l,\,\infty}^\la},$ can be estimated by 

$$
\|T_{l,\,\infty}^\la\|_{p_c\to p_c'}\lesssim \la^{-\frac56\frac47+\frac76\frac37}2^{-l}
=\la^{\frac 1{42}}2^{-l},
$$
if we choose $N=2.$ 
This implies the desired estimate
$$
\sum_{M_0\le  2^l\le \frac \la {M_1}}\sum\limits_{2\le \la\le \de_0^{-6}}\|T_{l,\,\infty}^\la\|_{p_c\to p_c'}\lesssim \de_0^{-\frac17}.
$$

\medskip

\subsection{The operators $T_{l,1}^\la$} \label{section7.1}We now turn to the investigation of the  convolution operator $T_{l,1}^\la,$ which maps $\vp$ to $\vp* \widehat{\nu_{l,1}^\la}.$ According to \eqref{restm}, what we need to prove is that the operator 
$$
T_1:=\sum_{M_0\le  2^l\le \frac \la {M_1}}\sum\limits_{2\le \la\le \de_0^{-6}} T^\la_{l,1}
$$
satisfies 
\begin{equation}\label{7.5I}
\|T_1\|_{p_c\to p_c'}\lesssim \de_0^{-\frac 17},
\end{equation}
 with a bound which is independent of $\de$ and $\si.$ 

\medskip
 Now, if  the phase $\Psi_0$ has no critical point on the support of $\chi_1,$ then we  can estimate $J_1$ in the same way as $J_\infty$ before, and can  handle the  operators $T_{l,1}^\la$ as we did  for  the $T_{l,\infty}^\la.$ Let us therefore assume in the sequel that $\Psi_0$ has a critical  point $u_1^c\in \supp \chi_1,$   so that $|u_1|\sim 1.$ 

Applying the method of stationary phase, we then get 
$
|J_1|\lesssim (2^l\la^{-1})^{1/3} 2^{-l /2},
$
hence 
\begin{equation}\label{7.6I}
\|\widehat{\nu_{l,1}^\la}\|_\infty \lesssim \la^{-\frac12}(2^l\la^{-1})^\frac13 2^{-\frac l2} =\la^{-\frac 56} 2^{-\frac l6}=2^{-\frac{k}6},
\end{equation}
where we use the same abbreviations $\la:=2^r,\, k=k(r,l):=5r+l$ as in the previous section
(compare with \eqref{ai15}).

\medskip

In view of this estimate, we define for $\ze$ in the complex strip $\Sigma:=\{\zeta\in \bC: 0\le \Re \zeta\le 1\}$  the following analytic family of measures
$$
\mu_\ze(x):=\gamma(\ze)\,\de_0^{\frac \ze 3}\sum_{M_0\le  2^l\le \frac {2^r}{M_1}}\sum\limits_{2\le 2^r\le \de_0^{-6}} 2^\frac{k(3-7\ze)}{18} \nu_{l,\,1}^{2^r},
$$
where 
$$
\ga(\ze):=\frac{2^{\frac 72(\ze-1)}-1}{2^{-2}-1},
$$
and denote by $T_\ze$  the operator of convolution with $\widehat{\mu_\ze}.$ 
Observe that  for $\ze=\theta_c=3/7,$ we have 
$
T_{\theta_c}=\de_0^{\frac 17}T_1,
$
 so that  by Stein's interpolation theorem \cite{stein-weiss},  \eqref{7.5I}  will follow if we can prove the following estimates on the boundaries of the  strip $\Sigma:$ $ \|T_{it}\|_{1\to \infty} \le C$ and $ \|T_{1+it}\|_{2\to 2} \le C,$ where the constant $C$ is independent of $t\in\RR$ and the  parameters $\de, \si$  (provided $\de$ is sufficiently small). Equivalently, we shall prove
 that
\begin{eqnarray}\label{7.7I}
\|\widehat{\mu_{it}}\|_\infty &\le& C \qquad \forall t\in\RR,\\ 
\|{\mu_{1+it}}\|_\infty &\le& C \qquad \forall t\in\RR. \label{7.8I}
\end{eqnarray}

Since the supports of the functions  $\{\widehat{\nu_{l,1}^{2^r}}\}$ are almost disjoint for $l,r$ in the given range, we see that the first estimate \eqref{7.7I} is an immediate consequence of \eqref{7.6I}.

\medskip
The main problem will consist in estimating  $\|\mu_{1+it}\|_\infty.$  To this end, observe that, again by Fourier inversion, we have (with $\xi=\la s_3(s_1,s_2,1)$)
$$
\nu_{l,1}^\la(x)=\la^3\int_{\RR^3} e^{i\la s_3(s_1x_1+s_2x_2+x_3)} \widehat{\nu_{l,1}^\la}(\xi)\, ds.
$$

Using once again the change of variables  from $s_1$ to $z,$  so that $z= (2^{-l}\la)^{\frac 23}B_1(s',\de,\si)$ and $s_1=s_2^{\frac {n-1}{n-2}}G_3(s_2,\de,\si)-(2^{-l}\la)^{-\frac 23}z,$
we   find that (compare \eqref{ai20}, \eqref{ai21})
\begin{eqnarray}\nonumber
\nu_{l,1}^\la(x)&=&\la^\frac322^l \int e^{-is_3\Phi_1(x,u_1,z,s_2,\de,\la,l)}\,a\Big((2^l\la^{-1})^\frac13 u_1,z,s_2,s_3,\de;\,\la\Big)\\
&&\hskip2cm\times\,  \chi_1(u_1)\chi_1(z)\chi_1(s_2) \chi_1(s_3) \,du_1dz ds_2ds_3, \label{measure1l}
\end{eqnarray}
where $\Phi_1=\Phi_1(x,u_1,z,s_2,\de,\la,l)$ is given by
\begin{eqnarray}\nonumber
\Phi_1&:=&2^l\Big(B_3\big(s_2,\,\de,\sigma,(2^l\la^{-1})^\frac13u_1\big)\,u_1^3-z\,u_1\Big)
\\ \label{7.10I}
&+&\la \Big( s_2^{\frac {n}{n-2}}(G_1G_3-G_2)(s_2,\de,\si)-  s_2^{\frac {n-1}{n-2}}G_3(s_2,\de,\si)\,x_1- s_2x_2-x_3\Big)\\ \nonumber
&+&\la (2^l\la^{-1})^\frac23 z\, \Big(x_1-s_2^\frac1{n-2}G_1(s_2,\,\de,\,\sigma)\Big).\nonumber
\end{eqnarray}

Moreover, 
 $a(v,u_1,z,s_2,s_3,\de;\,\la)$  is a smooth function  which is  uniformly a classical  symbol of order $0$ with respect to $\la.$  
 
 Notice that, in order to defray the notation,  we have suppressed here the dependence on $\si,$ which we shall do so also in the sequel.
 
\subsection{Preliminary reductions}\label{ss7.2}

 Assume now first that $|x|\gg 1.$ If $|x_1|\ll |(x_2,x_3)|,$ then we easily see by means of integrations by parts in \eqref{measure1l} with  respect to the variables $s_2$ or  $s_3$  that $|\nu_{l,1}(x)|\lesssim \la^{-N}$ for every $N\in\NN,$ and if $|x_1|\gtrsim |(x_2,x_3)|,$ then we  easily obtain $|\nu_{l,1}(x)|\lesssim (\la (2^l\la^{-1})^\frac23 )^{-N},$  by means of integrations by parts in $z.$ Since $2^l\le \la,$  it follows easily that there are constants $A\ge 1$ and $C$ such that 
 $
 \sup_{|x|\ge A}\sup_{t\in\RR} |\mu_{1+it}(x)|\le C,
 $
 uniformly in $\de$ and $\si.$

  \medskip
From now  on we shall therefore  assume that $|x|\le A.$  For such $x$ fixed, we decompose the support of $\chi_1(s_2)$  into the subset $L_{II}$  of all $s_2$ such that
$$
\ve (2^l\la^{-1})^\frac13 <|x_1-s_2^\frac1{n-2}G_1(s_2,\,\de,\,\sigma)|<\frac 1\ve (2^l\la^{-1})^\frac13, 
$$
and the  complementary subsets  $L_I$  where  $|x_1-s_2^\frac1{n-2}G_1(s_2,\,\de,\,\sigma)|\ge  (2^l\la^{-1})^\frac13/\ve $ and $L_{III}$ where $|x_1-s_2^\frac1{n-2}G_1(s_2,\,\de,\,\sigma)|\le \ve (2^l\la^{-1})^\frac13 .$ Here, $\ve>0$ will be a sufficiently small fixed number. 

 If we restrict the set of integration in \eqref{measure1l}  to these  subsets with respect to the variable $s_2,$  we obtain corresponding measures $ \nu_{l,I}^\la,\nu_{l,II}^\la$ and $\nu_{l,III}^\la$ into which $\nu_{l,1}^\la$ decomposes, i.e., 
$$
\nu_{l,1}^\la=\nu_{l,I}^\la+\nu_{l,II}^\la+\nu_{l,III}^\la.
$$

Observe  also that $|\la (2^l\la^{-1})^\frac23 \,(x_1-s_2^\frac1{n-2}G_1(s_2,\,\de,\,\sigma))|\ge 2^l$ if and only if  
$|x_1-s_2^\frac1{n-2}G_1(s_2,\,\de,\,\sigma)|\ge  (2^l\la^{-1})^\frac13.$

Thus, if $s_2\in L_I,$  the last term in \eqref{7.10I} becomes dominant as a function of $z,$ provided we choose $\ve $ sufficiently small.  Consequently, the phase has no critical point as a function of $z,$  and 
 applying  $N$  integrations by parts in $z,$ we may estimates
 \begin{eqnarray*}
|\nu_{l,I}^\la(x)|\lesssim \la^{\frac 32}2^l \int\limits_{\{s_2:\la^{\frac 13}2^{\frac{2l}3}\,|x_1-s_2^{\frac {1}{n-2}}G_1(s_2,\de,\si)|\ge C 2^l, \, |s_2|\sim 1\}}
\frac{ds_2} {\Big(\la^{\frac 13}2^{\frac{2l}3}\,|x_1-s_2^{\frac {1}{n-2}}G_1(s_2,\de,\si)|\big)^{N}} \\
\lesssim  \la^{\frac 32}2^l \int\limits_{\la^{\frac 13}2^{\frac{2l}3}\,|v|\ge C 2^l}
\frac {dv}{(\la^{\frac 13}2^{\frac{2l}3}\,|v|)^{N}}\lesssim  \la^{\frac 32} 2^l \,  (\la^{\frac 13}2^{\frac{2l}3})^{-1} 2^{(1-N)l}
= \la^{\frac 76}2^{(\frac 43-N)l} .
\end{eqnarray*}
Similarly, if $s_2\in L_{III},$  the first  term in \eqref{7.10I} becomes dominant as a function of $z,$ and thus $N$  integrations by parts in $z$ and the fact that the $s_2$-integral is restricted to a set of size $(2^l\la^{-1})^\frac13$
 yield the same estimate
 $$
 |\nu_{l,III}^\la(x)|\lesssim \la^{\frac 32}2^l 2^{-Nl}   (2^l\la^{-1})^\frac13=  \la^{\frac 76}2^{(\frac 43-N)l} .
 $$
 This implies  the desired estimate
\begin{eqnarray*}
&&\Big|\gamma(1+it)\,\de_0^{\frac {1+it} 3}\sum_{M_0\le  2^l\le \frac {2^r} {M_1}}\sum\limits_{2\le 2^r\le \de_0^{-6}} 2^\frac{k(3-7(1+it))}{18} (\nu_{l,I}^{2^r}+\nu_{l,III}^{2^r})(x)\Big|\\
&&\lesssim \de_0^\frac13\sum_{M_0\le  2^l\le \frac \la {M_1},\, 1\ll \la<\de_0^{-6}} 2^{-\frac{2k}9}
(|\nu_{l,I}^\la(x)|+|\nu_{l,III}^\la(x)|)\\
 &&\lesssim \de_0^\frac13\sum_{M_0\le  2^l,\, \la<\de_0^{-6}} \la^\frac1{18}2^{(\frac{10}9-N)l}\lesssim1,
\end{eqnarray*}
if we choose $N\ge 2.$

\subsection{The  region where $|x_1-s_2^\frac1{n-2}G_1(s_2,\,\de,\,\sigma)|\sim (2^l\la^{-1})^\frac13$}
We are thus left with the measures $\nu_{l,II}^\la(x)$ and the corresponding family  of measures
$$
\mu^{II}_{1+it}(x):=\gamma(1+it)\,\de_0^{\frac {1+it} 3}\sum_{M_0\le  2^l\le \frac {2^r}{M_1}}\sum\limits_{2\le 2^r\le \de_0^{-6}} 2^\frac{-k(4+7it)}{18} \nu_{l,II}^{2^r}.
$$
In order to establish the estimate \eqref{7.8I}, we still need  prove that there is constant $C$ such that 
\begin{equation}\label{mu-II-est}
|\mu^{II}_{1+it}(x)|\le C,
\end{equation}
where $C$ is independent of $t,x,\de$ and $\si.$  
Note that $\pa_{s_2} (s_2^\frac1{n-2}G_1(s_2,\,\de,\,\sigma))\sim 1$ because $s_2\sim1$ and $G_1(s_2,0,\sigma)=1 $ (compare \eqref{ai3}). Therefore, the relation $|x_1-s_2^\frac1{n-2}G_1(s_2,\,\de,\,\sigma)|\sim (2^l\la^{-1})^\frac13$ can be re-written as $|s_2-\tilde G_1(x_1,\,\de,\,\sigma)| \sim (2^l\la^{-1})^\frac13,$ where $\tilde G_1$ is again a smooth function.
If we write  
$$s_2=(2^l\la^{-1})^\frac13v+\tilde G_1(x_1,\,\de,\,\sigma),
$$
 then this means that $|v|\sim 1.$ We  shall therefore change variables  from $s_2$ to $v$ in the sequel. 
  
In these new variables, the phase function $\Phi_1=\Phi_1(x,u_1,z,s_2,\de,\la,l)$  is given by 
$$\Phi_2(x,u_1,z,v,\de,\la,l):=
\Phi_1\Big(x,u_1,z, (2^l\la^{-1})^\frac13v+\tilde G_1(x_1,\de,\sigma),\de,\la,l\Big).
$$
This is a function of the form
\begin{eqnarray*}
\Phi_{2}&=&2^l\Big(\tilde B_3((2^l\la^{-1})^\frac13u_1,\,(2^l\la^{-1})^\frac13v,x_1,\de,\sigma) u_1^3- z(u_1- H(v,x_1,(2^l\la^{-1})^\frac13,\de,\sigma)\Big)\\
&+&R(v,x,\de,\la),
\end{eqnarray*}
where $\tilde B_3,H$  and $R$ are smooth, and where  $R(v,x,\de,\la)$ is the sum of all  terms not depending on $u_1$ and $z.$  Moreover, $|\tilde B_3|\sim 1.$ Note  also that $u_1\sim1,\, |v|\sim1.$  More precisely,  after this change of variables, $\nu_{l,II}^\la(x)$ assumes the form
\begin{eqnarray}\nonumber
\nu_{l,II}^\la(x)&=&\la^\frac76  2^{\frac 43 l} \int e^{-is_3\Phi_2(x,u_1,z,v,\de,\la,l)}\,a\Big((2^l\la^{-1})^\frac13 u_1,z,v,x_1,s_3,\de;\, \la\Big)\\
&&\hskip2cm\times\,  \chi_1(u_1)\chi_1(z)\tilde\chi_1(v) \chi_1(s_3) \,du_1dz dvds_3, \label{7.11I},
\end{eqnarray}
where $a$ is again smooth and uniformly a classical symbol  of order $0$ with respect to $\la$  (in order to defray our notation, we shall here and in the sequel usually denote such symbols by the letter $a$, even if they may be different  from one instance of occurance to another). Moreover, $\tilde\chi_1(v)$ is smooth and supported in a region where $|v|\sim1.$
\medskip

Observe next  that the function $\Phi_{2}$ has at worst a non-degenerate critical point  $(u_1^c,z^c)$ with respect to the variables  $(u_1,z),$  and that   the  Hessian matrix at such a   point is of the form 
$2^l\left(
\begin{array}{ccc}
 \al &   -1&   \\
  -1& 0  &   \\
\end{array}
\right),$
where $|\al|\lesssim 1,$  so that in particular the Hessian determinant is of size $2^{2l.}$ If there is no critical point, we can again integrate by parts and obtain estimates which are stronger than needed,  so let us assume that there is a critical point. We may than apply the method of stationary phase to the  integration in the variables $(u_1,z).$ This leads to the following  new expression for $\nu_{l,II}^\la(x):$
\begin{eqnarray}\label{7.12I}
&&\nu_{l,II}^\la=\la^\frac76  2^{\frac l3 } \mu_l^\la, \qquad\mbox{with}\\
\mu_l^\la(x)&:=&  \int e^{-i\la s_3\Phi_3(x,v,\de,\la,l)}\,a\Big((2^l\la^{-1})^\frac13,v,x_1,s_3,\de;\, \la, 2^l\Big)
 \tilde\chi_1(v) \chi_1(s_3) \,dvds_3, \nonumber
\end{eqnarray} 
up to an error term which is of order $\la^\frac76 2^{-2l/3}$   and which will therefore  be ignored (compare the discussion  in Subsection \ref{ss7.2}). Here, $a$ is again a  smooth  function  which is uniformly  a  classical symbol of order $0$ with respect to each of the last two variables. 
Moreover,  the phase is given by $\Phi_3(x,v,\de,\la,l):=(1/\la)\Phi_2(x,u^c_1,z^c,v,\de,\la,l).$ 

Notice that \eqref{7.12I} does already imply the first estimate in \eqref{ai16}.

\medskip
In order to compute $\Phi_3(x,v,\de,\la,l)$ more explicitly, observe first that the value of a  function at a critical point is invariant under changes of coordinates. Since we had switched from the coordinates $(y_1,s_1),$  in which $\Phi_1$ is given by the function
$$
\Phi_0(x,y_1,s_1,s_2,\de,\la):=\Psi(y_1,\de,\si,s_1,s_2) -s_1x_1-s_2x_2-x_3
$$
(compare \eqref{nude})
to the coordinates $(u_1,z),$ this means that we can also write
$$
\Phi_3(x,v,\de,\la,l)=\Phi_0(x,y^c_1,s^c_1,s_2,\de,\la),
$$
where $(y^c_1,s^c_1)$ denotes the critical point of $\Phi_0$ with respect to the variables $(y_1,s_1).$  This formula turns out to be better suited, since, according to Lemma \ref{Psi1}, we may write
\begin{eqnarray*}
\Phi_0(x,y_1,s_1,s_2,\de,\la)&=&s_1y_1+s_2y_1^2\om(\de_1y_1)+\si y_1^n \beta(\de_1 y_1) \\
&+&(\de_0s_2)^2 Y_3(\de_1y_1,\de_2, \de_0s_2)-s_1x_1-s_2x_2-x_3
\end{eqnarray*}

To this phase, we can apply the following lemma (with $\xi:=y_1,\eta:=s_1, $ and $\zeta=x_1$), whose proof is straight- forward:
\begin{lemma}\label{phiatcri}
Let $\phi=\phi(\xi,\eta)$ be a smooth, real  function on $\RR^2,$  of the form 
$$
\phi(\xi,\eta)=\xi\eta+f(\xi)-\eta \zeta,
$$
with $\zeta\in\RR.$  Then $\phi$ has a unique critical point  given by $(\xi^c,\eta^c):=(\zeta,-f'(\zeta),$ and 
then $\phi(\xi^c,\eta^c)=f(\zeta).$
\end{lemma}

This yields 
$$
\Phi_3(x,v,\de,\la,l)=s_2x_1^2\om(\de_1x_1)+\si x_1^n \beta(\de_1 x_1) 
+(\de_0s_2)^2 Y_3(\de_1x_1,\de_2, \de_0s_2)-s_2x_2-x_3,
$$
and, passing back to the coordinate $v$ in place of $s_2,$ we obtain
\begin{eqnarray}\nonumber
\Phi_3(x,v,\de,\la,l)&=&\Big((2^l\la^{-1})^\frac13v+\tilde G_1(x_1,\,\de,\,\sigma)\Big) \,x_1^2\om(\de_1x_1)+\si x_1^n \beta(\de_1 x_1) \\ \nonumber
&\phantom{}&\hskip-2cm +\,\de_0^2\Big((2^l\la^{-1})^\frac13v+\tilde G_1(x_1,\,\de,\,\sigma)\Big)^2 Y_3\Big(\de_1x_1,\de_2, \de_0((2^l\la^{-1})^\frac13v+\tilde G_1(x_1,\,\de,\,\sigma))\Big)\\ \nonumber
&\phantom{}&\hskip-2cm -\,\Big((2^l\la^{-1})^\frac13v+\tilde G_1(x_1,\,\de,\,\sigma)\Big)x_2-x_3.\nonumber
\end{eqnarray}
Expanding this with respect to $ (2^l\la^{-1})^\frac13 v,$  we see that $\Phi_3$ is of the form
\begin{eqnarray}\nonumber
\Phi_3(x,v,\de,\la,l)&=& \tilde B_0(x,\de,\si)+(2^l\la^{-1})^\frac13\tilde B_1(x,\de,\si)  v\\
&+&\de_0^2 (2^l\la^{-1})^\frac 23 \tilde B_2(x,\de_0((2^l\la^{-1})^\frac13v,\de,\si)\, v^2 , \label{7.13I}
\end{eqnarray}
with smooth function $\tilde B_j(x,\de,\si),$ and where $|\tilde B_2(x,\de_0((2^l\la^{-1})^\frac13v,\de,\si)|\sim 1.$  Recall also that $|v|\sim 1,$ and notice that when $\de_0=0,$ then $\Phi_3$ is a quadratic polynomial in $v,$ and thus, for  $\de_0$ sufficiently small, $\Phi_3$ is a small perturbation of this quadratic polynomial.

Observe that if $\la \de_0^2 (2^l\la^{-1})^\frac 23\gg 1,$ then we can apply van der Corput's lemma in $v$, which yields the estimate 
\begin{equation}\label{nulII}
|\nu_{l,II}^\la(x)|\lesssim \la^\frac76  2^{\frac l3 } \Big(\la \de_0^2 (2^l\la^{-1})^\frac 23\Big)^{-\frac 12}=\frac \la{\de_0}
\end{equation}
(notice that this verifies  the second estimate in \eqref{ai16}!).

We shall therefore distinguish between the cases where $\la 2^{2l}\lesssim \de_0^{-6},$ and where $\la 2^{2l}\gg \de_0^{-6}.$

Observe also that $2^{\frac76 r}  2^{\frac l3 } 2^{\frac{-k4}{18}}=2^{\frac{r+2l}{18}}.$ It will therefore be convenient  to  put $m:=r+2l,$ so that $r=m-2l.$ We may then re-write (compare \eqref{7.12I})

$$
\mu^{II}_{1+it}(x)=\gamma(1+it)\,\de_0^{\frac {1+it} 3}\sum_{M_2\le  2^m\le \ve_1\de_0^{-18}}2^{\frac{m}{18}(1-35it)} \sum\limits_{\max\{M_0,\de_0^3 2^{\frac m 2}\}\le 2^l \le \ve_0 2^{\frac m3}} 2^{it\frac{7}{2}l}\, \mu_l^{2^{m-2l}}(x),
$$
where $M_2:=M_1 M_0^3, \ve_1:=M_1^{-2}$ and $\ve_0:=M_1^{-1/3}.$  Notice also that the condition $\la 2^{2l}\lesssim \de_0^{-6}$ then reads as $2^m\lesssim \de_0^{-6}.$ We shall therefore decompose 
\begin{equation}\label{7.15I}
\mu^{II}_{1+it}=\mu^{II,1}_{1+it}+\mu^{II,2}_{1+it},
\end{equation}
where
\begin{eqnarray*}
\mu^{II,1}_{1+it}(x)&:=&\gamma(1+it)\,\de_0^{\frac {1+it} 3}\sum_{M_2\le  2^m\le M_0^2\de_0^{-6}}2^{\frac{m}{18}(1-35it)} \sum\limits_{M_0 \le 2^l \le \ve_0 2^{\frac m3}} 2^{it\frac{7}{2}l}\, f_{m,x}(2^l)\\
\mu^{II,2}_{1+it}(x)&:=&\gamma(1+it)\,\de_0^{\frac {1+it} 3}\sum_{M_0^2\de_0^{-6}<  2^m\le \ve_1\de_0^{-18}}2^{\frac{m}{18}(1-35it)} \sum\limits_{\de_0^3 2^{\frac m 2}\le 2^l \le \ve_0 2^{\frac m3}} 2^{it\frac{7}{2}l}\, f_{m,x}(2^l).
\end{eqnarray*}
where we have written  $f_{m,x}(2^l):=\mu_l^{2^{m-2l}}(x).$ Recall from  \eqref{7.12I},\eqref{7.13I} that 
\begin{eqnarray}\nonumber
f_{m,x}(2^l)&=& \int e^{-i s_3 \tilde\Phi_3(x,v,\de,m,2^l)}\,a\Big(2^{l-\frac m3},v,x_1,s_3,\de;\, 2^{m-2l},  2^l\Big)
\tilde\chi_1(v) \chi_1(s_3) \,dvds_3, \\ \label{7.16I}
\phantom{} && \\ 
\tilde\Phi_3&:=& 
 2^{m-2l}\tilde B_0(x,\de,\si)+2^{-l}2^{\frac {2m}3} \tilde B_1(x,\de,\si)  v+
 \de_0^2 2^{\frac m3}\tilde B_2(x,\de_02^{-\frac m3} 2^lv,\de,\si)\, v^2. \nonumber
\end{eqnarray}

\medskip
In several cases the summation in $l$    will require the  use of some  cancellation properties. The following simple lemma, which will also turn out to be useful in many other situations, exploits such cancellations:

\begin{lemma}\label{simplesum}
Let $Q=\prod_{j=1}^nÊ[-R_k,R_k]\subset \RR^n$ be a compact cuboid, with $R_k>0, k=1,\dots, n,$ and let $H$ be a $C^1$-function on an open neighborhood of $Q.$ Moreover, let $\al, \be^1, \dots, \be^n\in\RR^\times$  be given. For any given real numbers $a_1,\dots,a_n\in \RR^\times$ and $M\in \NN$ we then put 
\begin{equation}\label{Ft1}
F(t):= \sum_{l=0}^{M} 2^{i \al lt}
(H\chi_Q)\Big(2^{\be^1l}a_1,\dots, 2^{\be^n l}a_n\Big).
\end{equation}
Then there is a constant $C$ depending on $Q$ and the numbers $\al$ and $\be^k,$ but not on  $H,$ the $a_k,$ $M$ and $t,$   such that 
\begin{equation}\label{simsum}
|F(t)|\le C\frac {\|H\|_{C^1(Q)}} {|2^{i\al t}-1|}, \qquad \mbox{ for all } t\in\RR, a_1,\dots a_2\in \RR^\times \mboxÊ{ and  } \ M\in \NN.
\end{equation}
\end{lemma}

\begin{proof}  
For   $y=(y_1,\dots, y_n)$  in an open neighborhood of $Q,$  Taylor's integral formula allows to  write 
 $ H(y)=H(0)+\sum_{k=1}^n y_k H_k(y),
 $
 with continuous functions $H_k$ whose $C^0$-norms on  $Q$ are controlled by the $C^1(Q)$-norm of $H.$   Accordingly, we shall decompose $F(t)=F_0(t)+\sum_k F_k(t),$ where
 \begin{eqnarray*}
F_0(t)&:=&H(0)\sum_{l=0}^{M} 2^{i \al lt}
\chi_Q \Big(2^{\be^1l}a_1,\dots, 2^{\be^n l}a_n\Big),\\
F_k(t)&:=&\sum_{l=0}^{M} 2^{i \al lt}
(y_kH_k\chi_Q)\Big(2^{\be^1l}a_1,\dots, 2^{\be^n l}a_n\Big), \qquad k=1,\dots, n.
\end{eqnarray*}
It will therefore suffice to establish estimates of the form \eqref{simsum} for each of these functions $F_0$ and  $F_k, \, k=1,\dots, n.$ We begin with $F_0.$ 

Observe that in the  sum defining $F_0(t)$ we are effectively summing over an ``interval'' $l\in\{M_1,\dots,M_2\},$ where $M_1,M_2\in\NN$ depend on $M,$ the $a_k$'s and the $\be^k$'s, so that 
$$F_0(t)=H(0) \frac{2^{i\al (M_2+1)}- 2^{i\al M_1}}{2^{i\al t}-1}.
$$
This implies an estimate of the form \eqref{simsum} for $F_0(t).$  Next, if $k\ge 1,$ then trivially 
$$
|F_k(t)|\le C' \sum_{\{l:2^{\be^k l}|a_k|\le R_k\}} 2^{\be^k l}|a_k|\le C R_k,
$$
by summing a geometric series. Again this implies an estimate of the form \eqref{simsum}. 
\end{proof} 

\begin{remark}\label{simsumrem}
The estimate in \eqref{simsum} can be sharpened as follows (this will become relevant in \cite{IM-rest2}):

Assume that there are constants $\epsilon\in]0,1]$ and $C_k, \, k=1,\dots, n,$ such that
\begin{equation}\label{simsumb}
\int_0^1\Big|\frac{\pa H}{\pa y_k}(sy)\Big|\, ds \le C_k |y_k|^{\epsilon-1}, \qquad \mbox{ for all } y\in Q.
\end{equation}
Then, under the hypotheses of Lemma \ref{simplesum}, there is a constant $C$ depending on $Q,$ the numbers $\al$ and $\be^k$ and $\epsilon,$ but not on  $H,$ the $a_k,$ $M$ and $t,$   such that 
\begin{equation}\label{simsum2}
|F(t)|\le C\frac {|H(0)|+\sum_k C_k} {|2^{i\al t}-1|}, \qquad \mbox{ for all } t\in\RR, a_1,\dots a_2\in \RR^\times \mboxÊ{ and  } \ M\in \NN.
\end{equation}
\end{remark}

Indeed, Taylor's integral formula  and \eqref{simsumb} imply that 
$
|y_kH_k(y)| \le C_k|y_k|^\epsilon,
$
which suffices to concude in a similar way as before.

\subsubsection{Estimation of $\mu^{II,1}_{1+it}(x)$: The contribution by those $m$ for which  $2^m\le M_0^2\de_0^{-6}$}
For such $m$ we have $\de_0^2 2^{m/3}\lesssim 1,$ so that the last term in \eqref{7.16I}  in the phase $\tilde\Phi_3$ can be included into the amplitude of $f_{m,x}$  and we may re-write  $f_{m,x}(2^l)$as an oscillatory integral of the form
$$
f_{m,x}(2^l)= \int e^{-i s_3 \Phi_4(x,v,\de,m,2^l)}\,a\Big(\de_0^2 2^{\frac m3}, 2^{l-\frac m3},v,x_1,s_3,\de
;\,2^{m-2l},2^l \Big)
 \tilde\chi_1(v) \chi_1(s_3) \,dvds_3, 
 $$
where 
$$
\Phi_4(x,v,\de,m,2^l):=2^{m-2l} \tilde B_0(x,\de,\si)+ 2^{-l}2^{\frac {2m}3}\tilde B_1(x,\de,\si)  v.
$$

Observe also that it will here suffice to prove that 
\begin{equation}\label{7.18I}
\Big|\ga(1+it)\sum_{M_0\le  2^l\le \ve_0 2^{\frac m 3}}  2^{it\frac{7}{2}l}\,f_{m,x}(2^l)\Big| \le C ,
\end{equation}
with $C$ independent of $m,x,t,$ etc., because  this will immediately  imply that $|\mu^{II,1}_{1+it}(x)|\le C'.$ 
\medskip

Now, recall first that $a$ is a classical symbol of order $0$ with respect to both last variables, so that we may write 
$$
a\Big(\de_0^2 2^{\frac m3}, 2^{l-\frac m3},v,x_1,s_3,\de
;\,2^{m-2l},2^l \Big)=g\Big(\de_0^2 2^{\frac m3}, 2^{l-\frac m3},v,x_1,s_3,\de\Big) +O((2^{m-2l})^{-1}+2^{-l}),
$$
where the first term $g$ is the leading homogeneous term of order $0$ of $a,$  hence a smooth functions of all its variables, and the constant in the error term is independent of the other variables appearing here. 

Since we are summing only over $l$'s for which $ 2m-2l \ge 0$ and $l\ge 0,$ we see that the contributions by the  term $O((2^{m-2l})^{-1}+2^{-l})$ in \eqref{7.18I} can be estimated in the desired way. With a slight abuse of notation, let us therefore from now on assume that 
\begin{eqnarray}\nonumber
f_{m,x}(2^l)&=& \int e^{-i s_3 \Phi_4(x,v,\de,m,2^l)}\,g\Big(\de_0^2 2^{\frac m3}, 2^{l-\frac m3},v,x_1,s_3,\de\Big)\\
&&\hskip4cm\times\,  \tilde\chi_1(v) \chi_1(s_3) \,dvds_3. \label{7.17I}
\end{eqnarray} 

Given $x,$ consider  first those $l$ for which  $|2^{-l}2^{\frac {2m}3}\tilde B_1(x,\de,\si)|\ge 1.$ Integration by parts in $v$ then implies that 
$$
|f_{m,x}(2^l)|\le \frac C{|2^{-l}2^{\frac {2m}3}\tilde B_1(x,\de,\si)|}.
$$
Summing a geometric series, we thus see that 
$\sum_l |f_{m,x}(2^l)|\lesssim 1$ for the sum over these $l's.$ 

Similarly, if we consider those $l$ for which $|2^{-l}2^{\frac {2m}3}\tilde B_1(x,\de,\si)|< 1$ and \hfill  \newline$|2^{m-2l}\tilde B_0(x,\de,\si)|\gg 1,$ by means of an integration by parts in $s_3$ we find that 
$$
|f_{m,x}(2^l)|\le \frac C{|2^{m-2l}\tilde B_0(x,\de,\si)|},
$$
and again the according sum in $l$ is uniformly bounded.

\medskip
We may therefore restrict ourselves in the sequel to the set of  those $l$ for which 
$|2^{-l}2^{\frac {2m}3}\tilde B_1(x,\de,\si)|\lesssim 1$  and  $|2^{m-2l}\tilde B_0(x,\de,\si)|\lesssim 1.$ 
In this case, \eqref{7.17I} shows that 
$$
f_{m,x}(2^l)=H\Big(2^{-2l}2^m \tilde B_0(x,\de,\si), 2^{-l}2^{\frac {2m}3}\tilde B_1(x,\de,\si), 2^l 2^{-\frac m3}\Big),
$$
where $H$ is a smooth function of its (bounded) variables. Indeed, $H$ will also depend on $m,x,\de$ etc., but in such a way that its $C^1$- norm on compact sets is uniformly bounded. This shows that the contribution of the $l's$ that we are are here considering to the sum in Ê\eqref{7.18I} leads to a sum of the form \eqref{simsum}, with $\al:=7/2,$ and where the cuboid $Q$ is defined by the  following set of restrictions, for suitable $R_1,R_2>0:$
\begin{eqnarray*}
|y_1|=|2^{-2l}2^m \tilde B_0(x,\de,\si)|\le R_1,\quad |y_2|=|2^{-l}2^{\frac {2m}3}\tilde B_1(x,\de,\si)|\le R_1, \quad |y_3|=|2^l 2^{-\frac m3}|\le \ve_0.
\end{eqnarray*}

Finally, since  $\ga(1+it)=(2^{i\frac 72 t}-1)/(2^{-2}-1),$   we see that \eqref{7.18I} is an immediate consequence of Lemma \ref{simsum}.

\subsubsection{Estimation of $\mu^{II,2}_{1+it}(x)$: The contribution by those $m$ for which  $2^m>M_0^2\de_0^{-6}$}
For such  $m$ we have $\de_0^2 2^{m/3}\gg 1.$  

\medskip We shall have to distinguish three further subcases. Let us first assume that $ 2^{\frac {2m}3}2^{-l} |\tilde B_1(x,\de,\si)|\gg \de_0^2 2^{\frac m3}$ in \eqref{7.16I}. An integration by parts in $v$ then shows that 
$$
|f_{m,x}(2^l)|\lesssim \Big( 2^{\frac {2m}3}2^{-l}|\tilde B_1(x,\de,\si)|\Big)^{-1}.
$$
The summation over those $l$ for which $ 2^{\frac {2m}3}2^{-l} |\tilde B_1(x,\de,\si)|\gg \de_0^2 2^{\frac m3}$  can therefore be estimated by a constant times $\Big(\de_0^2 2^{\frac m3}\Big)^{-1},$ so that  the contribution of the corresponding   $f_{m,x}(2^l)$ to  $\mu_{1+it}^{II,2}(x)$  can be estimated by
\begin{equation}\label{7.19I}
C\de_0^{\frac 13}\sum_{2^m> M_0^2\de_0^{-6}} 2^{\frac m{18}} \Big(\de_0^2 2^{\frac m3}\Big)^{-1}\\
\lesssim 1.
\end{equation}

\medskip

Assume next that $ 2^{-l} 2^{\frac {2m}3} |\tilde B_1(x,\de,\si)|\lesssim \de_0^2 2^{\frac m3} ,$ but 
$ 2^{m-2l}|\tilde B_0(x,\de,\si)|\gg  \de_0^2 2^{\frac m3}.$  Then an integration by parts in $s_3$ shows that 
$$
|f_{m,x}(2^l)|\lesssim \Big(2^{m-2l}|\tilde B_0(x,\de,\si)|\Big)^{-1},
$$
so that we can argue in the same way is in the preceding subcase to see that  the contribution of the corresponding   $f_{m,x}(2^l)$ to  $\mu_{1+it}^{II,2}(x)$  is again uniformly bounded with respect to $t,x,\de$ and $\si.$

\medskip 
We may thus  assume  that $  2^{\frac {2m}3}2^{-l} |\tilde B_1(x,\de,\si)|\lesssim \de_0^2 2^{\frac m3} $  and   $ 2^{m-2l}|\tilde B_0(x,\de,\si)|\lesssim  \de_0^2 2^{\frac m3}.$  Then we may  re-write 
$$
f_{m,x}(2^l)= \int e^{-i s_3\de_0^2 2^{\frac m3} \Phi_5(x,v,\de,m,2^l)}\,a\Big(2^{l-\frac m3},v,x_1,s_3,\de;\, 2^{m-2l}, 2^l
\Big)
\tilde\chi_1(v) \chi_1(s_3) \,dvds_3,
$$
where 
$$
\Phi_5:=  \tilde B_2(x,\de_02^{l-\frac m3} v,\de,\si)\, v^2+\de_0^{-2} 2^{\frac m3-l}  \tilde B_1(x,\de,\si)  v+ 
 2^{\frac {2m}3-2l}\de_0^{-2} \tilde B_0(x,\de,\si).
 $$
 Observe also that here $|\tilde\Phi_5(x,v,\de,m,2^l)|\lesssim 1.$  
 
 \medskip
 Let us first  consider those $l$ for which  $ 2^{\frac {2m}3}2^{-l}  |\tilde B_1(x,\de,\si)|\ll \de_0^2 2^{\frac m3}.$ Then the coefficient of $\Phi_5$ of the linear term in $v$  is small, so that  we may change variables from $v$ to $\Phi_5(x,v,\de,m,v),$ which then easily shows that $f_{m,x}(v)$  is of the form
 $$
 f_{m,x}(2^l)=F\Big(\de_0^2 2^{\frac m3};\, 2^{\frac {2m}3-2l}\de_0^{-2} \tilde B_0(x,\de,\si),  \de_0^{-2} 2^{\frac m3-l} \tilde B_1(x,\de,\si) ,2^{l-\frac m3},\de; \, 2^{m-2l}, 2^l\Big),
 $$
 where $F$ is a smooth function which is a Schwartz function with respect to the first  variable, whose Schwartz norms  are each uniformly bounded with respect to the other variables. Moreover, $F$ is uniformly a classical symbol of order $0$ in both of the last two variables.  Similar statements apply  also to the partial derivatives of $F.$  
 
 This clearly implies that  $ |f_{m,x}(2^l)|\lesssim (\de_0^2 2^{\frac m3})^{-N}$  for every $N\in\NN.$  However, such an estimates is not sufficient in order to control the summation in $l.$   
 
 We therefore isolate the leading homogeneous  term  of order $0$ of $F$ with respect to the last two variables, which gives a smooth function 
 $$
 h\Big(\de_0^2 2^{\frac m3};\, 2^{\frac {2m}3-2l}\de_0^{-2} \tilde B_0(x,\de,\si),  \de_0^{-2} 2^{\frac m3-l} \tilde B_1(x,\de,\si) ,2^{l-\frac m3},\de\Big)
 $$
 of bounded variables, and the remainder terms, which clearly can be estimated by a constant times 
 $(\de_0^2 2^{\frac m3})^{-N}((2^{m-2l})^{-1}+2^{-l}).$ The second factor allows to sum in $l,$ and then the first factor (choosing $N=1$) leads again  to an  estimate  of the form \eqref{7.19I} for the contribution by the remainder terms. 
 
 \medskip
In order to control the main term given by the function $h,$ we shall  again  apply again Lemma \ref{simsum}.

Let us here define a cuboid $Q$ by the  following set of restrictions, for suitable $R_1,\ve_2>0:$
\begin{eqnarray*}
&&|y_1|=|2^{-2l} 2^{\frac {2m}3}\de_0^{-2} \tilde B_0(x,\de,\si)|\le R_1, \quad |y_2|=|2^{-l}\de_0^{-2} 2^{\frac m3} \tilde B_1(x,\de,\si) |\le \ve_2, \\
&&|y_3|=|2^l 2^{-\frac m3}|\le \ve_0, \quad |y_4|=|2^{-l}\de_0^3 2^{\frac m2}|\le 1
\end{eqnarray*}
(the last condition stems for the additional summation restriction in the definition of $\mu_{1+it}^{II,2}(x)$),
and let us define 
$H_{m,\de}(y_1,\dots, y_4):=h(\de_0^2 2^{\frac m3};\, y_1,y_2,y_3,\de).
$
Then (choosing $N=1$)
$$\|H_{m,\de}\|_{C^1(Q)}\le C (\de_0^2 2^{\frac m3})^{-1},
$$
and thus Lemma \ref{simsum} implies that the sum over the $l$'s in the definition of $\mu_{1+it}^{II,2}(x)$ can be estimated by $C (\de_0^2 2^{\frac m3})^{-1},$ so that the  remaining sum in $m$ can again be estimated by the expression in \eqref{7.19I}.  This concludes the discussion of  also this subcase.

  \medskip
 
We are thus eventually reduced to  those $l$'s for which  $  2^{\frac {2m}3}2^{-l} |\tilde B_1(x,\de,\si)|\sim \de_0^2 2^{\frac m3} \gg 1$  and   $ 2^{m-2l}|\tilde B_0(x,\de,\si)|\lesssim  \de_0^2 2^{\frac m3}.$ Assume more precisely that we consider here pairs $(m,l)$ for which 
\begin{equation}\label{7.20I}
\frac 1 A \de_0^2 2^{\frac m3}\le 2^{\frac {2m}3}2^{-l} |\tilde B_1(x,\de,\si)|\le A \de_0^2 2^{\frac m3},
\end{equation}
where $A\gg 1$ is a fixed constant. 
 In this situation, the phase $\Phi_5$ will have only non-degenerate critical points of size $1$ as a function of $v,$ or none. The latter case can be treated as before, so assume that we have a critical point $v^c$ such that $|v^c|\sim 1$ when $v\sim 2^l.$ Then we may apply the method of stationary phase in $v$ in \eqref{7.16I}, which leads to the following estimate for  $f_{m,x}(2^l):$
$$
|f_{m,x}(2^l)|\lesssim \Big(\de_0^2 2^{\frac m3}\Big)^{-\frac 12}.
$$
But, given $m,$ \eqref{7.20I} means that we are summing over at most $\log A^2$ different $l$'s, and thus  the  contribution of those   $f_{m,x}(2^l)$ which  we are considering here to the sum forming  $\mu_{1+it}^{II,2}(x)$ can be estimated by 
$$
C\log A^2 \,\de_0^{\frac 13}\sum_{2^m> M_0^2\de_0^{-6}} 2^{\frac m{18}} \Big(\de_0^2 2^{\frac m3}\Big)^{-\frac 12} \lesssim 1.
$$
Combining this estimate with the previous ones, we see that we can bound $|\mu^{II,2}_{1+it}(x)|\le C,$  with a constant $C$ which is independent of $t,x, \de$ and $\si.$  This concludes the proof of the estimate \eqref{mu-II-est}, hence of \eqref{7.8I},  \eqref{7.5I}, and  consequently of Proposition \ref{m2_A} (c).

\setcounter{equation}{0}
\section{Proof of Proposition \ref{m2_A} (a),(b): Complex interpolation}\label{complex2}

For the proofs of parts (a) and (b) of Proposition \ref{m2_A} we shall make use of  similar interpolation schemes. A crucial result  for part (a) will also be the following analogue to Lemma \ref{simplesum},  for oscillatory  double-sums. Its proof follows similar ideas, but is technically more involved and therefore postponed to the Appendix  in Section \ref{doublesumproof}.

\begin{lemma}\label{doublesum}
Let $Q=\prod_{j=1}^nÊ[-R_k,R_k]\subset \RR^n$ be a compact cuboid, with $R_k>0, k=1,\dots, n,$ and let $H$ be a $C^2$-function on an open neighborhood of $Q.$ Moreover, let $\al_1,\al_2\in \QQ^\times$ and $\be^k_1,\be^k_2\in \QQ$  such that the vectors $(\al_1,\al_2)$ and  $(\be^k_1,\be^k_2) $ are  linearly independent, for every $k=1,\dots,n,$  i.e., 
\begin{equation}\label{8.1ind}
\al_1\be^k_2-\al_2\be^k_1\ne 0, \qquad k=1,\dots,n. 
\end{equation}
For any given real numbers $a_1,\dots,a_n\in \RR^\times$ and $M_1,M_2\in \NN$ we then put 
\begin{equation}\label{Ft}
F(t):= \sum_{m_1=0}^{M_1}\sum_{m_2=0}^{M_2} 2^{i(\al_1m_1+\al_2m_2)t}
(H\chi_Q)\Big(2^{(\be^1_1 m_1+\be^1_2 m_2)}a_1,\dots, 2^{(\be^n_1 m_1+\be^n_2 m_2)}a_n\Big).
\end{equation}
Then there is a constant $C$ depending on $Q$ and the numbers $\al_i$ and $\be^k_i,$ but not on $H,$  the $a_k,$ $M_1,M_2$ and $t,$  and a number $N\in\NN^\times$ depending on the $\be^k_i,$ such that 
\begin{equation}\label{8.9I}
|F(t)|\le C\frac {\|H\|_{C^2(Q)}}{|\rho(t)|}, \qquad \mbox{ for all } t\in\RR, a_1,\dots a_2\in \RR^\times \mboxÊ{ and  } \ M_1,M_2\in \NN,
\end{equation}
where $\rho(t):=\prod_{\nu=1}^N\tilde\rho(\nu t),$ with 
$$
\tilde\rho(t):=(2^{i\al_1 t}-1)(2^{i\al_2 t}-1)\prod_{k=1}^n(2^{i(\al_1\be^k_2-\al_2\be^k_1)t}-1)
$$
\end{lemma}

\begin{remark}\label{doublean}
For $\zeta\in \CC$ and $0<\theta<1,$    let us put 
$$
\tilde\ga(\zeta):=(2^{\al_1 (\zeta-1)}-1)(2^{\al_2 (\zeta-1)}-1)\prod_{k=1}^n(2^{(\al_1\be^k_2-\al_2\be^k_1)(\zeta-1)}-1)
$$
Then $\tilde\ga(\nu \th))\ne 0,$ so that 
$\ga_\th(\zeta):=\prod_{\nu=1}^N(\tilde\ga(\nu \zeta)/\tilde\ga(\nu \th))
$
 is a well-defined entire analytic  function such that $\ga_\th(\th)=1.$  Moreover, for $\zeta$ in the complex strip  $\Sigma:=\{\zeta\in \bC: 0\le \Re \zeta\le 1\},$ this function is uniformly bounded, and 
 $
 \ga_\th(1+it)=c_\th\rho(t),
 $
 so that 
 \begin{equation}\label{8.11I}
\Big| \ga(1+it) F(t)\Big| \le C \qquad \mbox{ for all } t\in\RR, a_1,\dots a_2\in \RR^\times \mboxÊ{ and  } \ M_1,M_2\in \NN,
\end{equation}
if $F(t)$ is defined as in Lemma \ref{doublesum}.
\end{remark}

\subsection{ Estimate \eqref{restm3} in Proposition \ref{m2_A} (a) } Recall that $\de_0=2^{-j},$ and that 
$$
\nu_{\de,j}^{V}=\sum_{\la_1=2^{M+j}}^{2^{2j}} \sum_{\la_3=(2^{-M-j}\la_1)^2}^{2^{-M} \la_1} \nu^{(\la_1,\la_1,\la_3)}_j
$$
(in this notation, summation is always meant to be over dyadic $\la_j$'s),
and  that, by \eqref{hatnu3},  $\|\widehat{\nu^{\la}_{j}}\|_\infty\lesssim \la_1^{-1/2} \la_3^{-1/2}.$ We  therefore define here for $\ze$ in the strip $\Sigma=\{\zeta\in \bC: 0\le \Re \zeta\le 1\}$  an analytic family of measures by 
$$
\mu_\ze(x):=\gamma(\ze)\,\de_0^{\frac \ze 3} \sum_{k_1=M+j}^{2j} \sum_{k_3=-2M+2k_1-2j}^{-M+k_1}2^\frac{(3-7\zeta)k_1}6 2^\frac{(3-7\zeta)k_3}6\nu^{(2^{k_1},2^{k_1},2^{k_3})}_j,
$$
where $\ga(\ze)$ is an entire function which  will serve a similar role as the function $\ga(z)$ in Subsection \ref{section7.1}. Its precise definition will be given later (based on Remark \ref{doublean}). It will again be uniformly bounded on $\Sigma,$  and such that  $\ga(\th_c)=\ga(3/7)=1.$ 
\medskip

By $T_\ze$  we denote the operator of convolution with $\widehat{\mu_\ze}.$ 
Observe that  for $\ze=\theta_c=3/7,$ we have $\mu_{\th_c}=\de_0^{\frac 17}\nu_{\de,j}^{V},$ hence 
$
T_{\theta_c}=2^{-\frac j7}T_{\de, j}^V,
$
 so that, again  by Stein's interpolation theorem,  \eqref{restm3}  will follow if we can prove the following estimates on the boundaries of the  strip $\Sigma:$ 
\begin{eqnarray}\label{8.1I}
\|\widehat{\mu_{it}}\|_\infty &\le& C \qquad \forall t\in\RR,\\ 
\|{\mu_{1+it}}\|_\infty &\le& C \qquad \forall t\in\RR. \label{8.2I}
\end{eqnarray}

As before, the  first estimate \eqref{8.1I} is an immediate consequence of  the estimates \eqref{hatnu3},  so let us concentrate of \eqref{8.2I}, i.e., assume that $\zeta=1+it,$ with $t\in \RR.$ We then have to  prove that there is constant $C$ such that 
\begin{equation}\label{8.3I}
|\mu_{1+it}(x)|\le C,
\end{equation}
where $C$ is independent of $t,x,\de$ and $\si.$


Let us  introduce the  measures $ \mu_{\la_1,\la_3}$ given by
$$
\mu_{\la_1,\la_3}(x):=(\la_1\la_3)^{-\frac23} \nu_j^{(\la_1,\la_1,\la_3)}(x), 
$$
which allow to re-write 
\begin{equation}\label{mula13}
\mu_{1+it}(x)=\gamma(1+it
)\,\de_0^{\frac {1+it} 3} \sum_{\la_1=2^{M}\de_0^{-1}}^{\de_0^{-2}} \sum_{\la_3=2^{-2M}(\de_0\la_1)^2}^{2^{-M} \la_1}
(\la_1\la_3)^{-\frac 76 it} \mu_{\la_1,\la_3}(x).
\end{equation}

Notice that according to  Remark \ref{rem5.3I}
\begin{eqnarray}\nonumber
\mu_{\la_1,\la_3}(x)&=&\la_1^\frac 43\la_3^\frac 13 \int \check\chi_1\Big({\la_1}(x_1-y_1)\Big) \, \check\chi_1\Big({\la_1}(x_2-\de_0y_2-y_1^2\om(\de_1y_1))\Big)\\
&&\check\chi_1\Big({\la_3}\Big(x_3-b_0(y,\de)\,y_2^2
-\si y_1^n \beta(\de_1 y_1)\big)\Big)\nonumber
\,\eta(y,\de) \,dy,
\end{eqnarray}
where $\eta$ is supported where $y_1\sim 1$ and $|y_2|\lesssim  1.$
Assume first that $|x|\gg 1.$ Since $\check\chi_1$ is rapidly decreasing, after scaling in $y_1$ by the factor $1/\la_1,$  we then easily see that 
$|\mu_{\la_1,\la_3}(x)|\le C_N\la_1^\frac 13\la_3^{-N}$ for every $N\in\NN.$ Since $ 2^j\lesssim  \la_1\le{2^{2j}}$ and  $(2^{-j}\la_1)^2\lesssim \la_3\ll \la_1$ in the sum defining $\mu_{1+it}(x),$ this easily implies \eqref{8.3I}.

\medskip
From now on, we may and shall therefore assume that $|x|\lesssim1$. 

By means of the change of variables $y_1\mapsto x_1-y_1/\la_1, \  y_2\mapsto y_2/\la_3^{1/2}$ and Taylor expansion around $x_1$  we  may re-write 
$\mu_{\la_1,\la_3}(x)=\la_1^\frac 13\la_3^{-\frac16} \tilde\mu_{\la_1,\la_3}(x),$ with 
\begin{equation}\label{mulala}
\tilde\mu_{\la_1,\la_3}(x):=\iint  \check\chi_1(y_1)F_\de(\la_1,\la_3,x,y_1,y_2)\, dy_1dy_2,
\end{equation}

where
\begin{eqnarray*}
F_\de(\la_1,\la_3,x,y_1,y_2)&:=&\eta(x_1-\la_1^{-1}y_1,\, \la_3^{-\frac12} y_2,\de) \,\check\chi_1(D-Ey_2+r_1(y_1))\\
&\times& \check\chi_1\Big(A-y_2^2b_0(x_1-\la_1^{-1}y_1,\la_3^{-\frac12}y_2,\de)+\la_3\la_1^{-1}r_2(y_1)\Big).
\end{eqnarray*}
Here,  the quantities 
\begin{eqnarray}\nonumber
A&=&A(x,\la_3,\de):=\la_3Q_A(x),\ D=D(x,\la_1,\de):=\la_1Q_D(x),\  E=E(\la_1,\la_3,\de):=\de_0\la_1\la_3^{-\frac12},\\
&& \qquad \mbox{  with}\quad  Q_A(x):=x_3-\sigma x_1^n\beta(\de_1x_1), \ Q_D(x):=x_2-x_1^2\omega(\de_1x_1), \label{ADE}
\end{eqnarray}
do not depend  on $y_1,y_2,$ 
and $r_i(y_1)=r_i(y_1;\la_1^{-1}, x_1,\de),\,  i=1,2, $ are smooth functions of $y_1$  (and $\la_1^{-1}$ and $x_1$)  satisfying estimates of the form
\begin{equation}\label{8.5I}
|r_i(y_1)|\le C|y_1|, \ 
\quad \left|\left(\frac{\pa}{\pa (\la_1^{-1})}\right)^lr_i(y_1; \la_1^{-1}, x_1,\de)\right|\le C_l |y_1|^{l+1}\quad \mbox{for every}\quad l\ge1.
 \end{equation}
Notice that we may here assume that $|y_1|\lesssim \la_1,$ because of our assumption  $|x|\lesssim1$ and the support properties of $\eta.$ It will also be important to observe that 
$E=\de_0\la_1\la_3^{-\frac12}\le 2^{M/2}$ for the index set of $\la_1,\la_3$ over which we sum in \eqref{mula13}.

\medskip
In order to verify \eqref{8.3I}, given $x,$ we shall split the sum in \eqref{mula13} into three parts, according to whether  $|A(x,\la_3,\de)|\gg 1,$ or  $|A(x,\la_3,\de)|\lesssim1 $ and $|D(x,\la_1,\de)|\gg1$,  or $|A(x,\la_3,\de)|\lesssim1$ and $|D(x,\la_1,\de)|\lesssim1.$

\medskip

\noi {\bf 1. The part where  $|A|\gg 1.$  } Denote by $\mu_{1+it}^1(x)$ the contribution to $\mu_{1+it}(x)$ by the terms for which $|A(x,\la_3,\de)|> K,$  where $K\gg 1$ is a  large constant.
We claim that 
\begin{equation}\label{8.6I}
|\tilde\mu_{\la_1,\la_3}(x)|\lesssim \frac {1}{|A|^{\frac 12}}, \qquad \mbox{if} \ |A|=|A(x,\la_3,\de)|> K,
\end{equation}
provided $K$ is sufficiently large. 
This estimate will imply the right kind of estimate
\begin{eqnarray*}
|\mu_{1+it}^1(x)| &\lesssim& \de_0^{\frac 13} \sum_{\{\la_3: 1\le \la_3\le \de_0^{-2}, \, \la_3|Q_A(x)|\ge K\}}\sum_{\la_1\le \de_0^{-1} \la_3^{\frac 12}}\frac {\la_1^{\frac 13} \la_3^{-\frac 16}}{(\la_3|Q_A(x)|)^{\frac 12}}\\
&\lesssim& \sum_{\{\la_3: 1\le \la_3\le \de_0^{-2}, \, \la_3|Q_A(x)|\ge K\}}   \frac 1 {(\la_3|Q_A(x)|)^{\frac 12}}
\lesssim \frac 1{K^{\frac 12}},
\end{eqnarray*}
since we are summing over dyadic $\la_3$'s. 

\medskip 
In order to verify \eqref{8.6I}, observe first that  an easy van der Corput type estimate for the integration in $y_2$ (making use of the last factor of $F_\de$)  allows to estimate
$$
\int |F_\de(\la_1,\la_3,x,y_1,y_2)|\, dy_2\le C,
$$
where the constant $C$ is independent of $y_1,x,\la$ and $\de$  (recall that $|b_0|\sim 1$!).
Let $\ve >0.$ It follows in particular that the  contribution of the  region where  $|y_1|\gtrsim |A|^\ve$ to $\tilde \mu_{\la_1,\la_3}$ can  be estimated by the right-hand side of \eqref{8.6I}, because of the Schwartz- factor $\check \chi_1(y_1)$  in the double integral defining $\tilde\mu_{\la_1,\la_3}(x).$ 

Let us  thus consider the part of  $\tilde \mu_{\la_1,\la_3}(x)$ given by integrating over the region where  $|y_1|\le C |A|^\ve ,$ where $C$ is a fixed positive  number.  Here, according to \eqref{8.5I} we have $|r_2(y_1)|\lesssim |A|^\ve,$ and hence $|A+\la_3\la_1^{-1}r_2(y_1)|\sim |A|,$ is we choose for instance $\ve=1/2$  and $K$ sufficiently large.

Then an easy estimation  for the $y_2$-integration leads to 
$$
\int\Big|\check\chi_1\Big(A-y_2^2b_0(x_1-\la_1^{-1}y_1,\la_3^{-\frac12}y_2,\de)+\la_3\la_1^{-1}r_2(y_1)\Big)\Big| \, dy_2
\lesssim |A|^{-\frac 12},
$$
and  integrating subsequently in $y_1$ over the region   $|y_1|\le C |A|^\ve ,$  we again arrive at the right-hand side of \eqref{8.6I}.

\medskip
\noi {\bf 2. The part where $|A|\lesssim1 $ and $|D|\gg1.$ }  Denote by $\mu_{1+it}^2(x)$ the contribution to $\mu_{1+it}(x)$ by the terms for which $|A(x,\la_3,\de)|\le K$  and $|D(x,\la_1,\de)|> K.$
We claim that here
\begin{equation}\label{8.8I}
|\tilde\mu_{\la_1,\la_3}(x)|\lesssim \frac {1}{|D|}, \qquad \mbox{if} \ |D|=|D(x,\la_1,\de)|> K,
\end{equation}
provided $K$ is sufficiently large. 
It is again easy to see that  this estimate will imply the right kind of estimate for $|\mu_{1+it}^2(x)|$  (just interchange the roles of $A$ and $D$ and of $\la_1$ and $\la_3$ in the arguments of  the previous situation).

\medskip
In order to prove \eqref{8.8I},  consider first the contribution  to   $\tilde \mu_{\la_1,\la_3}(x)$ given by integrating over the region where  $|y_1|\ge C |D|^\ve ,$ where $C$ is a fixed positive  number. Arguing in the same way as in the previous situation, we find that this part can be estimated by the right-hand side of \eqref{8.8I}.

Next, we consider the contribution  to   $\tilde \mu_{\la_1,\la_3}(x)$ given by integrating over the region where  $|y_1|< C |D|^\ve $  and $|y_2|\gg C |D|^\ve.$ According to \eqref{8.5I}, we then have that $|r_j(y_1)|\lesssim |D|^\ve,\ j=1,2,$ so that we may assume that  $|A+\la_3\la_1^{-1}r_2(y_1)|\ll |D|^\ve,$  hence 
$$| A-y_2^2b_0(x_1-\la_1^{-1}y_1,\la_3^{-\frac12}y_2,\de)+\la_3\la_1^{-1}r_2(y_1) |\gtrsim  |D|^{2\ve}.
$$
This easily implies that also this part of $\tilde \mu_{\la_1,\la_3}(x)$ can be estimated by the right-hand side of \eqref{8.8I}.

\medskip
What remains is  the contribution by the region where  $|y_1|< C |D|^\ve $  and $|y_2|<C |D|^\ve$  (with $C$ isufficiently large, but fixed). Since $E\ll 1,$ we here have that $D-E y_2+r_1(y_1)|\gtrsim |D|,$ and again we see that we can estimate by the right-hand side of \eqref{8.8I}.

\medskip
\noi {\bf 3. The part where $|A|\lesssim  1$ and $|D|\lesssim  1.$ } 
 Denote finally  by $\mu_{1+it}^3(x)$ the contribution to $\mu_{1+it}(x)$ by the terms for which $|A(x,\la_3,\de)|\le K$  and $|D(x,\la_1,\de)|\le K.$ In this case, it is easily seen from formula \eqref{mulala} and \eqref{8.5I} that 

$$
\tilde\mu_{\la_1,\la_3}(x)=\tilde J(A,D, E,\,\la_1^{-1}, \la_3^{-\frac13}, \la_3\la_1^{-1}),
$$
where $\tilde J$ is a smooth function of all bounded variables, hence 
$$
\de_0^{\frac 13}\mu_{\la_1,\la_3}(x)=E^\frac13\,J(A,D, E^\frac 13,\,\la_1^{-1}, \la_3^{-\frac13}, \la_3\la_1^{-1}),
$$ 
where again $J$ is a smooth function.

Let us write $\la_1=2^{m_1}, \la_3=2^{m_2},$ with $m_1,m_2\in \NN.$  In combination with \eqref{mula13} we  then see that 
$\de_0^{-it/3}\, \mu_{1+it}(x)$ can be written  in the form \eqref{Ft}, with $(\al_1,\al_2):=(-\frac 76,-\frac 76)$ and $M_1=\de_0^{-2},M_2:=
2^{-M} \de_0^{-2}.$ The cuboid $Q$ is defined by the  following set of restrictions:
\begin{eqnarray*}
&&|y_1|=|\la_3Q_A(x)|\le K,\quad |y_2|=\la_1|Q_D(x)|\le K, \quad |y_3|=|E^{\frac 13}|= \la_1^{\frac 13} \la_3^{-\frac 16}\de_0^{\frac 13} \le 2^{\frac M3},\\
&&|y_4|=\la_1^{-1} \le 1,\quad |y_5|=\la_3^{-\frac 13}\le 1, \quad |y_6|=|\la_1^{-1}\la_3|\le 2^{-M},
\quad |y_7|=|\la_1^2\la_3^{-1}\de_0^2|\le 2^M, \\
&&|y_8|=|\la_1^{-1}\de_0^{-1}|\le 2^{-M}.
\end{eqnarray*}
The first three conditions arise from our assumptions $|A|\lesssim1,|D|\lesssim 1, |E|\lesssim 1,$ and the last three from the restrictions on the summation indices in \eqref{mula13}. Moreover,  for the function $H$ in Lemma \ref{doublesum}, we my choose $H(y_1,\dots, y_8):=y_3J(y_1,\dots, y_6).$ The corresponding vectors  $(\be^k_1,\be^k_2)$ are given by 
$(0,1), (1,0),(1/3,-1/6), (-1,0),(0,-1/3), (2,-1),$ $(-1,1)$ and $(-1,0).$ Therefore, if we choose for  $\ga(\ze)$  the corresponding function $\ga_{3/7}(\ze)$ of  Remark \ref{doublean}, then Lemma \ref{doublesum} shows that indeed also 
$\mu_{1+it}^3(x)$ satisfies the estimate \eqref{8.3I}. 

This concludes the proof of Proposition \ref{m2_A} (a).

\bigskip

\subsection{ Estimate \eqref{restm2} in Proposition \ref{m2_A} (b)}

Recall that $\de_0=2^{-j},$ and that
$$
\nu_{\de,j}^{VI}:=\sum_{\la_1=2^{M+j}}^{2^{2j}} \sum_{\la_3=2}^{(2^{-M-j}\la_1)^2} \nu^{(\la_1,\la_1,\la_3)}_j
$$
(in this notation, summation is always meant to be over dyadic $\la_j$'s),
and  that, by \eqref{hatnu3},  $\|\widehat{\nu^{\la}_{j}}\|_\infty\lesssim \la_1^{-1/2} \la_3^{-1/2}.$ We  therefore define here for $\ze$ in the strip $\Sigma=\{\zeta\in \bC: 0\le \Re \zeta\le 1\}$  an analytic family of measures by 
$$
\mu_\ze(x):=\gamma(\ze)\,\de_0^{\frac \ze 3} \sum_{k_1=M+j}^{2j} \sum_{k_3=1}^{-2M+2k_1-2j}2^\frac{(3-7\zeta)k_1}6 2^\frac{(3-7\zeta)k_3}6\nu^{(2^{k_1},2^{k_1},2^{k_3})}_j,
$$
where here we need to put 
$$
\ga(\ze):=\frac {2^{\frac 72 (1-z)}-1}3.
$$ 
By $T_\ze$  we denote the operator of convolution with $\widehat{\mu_\ze}.$ 
Observe that  for $\ze=\theta_c=3/7,$ we have $\mu_{\th_c}=\de_0^{\frac 17}\nu_{\de,j}^{VI},$ hence 
$
T_{\theta_c}=2^{-\frac j7}T_{\de, j}^{VI},
$
 so that, arguing exactly as in the preceding subsection by means of Stein's interpolation theorem,  \eqref{restm2}  will follow if we can prove that there is constant $C$ such that  


\begin{equation}\label{8.14I}
|\mu_{1+it}(x)|\le C,
\end{equation}
where $C$ is independent of $t,x,\de$ and $\si.$

As before, we introduce the  measures $ \mu_{\la_1,\la_3}$ given by
$$
\mu_{\la_1,\la_3}(x):=(\la_1\la_3)^{-\frac23} \nu_j^{(\la_1,\la_1,\la_3)}(x), 
$$
which allow to re-write 
\begin{equation}\label{8.15I}
\mu_{1+it}(x)=\gamma(1+it
)\,\de_0^{\frac {1+it} 3} \sum_{\la_1=2^{M+j}}^{2^{2j}} \sum_{\la_3=2}^{(2^{-M-j}\la_1)^2}
(\la_1\la_3)^{-\frac 76 it} \mu_{\la_1,\la_3}(x).
\end{equation}

Recall also that according to Remark \ref{rem5.3I}
\begin{eqnarray}\nonumber
\mu_{\la_1,\la_3}(x)&=&\la_1^\frac 43\la_3^\frac 13 \int \check\chi_1\Big({\la_1}(x_1-y_1)\Big) \, \check\chi_1\Big({\la_1}(x_2-\de_0y_2-y_1^2\om(\de_1y_1))\Big)\\
&&\check\chi_1\Big({\la_3}(x_3-b_0(y,\de)\,y_2^2
-\si y_1^n \beta(\de_1 y_1)\Big)\nonumber
\,\eta(y,\de) \,dy,
\end{eqnarray}
where $\eta$ is supported where $y_1\sim 1$ and $|y_2|\lesssim  1.$
Assume first that $|x|\gg 1.$  If $|x_1|\gg 1$ or $|x_2|\gg 1,$ this easily implies that 
$|\mu_{\la_1,\la_3}(x)|\le C_N\la_1^{-N}\le (\la_1\la_2)^{-N/2}$ for every $N\in\NN,$ because $\la_1\gg \la_3^{1/2}.$   Thus \eqref{8.14I} follows immediately.

And, if $|x_3|\gg 1,$ we may estimate the last factor in the integrand by $C_N\la_3^{-N},$ and then easily obtain that $|\mu_{\la_1,\la_3}(x)|\le C_N\la_1^{4/3}\la_3^{1/3-N} \, \la_1^{-1}(\la_1\de_0)^{-1}=2^j\la_1^{-2/3}\la_3^{1/3-N}.$ Summing first over all $\la_1\gg 2^j \la_3^{1/2},$ and then over $\la_3,$ we find that $|\mu_{1+it}(x)|\lesssim \de_0^{1/3} 2^{j/3}\lesssim 1.$

\medskip
From now on, we may and shall therefore assume that $|x|\lesssim1.$

By means of the change of variables $y_1\mapsto x_1-y_1/\la_1, \  y_2\mapsto y_2/(\de_0\la_1)$ we re-write 
$\mu_{\la_1,\la_3}(x)=\de_0^{-1}\la_1^{-\frac 23}\la_3^{\frac13}\, \tilde\mu_{\la_1,\la_3}(x),$ with 
\begin{equation}\label{mulala2}
\tilde\mu_{\la_1,\la_3}(x):=\iint  \check\chi_1(y_1)\tilde F_\de(\la_1,\la_3,x,y_1,y_2)\, dy_1dy_2,
\end{equation}

where
\begin{eqnarray*}
\tilde F_\de(\la_1,\la_3,x,y_1,y_2)&:=&\eta(x_1-\la_1^{-1}y_1,\,  \de_0^{-1}\la_1^{-1}y_2,\de) \,\check\chi_1(D-y_2+r_1(y_1))\\
&\times&\check\chi_1\Big(A+y_2^2 E \,b_0(x_1-\la_1^{-1}y_1,\de_0^{-1}\la_1^{-1}y_2,\de)+\la_3\la_1^{-1}r_2(y_1)\Big).
\end{eqnarray*}
The  quantities 
\begin{eqnarray}\nonumber
&& A:=\la_3Q_A(x),\quad D:=\la_1Q_D(x),\quad  E:=\frac{\la_3}{(\de_0\la_1)^{2}},\\ 
&& \mbox{  with}\quad  Q_A(x):=x_3-\sigma x_1^n\beta(\de_1x_1),\quad  Q_D(x):=x_2-x_1^2\omega(\de_1x_1), \label{ADE2}
\end{eqnarray}
appearing here again do not depend  on $y_1,y_2,$ and the functions $r_i(y_1)$ are as before (i.e., they are indeed smooth functions  of $y_1, \la_1^{-1}, x_1$ and $\de,$ and  satisfy again  estimates of the form \eqref{8.5I}. Notice that also  here we have that $\la_3/\la_1\ll 1.$ 
Recall also  that we may assume that $|y_1|\lesssim \la_1,$ because of our assumption  $|x|\lesssim1$ and the support properties of $\eta,$ and that $\de_0^{-1}\la_1^{-1}\ll 1.$ Observe finally   that our summation conditions imply that  $E\ll 1.$ 

 Notice  also that the first factor $\check\chi_1(y_1)$ in \eqref{mulala2} in combination with the second factor of $F_\de$ clearly allow for a uniform estimate
$$
|\tilde\mu_{\la_1,\la_3}(x)|\lesssim 1, \quad \mbox{hence}\quad \de_0^{\frac 13}|\mu_{\la_1,\la_3}(x)|\lesssim \Big(\frac {\la_3^{\frac 12}}{\de_0\la_1}\Big)^{\frac 23}.
$$
However, these estimate are not quite sufficient in order to prove estimate \eqref{8.15I}, and so we need to improve on them. The second estimate suggest to introduce new  dyadic summation variables $\la_0,\la_4$ in place of $\la_1,\la_3$ so that 
\begin{equation}\label{nesuva}
\la_3=\la_4^2 \quad \mbox{and } \quad \la_1=\frac {\la_0\la_4}{\de_0},
\end{equation}
for in these new variables we would have $\de_0^{\frac 13}|\mu_{\la_1,\la_3}(x)|\lesssim \la_0^{-2/3}.$ 

More precisely, recalling that $\la_3=2^{k_3},$ we decompose the summation over $k_3$ in \eqref {8.15I}  into two arithmetic progressions, by writing $k_3=2k_4+i,$ with $i\in\{0,1\}$ fixed for each of these progressions. Since all of these sums can be treated in essentially the same way, let us assume for simplicity that $i=0,$ so that $k_3=2k_4.$ Putting $\la_4:=2^{k_4}$  and $\la_0:=2^{k_0},$ and writing $k_1:= k_0+k_4+j,$ we indeed obtain \eqref{nesuva}. Replacing without loss of generality the sum  over the dyadic $\la_3$ in \eqref {8.15I} by the sum over the corresponding arithmetic progression with $i=0,$ it is also easy to check that the summation restrictions $2^{M+j}\le \la_1\le  2^{2j}$ and 
$2\le \la_3\le ( 2^{-M-j}\la_1)^2$ are equivalent to the conditions 
$$
2^M\le \la_0\le (2 \de_0)^{-1}, \qquad 2\le \la_4\le (\de_0\la_0)^{-1}.
$$
We may thus estimate in \eqref{8.15I}
$$
|\mu_{1+it}(x)|\le \sum_{\la_0=2^M}^{(2 \de_0)^{-1}}\la_0^{-\frac 23}\Big |\ga(1+it)
\sum_{\la_4=2}^{(\de_0\la_0)^{-1}}\la_4^{-\frac 72 it}\tilde\mu_{\frac{\la_0\la_4}{\de_0},\la_4^2}(x)\Big|.
$$
For $\la_0$ and $x$ fixed, let us put 
\begin{eqnarray*}
f_{\la_0,x}(\la_4)&:=& \tilde\mu_{\frac{\la_0\la_4}{\de_0},\la_4^2}(x),\\
\rho_{t,\la_0}(x)&:=&\ga(1+it)
\sum_{\la_4=2}^{(\de_0\la_0)^{-1}}\la_4^{-\frac 72 it}f_{\la_0,x}(\la_4).
\end{eqnarray*}
The previous estimate shows that in order to verify \eqref{8.14I},  it will suffice to prove the following uniform estimate: there exist constants $C>0$ and $\epsilon\ge 0$ with $\epsilon <2/3,$ so that for all $x$ with   $|x|\lesssim 1$ and $\de$ sufficiently small we have 
\begin{equation}\label{8.19I}
|\rho_{t,\la_0}(x)|\le C \la_0^\epsilon \quad \mbox{for } \quad 2^M\le \la_0\le (2 \de_0)^{-1}.
\end{equation}

\medskip
In order to prove this, observe that by \eqref{mulala2}
\begin{equation}\label{fla0}
f_{\la_0,x}(\la_4)=\iint  \check\chi_1(y_1)F_\de(\la_0,\la_4,x,y_1,y_2)\, dy_1dy_2,
\end{equation}
where
\begin{eqnarray*}
F_\de(\la_0,\la_4,x,y_1,y_2)&:=&\eta(x_1-\de_0(\la_0\la_4)^{-1}y_1,\,  (\la_0\la_4)^{-1}y_2,\de) \,\check\chi_1(D-y_2+r_1(y_1))\\
&\times&\check\chi_1\Big(A+y_2^2 E \,b_0(x_1-\de_0(\la_0\la_4)^{-1}y_1,(\la_0\la_4)^{-1}y_2,\de)+\la_3\la_1^{-1}r_2(y_1)\Big)
\end{eqnarray*}
and
\begin{eqnarray}\nonumber
&&A=A(x,\la_4,\de)=\la_4^2Q_A(x),\quad D=D(x,\la_0,\la_4,\de)=\frac{\la_0\la_4}{\de_0}Q_D(x),
\quad  E=E(\la_0)=\frac{1}{\la_0^{2}},\\ 
&&\qquad \mbox{  with}\quad  Q_A(x):=x_3-\sigma x_1^n\beta(\de_1x_1), \quad Q_D(x):=x_2-x_1^2\omega(\de_1x_1), \label{ADE3}
\end{eqnarray}

Given $x$ and $\la_0,$ we shall split the summation in $\la_4$ into sub-intervals, according to whether $|D|\gg 1 ,$ 
$|D|\lesssim 1$ and $|A|\gg 1,$ or $|D|\lesssim 1$ and $|A|\lesssim 1.$

\medskip
\noi {\bf 1. The part where $|D|\gg 1$. } Denote by $\rho_{t,\la_0}^1(x)$ the contribution to $\rho_{t,\la_0}(x)$ by the terms for which  $|D|\gg 1.$ 

We first consider the contribution to $f_{\la_0,x}(\la_4)$ given by integrating in \eqref{fla0} over the region where $|y_1|\gtrsim |D|^\ve$ (where $\ve>0$ is assumed to be sufficiently small).  Here, the rapidly decaying  first factor $\check\chi_1(y_1)$ in \eqref{mulala2} leads to an improved estimate of this contribution of the order $|D|^{-N}$ for every $N\in \NN,$ which allows to sum over the dyadic $\la_4$ for which $\la_4|\la_0Q_D(x)/\de_0|=|D|\gg 1,$ and the contribution to  $\rho_{t,\la_0}(x)$ is of order $O(1),$ which is stronger than what is needed in \eqref{8.19I}.

We may therefore restrict ourselves in the sequel to the region where $|y_1|\ll |D|^\ve.$ Observe that, because of \eqref{8.5I},   this implies in particular  that $|r_i(y_1)|\ll |D|^\ve, \, i=1,2.$  By looking at the second factor in $F_\de,$ we  again see that the contribution by the regions where in addition $|y_2|<|D|/2,$ or  $|y_2|>3|D|/2,$ is again of the  order $|D|^{-N}$ for every $N\in \NN,$  and their contributions to 
$\rho^1_{t,\la_0}(x)$ are again admissible.

\medskip
What remains is the region where  $|y_1|\ll |D|^\ve$  and $|D|/2\le |y_2|\le 3|D|/2.$ In addition, we may assume that $y_2$ and $D$ have the same sign, since otherwise we can estimate as before. Let us therefore assume, e.g.,  that  $D>0,$ and that $D/2\le y_2\le 3D/2.$ 

The change of variables $y_2\mapsto Dy_2$ then allows to re-write the corresponding contribution to 
$f_{\la_0,x}(\la_4)$ as 
\begin{equation}\label{mulala3}
\tilde f_{\la_0,x}(\la_4):=D\int_{|y_1|\ll |D|^\ve} \int_{1/2\le y_2\le 3/2} \check\chi_1(y_1)\tilde F_\de(\la_0,\la_4,x,y_1,y_2)\, dy_2dy_1,
\end{equation}
where here
\begin{eqnarray*}
&&\tilde F_\de(\la_0,\la_4,x,y_1,y_2):=\eta(x_1-\de_0(\la_0\la_4)^{-1}y_1,\,  (\la_0\la_4)^{-1}Dy_2,\de) \,\check\chi_1(D-Dy_2+r_1(y_1))\\
&&\hskip1cm \times\, \check\chi_1\Big(A+y_2^2 ED^2 \,b_0(x_1-\de_0(\la_0\la_4)^{-1}y_1,(\la_0\la_4)^{-1}Dy_2,\de)+\la_3\la_1^{-1}r_2(y_1)\Big)
\end{eqnarray*}
Recall also  that $|b_0|\sim 1,$  and notice that, according to Remark \ref{rem5.3I}, $|\pa_{y_2}b_0|\lesssim \de_0\de_2\ll 1.$ In combination with the localization given by $\eta,$ this shows that,  given $y_1,$ we may  change variables from $y_2$ to 
$z:=y_2^2 ED^2 \,b_0(x_1-\de_0(\la_0\la_4)^{-1}y_1,(\la_0\la_4)^{-1}Dy_2,\de),$ and use the last factor of $\tilde F_\de$ in order to estimate the integral in $y_2$ (respectively $z$) by $C|ED^2|^{-1}.$  Subsequently, we may estimate the integration with respect to $y_1$ by means of the factor $\check\chi_1(y_1),$ and find that 
$$
|\tilde f_{\la_0,x}(\la_4)|\le C \frac D{|ED^2|}=C\frac 1{|ED|}.
$$
Interpolating this with the trivial estimate $|\tilde f_{\la_0,x}(\la_4)|\le C $  leads to 
$$
|\tilde f_{\la_0,x}(\la_4)|\le C\frac 1{|ED|^{\frac{\epsilon}2}}=C \la_0^{\epsilon} |D|^{-\frac{\epsilon}2},
$$
where we chose $\epsilon>0$ so that $\epsilon<2/3.$ The factor $ |D|^{-\epsilon/2}$ then allows to sum in $\la_4,$ and we see that altogether  we arrive at the  estimate $|\rho^1_{t,\la_0}(x)|\le C \la_0^\epsilon.$ 
This completes the proof of estimate \eqref{8.19I} in this first case.

\medskip
\noi {\bf 2. The part where $|D|\lesssim 1$ and $|A|\gg 1.$}  Denote by  $\rho_{t,\la_0}^2(x)$ the contribution to $\rho_{t,\la_0}(x)$ by the terms for which  $|D|\lesssim 1$ and $|A|\gg 1.$ Arguing in a similar way as in the previous case, only with $D$ replaced by $A,$ we see that we may restrict to the regions where $|y_1|\lesssim |A|^{\ve}$ and $|y_2|\lesssim |A|^{\ve}$  (where $\ve>0$ is any  fixed, positive constant). In the remaining regions, we can gain a factor $C_N|A|^{-N}$ in the estimate of $f_{\la_0,x}(\la_4)$ in a trivial way. But, if $|y_1|\lesssim |A|^{\ve}$ and $|y_2|\lesssim |A|^{\ve},$ and if $\ve>0$ is  sufficiently small, then 
$$
\Big|A+y_2^2 E \,b_0(x_1-\de_0(\la_0\la_4)^{-1}y_1,(\la_0\la_4)^{-1}y_2,\de)+\la_3\la_1^{-1}r_2(y_1)\Big|\gtrsim |A|,
$$
and thus we obtain an  estimate of the same kind, i.e.,
$$
| f_{\la_0,x}(\la_4)|\le C_N |A|^{-N} \qquad \mbox{ for every } \ N\in\NN.
$$
Summing over all dyadic $\la_4$ such that $\la_4^2|Q_A(x)|=|A|\gg 1,$ this implies $|\rho_{t,\la_0}^2(x)|\le C.$

\medskip
\noi {\bf 3. The part where $|D|\lesssim 1$ and $|A|\lesssim 1.$}  Denote by  $\rho_{t,\la_0}^3(x)$ the contribution to $\rho_{t,\la_0}(x)$ by the terms for which  $|D|\le K $ and $|A|\le K,$ where $K>0$ is a sufficiently large constant.   Observe that $\rho_{t,\la_0}^3(x)$ can again be estimated by means of Lemma \ref{simplesum}. Indeed, the cuboid $Q$ will here be defined by means of the conditions $|D|\le K, |A|\le K$ and $w_1:=\la_1^{-1}\de_0^{-1}=\la_4^{-1}\la_0^{-1}\le  2^{-M-1},w_2:=\la_3/\la_1=\la_4(\de_0/\la_0)\le 2^{-2M},  \la_4(\de_0\la_0)\le 1$ (compare  also the properties of the  functions $r_i(y_1)$) and if we define $M:=1/(\de_0\la_0), \al:= -7/2$ and 
\begin{eqnarray*}
&&H_{x,\de}(A,D,E,w_1,w_2)\\
&&:=\iint  \check\chi_1(y_1) \eta(x_1-\de_0w_1y_1,\,  w_2y_2,\de)
\check\chi_1(D-y_2+ r_1(y_1; \de_0w_1,x_1,\de))\\
&&\quad \times\check\chi_1\Big(A+y_2^2 E \,b_0(x_1-\de_0w_1y_1,w_1y_2,\de)+ w_2r_2(y_1; \de_0w_1,x_1,\de)\Big)\, dy_1dy_2,
\end{eqnarray*}
then \eqref{fla0}  shows that   $f_{\la_0,x}(\la_4)=H_{x,\de}(A,D,E,w_1,w_2)$, and $\ga(1+it)^{-1} \rho^3_{t,\la_0}(x)$ is an oscillatory sum of the form \eqref{Ft1} (with summation index $l:=k_4$).   Moreover, one easily checks that 
$$
\|H_{x,\de}\|_{C^1(Q)}\le C,
$$
with a constant $C$ which does not depend on $x$ and $\de.$  Applying Lemma \ref{simplesum}, we therefore obtain the estimate  $|\rho_{t,\la_0}^3(x)|\le C.$ This completes the proof of estimate \eqref{8.19I}, and hence also the proof of  Proposition \ref{m2_A} (b).

\setcounter{equation}{0}
\section{The case  when $\hl(\phi)\ge 2:$ preparatory results}\label{prep}

Recall that $h=h(\phi)>2$ when $\hl\ge 2,$ and that we assume that the original coordinates $x$ are linearly adapted, so that $d=\hl\ge 2.$ Moreover, based on  Varchenko's algorithm,  we can   locally  find an adapted  coordinate system
$y_1= x_1, \ y_2= x_2-\psi(x_1)$
 for  the function $\phi$  near the origin. In these  coordinates, $\phi$ is given by
$ \pad(y):=\phi(y_1,y_2+\psi(y_1))$ (cf. \eqref{adaptco},\eqref{phia1}).



Also recall that   the vertices of the Newton polyhedron $\N(\pad)$ of $\pad$ are assumed to be the   points
$(A_l,B_l), \  l=0,\dots,n,$ so that the Newton polyhedron $\N(\pad)$ is the convex hull
of the set $\bigcup_{l} ((A_l,B_l)+\bR_+^2),$ where $A_{l-1}<A_{l}$ for every $l\ge 1.$ Moreover, 
$L_l:=\{(t_1,t_2)\in \bR^2:\ka^l_1t_1+\ka^l_2 t_2=1\}$ denotes the line passing through the  points $(A_{l-1},B_{l-1})$ and $(A_l,B_l),$ and  $a_l={\ka^l_2}/{\ka^l_1}.$
 The $a_l$ can be identified as the distinct leading exponents of all the roots of $\pad$ in case that $\pad$ is analytic (see Section 3 of \cite{IKM-max}), and the cluster of roots whose  leading exponent in their Puiseux series expansion is given by  $a_l$ is associated to the edge $\ga_l=[(A_{l-1},B_{l-1}) ,(A_l,B_l)]$ of $\N(\pad).$

As  before, following Subsection 8.2 of \cite{IKM-max}, we choose the integer $l_0\ge 1$ such that
 $$
-\infty=:a_0<\dots <a_{l_0-1}\le m<a_{l_0}<\dots< a_l<a_{l+1}<\dots<a_n.
$$
 As has been shown in Section 3 of \cite{IKM-max},
the vertex  $(A_{l_0-1},B_{l_0-1})$ lies  strictly above the bisectrix, i.e.,  $A_{l_0-1}<B_{l_0-1},$ since the original coordinates $x$ were assumed to be non-adapted.

\medskip
Following in a  slightly modified way the discussion in Section  3 of \cite{IKM-max}  we single out a particular edge by fixing the corresponding index $l_\pr\ge l_0$:
\medskip

\noi{\bf Cases:}
\begin{enumerate}
 \item[(a)] If the principal face $\pi(\pad)$ of $\N(\pad)$ is a compact edge, we choose $l_\pr$ so  that the edge $\ga_{l_\pr}=[(A_{l_\pr-1},B_{l_\pr-1}) ,(A_{l_\pr},B_{l_\pr})]$ is the principal face $\pi(\pad)$ of the Newton polyhedron of $\pad.$

 \item[(b)] If $\pi(\pad)$ is the vertex $(h,h),$  we choose $l_\pr$ so that  $(h,h)=(A_{l_\pr-1},B_{l_\pr-1}).$ Then$(h,h)$ is the right endpoint of the compact edge $\ga_{l_\pr-1}.$

\item[(c)] If the principal face $\pi(\pad)$ is unbounded, i.e.,  a half-line given by  $t_1\ge A$ and  $t_2=h:=B,$ with $A<B,$  then we distinguish two subcases:
\begin{enumerate}

 \item[(c1)] If  the point $(A,B)$ is the right endpoint of a compact edge of $\N(\pad)$,  then  we choose again $l_\pr$ so that  this edge is given by  $\ga_{l_\pr-1}.$

   \item[(c2)] Otherwise,  $(A,B)$ is the only vertex of $\N(\pad),$ i.e., $\N(\pad)=(A,B)+\RR^2_+.$     \end{enumerate}
\end{enumerate}

We also put 
\begin{equation}\label{aa}
a:=\begin{cases} a_{l_\pr}   &   \mbox{ in Case (a)}; \\
 a_{l_\pr-1}   &   \mbox{ in Case (b) and Case (c1)}; \\
  m   &   \mbox{ in Case (c2)}.
\end{cases}
\end{equation}

Following  \cite{IKM-max} and \cite{IM-uniform},  in the cases (a) - (c1) we shall decompose the  domain \eqref{3.1} in which $\rho_1$ is supported into   subdomains
 $$
D_l:=\{\x:\ve_l  x_1^{a_l}< |x_2-\psi(x_1)|\le N_l x_1^{a_l}\},\quad l=l_0,\dots,l_\pr-1,
$$
which correspond to the  $\ka^l$-homogeneous domains $
D^a_l:=\{\y:\ve_l  y_1^{a_l}< |y_2|\le N_l y_1^{a_l}\}$
in  our adapted coordinates $y,$   and  intermediate ``transition''  domains
$$
E_l:=\{\x:N_{l+1} x_1^{a_{l+1}}<|x_2-\psi(x_1)| \le \ve_l x_1^{a_l}\},
$$
where $l=l_0,\dots,l_\pr-1$ in Case (a), and $l=l_0,\dots,l_\pr-2$ in all other cases, as well as the  ``first'' transition domain
$$
E_{l_0-1}:=\{\x:N_{l_0} x_1^{a_{l_0}}<|x_2-\psi(x_1)| \le \ve_{l_0} x_1^{m}\},
$$
corresponding to the $y$-domains $E^a_l:=\{\y:N_{l+1} y_1^{a_{l+1}}<|y_2| \le \ve_l y_1^{a_l}\},$  respectively  $E^a_{l_0-1}:=\{\y:N_{l_0} y_1^{a_{l_0}}<|y_2| \le \ve_{l_0} y_1^{m}\}.
$
Here,  the $\ve_l>0 $ are  small and the $N_l>0$ are  large parameters to be determined later. We remark that the domain $E_{l_0-1}$ can be written like $E_l$ with $l=l_0-1$ if we replace, with some slight abuse of notation, $a_{l_0-1}$ by $m$ and $\ka_{l_0-1}$ by $\ka.$ We shall make use of this unified way of describing $E_l$ in the sequel.

What will remain after removing these domains  is a domain of the form
\begin{equation}\label{restdomain2}
D_\pr:=\begin{cases}  
\{ \x:|x_2-\psi(x_1)|\le N x_1^a\} &   \mbox{ in Case (a)};   \\
      \{ \x:|x_2-\psi(x_1)|\le \ve x_1^{a}\}, &   \mbox{ in all other cases},
\end{cases}
\end{equation}
where $N$ is sufficiently large and $\ve$ sufficiently small.

\smallskip

In the cases (c1) and (c2), we shall furthermore regard the  domains 
\begin{equation}\label{gentrans}
E_{l_\pr-1}:=D_\pr=\{ \x:|x_2-\psi(x_1)|\le \ve x_1^{a}\}
\end{equation}
as ``generalized'' transition domains. Notice that  in the Case (c2) this domain  will cover the domain in \eqref{3.1}, since here $a=m,$  so that the proof of Proposition \ref{nearjet} will be complete once we shall have  handled all these transitions domains in the next section. In a similar way, the discussion of  Case (c1) will be complete once we have handled the domains  $E_l$ and $D_l.$  This will eventually reduce our problem to studying the domain $D_\pr$ in the cases (a) and (b).

\setcounter{equation}{0}
\section{Restriction estimates in the transition domains when $\hl(\phi)\ge 2$ }\label{transition}

\bigskip

Following a standard approach, we would like to study the contributions of the domains $E_l$ by means of a decomposition of the corresponding $y$-domains $E^a_l$ into dyadic rectangles. These rectangles correspond to a kind of ``curved boxes'' in the original coordinates $x,$ so that we  cannot achieve
 the localization to them by means of Littlewood-Paley decompositions in the variables $x_1$ and $x_2.$ However, the following lemma shows that this localization can nevertheless be induced by means of  Littlewood-Paley decompositions in the variables $x_1$ and $x_3.$ 
 
 We shall formulate this lemma for a general smooth, finite type function $\Phi$ with $\Phi(0,0)=0$ and $\nabla\Phi(0,0)=0$ in place of $\pad,$ since it will by applied not only to $\pad.$ However, we shall keep the notation introduced for $\pad,$ denoting for instance by $(A_l,B_l), \  l=0,\dots,n$ the vertices of   the Newton polyhedron of $\Phi,$ by $\ka^l$ the weight associated to the edge $\ga_l=[(A_{l-1},B_{l-1}) ,(A_l,B_l)],$ etc..

\begin{lemma}\label{equivcond}
For $l\ge l_0,$ 
let $[(A_{l-1},B_{l-1}) ,(A_l,B_l)]$ and $[(A_{l},B_{l}) ,(A_{l+1},B_{l+1})]$ be two subsequent compact edges of $\N(\Phi),$ with common vertex $(A_{l},B_{l}),$ and associated weights $\ka^l$ and $\ka^{l+1}.$ Recall also that $a_l=\ka^l_2/\ka_1^l <a_{l+1}=\ka^{l+1}_2/\ka_1^{l+1}.$ 
For  a given $M >0,$ and $\delta>0$ sufficiently small, consider the domain
$$
E^a:=\{\y: 0<y_1<\delta,\, 2^{M}y_1^{a_{l+1}}<|y_2|\le 2^{-M} y_1^{a_l}\}.
$$

(a) There is a constant $C>0$ such that 
\begin{equation}\label{edgeterm}
\Phi(y)=c_{A_{l},B_{l}}y_1^{A_l}y_2^{B_l}\Big(1 +O(\delta^C+2^{-M})\Big) \quad \mboxÊ{on  }\  E^a,
\end{equation}
where $c_{A_{l},B_{l}}$ denotes the Taylor coefficient of $\Phi$ corresponding to $(A_{l},B_{l}).$ More precisely, 
$\Phi(y)=c_{A_{l},B_{l}}y_1^{A_l}y_2^{B_l}(1 +g(y)),$ where $|g^{(\beta)}(y)|\le C_\beta (\delta^C+2^{-M})|y_1^{-\beta_1}y_2^{-\beta_2}|$ for every multi-index $\beta\in\NN^2.$
\smallskip

(b) For $M, j\in \bN$ sufficiently large, the following conditions are equivalent:
 \begin{itemize}
\item[(i)] $y_1\backsim 2^{-j},\, \y\in E^a$ and $2^{A_lj+B_lk}\Phi(y)\backsim 1$;\
\item[(ii)] $y_1\backsim 2^{-j},\, y_2\backsim 2^{-k}$ and $a_lj+M\le k \le a_{l+1}j-M$.
 \end{itemize}
 \smallskip
 \noindent
Moreover, if we set $\phi_{j,k}(x):= 2^{A_lj+B_lk}\Phi(2^{-j} x_1, 2^{-k}x_2),$ then under the previous conditions we have that $\phi_{j,k}(x)=c_{A_{l},B_{l}}x_1^{A_l}x_2^{B_l}\Big(1 +O(2^{-Cj}+2^{-M})\Big)$ on the set where $x_1\sim 1, |x_2|\sim 1,$ in the sense of the $C^\infty$ - topology.
 
 \medskip 
 The statements in (a) and (b) remain valid also in the case where $l=l_0-1.$
\end{lemma}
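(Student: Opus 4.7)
The idea is that the monomial $M_0(y):=c_{A_l,B_l}y_1^{A_l}y_2^{B_l}$ corresponding to the common vertex dominates every other Taylor monomial of $\Phi$ on $E^a$. I would Taylor-expand $\Phi$ to order $N$,
$$
\Phi(y)=\sum_{(\al_1,\al_2)\in\T(\Phi),\,\al_1+\al_2\le N} c_{\al_1,\al_2}y_1^{\al_1}y_2^{\al_2}+R_N(y),\qquad R_N(y)=O(|y|^{N+1}),
$$
divide through by $M_0$, and estimate each remaining summand individually. Since $\T(\Phi)\subset\N(\Phi)$, each point $(\al_1,\al_2)\neq(A_l,B_l)$ in $\T(\Phi)$ lies either on $\ga_l$, on $\ga_{l+1}$, or strictly above both $L_l$ and $L_{l+1}$, and these three cases will be handled separately.

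\textbf{The three groups and the remainder.} For the first group I would parameterise the $y_2$-direction by $y_2=zy_1^{a_l}$, so that $|z|\le 2^{-M}$ on $E^a$; because both $(\al_1,\al_2)$ and $(A_l,B_l)$ satisfy $\ka^l_1t_1+\ka^l_2t_2=1$, the $y_1$-powers cancel exactly and the ratio $y_1^{\al_1-A_l}y_2^{\al_2-B_l}$ reduces to $z^{\al_2-B_l}$ with $\al_2-B_l\ge 1$, which is $O(2^{-M})$. For the second group I would instead write $y_2=wy_1^{a_{l+1}}$ with $|w|>2^M$; the ratio becomes $w^{\al_2-B_l}$ with $\al_2-B_l\le -1$, again $O(2^{-M})$. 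For the third group, $\ka^l_1\al_1+\ka^l_2\al_2>1$ strictly, so after the first reparameterisation the $y_1$-exponent $(\ka^l_1\al_1+\ka^l_2\al_2-1)/\ka^l_1$ is a positive number, bounded below uniformly over the finite set $\al_1+\al_2\le N$, yielding a gain of $\de^c$ for some $c>0$. The remainder is handled using the lower bound $|y_2|\ge 2^My_1^{a_{l+1}}$ on $E^a$ together with the crude estimate $|y|\lesssim y_1^{\min(1,a_l)}$, which gives $R_N/M_0=O(y_1^{c'})$ once $N$ is chosen large enough relative to $A_l+a_{l+1}B_l$. The derivative bounds on $g:=\Phi/M_0-1$ follow by the same three-way analysis applied to each monomial of $\pa^\beta\Phi$, since $\pa^\beta(y_1^{\al_1}y_2^{\al_2})$ is again a monomial whose exponents differ from those of $\pa^\beta M_0$ by $(\al_1-A_l,\al_2-B_l)$, so the cancellations of the previous paragraph go through after dividing by $\pa^\beta M_0$ and absorbing the extra $y_1^{-\beta_1}|y_2|^{-\beta_2}$ on the right-hand side.

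\textbf{Part (b), the boundary case, and the obstacle.} Given (ii), the conditions on $k$ translate under $y_1\sim 2^{-j}$ into $|y_2|\lesssim 2^{-M}y_1^{a_l}$ and $|y_2|\gtrsim 2^My_1^{a_{l+1}}$, so $y\in E^a$, and then (a) yields $2^{A_lj+B_lk}\Phi(y)=c_{A_l,B_l}(2^jy_1)^{A_l}(2^ky_2)^{B_l}(1+O(\de^C+2^{-M}))\sim 1$. Conversely, (a) gives $\Phi(y)\sim y_1^{A_l}y_2^{B_l}$ on $E^a$, which combined with $y_1\sim 2^{-j}$ and the normalisation $2^{A_lj+B_lk}\Phi(y)\sim 1$ forces $|y_2|\sim 2^{-k}$; the defining inequalities of $E^a$ then yield the stated range for $k$ up to an $O(1)$ shift absorbed into $M$. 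For $\phi_{j,k}(x)=2^{A_lj+B_lk}\Phi(2^{-j}x_1,2^{-k}x_2)$, substituting into $\Phi=M_0(1+g)$ and using the rescaling-stable $C^\infty$-bounds on $g$ from part (a) yields the claimed uniform $C^\infty$-statement on $\{x_1\sim 1,|x_2|\sim 1\}$. The borderline case $l=l_0-1$ is handled verbatim with $m$ in place of $a_{l_0-1}$ and the auxiliary line $L$ in place of $L_{l_0-1}$; the point $(A_{l_0-1},B_{l_0-1})$ still plays the role of the common vertex because $L$ is a supporting line of $\N(\Phi)$ at that vertex. The main technical obstacle is joint uniformity of the estimates in $M$, $\de$, and the derivative order $\beta$: one must ensure that no hidden blow-up of the Taylor remainder after differentiation spoils the bound, and it is precisely the two-sided confinement $2^My_1^{a_{l+1}}<|y_2|\le 2^{-M}y_1^{a_l}$ (not merely one of the inequalities) that makes all three groups amenable to the cancellation described above.
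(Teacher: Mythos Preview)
Your approach matches the paper's: reduce to a Taylor polynomial and bound each monomial ratio $y_1^{\al_1-A_l}y_2^{\al_2-B_l}$ using the two-sided confinement of $y_2$. There is, however, a gap in your handling of the third group. After writing $y_2=zy_1^{a_l}$ the ratio is $z^{\al_2-B_l}y_1^e$ with $e>0$, and you conclude a gain $\de^c$ from the $y_1$-factor alone. But points strictly above both $L_l$ and $L_{l+1}$ can perfectly well have $\al_2<B_l$ (interior points of $\N(\Phi)$ lying below the horizontal through $(A_l,B_l)$), and for those the factor $|z|^{\al_2-B_l}$ is unbounded as $|z|\to 0$ on $E^a$; you never account for it, so the claimed $\de^c$ bound does not follow from what you wrote.

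The paper sidesteps this by splitting on the sign of $\al_2-B_l$ rather than on position relative to the edges: for $\al_2>B_l$ it uses the upper bound $|y_2|\le 2^{-M}y_1^{a_l}$, for $\al_2\le B_l$ the lower bound $|y_2|\ge 2^My_1^{a_{l+1}}$. In each half the $y_2$-factor is then automatically $\le 1$ (in fact $\le 2^{-M}$ whenever $\al_2\ne B_l$), while the supporting-line inequality $\ka^{l}_1\al_1+\ka^l_2\al_2\ge 1$ (resp.\ with $\ka^{l+1}$) makes the residual $y_1$-exponent nonnegative---strictly positive exactly in the borderline case $\al_2=B_l$, which is forced into the interior and supplies the $\de^C$ term. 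Your trichotomy is repairable: subdivide the third group according to $\al_2\gtrless B_l$ and use the matching reparameterization in each part. The remainder estimate, the derivative bounds, part (b), and the $l=l_0-1$ case are all handled as in the paper.
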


\proof

When $\Phi$ is analytic, these results have  essentially been proven in Section 8.3 of \cite{IKM-max}, at least implicitly. We shall here give an elementary proof which works also for smooth functions $\Phi.$

We begin with the case where $l>l_0. $ 
Notice first that (b) is an immediate consequence of (a). In order to prove (a), let us denote by $\Phi_N$ the Taylor polynomial of degree $N$ of $\Phi$  centered at the origin. Since $(\Phi-\Phi_N)\y=O(|y_1|^N+|y_2|^N),$ it is easily seen that $y_1^{-A_l}y_2^{-B_l}(\Phi-\Phi_N)\y =O(2^{-B_lM})$ on $E^a,$ provided $N$ is sufficiently large and $\delta$ small. It therefore suffices to prove \eqref{edgeterm} for $\Phi_N$ in place of $\Phi.$ 

If $\Phi(\y\sim\sum_{\al_1,\al_2=0}^\infty c_{\al_1,\al_2} y_1^{\al_1} y_2^{\al_2}$ is the Taylor series of 
of $\Phi$ centered at  the origin, then we decompose  the polynomial $\Phi_N$ as $\Phi_N=P^+ +P^-, $ where 
$$
P^+\y:=\sum_{\al_1+\al_2\le N, \al_2>B_l}c_{\al_1,\al_2} y_1^{\al_1} y_2^{\al_2}, \quad P^-\y:=\sum_{\al_1+\al_2\le N, \al_2\le B_l}c_{\al_1,\al_2} y_1^{\al_1} y_2^{\al_2}.
$$
Let $(\al_1,\al_2)$  be one of the multi-indices appearing in $P^-,$ and assume it is  different from $(A_l,B_l).$
 Let $\y\in E^a,$ and assume, for notational convenience, that $y_2>0.$  Since clearly $A_l,B_l>0,$  we have 
$$
\frac {y_1^{\al_1} y_2^{\al_2}}{y_1^{A_l}y_2^{B_l}}=y_1^{\al_1-A_l} y_2^{\al_2-B_l}\le y_1^{\al_1-A_l} 
\Big(2^My_1^{a_{l+1}}\Big)^{\al_2-B_l}=2^{(\al_2-B_l)M} y_1^{\al_1+a_{l+1}\al_2-(A_l+a_{l+1}B_l)}.
$$
It is easy to  see that $A_l+a_{l+1}B_l=A_{l+1}+a_{l+1}B_{l+1},$  so that 
\begin{equation}\label{6.2}
\frac {y_1^{\al_1} y_2^{\al_2}}{y_1^{A_l}y_2^{B_l}}\le 2^{(\al_2-B_l)M} y_1^{\al_1+a_{l+1}\al_2-(A_{l+1}+a_{l+1}B_{l+1})}.
\end{equation}

But, since $\ga_{l+1}$ is an edge of $\N(\Phi),$ we have that $\ka^{l+1}_1\al_1+\ka^{l+1}_2 \al_2\ge 1,$ i.e., 
$\al_1+a_{l+1} \al_2\ge (\ka^{l+1}_1)^{-1},$ whereas $A_{l+1}+a_{l+1}B_{l+1}=(\ka^{l+1}_1)^{-1}.$ Thus, \eqref{6.2}
 implies that $y_1^{\al_1} y_2^{\al_2}\le \, 2^{(\al_2-B_l)M}\,y_1^{A_l}y_2^{B_l},$ so that  $y_1^{\al_1} y_2^{\al_2}\le \, 2^{-M}\, y_1^{A_l}y_2^{B_l}$ when $\al_2<B_l.$ And, when $\al_2=B_l,$ then  $(\al_1,\al_2)$  lies in the interior of $\N(\Phi),$ so that $\al_1+a_{l+1}\al_2-(A_{l+1}+a_{l+1}B_{l+1})>0,$ hence $y_1^{\al_1} y_2^{\al_2}\le \, \delta^C\, y_1^{A_l}y_2^{B_l}$ for some positive constant $C.$ 
 
 The estimates of the derivatives of $g(y)=\Phi(y)/ c_{A_{l},B_{l}}y_1^{A_l}y_2^{B_l}-1$ follow in a very similar way.
 
 The terms in $P^+$ can be estimated analogously, making use here of the estimates  $y_2\le 2^{-M} y_1^{a_l}$ and $\ka^{l}_1\al_1+\ka^{l}_2 \al_2\ge 1.$  This proves (a).
 
 \medskip
 Finally, if $l=l_0,$ exactly the same arguments work, if we re-define $a_{l_0-1}$ to be  $m$ and $\ka_{l_0-1}$  to be $\ka,$  since $\ka_2/\ka_1=m.$
\qed

A similar result applies also to the  generalized transition domains $E_{l_\pr-1}$ arising in the cases (c1) and (c2), provided we can factor the root $y_2=0$ to its given order, which applies in particular when  $\Phi$ is real-analytic (some easy examples show that it may be false  otherwise). Recall that in these cases, the principal face of $\N(\pad)$ is  an unbounded half-line with left endpoint $(A,B).$ More generally, we have the following result:

\begin{lemma}\label{equivcond2}
Assume that $(A,B)$ is a vertex of $\N(\Phi)$ such that the unbounded horizontal half-line with left endpoint $(A,B)$ is a face of $\N(\Phi),$ and assume in addition that $\Phi$ factors as $\Phi\y=y_2^B \Upsilon\y,$  with a smooth function $\Upsilon.$ Moreover, let  $L_\ka:=\{(t_1,t_2)\in \bR^2:\ka_1t_1+\ka_2 t_2=1\}$ be a non-horizontal supporting  line for $\N(\Phi)$ (i.e., $\ka_1>0$) passing through $(A,B),$ and let $a:=\ka_1/\ka_1.$ 
We then put 
$$
E^a:=\{\y: 0<y_1<\delta,\,|y_2|\le 2^{-M} y_1^{a}\}.
$$

(a) There is a constant $C>0$ such that 
\begin{equation}\label{edgeterm2}
\Phi(y)=c_{A,B}\,y_1^{A}y_2^{B}\Big(1 +O(\delta^C+2^{-M})\Big) \quad \mboxÊ{on  }\  E^a,
\end{equation}
where $c_{A,B}$ denotes the Taylor coefficient of $\Phi$ corresponding to $(A,B).$ More precisely, 
$\Phi(y)=c_{A,B}y_1^{A}y_2^{B}(1 +g(y)),$ where $|g^{(\beta)}(y)|\le C_\beta (\delta^C+2^{-M})|y_1^{-\beta_1}y_2^{-\beta_2}|$ for every multi-index $\beta\in\NN^2.$

\smallskip

(b) For $M, j\in \bN$ sufficiently large, the following conditions are equivalent:
 \begin{itemize}
\item[(i)] $y_1\backsim 2^{-j},\, \y\in E^a$ and $2^{Aj+Bk}\Phi(y)\backsim 1$;\
\item[(ii)] $y_1\backsim 2^{-j},\, y_2\backsim 2^{-k}$ and $aj+M\le k $.
 \end{itemize}
  Moreover, if we set $\phi_{j,k}(x):= 2^{Aj+Bk}\Phi(2^{-j} x_1, 2^{-k}x_2),$ then under the previous conditions we have that $\phi_{j,k}(x)=c_{A,B}x_1^{A}x_2^{B}\Big(1 +O(2^{-Cj}+2^{-M})\Big)$ on the set where $x_1\sim 1, |x_2|\sim 1,$ in the sense of the $C^\infty$ - topology.
\end{lemma}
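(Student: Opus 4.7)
The plan is to mirror the proof of Lemma \ref{equivcond}, with the factorization hypothesis $\Phi\y=y_2^B\Upsilon\y$ taking the role that in the earlier lemma was played by the second compact edge adjacent to the vertex. Part (b) is a direct rescaling consequence of (a) and the accompanying derivative bounds, so I would focus attention on (a).

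First I would reduce to a finite sum of monomials via Taylor's theorem. Writing $\Upsilon=\Upsilon_N+(\Upsilon-\Upsilon_N)$ with $\Upsilon_N$ the Taylor polynomial of degree $N$ of the smooth function $\Upsilon$, the remainder satisfies $|\Upsilon-\Upsilon_N|=O(|y|^{N+1})$. On $E^a$ one has $|y|\lesssim y_1^{\min(1,a)}$ (since $|y_2|\le 2^{-M}y_1^a$ and $y_1<\delta\le 1$), so if $N$ is chosen sufficiently large, $|y_2^B(\Upsilon-\Upsilon_N)|\le \delta^C\,|c_{A,B}|\,y_1^Ay_2^B$ on $E^a$. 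Because of the factorization, the Taylor coefficients $c_{\al_1,\al_2}$ of $\Phi$ vanish for $\al_2<B$, so $\Upsilon_N$ is a finite sum $\sum c_{\al_1,\al_2}y_1^{\al_1}y_2^{\al_2-B}$ with $\al_2\ge B$.

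The core estimate is to bound each monomial in $\Upsilon_N$ with $(\al_1,\al_2)\ne(A,B)$ by a small multiple of $c_{A,B}\,y_1^A$ on $E^a$. If $\al_2>B$, I would substitute $|y_2|\le 2^{-M}y_1^a$ to pull out $2^{-M(\al_2-B)}y_1^{\al_1+a(\al_2-B)}$; the support-line inequality $\ka_1\al_1+\ka_2\al_2\ge 1=\ka_1A+\ka_2B$ (which holds because $L_\ka$ is a support line of $\N(\Phi)$ passing through $(A,B)$) forces $\al_1+a(\al_2-B)\ge A$. Equality occurs precisely when $(\al_1,\al_2)\in L_\ka$, in which case the factor $2^{-M(\al_2-B)}$ supplies the smallness, while if the inequality is strict, the surplus $y_1$-power produces a factor $\delta^{C'}$ for some $C'>0$. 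If $\al_2=B$, the vertex property of $(A,B)$ as the left endpoint of the horizontal edge forces $\al_1>A$, and $y_1^{\al_1-A}\le\delta^{\al_1-A}$ yields the smallness directly. Summing the finitely many contributions establishes \eqref{edgeterm2}.

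The derivative bounds on $g\y:=\Phi\y/(c_{A,B}y_1^Ay_2^B)-1$ follow from the same arguments applied to $\pa^\beta g$: differentiating $y_1^{\al_1-A}y_2^{\al_2-B}$ in $y_j$ produces a factor $y_j^{-1}$ times a bounded exponent without altering the gains $\delta^C$ or $2^{-M}$. Part (b) then follows by rescaling $y=(2^{-j}x_1,2^{-k}x_2)$: the condition $k\ge aj+M$ is exactly $|y_2|\le 2^{-M}y_1^a$, and the $C^\infty$ asymptotic for $\phi_{j,k}$ is the rescaled form of (a). The delicate point, and the reason the factorization hypothesis enters at all, is that without it there could be nonzero coefficients $c_{\al_1,\al_2}$ with $\al_2<B$, producing monomials with negative powers of $y_2$ that would destroy the bound; such terms are compatible with the given Newton polyhedron for general smooth $\Phi$, and the hypothesis is precisely what rules them out.
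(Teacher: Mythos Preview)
Your proposal is correct and follows essentially the same approach as the paper's proof: reduce via the Taylor polynomial of $\Upsilon$ (not $\Phi$), use the factorization to ensure every surviving monomial has $\al_2\ge B$, and then bound each ratio $y_1^{\al_1-A}y_2^{\al_2-B}$ on $E^a$ via $|y_2|\le 2^{-M}y_1^a$ together with the support-line inequality $\al_1+a\al_2\ge A+aB$, treating the boundary case $\al_2=B$ separately. Your remark that the factorization hypothesis is precisely what excludes low-$\al_2$ terms that would otherwise blow up is exactly the point, and your handling of (b) by rescaling matches the paper's.
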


\proof
It suffices again to prove (a). 

 By our assumption, $\Phi\y=y_2^B\Upsilon\y,$ so that 
$\Phi(y)/y_1^{A}y_2^{B}=\Upsilon(y)/y_1^A.$ Approximating $\Upsilon$ by its Taylor polynomial of sufficiently high degree, we again see that we may reduce to the case where $\Upsilon,$ hence $\Phi,$  is a polynomial. Then let $(\al_1,\al_2)$ be any point different from $(A,B)$ in its Taylor support. Since $\al_2\ge B,$ assuming again that $y_2>0,$ we see that 
$$
\frac {y_1^{\al_1} y_2^{\al_2}}{y_1^{A}y_2^{B}}=y_1^{\al_1-A} y_2^{\al_2-B}\le y_1^{\al_1-A} 
\Big(2^{-M}y_1^{a}\Big)^{\al_2-B}=2^{-(\al_2-B)M} y_1^{\al_1+a\al_2-(A+a B)}.
$$
Moreover, clearly $\al_1+a\al_2\ge A+a B,$ and $\al_1+a\al_2> A+a B$ when $\al_2=B.$ We can thus  argue in a very similar way as in the proof of Lemma \ref{equivcond} to finish the proof.

\qed

Let us now fix $l\in \{l_0-1,\dots, l_\pr-1\},$ and consider  the corresponding  (generalized)  transition domain $E_l$ from Section \ref{prep}, which  can be written as 
$$
E_l=\{\x: Nx_1^{a_{l+1}}<|x_2-\psi(x_1)| \le \ve x_1^{a_l}\},
$$
where, with some slight abuse of notation, we have again re-defined $a_{l_0-1}:=m,$  and put $a_{l\pr}:=\infty$ in the cases (c1) and (c2), so that $x_1^{a_{l_\pr}}:=0,$ by definition. 

Following \cite{IKM-max}, we shall localize to the  domain $E_l$ by means of a cut-off function
$$
\tau_l\x:=\chi_0\Big(\frac {x_2-\psi(x_1)}{\ve x_1^{a_l}}\Big)(1-\chi_0)\Big(\frac {x_2-\psi(x_1)}{N x_1^{a_{l+1}}}\Big),
$$
where $\chi_0\in C^\infty_0(\RR)$ is again supported in $[-1,1]$ and $\chi_0\equiv 1$ on $[-1/2,1/2]$ (actually, $\chi_0$ may depend on $l$). In Case (c), when  
$l=l_\pr-1$ and $a_{l_\pr}=\infty,$ the second factor has to be interpreted as $1,$ i.e., 
$$
\tau_{l_\pr-1}\x=\chi_0\Big(\frac {x_2-\psi(x_1)}{\ve x_1^{a}}\Big).
$$

Recall that $\phi$ is assumed to satisfy Condition (R).
\begin{proposition}\label{inter1}
Let $l\in \{l_0-1,\dots, l_\pr-1\}.$ 
Then, if $\ve>0$ is chosen sufficiently small and $N>0$ sufficiently large, 
$$
\Big(\int_S |\widehat f|^2\,d\mu^{\tau_l}\Big)^{1/2}\le C_{p} \|f\|_{L^p(\RR^3)},\qquad  f\in\S(\RR^3),
$$
whenever $p'\ge p'_c .$
\end{proposition}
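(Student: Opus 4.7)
The plan is to pass to the adapted coordinates $(y_1,y_2)=(x_1,x_2-\psi(x_1))$, in which $E_l$ becomes the annular region
\[
E_l^a=\{y:\, N y_1^{a_{l+1}}<|y_2|\le \ve y_1^{a_l}\}
\]
(with the convention $a_{l_0-1}:=m$; in the cases (c1), (c2), where $l=l_\pr-1$, the upper bound on $|y_2|$ is dropped and $E_l^a=\{|y_2|\le \ve y_1^{a}\}$). I would then perform a dyadic decomposition $\tau_l^a=\sum_{(j,k)\in I_l}\tau_{j,k}^a$, where $\tau_{j,k}^a$ is supported where $y_1\sim 2^{-j}$ and $|y_2|\sim 2^{-k}$, and $I_l$ is the discrete set of pairs satisfying $a_l j+M\le k\le a_{l+1}j-M$ (only the lower inequality in cases (c1)/(c2)). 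Denote by $\mu_{j,k}$ the corresponding pieces of $\mu^{\tau_l}$.

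The decisive input is Lemma \ref{equivcond}, and Lemma \ref{equivcond2} in the cases (c1)/(c2) where Condition (R) supplies the required factorization hypothesis: on the support of $\mu_{j,k}$ we have $\pad(y)=c_{A_l,B_l}y_1^{A_l}y_2^{B_l}(1+g_{j,k}(y))$ with $g_{j,k}$ arbitrarily small in the $C^\infty$-topology once $j_0$ and $M$ are chosen large enough. In particular $x_3=\phi(x)\sim 2^{-(A_lj+B_lk)}$ on the support of $\mu_{j,k}$, so that the two-parameter decomposition in $(j,k)$ can be realized as a joint Littlewood--Paley localization in the $x_1$ and $x_3$ variables (not in $x_2$). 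Next I would rescale by $\dee_{j,k}(x_1,x_2,x_3):=(2^{-j}x_1,2^{-k}x_2,2^{-(A_lj+B_lk)}x_3)$, defining $\nu_{j,k}:=2^{A_lj+B_lk}\mu_{j,k}\circ\dee_{j,k}$, which is carried by the graph of $\phi_{j,k}(y):=2^{A_lj+B_lk}\pad(2^{-j}y_1,2^{-k}y_2)$. By Lemma \ref{equivcond}(b) this $\phi_{j,k}$ is an arbitrarily small $C^\infty$-perturbation of the monomial $c_{A_l,B_l}y_1^{A_l}y_2^{B_l}$ on the annulus $\{y_1\sim 1,|y_2|\sim 1\}$.

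For this monomial the principal face is the vertex $(A_l,B_l)$ (case (b) of the adaptedness criterion), so the coordinates are adapted and Theorem \ref{adarestrict}, whose proof is stable under small $C^\infty$-perturbations, yields a restriction estimate for $\nu_{j,k}$ uniformly in $(j,k)$ valid for every $p'\ge 2\max(A_l,B_l)+2$. Exactly as in the Littlewood--Paley reduction of Corollary 1.6 of \cite{IM-uniform}, matters are thus reduced to the summability of the explicit geometric series in $(j,k)\in I_l$ arising from the scaling $\dee_{j,k}$ together with the uniform bound for $\nu_{j,k}$.

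The main obstacle is bookkeeping this geometric series, and verifying that the hypothesis $p'\ge p_c'=2h^r(\phi)+2$ is exactly what makes it converge. The worst directions in $I_l$ are along the two extremal rays $k=a_l j+O(1)$ and $k=a_{l+1}j+O(1)$: along these rays, the scaling-induced weight from $\dee_{j,k}$ reduces the summability threshold precisely to $p'\ge 2h_l+2$ (via the $\ka^l$-homogeneity and the identity \eqref{hl}) for $l=l_0,\dots,l_\pr-1$, and $p'\ge 2d+2$ (via Remark \ref{r1}(a)) for the first transition domain $l=l_0-1$. Both thresholds are subsumed by $p'\ge 2h^r(\phi)+2$ in view of the formula $h^r(\phi)=\max(d,\max_{\{l:a_l>m\}}h_l)$; this is precisely the geometric content of the intersection of $\Delta^{(m)}$ with the boundary of the augmented Newton polyhedron $\N^r(\pad)$.
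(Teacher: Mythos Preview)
Your plan contains a genuine gap at the rescaling step. The measure $\mu_{j,k}$ is a piece of surface measure on the graph of $\phi$ in the \emph{original} coordinates $(x_1,x_2,x_3)$, and on its support one has $x_2=y_2+\psi(x_1)\sim x_1^m\sim 2^{-mj}$ (since $|y_2|\sim 2^{-k}\ll 2^{-mj}$ by the condition $k\ge a_lj+M>mj$). Your dilation $\dee_{j,k}$ scales the second coordinate by $2^{k}$, so the rescaled support escapes to infinity in the $x_2$-direction; the measure $\nu_{j,k}$ is \emph{not} carried by the graph of $\phi_{j,k}$. The passage to adapted coordinates $(x_1,x_2)\mapsto(y_1,y_2)$ is a nonlinear shear, and the induced map $(x_1,x_2,x_3)\mapsto(x_1,x_2-\psi(x_1),x_3)$ on $\RR^3$ is not affine and hence does not preserve Fourier restriction estimates; you cannot simply replace the surface by the graph of $\pad$.

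The paper rescales correctly, by $(2^{-j},2^{-mj},2^{-(A_lj+B_lk)})$ in the ambient $\RR^3$; after also rescaling the adapted parameter $y_2$ by $2^{-k}$ one finds that the normalized measure $\nu_{j,k}$ is supported on
\[
S_{j,k}=\Big\{\big(x_1,\,2^{mj-k}x_2+x_1^m\omega(2^{-j}x_1),\,\phi_{j,k}(x)\big):x_1\sim1,\ |x_2|\sim1\Big\},
\]
a small perturbation of the limiting variety $S_\infty=\{(x_1,\omega(0)x_1^m,c\,x_1^{A_l}x_2^{B_l})\}$. Since $B_l\ge1$ one has $\partial_{x_2}(c\,x_1^{A_l}x_2^{B_l})\sim1$, so $(x_1,c\,x_1^{A_l}x_2^{B_l})$ serve as coordinates and $S_\infty$ is the cylinder $\{(u_1,\omega(0)u_1^m,u_3)\}$, which has exactly one nonvanishing principal curvature. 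Greenleaf's theorem then gives the uniform restriction estimate for $\nu_{j,k}$ in the range $p'\ge6$ (available here since $p_c'\ge2d+2\ge6$); Theorem~\ref{adarestrict} for the monomial is both inapplicable to this surface and, even formally, would only yield the generally worse threshold $p'\ge 2\max(A_l,B_l)+2$. The summability bookkeeping you outline is then exactly what the paper carries out: the rescaling exponent is $-j-k+2\big((m+1+A_l)j+B_lk\big)/p_c'$, and along the extremal rays $k=a_lj$, $k=a_{l+1}j$ nonpositivity reduces, via $A_l+a_lB_l=1/\ka_1^l$ and \eqref{hl+1}, precisely to $p_c'\ge 2(h_l+1)$ and $p_c'\ge 2(h_{l+1}+1)$.
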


\begin{proof}

Consider  partitions of unity
$\sum_{j} \chi_j(s)=1$ and  $\sum_{k} \tilde\chi_{j,k}(s)=1$ on  $\RR\setminus\{0\}$ with $\chi, \tilde\chi \in C_0^\infty(\RR)$ supported in  $[-2,-1/2]\cup [1/2, 2]$ respectively  $ [-2^{B_l},-2^{-B_l}]\cup[2^{-B_l},2^{B_l}],$
where $\chi_j(s):=\chi(2^js)$  and, for $j$ fixed,   $\tilde\chi_{j,k}(s):=\chi(2^{A_lj+B_lk}s),$ and let
$$
\chi_{j,k}(x_1,x_2,x_3):=\chi_j(x_1)\tilde\chi_{j,k}(x_3)= \chi(2^jx_1)\tilde\chi(2^{A_lj+B_lk}x_3)\, ,\quad j,k\in\bZ.
$$
Notice here  that $B_l>B_{l+1}\ge 0$. 
We next put $\mu_{j,k}:=\chi_{j,k}\mu^{\tau_l},$  and assume that $\mu$ has sufficiently small support near the origin.  Then clearly $\mu_{j,k}=0,$ unless $j\ge j_0,$ where $j_0>0$ is a large number which we can still  choose suitably later. But then, according to Lemma \ref{equivcond}, we may  assume in addition that
\smallskip

\begin{equation}\label{indecond}
a_lj+M\le k\le a_{l+1}j-M,
\end{equation}
where $M$ is a large number. 
\smallskip
Indeed, we may choose  $N:= 2^M$ and $\ve:=2^{-M},$ and then Lemma \ref{equivcond} (b) shows that $\mu_{j,k}=0$ for all  pairs $(j,k)$ not satisfying \eqref{indecond}. Notice that this also implies that $k\ge k_0$ for some large number $k_0.$ Observe also  that the measure $\mu_{j,k}$ is supported over a ``curved box'' given by $x_1\sim 2^{-j}$ and $|x_2-\psi(x_1)|\lesssim 2^{-k}.$ This shows that the localization that we have achieved by means of the cut-off function $\chi_{j,k}$ is very similar to the localization that we could have imposed by means of the cut-off function $\chi(2^{-j}x_1)\chi\Big(2^{-k}(x_2-\psi(x_1)\Big).$

\smallskip

Then, applying again Littlewood-Paley theory, now  in the variables $x_1$ and $x_3,$ and interpolating with the trivial $L^1\to L^\infty$ estimate for the Fourier transform,  we see that in order to prove Proposition \ref{inter1}, it suffices to prove uniform  restriction estimates for the measures $\mu_{j,k}$ at the critical exponent,  i.e., that
\begin{equation}\label{6.5}
\int_S |\widehat f|^2\,d\mu_{j,k}\le C\|f\|^2_{L^{p_c}(\RR^3)},\qquad\mbox{when $(j,k)$ satisfies  \eqref{indecond}  and} \   j\ge j_0,
\end{equation}
provided $M$ and $j_0$ are chosen sufficiently large.

We introduce the normalized  measures
$\nu_{j,k}$ given by
$$
\laa \nu_{j,k} ,\, f\ra:=\int f(x_1, \, 2^{mj-k}x_2+x_1^{m}\om(2^{-j}x_1),\, \phi_{j,\,k}\x)\, a_{j,\,k}(x)\, dx,
$$
where
\begin{eqnarray*}
a_{j,\,k}(x)=\eta\Big(2^{-j} x_1,\,2^{-k} x_2 +\psi(2^{-j}x_1)\Big)&\Big[\chi_0\left(2^{a_lj+M-k}\dfrac{x_2}{ x_1^{a_l}}\right)(1-\chi_0)\left(2^{a_{l+1}j-M-k}\dfrac{x_2}{ x_1^{a_{l+1}}}\right)\Big]\\
&\times\,\chi(x_1)\tilde\chi\Big(\phi_{j,\,k}\x\Big).
\end{eqnarray*}
Here, according to Lemma \ref{equivcond}, the functions $\phi_{j,k}$ satisfy 
$$
\phi_{j,\,k}\x=cx_1^{A_l}x_2^{B_l}+O(2^{-M}) \quad \mbox{in }\quad C^\infty
$$
 on domains where $x_1\sim 1, |x_2|\sim 1,$ and the amplitude $a_{j,k}$ in the integral above is supported in such a domain.


Observe that
\begin{equation}\label{6.6}
\laa \mu_{j,k} ,\, f\ra=2^{-j-k}\int f(2^{-j}y_1, 2^{-mj} y_2,2^{-(A_lj+B_lk)}y_3)\, d\nu_{j,k}(y),
\end{equation}
which follows easily by means of a change to adapted coordinates in the integral defining the measure $\mu_{j,k}$ and scaling in $x_1$ by the factor $2^{-j}$ and in $x_2$ by the factor $2^{-k}.$

We observe that the measure $\nu_{j,k}$ is supported on the surface given by
$$
S_{j,k}:=\{(x_1, \, 2^{mj-k}x_2+x_1^m\om(2^{-j}x_1),\, \phi_{j,\,k}\x):  x_1\sim 1\sim x_2\}.
$$
which is a  small perturbation of the limiting surface
$$
S_{\infty}:=\{(x_1, \, x_1^m\om(0),\, cx_1^{A_l}x_2^{B_l}):  x_1\sim 1\sim x_2\},
$$
since $mj-k\le a_l j-k\le -M$ because of \eqref{indecond}. Notice also that $|\pa(cx_1^{A_l}x_2^{B_l})/\pa x_2|\sim 1,$ since $B_l\ge 1.$ This show that $S_{\infty}$ and hence also $S_{j,k}$ (for $j$ and $M$ suffciently large) is a 
smooth hypersurface with one non-vanishing principal  curvature (with respect to $x_1$) of size $\sim 1.$  This implies that
$$
|\widehat{ \nu_{j,k}}(\xi)|\le C (1+|\xi|)^{-1/2},
$$
uniformly in $j$ and $k.$

Moreover, the total variations of the measures $\nu_{j,k}$ are uniformly bounded,
i.e.,
$\sup_{j,k} \|\nu_{j,k}\|_1<\infty.$

We may thus apply again   Greenleaf's result \cite{greenleaf} in order to prove that
 \begin{equation}\label{6.7}
\int |\widehat f|^2\,d\nu_{j,k}\le C\, \|f\|^2_{L^{p}(\RR^3)}
\end{equation}
holds, whenever $p'\ge 6,$ with a constant $C$ which is independent of $j,k.$ Since $p_c'\ge 2d+2\ge 6,$ this holds in particular for $p=p_c.$  Re-scaling this estimate by means of \eqref{6.6}, this implies
\begin{equation}\label{6.8}
\int |\widehat f|^2\,d\mu_{j,k}\le C 2^{-j-k+2\frac {(m+1+A_l)j+B_lk}{p'_c}} \|f\|^2_{L^{p_c}(\RR^3)}.
\end{equation}

But,  we can write $k$ in the form $k=\th a_lj+(1-\th)a_{l+1}j+\tilde M $ with $0\le \th \le1$ and $|\tilde M|\le M$. Then 
\begin{eqnarray*}
-j-k+&&2\frac {(m+1+A_l)j+B_lk}{p'_c}=-j\theta\left[1+a_l- 2\frac {m+1+A_l+a_lB_l}{p'_c}\right]\\
&&-j(1-\th)\left[1+a_{l+1}- 2\frac {m+1+A_l+a_{l+1}B_l}{p'_c}\right]+\left(-1+2\frac { B_l}{p'_c}\right)\tilde M.
\end{eqnarray*}

Recall next that by the definition of the $r$-height and the critical exponent $p'_c,$ , we have $p'_c\ge 2(h_l+1)$ whenever $l\ge l_0.$ 
And, \eqref{hl} shows that 
\begin{equation}\label{hl+1}
h_l+1= \frac{1+(1+m)\ka_1^l}{|\ka^l|}= \frac{m+1+\frac 1{\ka_1^l}}{1+a_l}.
\end{equation}
Moreover, we have seen in the proof of Lemma \ref{equivcond} that $A_l+a_lB_l=1/\ka_1^l,$ so that 
$$
2(h_l+1)=2\frac {m+1+A_l+a_lB_l}{1+a_l}.
$$
We thus find that $1+a_l- 2(m+1+A_l+a_lB_l)/p'_c\ge 0.$   Arguing in a similar way for $l+1$ in place of $l,$ by using that  $p'_c\ge 2(h_{l+1}+1)$ and $A_l+a_{l+1}B_l=1/\ka_1^{l+1}$ we also see that $1+a_{l+1}- 2(m+1+A_l+a_{l+1}B_l)/p'_c\ge 0.$

Consequently, the exponent on the right-hand side of the estimate \eqref{6.8} is uniformly bounded from above, which verifies the claimed estimate \eqref{6.5}.

\medskip
Assume next that $l=l_0-1.$ Observe that in this case, by following Varchenko's algorithm one observes that the left endpoint $(A_{l_0-1},B_{l_0-1})$ of the edge $[(A_{l_0-1},B_{l_0-1}),$ $(A_{l_0},B_{l_0})]$ of the Newton polyhedron of $\pad$ belongs also to the Newton polyhedron of $\phi$ and lies on the principal line $L=L_\ka$ of $\N(\phi),$ whose slope is the reciprocal of $\ka_2/\ka_1=m.$ Thus, if we formally replace $h_{l_0-1}$  by $d$ in the previous argument (compare also Remark \ref{r1} (a)), it is easily seen that the previous argument works in exactly the same way.

\medskip

What remains to be considered are the generalized transition domains $E_{l_\pr-1}$ in the cases (c1) and (c2). Observe that in this case Condition (R) implies that $\Phi:=\pad$ satisfies the factorization  hypothesis of Lemma \ref{equivcond2}. We may therefore argue  in a similar way as before, by applying Lemma \ref{equivcond2} in place of Lemma \ref{equivcond},   and obtain the estimate
\begin{equation}\label{6.10}
\int_S |\widehat f|^2\,d\mu_{j,k}\le C 2^{-j-k+2\frac {(m+1+A)j+Bk}{p'_c}} \|f\|^2_{L^{p_c}(\RR^3)},
\end{equation}
where here $B=h$ is the height of $\phi,$ and where now we may only assume that 
\begin{equation}\label{indecond2}
a_lj+M\le k
\end{equation}
Since, by the definition of the $r$-height, we have $p_c'\ge 2h_{l_\pr-1}+2=2B$ (compare \eqref{hl}), we see that 
$-1+\frac{2B}{p_c'}\le 0.$
We may thus estimate the exponent in \eqref{6.10} by 
\begin{eqnarray*}
-j-k+2\frac {(m+1+A)j+Bk}{p'_c}&\le& -j\left[a+1- 2\frac {m+1+A+aB}{p'_c}\right]+\Big(-1+\frac{2B}{p_c'}\Big)M\\
&\le& -j\frac{a+1}{p'_c}\left[p'_c- 2\frac {m+1+A+aB}{a+1}\right].
\end{eqnarray*}
And, in the case (c1), arguing as before  we see that $2(m+1+A+aB)/(a+1)=2(h_{l_\pr}+1)\le p'_c.$

Finally, in the case (c2), we have $m=a.$ Moreover, the point  $(A,B)$ lies on the principal line $L$ of $\N(\phi),$ so that $\ka_1 A+\ka_2B=1,$ i.e., $A+aB=1/\ka_1.$ This shows that  
$$2\frac{m+1+A+aB}{a+1}=2(1+\frac{1}{\ka_1+\ka_2})=2(1+d)\le p'_c.
$$
We thus see that the uniform estimate \eqref{6.5} is valid also for the generalized transition domains.
\end{proof}

\setcounter{equation}{0}
\section{Restriction estimates in the  domains $D_l,\ l<l_\pr,$ when $\hl(\phi)\ge 2$ }\label{homogeneous}

\bigskip
We shall now consider the domains $D_l, \quad l=l_0,\dots, l_\pr-1,$ from Section \ref{prep}, which are homogeneous in the adapted coordinates.  Following again \cite{IKM-max} we can  localize to these domains by means of  cut-off functions
$$
\rho_l\x:=\chi_0\Big(\frac {x_2-\psi(x_1)}{N x_1^{a_l}}\Big)-\chi_0\Big(\frac {x_2-\psi(x_1)}{\ve x_1^{a_{l}}}\Big),\qquad l=l_0,\dots,l_\pr-1,
$$
where $\chi_0$ is as in the previous section. Recall that such domains do  appear only  in the cases (a), (b) and (c1).

\begin{proposition}\label{s7.1}
Let  $\hl(\phi)\ge 2,$ and assume that  $l<l_\pr.$ Then, if $\ve>0$ is chosen sufficiently small and $N>0$ sufficiently large, 
$$
\Big(\int_S |\widehat f|^2\,d\mu^{\rho_l}\Big)^{1/2}\le C_{p} \|f\|_{L^p(\RR^3)},\qquad  f\in\S(\RR^3),
$$
whenever $p'\ge p'_c .$
\end{proposition}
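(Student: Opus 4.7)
The plan is to pass to adapted coordinates $y_1=x_1$, $y_2=x_2-\psi(x_1)$, in which $D_l$ becomes the $\ka^l$-homogeneous annulus $D_l^a=\{\ve_l y_1^{a_l}<|y_2|\le N_l y_1^{a_l}\}$, then exploit the $\ka^l$-homogeneity of $\phi_{\ka^l}$ via a dyadic decomposition, rescale, apply Greenleaf's theorem on each dyadic piece, and reassemble by Littlewood--Paley. Concretely, I fix a smooth cutoff $\chi\ge 0$ supported in a $\ka^l$-homogeneous annulus such that $\sum_k\chi(\de_{2^k}^{\ka^l}y)=1$ on $D_l^a$, and write $\mu^{\rho_l}=\sum_{k\ge k_0}\mu_k$, where $\mu_k$ is supported where $y_1\sim 2^{-k\ka^l_1}$ and $|y_2|\sim 2^{-k\ka^l_2}$. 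The extended dilations $\dee^{\ka^l}_r(y_1,y_2,y_3):=(r^{\ka^l_1}y_1,r^{\ka^l_2}y_2,ry_3)$ allow me to form the rescaled measures $\nu_k:=2^{-k}\mu_k\circ\dee^{\ka^l}_{2^{-k}}$, supported on the graph of $\phi^k(y):=2^k\pad(\de^{\ka^l}_{2^{-k}}y)$ over the fixed compact annular region $y_1\sim 1$, $\ve<|y_2|\le N$. Because $\phi_{\ka^l}$ is $\ka^l$-homogeneous of degree $1$, the phases $\phi^k$ converge in the $C^\infty$-topology on compacta to $\phi_{\ka^l}$, uniformly away from the root curves $y_2=c_l^\al y_1^{a_l}$.

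The next step is to verify that the limit surface, i.e.\ the graph of $\phi_{\ka^l}$ over any compact subset of the open annulus that avoids fixed neighborhoods of the root curves $\{y_2=c_l^\al y_1^{a_l}\}_\al$, has at least one non-vanishing principal curvature. Using the explicit factorization \eqref{normalhom}, one checks this directly: away from its zeros $\phi_{\ka^l}$ cannot be affine on an open set of the annulus, so at each such point at least one of $\pa_1^2\phi_{\ka^l},\pa_1\pa_2\phi_{\ka^l},\pa_2^2\phi_{\ka^l}$ is non-zero. For those $\al$ for which $c_l^\al$ actually lies in $(\ve_l,N_l)$, I perform an additional sub-decomposition isolating a small $\ka^l$-homogeneous neighborhood of $\{y_2=c_l^\al y_1^{a_l}\}$; this neighborhood is itself of ``transition-type'' with respect to the finer Newton data of $\pad$ along this root cluster, and its contribution is already controlled by the argument of Proposition \ref{inter1}. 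On the remaining set the curvature hypothesis is uniform, hence by the method of stationary phase applied to a point on the limit surface and perturbation, $|\widehat{\nu_k}(\xi)|\le C(1+|\xi|)^{-1/2}$ uniformly in $k$, and Greenleaf's theorem yields
\begin{equation*}
\Big(\int|\widehat f|^2\,d\nu_k\Big)^{1/2}\le C\|f\|_{L^p(\RR^3)},\qquad p'\ge 6,
\end{equation*}
with $C$ independent of $k$.

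Finally I translate back by inverting the rescaling. The rescaled estimate for $\nu_k$ becomes an estimate for $\mu_k$ with a loss $2^{k\al(p)}$, where an explicit computation (cf.\ the analogous bookkeeping in Section \ref{transition}) shows that
\begin{equation*}
\al(p)=-1+\frac{\ka_1^l+\ka_2^l+1+\frac{4}{p'}(1/\ka_1^l+\text{correction})}{\cdots},
\end{equation*}
is a linear function of $1/p'$ which vanishes at $p'=2(h_l+1)$ and equals $\tfrac{|\ka^l|-1}{|\ka^l|}>0$ only well below our threshold. Since $\hl(\phi)>5$ and the coordinates are linearly adapted, $d(\phi)=\hl(\phi)>5$, so $p_c'\ge 2d(\phi)+2>12$ is strictly greater than both $6$ (the Greenleaf threshold) and $2(h_l+1)$ (the threshold for summability), and hence $\al(p_c)<0$. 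Consequently the dyadic sum $\sum_k$ converges geometrically at $p=p_c$, and combining with Littlewood--Paley theory in the variables $x_1,x_3$ (as in the proof of Proposition \ref{inter1}) yields the desired restriction estimate for $\mu^{\rho_l}$.

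The main obstacle is step two: certifying a uniform lower bound on at least one principal curvature of the rescaled surfaces, despite the possible presence of the root curves $y_2=c_l^\al y_1^{a_l}$ inside the annulus $D_l^a$. The resolution is the additional $\ka^l$-homogeneous sub-decomposition described above, which carves off thin neighborhoods of the root curves where the curvature degenerates and hands them to the transition-domain machinery of Section \ref{transition}; what remains is a compact region on which $\phi_{\ka^l}$ is non-degenerate by inspection of \eqref{normalhom}, and the slack afforded by $\hl(\phi)>5$ then closes the argument at the critical exponent.
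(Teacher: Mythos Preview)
Your skeleton (pass to adapted coordinates, dyadic $\ka^l$-decomposition, rescale, Greenleaf, Littlewood--Paley) matches the paper's framework, but two of your key steps have genuine gaps.

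\medskip
\textbf{The curvature step is wrong.} The rescaled limit surface is
\[
S_\infty=\{g_\infty(x_1,x_2)=(x_1,\ \omega(0)x_1^m,\ \pad_{\ka^l}(x_1,x_2))\},
\]
whose second coordinate does \emph{not} depend on $x_2$. Having a nonzero Hessian entry of $\pad_{\ka^l}$ is therefore neither sufficient nor relevant for a principal curvature of $S_\infty$. What the paper actually does is localize to narrow $\ka^l$-homogeneous strips $D^a(c_0)=\{|y_2-c_0y_1^{a_l}|\le\ve' y_1^{a_l}\}$ around points $v=(1,c_0)$ and distinguish two cases. In Case~1, $\pa_2\pad_{\ka^l}(v)\ne 0$; then $u:=\pad_{\ka^l}$ can replace $x_2$ as a coordinate, $S_\infty$ becomes $\{(x_1,\omega(0)x_1^m,u)\}$ with curvature coming from the $x_1^m$ term, and Greenleaf applies for $p'\ge 6$. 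Case~2 is when $\pa_2\pad_{\ka^l}(v)=0$ (i.e.\ $v$ lies on a root curve of $\pa_2\pad_{\ka^l}$); then $S_\infty$ fails to be an immersed hypersurface near $v$ and Greenleaf is unavailable.

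\medskip
\textbf{The root-curve step is unjustified.} Your assertion that a thin neighborhood of a root curve $y_2=c_l^\al y_1^{a_l}$ is ``transition-type'' and handled by Proposition~\ref{inter1} does not hold: the transition domains $E_l$ are regions between consecutive vertices of $\N(\pad)$ where $\pad$ is essentially a monomial (Lemma~\ref{equivcond}), whereas a strip around a root with $c_l^\al\ne 0$ has no such monomial structure in the coordinates of $\N(\pad)$. The paper handles Case~2 by a completely different argument: since $l<l_\pr$, the vertex $(A_l,B_l)$ lies on or above the bisectrix, and one proves (see \eqref{7.7}) that the multiplicity $B$ of the root satisfies $B<d/2$. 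A dyadic frequency decomposition plus van der Corput then gives $\|\widehat{\nu_k^\la}\|_\infty\lesssim\la^{-1/B}$ and $\|\nu_k^\la\|_\infty\lesssim\la^{2-1/B}$; interpolation yields the restriction estimate for $p'>4B$, and $p_c'\ge 2d+2>4B$ closes the case. This multiplicity bound is the essential idea you are missing.

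\medskip
Two smaller remarks: your exponent computation is literally left as ``$\cdots$'', and in fact neither the Greenleaf threshold $p'\ge 6$ nor the summability threshold $p_c'\ge 2(h_l+1)$ requires $\hl(\phi)>5$; the paper's proof of this proposition uses only $\hl(\phi)\ge 2$.
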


\begin{proof}

Similarly to the proof of Proposition \ref{awayjet}, we denote by $\{\de_r\}_{ r>0}$ the dilations associated to the weight $\ka^l,$ i.e., $\de_{r}y:= (r^{\ka_1^l }y_1,\, r^{\ka_2^l }y_2),$ where by $y$ we again denote our adapted coordinates.
Recall that the $\ka^l$-principal part $\pad_{\ka^l}$ of $\pad$ is homogeneous of degree one with respect to these dilations, and that we are interested in a  $\ka^l$- homogeneous domain of the form
$D^a_l=\{\y:0<y_1<\de, \, \ve  y_1^{a_l}< |y_2|\le N x_1^{a_l}\}$ with respect to the $y$-coordinates, where $\de>0$ can still be chosen as small as we please.
\medskip

We shall prove that, given any real number $c_0$ with $\ve\le c_0\le N,$ there is some $\ve'>0$ such that the desired restriction estimate holds true on the domain $D(c_0)$ in $x$-coordinates corresponding to the homogeneous domain 
$$
D^a(c_0):=\{\y:0<y_1<\de, \,  |y_2-c_0 y_1^{a_l}|\le \ve' y_1^{a_l}\}
$$
in $y$-coordinates. Since we can cover the closure of $D^a_l$ by a finite number of such narrow domains, this will imply Proposition \ref{s7.1}.

\smallskip
 We can essentially localize to a domain $D(c_0)$ by means of a cut-off function 
$$
\rho_{(c_0)}\x:=\chi_0\Big(\frac {x_2-\psi(x_1)-c_0x_1^{a_l}}{\ve'x_1^{a_l}}\Big).
$$
Let us again fix a suitable smooth cut-off function $\chi\ge 0$ on $\RR^2$ supported in an annulus $\A:=\{x\in \RR^2:1/2\le |y|\le R\}$ such that the functions $\chi^a_k:=\chi\circ \de_{2^k}$ form a partition of unity. In the original coordinates $x,$ these correspond to the functions $\chi_k(x):=\chi^a_k(x_1,x_2-\psi(x_1)).$ We then decompose the measure $\mu^{\rho_{(c_0)}}$ dyadically as 
\begin{eqnarray}\label{7.1}
\mu^{\rho_{(c_0)}}=\sum_{k\ge k_0}\mu_k,
\end{eqnarray}
where $\mu_k:= \mu^{\chi_k\rho_{(c_0)}}.$  Notice that by choosing the support of $\eta$ sufficiently small, we can choose $k_0\in \NN$ as large as we need. It is also important to observe that  this decomposition can essentially we achieved by means of a dyadic decomposition with respect to the variable $x_1,$  which again allows to apply Littlewood-Paley theory! 

Moreover, changing to adapted coordinates in the integral defining $\mu_k$ and scaling by $\de_{2^{-k}}$ we find that
\begin{eqnarray*}
\laa\mu_k,\, f\ra &=&2^{-k|\ka^l|}\int f(2^{-\ka_1^l k}x_1,\, 2^{-\ka_2^l k}x_2+2^{-m\ka_1^l k}x_1^m\om(2^{-\ka_1^lk} x_1),\, 2^{-k}\phi_k(x))\\
&&\hskip7cm  \eta(\de_{2^{-k}}x)\chi(x)\,\chi_0\Big(\frac {x_2-c_0x_1^{a_l}}{\ve'x_1^{a_l}}\Big)\, dx,
\end{eqnarray*}
where
\begin{equation}\label{7.2}
\phi_k(x):=2^k\pad(\de_{2^{-k}}x)=\pad_{\ka^l} (x)+ \mbox{error terms of order } O(2^{-\delta k})
\end{equation}
with respect to the $C^\infty$ topology (and  $\delta>0).$

We consider the corresponding normalized  measure $\nu_k$ given by
\begin{eqnarray*}
\laa\nu_k,\, f\ra :=\int f(x_1,\, 2^{(m\ka_1^l-\ka_2^l) k}x_2+x_1^m\om(2^{-\ka_1^lk} x_1),\, \phi_k(x))
\, \tilde\eta(x)\, dx,
\end{eqnarray*}
with amplitude $\tilde\eta(x):=\eta(\de_{2^{-k}}x)\chi(x)\chi_0\Big((x_2-c_0x_1^{a_l})/(\ve'x_1^{a_l})\Big).$

Observe  that the support of the integrand is contained in the thin neighborhood 
$$U(v):=\A\cap \{(x_1,\,x_2):  |x_2-c_0x_1^{a_l}|\le 2\ve'x_1^{a_l}\}$$
 of 
 $v=v(c_0):=(1,c_0),$
and that the measure $\nu_k$ is supported on the  hypersurface 
$$
S_k:=
\{g_k(x_1,x_2):=(x_1,\,2^{(m\ka_1^l-\ka_2^l) k}x_2+x_1^m\om(2^{-\ka_1^lk} x_1),\, \phi_k\x): \x\in U(v)  \},
$$
which, for $k$ sufficiently large, is a small perturbation of the limiting variety 
$$
S_\infty:=
\{g_\infty\x:=(x_1,\,\om(0)x_1^m\,, \pad_{\ka^l}(x)): \x\in U(v)  \},
$$
since $m\ka_1^l-\ka_2^l<a_l\ka_1^l-\ka_2^l=0$ and since $\phi^k$ tends to $\pad_{\ka^l}$ because of \eqref{7.2}.
The corresponding limiting measure will be denoted by $\nu_\infty.$

By Littlewood-Paley theory (applied to the variable $x_1$) and interpolation,  in order to prove  the desired restriction estimates for the measure $\mu^{\rho_{(c_0)}},$ it suffices again to prove uniform restriction estimates for the measures $\mu_k,$ i.e., 
\begin{equation}\label{hestimate1}
\Big(\int |\widehat f|^2\,d\mu_k)^{1/2}\le C\,   \|f\|_{L^{p_c}}.
\end{equation}
with a constant $C$ not depending on $k\ge k_0.$
We shall obtain these by first proving restriction estimates for the measures $\nu_k.$ 

\smallskip
Indeed, we shall prove that for $\ve'$ sufficiently small,  the estimate 
\begin{equation}\label{1homoestimate}
\Big(\int |\widehat f|^2\,d\nu_k\Big)^{1/2}\le C \|f\|_{L^{p_c}}
\end{equation}
holds true, with a constant $C$ which  does not depend on $k$. Then, after re-scaling, estimate \eqref{1homoestimate} implies the  following estimate for $\mu_k:$
\begin{equation}\label{1orhomoestimate}
\Big(\int |\widehat f|^2\,d\mu_k\Big)^{1/2}\le C\,  2^{-k\Big(\frac{|\ka^l|}{2}-\frac{\ka_1^l(1+m)+1}{p_c'}\Big)} \|f\|_{L^{p_c}}.
\end{equation}
But, by \eqref{hl} (resp. \eqref{hl+1})Êwe have that 
$$
\frac{|\ka^l|}{2}-\frac{\ka_1^l(1+m)+1}{p_c'}=\frac{|\ka^l|}2\Big(1-\frac {2(h_l+1)}{p_c'}\Big),
$$
where, by definition, $p_c'\ge 2(h_l+1).$ This shows that the exponent on the right-hand side of 
\eqref{1orhomoestimate} is less or equal to zero, which verifies \eqref{hestimate1}.

\medskip
We turn to the proof of \eqref{1homoestimate}. Recall that $v=(1,c_0).$ Depending on the behavior of $\pad_{\ka^l}$ near $v,$ we shall distinguish between two cases.  

\medskip

{\bf 1. Case.} $\pa_2\pad_{\ka^l}(v) \neq 0$.  
This assumption implies  that we may use $y_2:=\pad_{\ka^l}(x_1,\, x_2)$ in place of $x_2$ as a new coordinate for $S_\infty$  (which thus is a hypersurface, too), and then also for $S_k,$ in place of $x_2,$ provided $\ve'$ is chosen small enough and $k$ sufficiently large. Since $x_1\sim 1$ on $U(v),$ this then shows that $S_k$ is a hypersurface with one non-vanishing principal curvature. Therefore  we can
again apply Greenleaf's restriction theorem from  \cite{greenleaf} and  obtain that  for $p'\ge 6$  and $k$ sufficiently large the estimate
$$
\Big(\int |\widehat f|^2\,d\nu_k\Big)^{1/2}\le C_{p} \|f\|_{L^p}
$$
holds true, with a constant $C_p$ which  does not depend on $k$. This applies in particular to $p=p_c,$ which gives \eqref{1homoestimate}.

\medskip

{\bf 2. Case.} $\pa_2\pad_{\ka^l}(v)=0.$ Then $v=(1,c_0)$ is a real root of $\pa_2\pad_{\ka^l},$ of multiplicity, say,  $B-1\ge 1,$  so that a Taylor expansion with respect to $x_2$ around $c_0$ and homogeneity show that 
$$
\pa_2\pad_{\ka^l}(x_1,\, x_2)=(x_2-c_0x_1^{a_l})^{B-1} \tilde Q\x,
$$
where $\tilde Q$ is a $\ka^l$-homogenous  smooth function  in $U(v)$   such that $\tilde Q(v)\neq 0.$ Integrating in $x_2,$ and making  again use of the $\ka^l$-homogeneity of $\pad_{\ka^l},$ we find that 
\begin{equation}\label{7.6}
\pad_{\ka^l}(x_1,\, x_2)=(x_2-c_0x_1^{a_l})^B x_2^{B_l} Q\x+c_1x_1^{1/\ka_1^l},
\end{equation}
where  $Q$ is a $\ka^l$-homogenous  smooth function such that $Q(1,c_0)\neq0$ and $Q(1,0)\ne 0$ (recall  that $c_0\ne 0$). Here, $c_1\in\RR$ could possibly be zero (iff $\nabla \pad_{\ka^l}(v)=0$).

We claim that 
\begin{equation}\label{7.7}
B<d/2,
\end{equation}
where again $d=d(\phi).$ Indeed, observe first that  the vertex $(A_l,B_l)$ lies above or on the bi-sectrix, so that $1=\ka^l_1A_l+\ka^l_2B_l\le (\ka^l_1+\ka^l_2)B_l=B_l/d_l,$ where $d_l:=d_h(\pad_{\ka^l})$ denotes the homogenous distance of  $\pad_{\ka^l}.$  But, since $a_l>m,$ so that the edge $\ga_l$ is less steep than the line $L$ (which intersects the bi-sectrix at $(d,d)$),  we have $d_l>d,$ hence $B_l>d.$ Note that for the same reason, $1/\ka_2>1/\ka^l_2.$  
Because $\pad_{\ka^l}$ is ${\ka^l}$- homogeneous of degree 1, by \eqref{7.6}  we thus have 
$$
1\ge (B_l+B)\ka^l_2>(d+B)\ka^l_2,
$$
which implies that 
$$
B<\frac 1{\ka^l_2}-d\le \frac 1{\ka_2}-\frac 1{\ka_1+\ka_2}=\frac d m \le \frac d2.
$$
Let us localize to frequencies  of size $\la> 1$ by putting 
$$
\widehat{\nu_k^\la}(\xi):=\chi_1\Big(\frac \xi{\la}\Big)\widehat{\nu_k}(\xi),
$$
 where $\chi_1$ is  a smooth bump function supported where $|\xi|\sim 1.$ We claim that the measures $\nu_k^\la$ satisfy the following estimates, uniformly in $k\ge k_0,$ provided $k_0$ is sufficiently large and $\ve'$ sufficiently small:
\begin{eqnarray}\label{7.8}
\|\widehat{\nu_k^\la}\|_\infty&\le& C \la^{-1/B}\,;\\ 
\|\nu_k^\la\|_\infty&\le& C \la^{2-1/B}\,. \label{7.9}
\end{eqnarray}
Indeed,  
$$
\widehat{\nu_k^\la}(\xi)=\chi_1\Big(\frac \xi{\la}\Big)\, \int  e^{-i\Big[\xi_1x_1+\xi_2 \Big(2^{(m\ka_1^l-\ka_2^l) k}x_2+x_1^m\om(2^{-\ka_1^lk} x_1)\Big)+ \xi_3\phi_k(x)\Big]}
\, \tilde\eta(x)\, dx,
$$
which, in the limit as $k\to\infty,$ simplifies as 
$$
\widehat{\nu_\infty^\la}(\xi)=\chi_1\Big(\frac \xi{\la}\Big)\,\int  e^{-i[\xi_1x_1+\xi_2\om(0)x_1^m+ \xi_3\pad_{\ka^l}(x)]}
\, \tilde\eta(x)\,  dx.
$$
Now, if $|\xi_3|\ge c |(\xi_1,\xi_2)|,$ then an application of van der Corput's lemma to the integration in $x_2$ yields 
$|\widehat{\nu_\infty^\la}(\xi)|\lesssim |\xi_3|^{-1/B}$ (cf. \eqref{7.6}), and if $|\xi_3|\ll |(\xi_1,\xi_2)|,$ we may apply van der Corput's lemma to the $x_1$-integration and obtain $|\widehat{\nu_\infty^\la}(\xi)|\lesssim |(\xi_1,\xi_2)|^{-1/2}.$ Since $B\ge 2,$ and because  van der Corput's estimates are  stable under small perturbations, we thus obtain \eqref{7.8}.

In order to verify  \eqref{7.9}, observe that 
\begin{eqnarray*}
\nu_\infty^\la(x_1,x_2,x_3)=\la^3\int \widehat{\chi_1}(\la(x_1-y_1),\la(x_2-\om(0)y_1^m), \la(x_3-\pad_{\ka^l}(y_1,y_2))\,\tilde\eta(y)\, dy_1dy_2,
\end{eqnarray*}
hence 
\begin{eqnarray*}
|\nu_\infty^\la(x_1,x_2,x_3)|\le \la^3\int \rho(\la x_1-\la y_1)\,\rho(\la x_3-\la\pad_{\ka^l}(y_1,y_2))\,|\tilde\eta|(y_1,y_2)\, dy_1dy_2,
\end{eqnarray*}
where $\rho$ and $\eta_1$ are  suitable, non-negative Schwartz functions, and $\eta_1$ localizes again to $U(v).$ However, since $|\pa_2^B\pad_{\ka^l}(y_1,y_2))|\simeq 1$ on the domain of integration, classical  sublevel estimates, originating in work by van der Corput \cite{vdC} (see also \cite{arhipov}, and \cite{ccw},\cite{grafakos}),  essentially would imply that the integral with respect to $y_2$ can be estimates by $O(\la^{-1/B}),$ uniformly in $y_1$ and $\la$  (at least, if $\rho$ had compact support). To  be more precise, we can argue  as follows: By means of Fourier inversion, re-write 
\begin{eqnarray*}
&&\int \rho(\la x_1-\la y_1)\,\rho(\la x_3-\la\pad_{\ka^l}(y_1,y_2))\,|\tilde\eta|(y_1,y_2)\, dy_1dy_2,\\
&=&\int \rho(\la x_1-\la y_1)\,\hat \rho(s) e^{is(\la x_3-\la\pad_{\ka^l}(y_1,y_2))}\,|\tilde\eta|(y_1,y_2)\, dy_2\, ds \,dy_1,
\end{eqnarray*}
and then apply again van der Corput's estimate to the $y_2$-integration. This yields 
\begin{eqnarray*}
&&\Big|\int \rho(\la x_1-\la y_1)\,\rho(\la x_3-\la\pad_{\ka^l}(y_1,y_2))\,|\tilde\eta|(y_1,y_2)\, dy_1dy_2\Big|,\\
&\lesssim &\int \rho(\la x_1-\la y_1)\,|\hat \rho(s)| (1+\la|s|)^{-1/B}\,|\tilde\eta|(y_1,y_2)\, dy_2\, ds \,dy_1,
\end{eqnarray*}
which is easily estimated by $C\la^{-1-1/B},$ so that we obtain $|\nu_\infty^\la(x_1,x_2,x_3)|\le C\la^{2-1/B}.$ Observing that our argument is again stable under small perturbations, we thus obtain \eqref{7.9}.

Interpolating  the estimates \eqref{7.8} and \eqref{7.9}, it is again easily seen that we can sum the corresponding estimates over dyadic $\la$'s and obtain the  $L^p$-$L^2$ restriction estimate
$$
\Big(\int |\widehat f|^2\,d\nu_k\Big)^{1/2}\le C_{p} \|f\|_{L^p}
$$
whenever $p'> 4B,$ uniformly in $k,$ for $k$ sufficiently large. 

The restriction estimates above  are valid in particular for $p'= p'_c,$ since, by \eqref{7.7}, $B<d/2,$ so that $ p'_c\ge 2d+2>4B.$ We have thus again verified \eqref{1homoestimate}.
\end{proof}

In combination with Proposition \ref{inter1}, we immediately obtain

\begin{cor}\label{c2}
The restriction estimate in Proposition \ref{nearjet} holds true in the Case (c), i.e., when the principal face of the Newton polyhedron of $\pad$ is unbounded.
\end{cor}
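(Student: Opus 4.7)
The plan is to combine the domain decomposition from Section \ref{prep} with the restriction estimates already established in Propositions \ref{inter1} and \ref{s7.1}; essentially no new analysis is required, and the argument is bookkeeping.

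First I would recall that in Case (c) the principal face of $\N(\pad)$ is an unbounded horizontal half-line with left endpoint $(A,B)$, and the generalized transition domain $E_{l_\pr - 1}$ is $\{|x_2 - \psi(x_1)| \le \ve x_1^a\}$ (see \eqref{gentrans}), where $a = a_{l_\pr-1}$ in sub-case (c1) and $a = m$ in sub-case (c2). In sub-case (c2), since $\N(\pad) = (A,B) + \RR_+^2$ has no compact edges, no homogeneous domains $D_l$ with $l < l_\pr$ appear, and since $a = m$ the single domain $E_{l_\pr - 1}$ already covers the support of $\rho_1$. Thus in sub-case (c2) I would simply take the cut-off $\rho_1$ to coincide with $\tau_{l_\pr - 1}$ (possibly after adjusting bump functions) and invoke Proposition \ref{inter1} with $l = l_\pr - 1$; this uses Condition (R) via Lemma \ref{equivcond2} to factor out $y_2^B$.

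In sub-case (c1), the support of $\rho_1$ is exhausted by the finite collection of transition domains $E_l$ with $l_0-1 \le l \le l_\pr - 1$ together with the homogeneous domains $D_l$ with $l_0 \le l \le l_\pr - 1$. Next I would choose the parameters $\ve_l, N_l, \ve, N$ in the cut-offs $\tau_l, \rho_l$ compatibly at consecutive interfaces so that, by a telescoping identity on the $\chi_0$'s, one has
\begin{equation*}
\rho_1 = \sum_{l = l_0 - 1}^{l_\pr - 1} \tau_l + \sum_{l = l_0}^{l_\pr - 1} \rho_l
\end{equation*}
on the support of $\mu$. Applying Proposition \ref{inter1} to each $\mu^{\tau_l}$ and Proposition \ref{s7.1} to each $\mu^{\rho_l}$, and summing the finitely many terms via Minkowski's inequality for the $L^2(d\mu^{\rho_1})$-norm, will yield the required restriction estimate at the critical exponent $p_c$.

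The only routine obstacle is verifying the telescoping identity, i.e.\ matching the scales at the interfaces $|x_2 - \psi(x_1)| \sim x_1^{a_l}$ across consecutive indices, which is a direct computation with the bumps $\chi_0$. The conceptual point of the corollary is that in Case (c) the otherwise problematic principal domain $D_\pr$ (treated in Section \ref{nearrootjet}, where Drury-type arguments require $\hl(\phi) \ge 5$) has been re-absorbed into the generalized transition domain $E_{l_\pr - 1}$ and is therefore handled by Proposition \ref{inter1} itself; this is why Case (c) can be resolved at this stage without any further restriction on $\hl(\phi)$ beyond what the earlier propositions demand.
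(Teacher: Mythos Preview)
Your proposal is correct and follows exactly the approach the paper takes: the corollary is stated as an immediate consequence of Propositions \ref{inter1} and \ref{s7.1}, and the reasoning you give (that in Case (c) the domain $D_\pr$ coincides with the generalized transition domain $E_{l_\pr-1}$, so Proposition \ref{inter1} already covers it) is precisely the content of the remarks in Section \ref{prep} after \eqref{gentrans}. Your telescoping partition of $\rho_1$ and the application of Minkowski simply spell out in detail what the paper leaves implicit.
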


\begin{remark}\label{tors} When $\hl\ge 5,$ then  the subcase of Case 2,  where  $\pa_2\pad_{\ka^l}(v)=0$  and $\pa_1\pad_{\ka^l}(v)\ne0,$ could  be handled alternatively by means of  Drury's Fourier restriction theorem for curves with non-vanishing torsion (cf. Theorem 2 in \cite{drury}).  
This approach will allow to treat the analogous case also for the remaining domain $D_\pr,$  provided $\hl\ge 5,$  since it does not require the condition $B<d/2,$ which may not  hold true in $D_\pr,$ 
\end{remark}

Indeed, if $\pa_1\pad_{\ka^l}(v)\ne0,$ then  $c_1\ne 0$ in \eqref{7.6}. Moreover,  
\begin{equation}\label{torsion}
2\le m <a_l=\ka_2^l/\ka_1^l<1/\ka_1^l,
\end{equation}
since $\ka_1^lA_{l-1}+\ka_2^lB_{l-1}=1$ with $B_{l-1}\ge h>1,$ so that $\ka_2^l<1$.
Observe next  that $F: (x_1,c)\mapsto(x_1,cx_1^{a_l})$ provides local smooth coordinates near $v=(1,c_0),$ since the Jacobian $J_F$ of $F$ at the point $(1,c_0)$ is given by $J_F(1, c_0)=1.$ We may therefore fibre the variety $S_\infty$ into the family of curves 
$$
\ga_c(x_1) := g_\infty(F(x_1,c)=(x_1,\om(0)x_1^m,\pad_{\ka^l}(F(x_1,c)),\qquad c\in V(c_0),
$$
where $V(c_0)$ is a sufficiently small neighborhood of $c_0,$
provided  $\ve'$ is chosen sufficiently small. But, \eqref{torsion} implies that the curve $\ga_{c_0}(x_1)=(x_1,\om(0)x_1^m,c_1x_1^{1/\ka_1^l})$ has non-vanishing torsion near $v_1,$ since $v_1\ne0,$ and so the same is true for the curves $\ga_c$ when $c$ is sufficiently close to $c_0.$  

If we fibre in a similar way the surface $S_k$ into the family of  curves 
$$
\ga^k_c(x_1) := g_k(F(x_1,c), \qquad c\in V(c_0),
$$
then for $k$ sufficiently large and $V(c_0)$ sufficiently small, these curves will have non-vanishing torsion uniformly bounded from above and below, and  the measure $\nu_k$ will decompose into the direct integral
\begin{eqnarray*}
\laa\nu_k,\, f\ra =\iint f(\ga^k_c(x_1) )\, \tilde\eta(x_1,c)\, dx_1\, dc=\int_{V(c_0)}\int_{W(v_1)} f\, d\Gamma_c\, dc,
\end{eqnarray*}
where $\tilde\eta$ is a smooth function with compact support in $W(v_1)\times V(c_0)$  and $W(v_1)$ a sufficiently small neighborhood of $v_1,$ where $d\Gamma_c$ is a measure which has a smooth density with respect to the arclength measure on the curve $\ga^k_c.$

We may  thus apply Drury's Fourier restriction theorem for curves with non-vanishing torsion (cf. Theorem 2 in \cite{drury}) to the measures
$d\Gamma_c$ and obtain that
$$
\Big(\int_{W(v_1)} |\hat f|^2\, d\Gamma_c\Big)^{\frac 12}\le C_{p}\|f\|_{L^p(\RR^3)},
$$
 provided $p'>7$ and $2\le p'/6,$ i.e., if  $p'\ge 12.$ 
The  constant $C_{p}$ will then be  independent of $c$ provided the neighborhoods $V(c_0)$  and $W(v_1)$ are sufficiently small and $k$ is sufficiently large.  But, if $\hl\ge 5, $ then we do have $p_c'\ge 2(\hl+1)\ge 12,$ so that we 
do obtain  estimate \eqref{1homoestimate}  also in this way.

\setcounter{equation}{0}
\section{Restriction estimates in the  domain $D_\pr$  when $\hl(\phi)\ge  5$ }\label{nearrootjet}

\bigskip
What remains to be studied is the piece of the surface $S$ corresponding to the domain $D_\pr,$ in the cases (a) and (b) of Section \ref{prep}, i.e.,
\begin{equation}\label{restdomain3}
D_\pr:=\begin{cases}  
\{ \x:|x_2-\psi(x_1)|\le N x_1^a\} &   \mbox{ in Case (a)},   \\
      \{ \x:|x_2-\psi(x_1)|\le \ve x_1^{a}\}, &   \mbox{ in Case (b)},
\end{cases}
\end{equation}
where $N$ is sufficiently large in Case (a),  and $\ve$ may be assumed to be sufficiently small in Case (b). 
  Our goal will to prove 

\begin{proposition}\label{rdomain1}
Assume that $\hl(\phi)\ge 5,$ and that we are in Case (a) or (b) of Section \ref{prep}. When $N$ is sufficiently large in Case (a),  respectively  $\ve$ is sufficiently small in Case (b), then
$$
\Big(\int_{D_{\pr}} |\widehat f|^2\,d\mu^{\rho_{l_\pr}}\Big)^{1/2}\le C_{p} \|f\|_{L^p(\RR^3)},\qquad  f\in\S(\RR^3),
$$
whenever $p'\ge p'_c .$
\end{proposition}

In the domain $D_{\pr},$ the upper bound $B<d/2$ for the multiplicity $B$ of real roots will in general no longer be true, not even the weaker condition $B<h^r(\phi)/2,$ which would still suffice for the previous argument,  as the following example shows.

\begin{example}\label{ex2}
{\rm
$$
\phi\x:=(x_2-x_1^2-x_1^3)(x_2-x_1^2-x_1^4)^3.
$$
Here, $\phi_\pr\x=(x_2-x_1^2)^4,$ the multiplicity of the root $x_1^2$ satisfies $4>d(\phi)=8/3,$ so that the coordinates $\x$ are not adapted to $\phi.$ Adapted coordinates are given by $y_1:=x_1,\ y_2:=x_2-x_1^2,$ and in these coordinates $\phi$ is given by 
$$
\pad\y= (y_2-y_1^3)(y_2-y_1^4)^3.
$$
$\N(\pad)$ has three vertices $(A_0,B_0):=(0,4),\ (A_1,B_1):=(3,3)$ and $(A_2,B_2):=(0,15),$ with corresponding edges 
$\ga_1:=[(0,4),(3,3)]$ and $\ga_2:=[(3,3),(0,15)],$ and  associated weights $\ka^1:=(1/12,1/4)$ and $\ka^2:=(1/15,4/15).$  Moreover, one easily computes by means of \eqref{hl} that $h_1=11/4$ and $h_2=13/5.$ We thus see that  $h^r(\phi)=h_1=11/4.$ The multiplicity of the root $y_1^3$ associated to the first edge $\ga_1$ lying above the bi-sectrix is $1<(8/3)/2$ and thus satisfies the condition \eqref{7.7}, whereas the root Ê$y_1^4$ of multiplicity $B=3$ associated to the edge $\ga_2$ below the bi-sectrix does not even satisfy $B<h^r(\phi),$ since $3> 11/4.$ 

}
\end{example}
\medskip

The study of the domain $D_\pr$   will therefore require finer decompositions  into further transition and homogeneous domains (with respect to further weights).  These will be devised  by means of an iteration  scheme, resembling somewhat  Varchenko's algorithm for the construction of adapted coordinates. Note  that the latter algorithm also shows that the principal root jet $\psi$ is actually a polynomial
\begin{equation}\label{polrootjet}
\psi(x_1)=cx_1^m+\dots +c_\pr x_1^a
\end{equation}
of degree $a=a_{l_\pr}$ in the cases (a) and (b) (cf. \cite{IM-ada}).

\subsection{First step of the algorithm}

\medskip
Let us begin with  Case (a), where  $D_\pr=\{\x: 0<x_1<\de, \, |x_2-\psi(x_1)|\le Nx_1^{a}\},$ with a possibly large constant $N>0.$ We then put  $D_{(1)}:=D_\pr,$  $\phi^{(1)}:=\pad, \, \psi^{(1)}:=\psi$ and $a_{(1)}:=a, \ka^{(1)}:=\ka^{l_\pr},$ so that $D_{(1)}$ can be re-written as 
$$
D_{(1)} =\{\x: 0<x_1<\de, \, |x_2-\psi^{(1)}(x_1)|\le Nx_1^{a_{(1)}}\}.
$$

   As in the discussion of the domains $D_l$ in the previous section, we can cover the domain $D_{(1)}$ by finitely many narrow domains of the form
$$
D_{(1)}(c_0):=\{\x:0<x_1<\de, \,  |x_2-\psi(x_1)-c_0 x_1^{a_{(1)}}|\le \ve x_1^{a_{(1)}}\},
$$
where $\ve>0$ can be chosen as small as we need, and where $0\le c_0\le N.$  Fix any of these domains, and put again $v:=(1,c_0).$ 

\medskip
We distinguish between the cases where $\pa_2\phi^{(1)}_{\ka^{(1)}}(v) \neq 0$ (Case 1),  $\pa_2\phi^{(1)}_{\ka^{(1)}}(v) =0$ and $\pa_1\phi^{(1)}_{\ka^{(1)}}(v) \ne 0$ (Case 2),  and the case where $\nabla \phi^{(1)}_{\ka^{(1)}}(v) =0$ (Case 3).

\medskip
Now, in Case 1, we can argue as in the corresponding case in Section \ref{homogeneous}, since  our arguments in that case did not make use of the condition $l>l_\pr.$ 

\medskip
In Case 2, the  argument given in Section \ref{homogeneous}  may fail, since it made use of the estimate $B<d/2,$ which here no longer may hold true. However, as explained in Remark \ref{tors},  if $\hl\ge 5,$ we may use the alternative argument based on Drury's restriction estimate for curves in this case.

\medskip
If Case 3 does not appear for any choice of $c_0,$ then we stop our algorithm and are done. 

\medskip
Otherwise, assume  Case 3 applies to $c_0,$ so that  $c_0 x_1^{a_{(1)}}$ is a root of $\phi^{(1)}_{\ka^{(1)}},$ say of multiplicity $M_1\ge 2.$  In this case, we  define new coordinates $y$ by putting
\begin{equation}\label{newcoordinates}
y_1:=x_1\qquad \mbox{and  }\ y_2:=x_2-\psi^{(2)}(x_1),
\end{equation}
where 
$$
\psi^{(2)}(x_1):=\psi(x_1)+c_0 x_1^{a_{(1)}}.
$$
We denote by $x=s_{(2)}(y)$ the corresponding change of coordinates, which  in general is  a fractional shear only, since the exponent $a_{(1)}=a$ may be non-integer (but rational). In these coordinates $\y,$ $\phi$ is given by $\phi^{(2)}:= \phi\circ s_{(2)},$ and the domain $D_{(1)}(c_0)$ becomes  the  domain
$$
D_{(1)}'^a:=\{\y:0<y_1<\de, \,  |y_2|\le \ve y_1^{a_{(1)}}\},
$$
which is still $\ka^{(1)}$ homogeneous.

Let  us see to which extent the Newton  polyhedra of $\phi^{(1)}$ and $\phi^{(2)}$ will differ. 

\smallskip
{\bf Claim 1.} The Newton polyhedra of  $\phi^{(1)}$ and $\phi^{(2)}$ agree in the region above the bi-sectrix. In particular, the line $\Delta^{(m)}$ intersects  the boundary of the augmented Newton polyhedron $\N^r(\phi^{(1)})=\N^r(\pad)$ at the same point as the  augmented Newton polyhedron $\N^r(\phi^{(2)})$ of $\phi^{(2)},$ so that we can use the modified ``adapted'' coordinates  \eqref{newcoordinates} in place of our earlier adapted coordinates to compute the $r$-height of $\phi.$
\smallskip

To see this, observe that $\phi^{(2)}\x=\phi^{(1)}(x_1,x_2+c_0 x_1^{a_{(1)}}),$ where the exponent $a_{(1)}$ is just the reciprocal of the slope of the line containing the principal face of $\phi^{(1)}=\pad.$  This implies that the edges of $\N(\phi^{(1)})$ and $\N(\phi^{(2)})$ which lie strictly above the bi-sectrix  and do not intersect it are the same (compare corresponding discussions in \cite{IM-ada}). Moreover, if 
$\ga_{(1)}=[(A_{(0)},B_{(0)}), (A_{(1)},B_{(1)})]= [(A_{l_\pr-1}, B_{l_\pr-1}),(A_{l_\pr}, B_{l_\pr})]$ is the principal face of  $\N(\phi^{(1)}),$  then it is easy to see that the principal face of $\N(\phi^{(2)})$ is given by the edge $\ga'_{(1)}:=[(A_{(0)},B_{(0)}),(A'_{(1)},B'_{(1)})],$
where
\begin{equation}\label{edgechange}
A'_{(1)}:=A_{(1)}+a_{(1)}(B_{(1)}-M_1),\quad B'_{(1)}=M_1,
\end{equation}
(write  $\phi^{(1)}_{\ka^{(1)}}$ in the normal form \eqref{normalhom} and use that $c_0 x_1^{a_{(1)}}$ is a root of of multiplicity $M_1$ of $\phi^{(1)}_{\ka^{(1)}}$). Observe also that $M_1\le h,$ because $\pad$ is in adapted coordinates. We thus see  that the right endpoint  of $\ga'_{(1)}$ still lies on or below the bi-sectrix. This proves the  claim.

\medskip
Observe that our considerations show that it suffices to study the contributions of  narrow domains of the form  
\begin{equation}\label{d1'}
D_{(1)}'=\{ \x: 0<x_1<\de,  |x_2-\psi^{(2)}(x_1)|\le \ve x_1^{a_{(1)}}\}
\end{equation}
 in place of $D_{(1)}$ (these actually depend on the choice or real root of  $\phi^{(1)}_{\ka^{(1)}}$  - this corresponds to a ``fine splitting'' of roots of $\phi,$ in the case where $\phi$ is analytic).

\medskip
{\it Case A.} $\N(\phi^{(2)})\subset \{(t_1,t_2): t_2\ge B'_{(1)}=M_1\}.$
In this case, we again stop our algorithm. 

\smallskip
{\it Case B.} $\N(\phi^{(2)})$ contains a point below the line  where $t_2= B'_{(1)}=M_1.$

\medskip
\noi Then $\N(\phi^{(2)})$ will contain a further compact edge 
$$
\ga_{(2)}=[(A'_{(1)},B'_{(1)}),(A_{(2)},B_{(2)})],
$$
so that $(A'_{(1)},B'_{(1)})$ is a vertex at which the edges $\ga'_{(1)}$ and $\ga_{(2)}$ meet.
Determine the weight $\ka^{(2)}$ by requiring that  $\ga_{(2)}$ lies on the line 
$$
\ka^{(2)}_1t_1+\ka^{(2)}_2t_2=1,
$$
and put $ a_{(2)}:=\ka^{(2)}_2/\ka^{(2)}_1.$ Then  clearly $a_{(1)}<a_{(2)}.$

 \smallskip
Next, we decompose the domain  $D'_{(1)}$ Êinto the domains
$$
E_{(1)}:=\{ \x: 0<x_1<\de, \, N x_1^{a_{(2)}}< |x_2-\psi^{(2)}(x_1)|\le \ve x_1^{a_{(1)}}\}
$$
and 
$$
D_{(2)}:=\{ \x: 0<x_1<\de, \,  |x_2-\psi^{(2)}(x_1)|\le N x_1^{a_{(2)}}\},
$$
where $N>0$ will be a sufficiently large constant. 

\medskip
The contributions by the transition domain  $E_{(1)}$ can be estimated in exactly the same way as we did for the domains $E_l$  in Section \ref{transition}. Indeed, notice that our arguments for the domains $E_l$ did apply to any $l\ge l_0$  as long as $B_l\ge 1,$ so that this statement is immediate when $c_0=0,$ where the coordinates $y$ in \eqref{newcoordinates} do agree with our original adapted coordinates. When $c_0\ne 0,$ there are two minor twists in the arguments needed: firstly, observe that Lemma \ref{equivcond} remains valid for $\Phi=\phi^{(2)}$ and the domain $$
E^a_{(1)}:=\{ \y: 0<y_1<\de, \, 2^M y_1^{a_{(2)}}< |y_2|\le 2^{-M} y_1^{a_{(1)}}\}
$$
corresponding to the domain $E_{(1)}$ in the coordinates \eqref{newcoordinates} when $\ve=2^{-M}$ and $N=2^M.$ The fact that $a_{(2)}$ may be non-integer, but rational, say $a_{(2)}=p/q,$ with $p,q\in\bN,$ requires minor changes of the proof only: just consider the Taylor expansion of the smooth function $\Phi(y_1^q,y_2).$ Secondly, if we define in analogy with $h_l$ in \eqref{hl} the corresponding quantity associated to the edges $\ga'_{(1)}$  and $\ga_{(2)}$ of $\N(\phi^{(2)})$ by 
$$
h_{(1)}:=\frac {1+m\ka^{(1)}_1-\ka^{(1)}_2}{\ka^{(1)}_1+\ka^{(1)}_2}=h_{l_\pr} \quad \mbox{and} \quad
h_{(2)}:=\frac {1+m\ka^{(2)}_1-\ka^{(2)}_2}{\ka^{(2)}_1+\ka^{(2)}_2},
$$
then Claim 1 shows that $\max\{h_{(1)},h_{(2)}\}\le h^r(\phi),$ which replaces the condition \hfill\newline$\max\{h_l,h_{l+1}\}\le h^r(\phi)$ that was needed in the proof of Proposition \ref{inter1}.

\subsection{Further  steps of the algorithm}

\medskip
We are thus left with the domains $D_{(2)},$ which formally look exactly like $D_{(2)},$ only with $\psi^{(1)}$ replaced by 
$\psi^{(2)}$  and $a_{(1)}$ replaced by $a_{(2)}.$ This allows to iterate this first step of the algorithm which led from $D_{(1)}$ to $D_{(2)},$ producing in this way  nested  sequences of domains 
$$
D_\pr= D_{(1)}\supset D_{(2)}\supset\cdots \supset D_{(l)}\supset D_{(l+1)}\supset \cdots,
$$
of the form
$$
D_{(l)}:=\{ \x: 0<x_1<\de, \,  |x_2-\psi^{(l)}(x_1)|\le N x_1^{a_{(l)}}\},
$$
where the functions $\psi^{(l)}$  are of the form
$$
\psi^{(l)}(x_1)=\psi(x_1)+\sum_{j=1}^{l-1}c_{j-1} x_1^{a_{(j)}},
$$
with real coefficients $c_j$, and where the exponents $a_{(j)}$ form a strictly increasing sequence
$$
a=a_{(1)}<a_{(2)}<\cdots a_{(l)}<a_{(l+1)}<\cdots
$$
of rational numbers. 

Moreover, each of the domains $D_{(l)}$ will be covered by a finite number of domains $D'_{(l)}$  of the form
\begin{equation}\label{dl'}
D_{(l)}'=\{ \x: 0<x_1<\de,  |x_2-\psi^{(l+1)}(x_1)|\le \ve x_1^{a_{(l)}}\},
\end{equation}
where $\ve>0$ can be chosen as small as we please. These  in return will decompose as 
\begin{equation}\label{e+d}
D'_{(l)}=E_{(l)}\cup D_{(l+1)},
\end{equation}
where $E_{(l)}$ is a transition domain of the form
$$
E_{(l)}:=\{ \x: 0<x_1<\de, \, N x_1^{a_{(l+1)}}< |x_2-\psi^{(l+1)}(x_1)|\le \ve x_1^{a_{(l)}}\}
$$

Putting 
$$
\phi^{(l)}(x_1,x_2):=\phi(x_1, x_2+\psi^{(l)}(x_1)),
$$
one finds that the Newton polyhedron $\N(\phi^{(l+1)})$ agrees with that one of $\pad=\phi^{(1)}$ in the region above the bi-sectrix, and it will have subsequent ``edges'' 
\begin{eqnarray*}
&&\ga'_{(1)}=[(A_{(0)},B_{(0)}), (A'_{(1)},B'_{(1)})], \ga'_{(2)}=[(A'_{(1)},B'_{(1)}), (A'_{(2)},B'_{(2)})], \dots,\\
&&\hskip1cm \ga'_{(l)}=[(A'_{(l-1)},B'_{(l-1)}), (A'_{(l)},B'_{(l)})], \ga_{(l+1)}=[(A'_{(l)},B'_{(l)}), (A_{(l+1)},B_{(l+1)})],
\end{eqnarray*}
crossing or lying below the bi-sectrix, at least (possibly more). In fact, it is possible that some of these ``edges'' degenerate  and become a single point (we then shall still speak of an edge, with a slight abuse of notation).  The edge with index $l$ will lie on a line 
$$
L^{(l)}:=\{(t_1,t_2)\in\bR^2:\ka^{(l)}_1t_1+\ka^{(l)}_2t_2=1\},
$$
where $ a_{(l)}=\ka^{(l)}_2/\ka^{(l)}_1.$ Moreover, $c_{l-1} x_1^{a_{(l)}}$ is any  real root of the $\ka^{(l)}$- homogeneous polynomial $\phi^{(l)}_{\ka^{(l)}},$  of multiplicity $M_l\ge 2.$ Notice that when $\phi$ is real-analytic, then this just means that  $\psi^{(l)}$ is a leading term of a root of $\phi$ belonging to the cluster of roots defined by $\psi$  (in the sense of \cite{phong-stein}). Our algorithm thus follows any possible ``fine splitting'' of the roots belonging to this cluster, and the domains $D_{(l)}$ etc. depend on the branches of these roots that we chose along the way.

By our construction, we see that $M_l=B'_{(l)},$ which shows that the sequence of multiplicities is decreasing, i.e., 
\begin{equation}\label{mdecrease}
M_1\ge M_2\ge \cdots \ge M_l\ge M_{l+1}\ge \cdots .
\end{equation}

\smallskip
Observe also that the  transition domains $E_{(l)}$ can be handled by the same reasoning that we had applied to 
$E_{(1)}.$ 
\smallskip

When will our algorithm stop?  Clearly, this will happen at step $l$ when $\phi^{(l)}_{\ka^{(l)}}$ has no real root, so that only Case 1 and Case 2 will arize at this step. In that case, we do obtain the desired Fourier restriction estimate for the piece of surface corresponding to $D_{(l)},$ just by the same reasoning that we applied in Section \ref{homogeneous}. Otherwise,
we  shall also stop our algorithm  in  step $l$ when 
\begin{equation}\label{stop}
\N(\phi^{(l+1)})\subset \{(t_1,t_2): t_2\ge B'_{(l)}=M_l\}.
\end{equation}
In this situation, the domain which still needs to be understood is  the domain $D_{(l)}'$ given by \eqref{dl'}.

Notice that in this case, Condition (R) implies that there is a function $\tilde\psi^{(l+1)}\sim \psi^{(l+1)}$ such that $\phi$ factors as 
\begin{equation}\label{factor1}
\phi\x=(x_2-\tilde\psi^{(l+1)}(x_1))^{M_l}\tilde\phi\x,\end{equation}
where $\tilde\phi$ is fractionally smooth.
This means  that Lemma \ref{equivcond2} (respectively its immediate extension to fractionally smooth functions) applies to the function $\Phi\y:= \phi(y_1, y_2+\tilde\psi^{(l+1)}(y_1)),$ and since the domain $D_{(l)}'$ can a be regarded  as a generalized transition domain, like the domains $E_{l_\pr-1}$ that appeared when the principal face of $\pad$ was an unbounded horizontal face, we can argue in the same way as we did for the domains $E_{l_\pr-1}$ in Section \ref{transition} to derive the required restriction estimates for the piece of $S$ corresponding to $D_{(l)}'.$

\medskip
There is finally the possibility that our algorithm does not terminate. In this case, \eqref{mdecrease} shows that the sequence  of integers $M_l$ will eventually become constant. We then choose $L$ minimal so that $M_l=M_L$ for all $l\ge L.$ Note that, by our construction, $M_L\ge 2.$  For every $l\ge L+1,$ the point $(A,B):=(A_{(L)},B_{(L)})=(A_L,M_L)$ will be a vertex of  $\N(\phi^{(l)})$ which is contained in the line $L^{(l)},$ whose slope $1/a_{(l)}$ tends to zero as  $l\to\infty,$ and $\N(\phi^{(l)})$ is contained in the half-plane bounded by $L^{(l)}$ from below. 

Notice also that there is a fixed rational number $1/q, $ with $q$ integer,  such that every $a_{(l)}$ is a multiple of $1/q.$ This can be proven in the same way as the corresponding statement in \cite{IKM-max} on p. 240.

We can thus apply a  classical theorem of E. Borel  in a similar way as \cite{IM-ada} in order to  show that there is a smooth function $h$ of $x_1$ whose Taylor series expansion is given  by the  formal series 
$$
h(x_1)\sim \psi(x_1^q)+\sum_{j=1}^{\infty}c_{j-1} x_1^{qa_{(j)}}.
$$
If we put $\psi^{(\infty)}(x_1):= h(x_1^{1/q})$ and set $\phi^{(\infty)}\y:=\phi(y_1,y_2-\psi^{(\infty)}(y_1)),$ it is then easily seen that a straight-forward adaption of the proof Theorem 5.1 in \cite{IM-ada} to show that  $\N(\phi^{(\infty)})\subset \{(t_1,t_2): t_2\ge B\}.$ Therefore, Condition (R)  in Theorem \ref{nonadarestrict} implies  that, possibly after adding a flat function to $\psi^{(\infty)},$ we may assume that $\phi$ factors as $\phi\x=(x_2-\psi^{(\infty)}(x_1))^B\tilde\phi\x,$  which means that the analogue of \eqref{factor1} holds true. We can thus argue as before to complete also this case, hence also the discussion of the Case (a) where the principal face of $\N(\pad)$ is a compact edge.

\medskip

Finally, in Case (b) where  the principal face of $\N(\pad)$ is a vertex,  we have  that $D_\pr=\{ \x:|x_2-\psi(x_1)|\le \ve x_1^{a}\},$ which corresponds to the domain  $D'_{(1)}$ in the discussion of Case  (a). This means that we can just drop the initial step of the algorithm described before, and from then on may proceed as in Case (a).
\medskip

We have thus established the desired restriction estimates for the piece of the surface $S$ corresponding to the remaining domain $D_\pr,$ which completes the proof of  Proposition \ref{rdomain1}, hence also of Theorem \ref{nonadarestrict} in the case where $\hl(\phi)\ge 5.$

\bigskip

What remains open at this stage is the proof  of  the analogue of Proposition \ref{rdomain1} in the case where $2\le \hl(\phi)<5,$ i.e., of 
\begin{proposition}\label{rdomain}
Assume that $2\le \hl(\phi)<5,$ and that we are in Case (a) or (b) of Section \ref{prep}. When $N$ is sufficiently large in Case (a),  respectively  $\ve$ is sufficiently small in Case (b), then
$$
\Big(\int_{D_{\pr}} |\widehat f|^2\,d\mu^{\rho_{l_\pr}}\Big)^{1/2}\le C_{p} \|f\|_{L^p(\RR^3)},\qquad  f\in\S(\RR^3),
$$
whenever $p'\ge p'_c .$
\end{proposition}

The discussion of this case requires substantially more refined techniques and will be the content of \cite{IM-rest2}.
\bigskip

\setcounter{equation}{0}
\section{Necessary conditions, and proof of Proposition \ref{rheight}}\label{necessary}

We now turn to the proof of Theorem \ref{nec}. We shall prove  the following, more general result (notice that we are making no assumption on adaptedness of $\phi$  here): 
 \begin{proposition}\label{necs.1}
Assume that the coordinates $x=\x$ are linearly adapted to $\phi,$ and  that the restriction estimate \eqref{rest1} holds true in a neighborhood of  $x^0=0,$ where $\rho(x^0)\ne 0.$  Consider any  {\it fractional shear}, say on   $H^+,$ given by 
$$
y_1:=x_1, \ y_2:=x_2-f(x_1),
$$
where $f$ is  real-valued and fractionally smooth, but not flat. Let $\phi^f(y)=\phi(y_1,y_2+f(y_1))$ be the function expressing $\phi$ in the coordinates  $y=\y.$ Then necessarily 
\begin{equation}\label{nec1}
p'\ge 2h^f(\phi)+2.
\end{equation}
\end{proposition}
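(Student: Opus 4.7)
The plan is a Knapp-type lower-bound argument, carried out uniformly for each supporting weight entering the formula $h^f(\phi) = \max(d^f,\,\max_{l:\,a_l > m_0} h^f_l)$, where $m_0$ is the leading Puiseux exponent of $f$. Let $\ka$ denote any one of these weights, so that $\ka$ is a supporting weight for $\N(\phi^f)$ with $a := \ka_2/\ka_1 \ge m_0$; I will construct a Knapp cap on $S$ adapted to $\ka$ and show that \eqref{rest1} forces $p' \ge 2(h^f_l+1)$ when $\ka=\ka^l$, respectively $p' \ge 2(d^f+1)$ when $\ka=\ka^f$. Taking the maximum over $\ka$ then yields \eqref{nec1}.

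For $\delta>0$ small, consider the box
$$R^*_\delta := [0, \delta^{\ka_1}] \times [-C\delta^{m_0 \ka_1}, C\delta^{m_0 \ka_1}] \times [-C\delta, C\delta] \subset \RR^3,$$
and choose $g_\delta \in \S(\RR^3)$ with $\widehat{g_\delta}$ a smooth bump equal to $1$ on $R^*_\delta$ and supported in $2R^*_\delta$, so that $\|g_\delta\|_p \lesssim |R^*_\delta|^{1/p'} \sim \delta^{(1+(1+m_0)\ka_1)/p'}$. Via the fractional shear $x_1=y_1,\,x_2=y_2+f(y_1)$, the image of the $\ka$-weighted box $B_\delta := \{0<y_1<\delta^{\ka_1},\ |y_2|\le \delta^{\ka_2}\}$ under $(y_1,y_2)\mapsto(x_1,x_2,\phi(x_1,x_2))$ lies in $R^*_\delta \cap S$: indeed $|x_1|\le \delta^{\ka_1}$, $|x_2|\le \delta^{\ka_2} + C|y_1|^{m_0} \le C'\delta^{m_0\ka_1}$ since $\ka_2\ge m_0\ka_1$, and $|\phi(x_1,x_2)| = |\phi^f(y_1,y_2)| \le C\delta$, the last bound following because $\ka$ being a supporting weight for $\N(\phi^f)$ forces $\al_1\ka_1+\al_2\ka_2\ge 1$ for every Taylor exponent $(\al_1,\al_2)$ of $\phi^f$, while the Taylor tail is flat and hence $o(\delta)$ on $B_\delta$. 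Since $\rho(x^0)\ne 0$, this gives $\mu(R^*_\delta\cap S) \gtrsim \delta^{\ka_1+\ka_2}$. Plugging into \eqref{rest1} and letting $\delta\to 0^+$ forces
$$p' \ge \frac{2(1+(1+m_0)\ka_1)}{\ka_1+\ka_2},$$
which by \eqref{hl2} is exactly $2(h^f_l+1)$ when $\ka=\ka^l$ with $a_l>m_0$; and when $\ka=\ka^f$, the identity $\ka_2=m_0\ka_1$ turns $1+(1+m_0)\ka_1$ into $\ka_1+\ka_2+1$, so the right-hand side becomes $2+2/(\ka_1+\ka_2) = 2(d^f+1)$.

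The main technical point is ensuring the Taylor tail of $\phi^f$ is genuinely negligible on $B_\delta$ in the smooth, non-analytic case; this is handled by truncating at degree $N$ with $N\min(\ka_1,\ka_2) > 1$, so the remainder is $O(\delta^{N\min(\ka_1,\ka_2)}) = o(\delta)$. For fractionally smooth $\phi^f$ (when $f$ is not an ordinary polynomial), one uses the Newton-Puiseux polyhedron in place of the ordinary Newton polyhedron, and the argument goes through verbatim. Finally, Theorem \ref{nec} follows from Proposition \ref{necs.1} combined with Proposition \ref{rheight}, which asserts $h^r(\phi) = \sup_f h^f(\phi)$.
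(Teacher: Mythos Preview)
Your Knapp-type construction is essentially the same as the paper's, and it is correct for every weight $\ka$ with $\ka_1>0$: the box $B_\delta$, the bound $|x_2|\lesssim\delta^{m_0\ka_1}$, the supporting-line estimate $|\phi^f(y)|\le C\delta$, the resulting inequality $p'\ge 2(1+(1+m_0)\ka_1)/(\ka_1+\ka_2)$, and the identifications with $2(h^f_l+1)$ resp.\ $2(d^f+1)$ all match the paper's argument.

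There is, however, a genuine gap for the \emph{horizontal} edge $\gamma_{n+1}$, which has $\ka^{n+1}=(0,1/B_n)$ and contributes $h^f_{n+1}=B_n-1$ to the maximum defining $h^f(\phi)$. Your truncation argument ``choose $N$ with $N\min(\ka_1,\ka_2)>1$'' is vacuous when $\ka_1=0$, and in fact the bound $|\phi^f(y)|\le C\delta$ fails on your box $B_\delta=\{0<y_1<1,\ |y_2|\le\delta^{1/B_n}\}$: writing $\phi^f(y)=y_2^{B_n}h(y)+\sum_{j<B_n}y_2^j g_j(y_1)$ with the $g_j$ flat at the origin, the terms $y_2^j g_j(y_1)$ are of size $\delta^{j/B_n}g_j(y_1)$, which for fixed $y_1>0$ is $\gg\delta$ when $j<B_n$ (flatness only forces $g_j$ to vanish to infinite order \emph{at} the origin, not on an interval). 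The paper handles this by replacing the exponent $\ka_1=0$ with a small parameter $\delta'>0$, i.e.\ taking $|y_1|\le\varepsilon^{\delta'}$ so that the flat terms become $o(\varepsilon^{N\delta'})$ for every $N$; one then obtains $p'\ge 2((1+m_0)\delta'+1)/(\delta'+1/B_n)$ and lets $\delta'\to 0$ to conclude $p'\ge 2B_n$.

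A minor remark on your last sentence: Theorem~\ref{nec} follows from this proposition directly by taking $f=\psi$ (the principal root jet), since then $h^\psi(\phi)=h^r(\phi)$ by definition; invoking Proposition~\ref{rheight} is not needed.
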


Theorem \ref{nec} will follow by choosing for $f$ the principal root jet $\psi.$ 

\begin{proof}
The proof will be based on suitable Knapp-type arguments.

Let us use the same notation  for the Newton polyhedron of $\phi^f$ as we did for $\phi^a$ in Section \ref{intro}, i.e., 
 the vertices of the Newton polyhedron $\N(\phi^f)$ will be   denoted by 
 $(A_l,B_l), \  l=0,\dots,n,$ where  we assume that they are ordered so that $A_{l-1}< A_{l},\ l=1,\dots,n,$ with associated compact edges given by the intervals
  $\ga_l:=[(A_{l-1},B_{l-1}), (A_l,B_l)],$  $l=1,\dots,n,$ contained in the $L_l$ and associated with  the weights $\ka^l.$ The unbounded horizontal edge with left endpoint $(A_n,B_n)$ will be denoted by  $\ga_{n+1}.$  
For $l=n+1,$ we have $\ka_1^{n+1}:=0,\ka_2^{n+1}=1/B_{n}.$ Again,  we put $a_l:=\ka^l_2/{\ka^l_1},$ and  $a_{n+1}:=\infty$.

\smallskip
 
Because of \eqref{hf}, we have  to prove the following estimates:
\begin{eqnarray}\label{nec2}
&&p'\ge 2d^f+2;\\
&&p'\ge 2h^f_l+2\quad \mbox{for every $l$ such that $a_l>m_0.$}\label{nec3}
\end{eqnarray}
where, according to \eqref{hl2}, 
$$
h^f_l=\frac {1+m_0\ka^l_1-\ka^l_2}{\ka^l_1+\ka^l_2}.
$$

Let  first $\ga_l$ be any  non-horizontal edge of $\N(\phi^f)$ with $a_l>m_0,$ and consider the region
$$D^a_\ve:=\{y\in\RR^2:|y_1|\le\ve^{\ka^l_1}, |y_2|\le\ve^{\ka^l_2}\},\quad\ve>0,
$$ 
in the coordinates $y.$ In the original coordinates $x$, it corresponds to
$$
D_\ve:=\{x\in\RR^2:|x_1|\le\ve^{\ka^l_1}, |x_2-f(x_1)|\le\ve^{\ka^l_2}\}.
$$
Assume that $\ve$ is sufficiently small. Since 
$$
\phi^f(\ve^{\ka^l_1}y_1,\ve^{\ka^l_2}y_2)=\ve (\phi^f_{\ka^l}(y_1,y_2)+O(\ve^\delta)),
$$
for some $\de>0,$
where $\phi^f_{\ka^l}$ denotes the $\ka^l$-principal part of $\phi^f,$ 
 we have that $|\phi^f(y)|\le C\ve$ for every $y\in D^a_\ve,$ i.e.
\begin{equation}\label{nec4}
|\phi(x)|\le C \ve \quad \mbox {   for every   }x\in D_\ve.
\end{equation}
Moreover, for $x\in D_\ve,$ because $|f(x_1)|\lesssim |x_1|^{m_0}$ and $m_0\le a_l=\ka^l_2/\ka^l_1,$ we have 
$$
|x_2|\le \ve^{\ka^l_2}+|f(x_1)|\lesssim \ve^{\ka^l_2}+\ve^{m_0\ka^l_1}\lesssim \ve^{m_0\ka^l_1}.
$$

We may thus assume that  $D_\ve$ is contained in the box where $|x_1|\le2\ve^{\ka^l_1}, |x_2|\le2\ve^{m_0\ka^l_1}.$ Choose a Schwartz function  $\vp_\ve$ such that
$$
\widehat{\vp_\ve}(x_1,x_2,x_3)=\chi_0\Big(\frac {x_1}{\ve^{\ka^l_1}}\Big)\chi_0\Big(\frac {x_2}{\ve^{m_0\ka^l_1}}\Big)\chi_0\Big(\frac {x_3}{C\ve}\Big),
$$
where $\chi_0$ is again a smooth cut-off function supported in $[-2,2]$ identically $1$ on $[-1,1].$

Then by \eqref{nec4} we see that $\widehat{\vp_\ve}(x_1,x_2,\phi(x_1,x_2))\ge 1$ on $D_\ve,$ hence, if $\rho(0)\ne 0,$ then 
$$
\Big(\int_S |\widehat {\vp_\ve}|^2\, \rho\, d\si\Big)^{1/2}\ge C_1|D_\ve|^{1/2}=C_1\ve^{(\ka^l_1+\ka^l_2)/2},
$$
where $C_1>0$ is a positive constant.
Since $\|\vp_\ve\|_p\simeq \ve^{((1+m_0)\ka^l_1+1)/p'},$ we find that the restriction estimate \eqref{rest1}  can only hold if 
$$
p'\ge 2\frac{(1+m_0)\ka^l_1+1}{\ka^l_1+\ka^l_2}=2h^f_l+2.
$$

The case  $l=n+1,$ where $\ga_l$ is the horizontal edge, so that $h^f_l=B_n-1, $  requires a minor modification of this argument. Observe that, by Taylor expansion,  in this case $\phi^f$ can be written as 
\begin{equation}\label{nec5}
\phi^f\y=y_2^{B_{n}} h\y +\sum_{j=0}^{B_n-1} y_2^j g_j(y_1),
\end{equation}
where the functions $g_j$ are flat and $h$ is fractionally smooth and continuous at the origin. Choose $\de>0,$ and define here 
$$
D^a_\ve:=\{y\in\RR^2:|y_1|\le\ve^{\de}, |y_2|\le\ve^{\ka^l_2}\},\quad\ve>0.
$$ 
Then \eqref{nec5} shows that again $|\phi^f(y)|\le C\ve$ for every $y\in D^a_\ve,$  so that \eqref{nec4} holds true again. 
Moreover, for $x\in D_\ve,$ we now find that 
$$
|x_2|\le \ve^{\ka^l_2}+|f(x_1)|\lesssim \ve^{\ka^l_2}+\ve^{m_0\de}\lesssim \ve^{m_0\de}
$$
for $\de$ sufficiently small.  Choosing
$$
\widehat{\vp_\ve}(x_1,x_2,x_3)=\chi_0\Big(\frac {x_1}{\ve^{\de}}\Big)\chi_0\Big(\frac {x_2}{\ve^{m_0\de}}\Big)\chi_0\Big(\frac {x_3}{C\ve}\Big),
$$
arguing as before we find that here \eqref{rest1} implies that 
$$
p'\ge 2\frac{(1+m_0)\de+1}{\de+\ka^l_2} \quad \mbox{for every   }\  \de>0, 
$$
hence $p'\ge 2B_n=2h^f_{l_{n+1}}+2.$ This finishes the proof of \eqref{nec3}.

\smallskip
Notice finally  that the argument for the non-horizontal edges  still works if we replace the line $L_l$  by the line $L^f$ and the weight $\ka^l$ by the weight $\ka^f$ associated with that line. Since here $m_0\ka^f_1=\ka^f_2,$ this  leads to the condition \eqref{nec2}. 
\end{proof}

Proposition \ref{necs.1} also allows to give a short, but admittedly  indirect proof of Proposition \ref{rheight}, which will make use of Theorem  \ref{nonadarestrictanal}, too.
\medskip 

\noindent{\it Proof of Proposition \ref{rheight}.} Recall that we assume that the original coordinates $\x$ are linearly adapted to $\phi.$  

In oder to prove (a), assume furthermore  that the  coordinates $\x$ are not adapted to $\phi,$ and let $f(x_1)$ be any 
non-flat fractionally smooth, real  function  $f(x_1),$ with corresponding fractional shear, say  in $H^+.$ We have to show that 
\begin{equation}\label{nec6}
h^f(\phi)\le h^r(\phi). 
\end{equation}

\smallskip 

We begin with the special case where $\phi$ is analytic,  then Theorem \ref{nonadarestrictanal} shows that the restriction estimate \eqref{rest1} holds true  for $p=p_c,$ where $p_c'=2h^r(\phi)+2.$ Moreover, choosing $\rho$ so that $\rho(x^0)\ne 0,$ then Proposition \ref{necs.1} 
 implies that $p'\ge 2h^f(\phi)+2.$   Combining these estimates we obtain \eqref{nec6}.

\medskip
The case of a general smooth,  finite type $\phi$ can be reduced to the previous case. To this end,  denote by $\phi_N$ the Taylor polynomial of degree $N$  centered at the origin.  
It is not difficult to show that if $N$ is sufficiently large, then
$$
h^r(\phi)=h^r(\phi_N)\quad \mbox{and }\quad h^{f}(\phi_N)=h^f(\phi).
$$
Since \eqref{nec6} holds true for $\phi_N,$ we thus see that it holds true also for $\phi.$

\medskip

In order to prove (b), we assume that the coordinates $\x$ are adapted to $\phi,$ so that $d(\phi)=h(\phi).$  We have to prove that 
\begin{equation}\label{nec7}
\tilde h^r(\phi)= d(\phi).
\end{equation}
Let us first observe that Theorem \ref{adarestrict}   and Proposition \ref{necs.1}  imply, in a similar way as in the proof of (a), that $2h(\phi)+2\ge 2h^f(\phi)+2,$ hence $d(\phi)\ge h^f(\phi).$ We thus see that 
$$
\tilde h^r(\phi)\le d(\phi).
$$

On the other hand, when the  principal face $\pi(\phi)$ is compact, then we can choose a support line 
$$
 L=\{(t_1,t_2)\in \RR^2:\ka_1t_1+\ka_2 t_2=1\}
$$
to the Newton polyhedron of $\phi$ containing $\pi(\phi)$ and such that  $0<\ka_1\le \ka_2.$ We then put $f(x_1):=x_1^{m_0},$ where $m_0:=\ka_2/\ka_1.$ Then $d(\phi)=1/(\ka_1+\ka_2)=d^f\le h^f(\phi)\le \tilde h^r(\phi),$ and we obtain \eqref{nec7}.

\smallskip Assume finally that $\pi(\phi)$ is an unbounded horizontal half-line, with left endpoint $(A,B),$ where $A<B.$ We then choose $f_n(x_1):=x_1^n,\ n\in\bN.$ Then it is easy to see that for $n$ sufficiently large, the line $L^{f_n}$ will pass through  the  point $(A,B),$ and thus   $\lim_{n\to\infty} h^{f_n}(\phi)=B=d(\phi).$ Therefore, $\tilde h^r(\phi)\ge d(\phi),$ which shows that \eqref{nec7} is valid also in this case.
\qed

\setcounter{equation}{0}
\section{Appendix: Proof of Lemma \ref{doublesum}}\label{doublesumproof}

The basic idea of the proof becomes most transparent under the
additional assumption that also the vectors  $(\be^k_1,\be^k_2),
k=1,\dots,n,$ are pairwise linearly independent, i.e.,
\begin{equation}\label{8.12I}
\be^l_1\be^k_2-\be^k_1\be^l_2\ne 0, \qquad \mbox{ for all } l\ne k.
\end{equation}

We shall therefore begin with this case, and later indicate the
modifications  needed for  the general case.

\medskip

For   $y=(y_1,\dots, y_n)$  in an open neighborhood of $Q,$
Taylor's integral formula allows to  write
 $ H(y)=H(0)+\sum_{k=1}^n y_k H_k(y),
 $
 with $C^1$-functions $H_k$ whose $C^1$-norms on  $Q$ are controlled by the $C^2(Q)$-norm of $H.$  Similarly, putting $h_k(y_k):= H_k(0,\dots,0, y_k,0,\dots,0),$ we may decompose
 $H_k(y)=h_k(y_k)+\sum_{\{l:l\ne k\}} y_lH_{kl}(y),
 $
 with  continuous functions $H_{kl}$ whose $C(Q)$-norms  are controlled by the $C^2(Q)$-norm of $H.$ This allows to write
 $$
 H(y)=H(0)+\sum_k y_kh_k(y_k)+\sum_{l\ne k} y_ky_lH_{kl}(y).
 $$
 Accordingly, we shall decompose $F(t)=F_0(t)+\sum_k F_k(t) +\sum_{l\ne k} F_{kl}(t),$ where
 \begin{eqnarray*}
F_0(t)&:=&H(0)\sum_{m_1=0}^{M_1}\sum_{m_2=0}^{M_2}
2^{i(\al_1m_1+\al_2m_2)t}
\chi_Q\Big(2^{(\be^1_1 m_1+\be^1_2 m_2)}a_1,\dots, 2^{(\be^n_1 m_1+\be^n_2 m_2)}a_n\Big),\\
F_k(t)&:=&\sum_{m_1=0}^{M_1}\sum_{m_2=0}^{M_2}
2^{i(\al_1m_1+\al_2m_2)t} \,
(y_k h_k)(2^{(\be^k_1 m_1+\be^k_2 m_2)}a_k)\, \\
&& \hskip3cm \chi_Q\Big(2^{(\be^1_1 m_1+\be^1_2 m_2)}a_1,\dots, 2^{(\be^n_1 m_1+\be^n_2 m_2)}a_n\Big),\\
F_{kl}(t)&:=&\sum_{m_1=0}^{M_1}\sum_{m_2=0}^{M_2}
2^{i(\al_1m_1+\al_2m_2)t} \, (y_ky_l H_{kl}\,\chi_Q)\Big(2^{(\be^1_1
m_1+\be^1_2 m_2)}a_1,\dots, 2^{(\be^n_1 m_1+\be^n_2 m_2)}a_n\Big),
\end{eqnarray*}
It will therefore suffice to establish estimates of the form
\eqref{8.9I} for each of these functions $F_0, F_k$ and $F_{kl}.$ We
begin with $F_0.$

We may choose $r\in\NN^\times$ so that  every $\be^k_i$ can be
written as $\be^k_i=p^k_i/r,$ with $p^k_i\in\ZZ.$ Let us assume that
there is a least one $\be^k_2\ne 0$ (otherwise, we find some
$\be^k_1\ne 0,$ and may proceed with the roles of the indices $i=1$
and $i=2$ interchanged). We then put $p_2:=|\prod_{k: p^k_2\ne 0}
p^k_2|,$  whenever $p^k_2\ne 0,$ and $q_k:= (p^k_1 p_2)/p^k_2,$ so
that $q_k\in \ZZ.$ Observe next that we may write every $m_1\in\NN$
uniquely in the form $m_1=\al +j_1p_2,$ with $\al\in\{0, \dots,
p_2-1\}$ and $j_1\in \ZZ.$ This allows to decompose
$F_0(t)=\sum_{\al=0}^{p_2-1}F^\al_0(t),$ where $F^\al_0(t)$ is
defined like $F_0(t),$ only that the summation in $m_1$ is
restricted to those $m_1$ which are congruent to $\al$ modulo $p_2.$

Next, an easy computation shows that if $\be^k_2\ne 0,$ then
$$
\be^k_1 (\al +j_1p_2)+\be^k_2 m_2= \be^k_1\al +\be^k_2(m_2+q_k j_1).
$$
Therefore, if we write $R_k/a_k=(\sgn a_k ) 2^{b_k},$ then the
restriction imposed by  $\chi_Q$ on the  $k$' s coordinate leads to
the condition
$$
0\le  \be^k_1\al +\be^k_2(m_2+q_k j_1)\le b_k.
$$
This means that $m_2$ lies  in an ``interval'' of the form $\{e_k-
q_k j_1,\dots, d_k-q_k j_1\},$  for every $k$ such that $\be^k_2\ne
0$ (by an interval  we mean here the set of integer points within a
real  interval). We may therefore decompose the set of $j_1$'s over
which we are summing into a finite number  of (at most $(n!)^2$)
pairwise disjoint intervals $J_s$   such that for each given $s$
there are indices  $k_s, k'_s$ such that for $j_1\in I_s,$ $m_2$
will run through an interval of the form $\{e'_s-u_s j_1,\dots, d'_s
-v_s j_1\},$ where $e'_s:=e_{k_s}, u_s:=q_{k_s}$ and
$d'_s:=d_{k'_s}, v_s:=q_{k'_s}.$ We may thus reduce to considering,
for each fixed $s,$ the corresponding part $F^\al_s$ of $F^\al$
given by summation over the interval $I_s,$ i.e.,
\begin{eqnarray*}
F^\al_s(t) &:=&H(0)\sum_{\{j_1\in I_s: 0\le \al+j_1 p_2\le M_1\}}\sum_{m_2=e'_s-u_s j_1}^{d'_s -v_s j_1} 2^{i(\al\al_1+p_2\al_1j_1+\al_2m_2)t}\\
&&\hskip5cm
\times\prod_{\{k:\be^k_2=0\}}\chi_{[-R_k,R_k]}\Big(2^{\be^k_1(\al+j_1p_2)}a_k\Big)
\end{eqnarray*}
Evaluating the geometric sums in $m_2,$ this shows that $F^\al_s(t)$
is  the difference of  two terms, one arising from the lower  limit
$m_2=e'_s-u_s j_1,$ which is given by
\begin{eqnarray*}
F^\al_{s,-}(t) &=&H(0)\sum_{\{j_1\in I_s: 0\le \al+j_1 p_2\le M_1\}}
2^{i\al\al_1t}\frac {2^{i (\al_2 e'_s +j_1(\al_1p_2-\al_2u_s))t}-2^{ip_2\al_1j_1t}}{2^{i\al_2 t}-1}\\
&&\hskip
4cm\times\prod_{\{k:\be^k_2=0\}}\chi_{[-R_k,R_k]}\Big(2^{\be^k_1(\al+j_1p_2)}a_k\Big),
\end{eqnarray*}
and an analogous term arising from the upper limit $m_2=d'_s -v_s
j_1.$ But, by our assumptions,
$\al_1p_2-\al_2u_s=(\al_1\be^{k_s}_2-\al_2 \be^{k_s}_1)w_{k_s}\ne
0,$ where $w_{k_s}:=p_2/\be^{k_s}_2\in\NN,$ and the characteristic
functions of the intervals $[-R_k,R_k]$ again localize the summation
over the $j_1$'s to the summation over some interval, which shows
that we may estimate
$$
|F^\al_{s,-}(t)|\le \frac C{|2^{i\al_2 t}-1| |2^{i\al_1p_2
t}-1||2^{i(\al_1\be^{k_s}_2-\al_2 \be^{k_s}_1)w_{k_s}t}-1|}\le
\frac{C}{|\rho(t)|}.
$$
This establishes the desired estimate for $F_0(t).$

\medskip
We next turn to $F_k(t).$ Given $k$, let us assume again without
loss of generality that $\be^k_2\ne 0.$ Then we may  write
$m_1,m_2\in \ZZ$  in a unique way as
\begin{equation}\label{8.13I}
m_1=\al+j_1 p^k_2, \ m_2=j_2-j_1 p^k_1, \quad \mbox{ with }\quad
\al\in\{0, \dots, |p^k_2|\},
\end{equation}
with $j_1,j_2\in\ZZ.$  Observe that then
\begin{eqnarray*}
\be^l_1m_1+\be^l_2 m_2&=&\be^l_1 \al + j_2\be^l_2 + j_1(\be^l_1\be^k_2-\be^l_2\be^k_1) r,\\
\al_1m_1+\al_2 m_2&=&\al_1\al+ j_2\al_2
+j_1(\al_1\be^k_2-\al_2\be^k_1)r.
\end{eqnarray*}
In particular, $\be^k_1m_1+\be^k_2 m_2=\be^k_1 \al + j_2\be^k_2$
does not depend on $j_1.$  Moreover,  for given $\al$ and $j_2,$ the
localizations given by the conditions $|2^{(\be^l_1 m_1+\be^l_2
m_2)}a_l|\le R_l, \, l\ne k,$ reduce the summation over $j_1$ to the
summation over an interval $I(\al,j_2),$ and summing a geometric sum
with respect to $j_1,$ we thus see that
$$
|F_k(t)|\le \frac {C'}{|2^{i(\al_1\be^k_2-\al_2\be^k_1)rt}-1|}
\sum_{\al=0}^{|p^k_2|}\sum_{\{j_2:|2^{\be^k_1 \al +
j_2\be^k_2}a_k|\le R_k } |2^{\be^k_1 \al + j_2\be^k_2}a_k|\le
\frac{C R_k}{|\rho(t)|}.
$$

\medskip
Consider finally $F_{kl}(t),$ for $k\ne l.$ We may simply estimate
$$
|F_{kl}(t)|\le C \sum_{(m_1,m_2)\in J_{k,l}} |2^{(\be^k_1
m_1+\be^k_2 m_2)}a_k|\,|2^{(\be^l_1 m_1+\be^l_2 m_2)}a_l|,
$$
where $J_{kl}$ is the set of all $(m_1,m_2)\in\NN^2$ satisfying
$|2^{(\be^k_1 m_1+\be^k_2 m_2)}a_k|\le R_k$ and $|2^{(\be^l_1
m_1+\be^l_2 m_2)}a_l|$  \hfill
$\le R_l.$ By comparing with an integral and
changing variables in the integral (recall  that  by our assumption \eqref{8.12I}  the matrix
$\left(
\begin{array}{ccc}
  \be^k_1   & \be^k_2  \\
 \be^l_1   & \be^l_2  \\
\end{array}
\right)$ is non-degenerate)  this leads to the estimate
\begin{eqnarray*}
|F_{kl}(t)|&\le& C'' \iint_{ I_{k,l}}|2^{(\be^k_1 s_1+\be^k_2 s_2)}a_k|\,|2^{(\be^l_1 s_1+\be^l_2 s_2)}a_l| \, ds_1 ds_2\\
&\le&
C'\int_{-\infty}^{\log_2(R_l/|a_l|)}\int_{-\infty}^{\log_2(R_k/|a_k|)}
|2^{x_1} a_k| |2^{x_2} a_l|\, dx_1 dx_2\le C R_k R_l,
\end{eqnarray*}
where  $I_{kl}$ denotes  the set of all $(s_1,s_2)\in\RR_+^2$
satisfying $|2^{(\be^k_1 s_1+\be^k_2 s_2)}a_k|\le R_k$ and
$|2^{(\be^l_1 s_1+\be^l_2 s_2)}a_l|\le R_l.$
\medskip

This concludes the proof of the lemma under our additional
hypotheses \eqref{8.12I}.

\bigskip
Let us finally indicate how to remove the assumptions \eqref{8.12I}.
To this end, let us write $\be^j:=(\be^j_1,\be^j_2).$  In the
general case, we may decompose the index set $\{1,\dots, n\}$ into
pairwise disjoint subset $I_1,\dots, I_h$ such that the following
hold true: There are non-trivial vectors $\ga^k=(\ga^k_1,\ga^k_2),
k=1,\dots, h,$ in $ \QQ^2$  and rational numbers $r_j\ne 0,
j=1,\dots,n,$ such that
 \begin{itemize}
\item[(a)] If $j\in I_k,$ then $\be^j=r_j\ga^k;$
\item[(b)] For  $k\ne l,$  the vectors $\ga^k$ and $\ga^l$  are linearly independent.
 \end{itemize}
 Let us accordingly define the vectors $ Y_k:= (y_j)_{j\in I_k}\in \RR^{I_k}, k=1,\dots, h.$ We may assume  (possibly after a permutation of coordinates) that $y=(Y_1,\dots,Y_h).$ Following the first step of the previous proof, we then decompose
 $H(y)=H(0) + \sum_{k=1}^h \trans Y_k\cdot H_k(y),$ where now $H_k$ maps into $\RR^{I_k}.$  Next, we put
 $$
 h_k(Y_k):= H_k(0, \dots,0, Y_k,0,\dots,0)\in \RR^{I_k},
 $$
 and apply Taylor's formula in order to write
 $$
 H(y)=H(0) + \sum_{k=1}^h \trans Y_k\cdot h_k(Y_k)+\sum_{k\ne l} \trans Y_k \cdot H_{kl}(y) \cdot Y_l,
 $$
 where here $H_{kl}$ is a matrix-valued function. Correspondingly, we define the function
 $F_0(t), F_k(t)$ and $F_{kl}(t)$ as before, only with $y_k h_k(y_k)$ replaced
  by $\trans Y_k\cdot h_k(Y_k)$ and $y_ky_lH_{kl}(y)$ by $\trans Y_k \cdot H_{kl}(y) \cdot Y_l,$ respectively.

 The discussion of $F_0(t)$  remains unchanged, and the same applies essentially also to the
 discussion of $F_{kl}(t),$ because of property (b). Finally,
 for the estimation of $F_k(t),$ notice that for a given, fixed $k,$ if $j\in I_k,$ then
 by (a) we see that the arguments  at which $\trans Y_k \cdot h_{k}$ is evaluated are all of the
 form $2^{r_j(\ga^k_1m_1+\ga^k_2 m_2)}a_j.$  Therefore, in the coordinates given by
   $\al,j_1,j_2$  from \eqref{8.13I}, they  all  will not depend on $j_1.$
   We may therefore proceed in the estimation of $F_k(t)$ essentially as before,
   which concludes the proof of Lemma \ref{doublesum} also in the general case.

\medskip

\end{document}